\newcommand{\nocontentsline}[3]{}
\newcommand{\tocless}[2]{\bgroup\let\addcontentsline=\nocontentsline#1{#2}\egroup}
\DeclarePairedDelimiter\floor{\lfloor}{\rfloor}
\DeclareMathOperator{\rank}{rank}
\DeclareMathOperator{\End}{End}
\DeclareMathOperator{\Hom}{Hom}
\DeclareMathOperator{\Lie}{Lie}
\DeclareMathOperator{\SL}{SL}
\DeclareMathOperator{\SO}{SO}
\DeclareMathOperator{\spec}{Spec}
\DeclareMathOperator{\GU}{GU}
\DeclareMathOperator{\GSp}{GSp}
\DeclareMathOperator{\GSpin}{GSpin}
\DeclareMathOperator{\Id}{Id}
\DeclareMathOperator{\Vol}{Vol}
\DeclareMathOperator{\tr}{tr}
\DeclareMathOperator{\Gal}{Gal}
\DeclareMathOperator{\CH}{CH} 
\DeclareMathOperator{\Pic}{Pic} 
\DeclareMathOperator{\Disc}{Disc}
\DeclareMathOperator{\Frob}{Frob}
\DeclareMathOperator{\re}{Re}
\DeclareMathOperator{\spn}{span}
\DeclareMathOperator{\length}{length}
\DeclareMathOperator{\val}{val}
\DeclareMathOperator{\disc}{disc}
\DeclareMathOperator{\Sh}{Sh}
\numberwithin{equation}{section}
\theoremstyle{plain}
\newtheorem{theorem}{Theorem}[section]
\newtheorem*{thm}{Theorem}
\newtheorem{proposition}[theorem]{Proposition}
\newtheorem{lemme}[theorem]{Lemma}
\newtheorem{claim}[theorem]{Claim}
\newtheorem{lemma}[theorem]{Lemma}
\newtheorem{corollaire}[theorem]{Corollary}
\newtheorem{conjecture}[theorem]{Conjecture}
\theoremstyle{definition}
\newtheorem{definition}[theorem]{Definition}
\newtheorem{assumption}[theorem]{Assumption}
\theoremstyle{remark}
\newtheorem{remarque}[theorem]{Remark}
\newcommand{\R}{\mathbb{R}}
\newcommand{\K}{\mathbb{K}}
\newcommand{\Z}{\mathbb{Z}}
\renewcommand{\P}{\mathbb{P}}
\newcommand{\Q}{\mathbb{Q}}
\newcommand{\N}{\mathbb{N}}
\newcommand{\A}{\mathbb{A}}
\newcommand{\C}{\mathbb{C}}
\newcommand{\cC}{\mathcal{C}}
\newcommand{\G}{\mathrm{G}}
\newcommand{\Ok}{\mathcal{O}}
\newcommand{\cP}{\mathcal{P}}
\newcommand{\cA}{\mathcal{A}}
\newcommand{\B}{\mathcal{B}}
\newcommand{\cO}{\mathcal{O}}
\newcommand{\cM}{\mathcal{M}}
\newcommand{\cX}{\mathcal{X}} 
\newcommand{\cY}{\mathcal{Y}}
\newcommand{\cZ}{\mathcal{Z}}
\newcommand{\cW}{\mathcal{W}}
\newcommand{\cS}{\mathcal{S}}
\newcommand{\cF}{\mathcal{F}}
\newcommand{\cN}{\mathcal{N}}
\newcommand{\bF}{\mathbb{F}}
\newcommand{\bH}{\mathbb{H}}
\newcommand{\fP}{\mathfrak{P}}
\newcommand{\fe}{\mathfrak{e}}
\newcommand{\bfN}{\mathbf{N}}
\newcommand{\bfV}{\mathbf{V}}
\newcommand{\bfH}{\mathbf{H}}
\newcommand{\bfQ}{\mathbf{Q}}
\newcommand{\bfx}{\mathbf{x}}
\newcommand{\bfone}{\mathbf{1}}
\newcommand{\bomg}{\boldsymbol{\omega}}
\newcommand{\zero}{\mathrm{zero}}
\newcommand{\bad}{\mathrm{bad}}
\newcommand{\good}{\mathrm{good}}
\newcommand{\mt}{\mathrm{mt}}
\newcommand{\er}{\mathrm{er}}
\newcommand{\univ}{\mathrm{univ}}
\newcommand{\ks}{\mathrm{KS}}
\newcommand{\nr}{\mathrm{nr}}
\newcommand{\inj}{\hookrightarrow}
\newcommand{\salim}[1]{{\color{Purple} \sf  Salim: [#1]}}
\newcommand{\ananth}[1]{{\color{Red} \sf Ananth: [#1]}}
\newcommand{\yunqing}[1]{{\color{Blue} \sf  Yunqing: [#1]}}
\title[]{Exceptional jumps of Picard ranks of reductions of K3 surfaces over number fields}
\author{Ananth N. Shankar}
\address{The department of mathematics, MIT, 182 memorial drive, Cambridge MA 02139.}
\email{ananths@mit.edu}
\author{Arul Shankar}
\address{215 Huron st, Toronto, ON M5T 1R2.}
\email{ashankar@math.utoronto.ca}
\author{Yunqing Tang}
\address{Department of Mathematics, University of California, Berkeley, Evans Hall, Berkeley CA 94720, USA.}
\email{yunqing.tang@berkeley.edu}
\author{Salim Tayou}
\address{45, Rue d'Ulm, 75230, Paris.}
\email{salim.tayou@ens.psl.eu}
\date\today
\begin{document}

\begin{abstract}
Given a K3 surface $X$ over a number field $K$ with potentially good reduction everywhere, we prove that the set of primes of $K$ where the geometric Picard rank jumps is infinite. As a corollary, we prove that either $X_{\overline{K}}$ has infinitely many rational curves or $X$ has infinitely many unirational specializations. 

Our result on Picard ranks is a special case of more general results on exceptional classes for K3 type motives associated to GSpin Shimura varieties. These general results have several other applications. For instance, we prove that an abelian surface over a number field $K$ with potentially good reduction everywhere is isogenous to a product of elliptic curves modulo infinitely many primes of $K$. 

\end{abstract}

\thanks{}
\maketitle
\setcounter{tocdepth}{1}
\tableofcontents

\section{Introduction}

\subsection{Picard rank jumps for K3 surfaces}
Let $X$ be a K3 surface over a number field $K$. Let $\mathcal{X}\xrightarrow{\pi} S$ be a smooth and projective model of $X$ over an open sub-scheme $S$ of $\mathrm{Spec}(\mathcal{O}_K)$, the spectrum of the ring of integers $\mathcal{O}_K$ of $K$. For every place $\mathfrak{P}$ of $K$ in $S$, let $\mathcal{X}_{\overline{\mathfrak{P}}}$ be the geometric fiber of $\pi$ at $\mathfrak{P}$. There is an injective specialization map between Picard groups (see \cite[Chap.17 Prop.2.10]{huybrechts}): 
$$\mathrm{sp}_{\mathfrak{P}}:\mathrm{Pic}(X_{\overline{K}})\hookrightarrow \mathrm{Pic}(\mathcal{X}_{\overline{\mathfrak{P}}}),$$ which implies the inequality between Picard ranks $$\rho(\mathcal{X}_{\overline{\mathfrak{P}}}):=\rank_{\Z}\Pic(\cX_{\overline{\fP}})\geq\rho(X_{\overline{K}}):=\rank_{\Z}\Pic(X_{\overline{K}}).$$  

In this paper, our main result implies the following theorem.
\begin{theorem}\label{k3ar}  
Let $X$ be a K3 surface over a number field $K$ and assume that $X$ admits, up to a finite extension of $K$, a projective smooth model $\mathcal{X}\rightarrow \mathrm{Spec}(\mathcal{O}_K)$. Then there are infinitely many finite places $\fP$ of $K$ such that $\rho(\cX_{\overline{\fP}})> \rho(X_{\overline{K}})$.
\end{theorem}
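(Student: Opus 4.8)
My plan is to deduce the theorem from the general theory of special divisors on GSpin Shimura varieties, via the Kuga--Satake construction and an arithmetic intersection argument.

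\textbf{Step 1: reduction to special divisors.} First I would pass to a finite extension of $K$ (harmless for the statement) so that $X$ has a smooth projective model $\cX\to\spec(\cO_K)$ and $\Pic(X_{\overline K})=\mathrm{NS}(X_{\overline K})$ is $\Gal(\overline K/K)$-invariant. Set $\rho=\rho(X_{\overline K})$ and let $L\subset H^2(X_{\overline K},\Z)$ be the orthogonal complement of $\mathrm{NS}(X_{\overline K})$, a lattice of signature $(2,b)$ with $b=20-\rho$ (so $b\le 19$ since $X$ is polarized). If $b=0$ (a singular K3) the surface has CM and the result follows from the existence of infinitely many primes of supersingular reduction, so I assume $b\ge 1$. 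Using Kuga--Satake and Madapusi Pera's integral canonical models, I would produce a smooth model $\cM/\Z[1/N]$ of the GSpin Shimura variety attached to $L$ together with a morphism $\cP\colon\spec(\cO_K[1/N])\to\cM$ carrying the Kuga--Satake abelian variety of $X$, using everywhere good reduction to spread the $K$-point out to a section. The variety $\cM$ carries special divisors $\cZ(m)$, $m\ge 1$; by the choice of $L$ the generic fiber $\cP_K$ meets no $\cZ(m)$, while $\cP_{\overline\fP}$ lies on some $\cZ(m)$ exactly when the reduction acquires an extra Tate class, which by the Tate conjecture for divisors on K3 surfaces over finite fields (a theorem) is algebraic and, being orthogonal to $\mathrm{sp}_\fP(\mathrm{NS}(X_{\overline K}))$, witnesses $\rho(\cX_{\overline\fP})>\rho$. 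So it is enough to show that $\cP_{\overline\fP}$ lies on $\bigcup_{m\ge1}\cZ(m)$ for infinitely many $\fP$.

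\textbf{Step 2: modularity and the size of the main term.} I would metrize the Hodge bundle $\widehat\omega$ on $\cM$ and equip each $\cZ(m)$ with Kudla's Green function, giving classes $\widehat\cZ(m)\in\widehat{\CH}^1(\cM)$. By the modularity of arithmetic special divisors (Borcherds, Bruinier--Funke, Howard--Madapusi Pera), $\sum_{m\ge1}\widehat\cZ(m)q^m$ is a modular form of weight $1+b/2$ with values in $\widehat{\CH}^1(\cM)_\R$, so $\sum_{m\ge1}\widehat{\deg}(\cP^*\widehat\cZ(m))q^m$ is (the holomorphic part of) a modular form of the same weight, whose Fourier coefficients --- via their relation to derivatives of Eisenstein series in Kudla's program --- grow like $m^{b/2}\log m$ with positive leading term. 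In particular
\[
\sum_{m\le M}\widehat{\deg}\bigl(\cP^*\widehat\cZ(m)\bigr)\ \gg\ M^{1+b/2}\log M .
\]

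\textbf{Step 3: separating places and concluding.} Decomposing $\widehat{\deg}(\cP^*\widehat\cZ(m))=\sum_\fP(\cP\cdot\cZ(m))_\fP\log N\fP+\sum_{\sigma\colon K\hookrightarrow\C}\Phi_m(\cP_\sigma)$, I would prove the analytic estimate
\[
\sum_{m\le M}\ \sum_{\sigma\colon K\hookrightarrow\C}\Phi_m(\cP_\sigma)\ \ll\ M^{1+b/2},
\]
i.e. that the archimedean contribution has strictly smaller order than the main term; here is where everywhere good reduction enters, through a bound on the Faltings height of the Kuga--Satake abelian variety of $X$, which controls how closely and how often $\cP_\sigma$ can approach the divisors $\cZ(m)$. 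Combining the two displays, $\sum_{m\le M}\sum_\fP(\cP\cdot\cZ(m))_\fP\log N\fP\gg M^{1+b/2}\log M$. If only finitely many places $\fP$ had $\cP_{\overline\fP}\in\bigcup_m\cZ(m)$, then for each such $\fP$ the special endomorphisms of the reduction would form a positive-definite lattice $L_\fP$ of rank $\le b+2$, the local intersection numbers $(\cP\cdot\cZ(m))_\fP$ would be counted by vectors $v\in L_\fP$ with $q(v)=m$ with multiplicities bounded on average, hence $\sum_{m\le M}(\cP\cdot\cZ(m))_\fP\ll_\fP M^{1+b/2}$ with \emph{no} logarithmic factor, and summing over the finitely many places would give $\sum_{m\le M}\sum_\fP(\cP\cdot\cZ(m))_\fP\log N\fP=O(M^{1+b/2})$ --- contradicting the previous inequality. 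Therefore infinitely many places occur, which proves the theorem.

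\textbf{The hard part.} The main obstacle is the archimedean estimate in Step 3: one must control Kudla's Green functions of increasingly singular divisors at the fixed non-special points $\cP_\sigma$, uniformly in $m$, ruling out anomalous proximity of $\cP_\sigma$ to $\cZ(m)$ for a thin but non-negligible set of $m$; this is exactly what everywhere good reduction (via the height bound) is designed to prevent, and I expect most of the work to lie there. The bookkeeping of local intersection multiplicities at the finite places --- in particular at supersingular primes, where $L_\fP$ attains rank $b+2$ --- is the other technical ingredient, but should be routine given the structure of special endomorphisms.
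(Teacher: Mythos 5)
Your Step 1 (Kuga--Satake reduction, spreading the $K$-point to a section of an integral model using potentially good reduction, and recasting the Picard-rank jump as lying on a special divisor $\cZ(m)$) matches the paper. However, your Steps 2--3 invert the magnitudes of the global height and the archimedean Green-function contribution, and this breaks the argument. You claim the Fourier coefficients of $\sum_m\widehat{\deg}(\cP^*\widehat\cZ(m))q^m$ grow like $m^{b/2}\log m$ via ``derivatives of Eisenstein series in Kudla's program.'' This is wrong: you are conflating Kudla's arithmetic Siegel--Weil formula (where one pairs $\widehat\cZ(m)$ against $\widehat\bomg^{\,b+1}$ and genuinely encounters $E_0'(\tau,0)$ with its logarithmic Fourier coefficients) with the height pairing against the one-cycle $\cY$. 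By the Howard--Madapusi Pera modularity theorem, the latter is a \emph{modular form} of weight $1+\tfrac{b}{2}$, so its coefficients are $O(m^{b/2})$ with no log; the paper records this as Proposition \ref{eq1}. Symmetrically, your claimed bound $\sum_{m\le M}\sum_\sigma\Phi_m(\cP_\sigma)\ll M^{1+b/2}$ is false: Bruinier's explicit formula gives $\Phi_m(x) = c(m)\log m + A(m,x) + o(|c(m)|\log m)$ with $c(m)\asymp -m^{b/2}$, so the archimedean term is the large \emph{negative} contribution $\asymp -m^{b/2}\log m$, and this is where the logarithm actually comes from. The entire hard analysis of \S\S 5--6 is devoted to showing that the positive correction $A(m,x)$ does not cancel the $c(m)\log m$ term, using Heath-Brown/Niedermowwe lattice-point counts and the ``diophantine bound'' derived from the good-reduction hypothesis. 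You correctly identify that the good-reduction hypothesis enters via a height/diophantine bound controlling proximity of $\cP_\sigma$ to the $\cZ(m)$, but you apply it to prove the wrong estimate. Since your two intermediate claims are each false, the fact that their errors cancel to give the correct conclusion about the finite contribution does not produce a valid proof.

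A secondary issue: you assert that $\sum_{m\le M}(\cP\cdot\cZ(m))_\fP\ll_\fP M^{1+b/2}$ with no log factor, and that supersingular primes are ``routine.'' The paper only establishes $\sum_{m\in S_{D,X}}(\cY\cdot\cZ(m))_\fP=o(X^{(b+1)/2}\log X)$ (Theorem \ref{finite-square}), and the supersingular case (where the special-endomorphism lattice $L_n$ has full rank $b+2$) is genuinely delicate: the number of summands in $n$ is $\asymp \log_p X$, each contributing $\asymp m^{b/2}$ via Heath-Brown, and removing the resulting $\log X$ requires a truncation argument that exploits the decay of local densities of $L_{eT}$ as $T\to\infty$ together with the decay of successive minima coming from Grothendieck--Messing/Serre--Tate (Proposition \ref{GMdecay}, Lemma \ref{lemdencomp1}). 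You also need the per-$m$ diophantine bound from Theorem \ref{summary} to control the first successive minimum of $L_n$ (Lemma \ref{first-min}); this is not routine bookkeeping.
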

This question has been raised by Charles \cite{charles} inspired by the work of Bogomolov--Hassett--Tschinkel \cite{bht} and Li--Liedtke \cite{ll} (see also \cite{costatschinkel}, \cite{costajahnel}). 
By \cite[Theorem 1]{charles}, up to a finite extension of $K$, the Picard rank $\rho(\cX_{\overline{\fP}})$ for a density one set of primes $\fP$ is completely determined by $\rho({X}_{\overline{K}})$ and the endomorphism field $E$ of the sub-Hodge structure $T(X_\sigma)$ of $H^2(X_\sigma^{an}(\C),\Q))$ given by the orthogonal complement of $\Pic(X_{\overline{K}})$ with respect to the intersection form, for any embedding $\sigma$ of $K$ in $\C$. For instance, if $E$ is CM or $\dim_E T(X_\sigma)$ is even, then $\rho(\cX_{\overline{\fP}})=\rho(X_{\overline{K}})$ for a density one set of primes $\fP$. In this situation, our theorem proves that the density zero set where $\rho(\cX_{\overline{\fP}})>\rho(X_{\overline{K}})$ is in fact infinite.

\subsection{Rational curves on K3 surfaces}
As an application of the above theorem, let us first recall the following conjecture.
\begin{conjecture}
Let $X$ be a K3 surface over an algebraically closed field $k$. Then $X$ contains infinitely many rational curves. 
\end{conjecture}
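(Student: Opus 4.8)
The plan is to feed Theorem~\ref{k3ar} into a reduction‑modulo‑$\fP$ argument in the style of Bogomolov--Hassett--Tschinkel and Li--Liedtke. First I would use a standard spreading‑out and specialization argument to reduce the conjecture over an algebraically closed field of characteristic $0$ to the case of a K3 surface $X$ over a number field $K$ with the same geometric Picard rank; the one restriction is that Theorem~\ref{k3ar} needs $X$ to have potentially good reduction everywhere, so this route initially treats only that class of surfaces (more on this below). Fix, as in the introduction, a smooth projective model $\cX\to S$ over an open $S\subset\spec\cO_K$ together with a relative polarization $\lambda$, and apply Theorem~\ref{k3ar} to produce an infinite set $\Sigma$ of places $\fP\in S$ with $\rho(\cX_{\overline{\fP}})>\rho(X_{\overline{K}})$.

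Fix $\fP\in\Sigma$. The enlarged Picard lattice of $\cX_{\overline{\fP}}$ must force rational curves: if it contains a primitive isotropic class one gets an elliptic fibration on $\cX_{\overline{\fP}}$, hence infinitely many rational curves there; an effective $(-2)$‑class is represented by a smooth rational curve; and in general, by the Bogomolov--Mumford theorem every effective class on a K3 surface is a sum of classes of rational curves, so the effective cone --- which spans $\Pic(\cX_{\overline{\fP}})\otimes\Q$ and is strictly larger than $\mathrm{sp}_{\fP}(\Pic(X_{\overline{K}}))\otimes\Q$ --- forces a rational curve $C_{\fP}\subset\cX_{\overline{\fP}}$ whose class is not in $\mathrm{sp}_{\fP}(\Pic(X_{\overline{K}}))$. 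Thus, were $X_{\overline{K}}$ to contain only finitely many rational curves, one would obtain for each $\fP\in\Sigma$ a rational curve on $\cX_{\overline{\fP}}$ that is genuinely ``new'' with respect to the reductions of the finitely many curves on $X_{\overline{K}}$.

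The decisive step is to extract a contradiction --- or the unirational alternative --- from this. Here one runs a deformation/specialization analysis inside $\cX\to S$: the curves $C_{\fP}$ lie in the relative Kontsevich space $\overline{M}_{0}(\cX/S)$, which is proper over $S$, and rational curves on K3 surfaces are unobstructed in the sense that the moduli of those in a fixed class has its expected dimension $0$ (Bogomolov--Mumford). If $\cX_{\overline{\fP}}$ is of finite height it lifts to characteristic $0$ together with its Picard lattice, and one uses this to propagate the extra algebraic structure --- and the new rational curves --- either back to the generic fibre $X_{\overline{K}}$ (where, after controlling $\lambda$‑degrees, infinitely many of them are pairwise distinct) or to a contradiction with the finiteness hypothesis. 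The residual case is that of supersingular reductions, for which one invokes the known unirationality results for supersingular K3 surfaces to conclude $\cX_{\overline{\fP}}$ is unirational. Combining the two cases over the infinite set $\Sigma$ yields the dichotomy announced in the abstract: infinitely many rational curves on $X_{\overline{K}}$, or infinitely many unirational specializations.

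I expect the real difficulty to be precisely this last step --- in particular the control of the $\lambda$‑degrees of the $C_{\fP}$, which a priori can grow with $\fP$ since the ``first'' exceptional class need not have bounded self‑intersection, and the careful separation of the finite‑height and supersingular regimes; it is exactly here that one cannot at present dispense with the unirational alternative. A secondary, more structural obstruction is the everywhere‑good‑reduction hypothesis inherited from Theorem~\ref{k3ar}: deducing the conjecture for a completely general K3 surface over $\overline{\Q}$ would further require handling the primes of bad reduction, e.g.\ by extending the main theorem to semistable or open models.
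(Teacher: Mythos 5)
The statement you are trying to prove is precisely the \emph{Conjecture} in \S1.2, which the paper itself does \emph{not} prove: it is stated as an open conjecture, with the remark that the characteristic-zero case is known by other means (Chen--Gounelas, cited as \cite[Theorem A]{chengoun}). What the paper \emph{does} prove is the weaker dichotomy of Corollary~\ref{rational} --- for a K3 over a number field with potentially good reduction everywhere, either $X_{\overline{K}}$ has infinitely many rational curves or $X$ has infinitely many unirational specializations --- and you have essentially reconstructed the proof of that corollary, not of the conjecture.

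Your proposal is honest on exactly this point: you conclude by saying your argument ``yields the dichotomy announced in the abstract'' and flag the two real gaps --- the supersingular/unirational escape route, and the everywhere-good-reduction hypothesis inherited from Theorem~\ref{k3ar}. Those are precisely why the argument stops short of the conjecture. For the dichotomy itself, your route matches the paper's: Theorem~\ref{k3ar} produces infinitely many places $\fP$ with jumped Picard rank, and the resulting new line bundles on $\cX_{\overline{\fP}}$ are fed into the Li--Liedtke lifting/specialization machinery (the paper invokes \cite[Proposition 4.2, Theorem 4.3]{ll} essentially verbatim), with the unirational case peeled off when supersingular reductions intervene. So your write-up contains more background exposition (Bogomolov--Mumford, isotropic classes giving elliptic fibrations, Kontsevich spaces) but the skeleton is the same.

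If you intended this to be a proof of the \emph{conjecture} rather than of the corollary, the gap is genuine and two-fold. First, you cannot rule out that the unirational branch of the dichotomy actually occurs infinitely often; the paper's corollary simply states both possibilities without resolving which one holds, and your argument does the same. Second, the spreading-out step only gives access to K3s (over a number field, hence over $\overline{\Q}$) with potentially good reduction everywhere; extending Theorem~\ref{k3ar} --- or rather Theorem~\ref{main}, which requires an $\cO_K$-point of the integral model $\cM$ --- to primes of bad reduction is itself a nontrivial problem (one would need to control the intersection theory on a compactified integral model, as the paper alludes to in its discussion of the good-reduction hypothesis). Neither step is addressed in the proposal, nor in the paper, so as a proof of the conjecture the argument does not close.
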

The first result towards this conjecture is the one attributed to Bogomolov and Mumford and appearing in \cite[Appendix]{morimukai} which states that every K3 surface over $\C$ contains a rational curve. This conjecture has been settled in recent years in many cases thanks to the work of many people \cite{bht,ll,charles,bt1,tayouelliptic,chen,chenlewis}. In characteristic zero, this conjecture has been solved in full generality in \cite[Theorem A]{chengoun}. Our theorem imply the following alternative for K3 surfaces over number fields admitting everywhere potentially good reduction.
\begin{corollaire}\label{rational}
Let $X$ be a K3 surface over a number field $K$ and assume that $X$ has potentially good reduction everywhere. Then either:
\begin{enumerate}
    \item $X_{\overline{K}}$ contains infinitely many rational curves;
    \item $X$ has infinitely many unirational and hence supersingular specializations.
\end{enumerate}
\end{corollaire}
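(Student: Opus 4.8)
The plan is to deduce Corollary~\ref{rational} from Theorem~\ref{k3ar} together with the unirationality of supersingular K3 surfaces and the technology of \cite{bht,ll,charles} that converts jumps of the Picard rank into rational curves. First I would replace $K$ by a finite extension over which $X$ acquires a smooth projective model $\cX\to\mathrm{Spec}(\mathcal{O}_K)$; this is harmless, since $X_{\overline{K}}$ and hence its rational curves are unchanged, while each place of $K$ has only finitely many places of the extension above it and the geometric fibres of the two models agree, so infinitely many unirational specializations after the extension produce infinitely many for $X$. Fixing such a model $\cX\to\mathrm{Spec}(\mathcal{O}_K)$ and a polarization $h$ of $X$ spreading out to a relatively ample class, Theorem~\ref{k3ar} supplies an infinite set $\Sigma$ of finite places $\fP$ with $\rho(\cX_{\overline{\fP}})>\rho(X_{\overline{K}})$. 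After discarding the finitely many $\fP$ of residue characteristic $2$ or $3$, I would split $\Sigma$ according to whether or not the reduction $\cX_{\overline{\fP}}$ is supersingular.

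If infinitely many $\fP\in\Sigma$ have supersingular reduction, then each such $\cX_{\overline{\fP}}$, being a supersingular K3 surface over an algebraically closed field of characteristic at least $5$, is unirational by Liedtke's theorem; conversely a unirational K3 surface is supersingular (Shioda, Artin), which accounts for the wording of~(2). Hence $X$ has infinitely many unirational, and a fortiori supersingular, specializations, and alternative~(2) holds. If instead only finitely many $\fP\in\Sigma$ are supersingular, then, discarding these, $X_{\overline{K}}$ possesses at each of the infinitely many remaining places $\fP$ a place of good reduction at which $\cX_{\overline{\fP}}$ has finite height and the geometric Picard rank jumps. Here I would run the argument of Bogomolov--Hassett--Tschinkel and Li--Lietdke \cite{bht,ll} (see also \cite{charles}): one chooses a class $v_{\fP}\in\Pic(\cX_{\overline{\fP}})$ not in the image of $\mathrm{sp}_{\fP}$, and after adding a large multiple of $\mathrm{sp}_{\fP}(h)$ may assume $v_{\fP}$ ample; the positive-characteristic form of the Bogomolov--Mumford theorem then produces in the linear system of a suitable multiple of $v_{\fP}$ a connected union of rational curves, necessarily with a component whose class does not lie in $\mathrm{sp}_{\fP}(\Pic(X_{\overline{K}}))$; and, $\cX_{\overline{\fP}}$ being of finite height, such a rational curve can be lifted along an auxiliary deformation of $\cX$ so as to yield rational curves on $X_{\overline{K}}$, the finite-height hypothesis being precisely what keeps the relevant obstruction spaces under control. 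Carrying this out over the infinitely many places in this case, and using that a single bounded family of divisor classes on $X_{\overline{K}}$ cannot be responsible for jumps at infinitely many distinct places, one obtains infinitely many rational curves on $X_{\overline{K}}$, i.e. alternative~(1).

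The main obstacle is exactly this last step: extracting an honest rational curve on the characteristic-zero surface $X_{\overline{K}}$ from a jump of the Picard rank on a special fibre. Non-supersingularity of the reduction is indispensable here — it is what makes the lifting arguments of \cite{bht,ll} work — and it is the source of the dichotomy in the statement. By contrast, the production of the relevant places $\fP$, which outside the density-one range analysed in \cite{charles} was previously unknown, is supplied unconditionally by Theorem~\ref{k3ar}, so that Corollary~\ref{rational} is a formal consequence of Theorem~\ref{k3ar} together with this established technology.
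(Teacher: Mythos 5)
Your proof is correct and follows essentially the same route as the paper's, which likewise cites Theorem~\ref{k3ar} to produce infinitely many places of Picard-rank jump and then, assuming only finitely many unirational specializations, invokes the Li--Liedtke lifting strategy (\cite[Proposition~4.2, Theorem~4.3]{ll}) to extract infinitely many rational curves on $X_{\overline{K}}$. You spell out what the paper leaves implicit — the reduction to residue characteristic $\geq 5$, Liedtke's theorem turning supersingular specializations into unirational ones, and the role of the finite-height (non-supersingular) hypothesis in the lifting step — but there is no genuinely different idea; this is the same argument, written out in full.
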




\subsection{Exceptional splitting of abelian varieties}
Let $A$ denote a geometrically simple abelian variety over a number field $K$. Assuming the Mumford--Tate conjecture for $A$, Zwyina \cite[Corollary 1.3]{zywina} proved that the mod $\fP$ reduction $A_{\fP}$ is geometrically simple for a density one set of primes of $K$ (up to replacing $K$ by a finite extension) if and only if $\End(A_{\overline{K}})$ is commutative. As an application of the proof of Theorem \ref{k3ar} (more precisely, \Cref{main_sp_end}), we prove that the density zero set of primes with $A_{\fP}$ geometrically non simple is infinite for certain classes of abelian varieties $A$ which are closely related to Kuga--Satake abelian varieties. We note that the Mumford--Tate conjecture is known (by work of Tankeev \cite{tankeev,tankeev2} and Vasiu \cite{vasiu1}) for the classes of abelian varieties that we treat.  

As a first example, we observe that the moduli space of principally polarized abelian surfaces can be realized as a GSpin Shimura variety of dimension $3$ and in this case, the associated Kuga--Satake abelian varieties are isogenous to powers of abelian surfaces. We therefore obtain:

\begin{theorem}\label{thm_absurf}
Every $2$-dimensional abelian scheme over $\cO_K$ admits infinitely many places of geometrically non simple reduction.
\end{theorem}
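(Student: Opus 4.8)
The plan is to realize the moduli space of principally polarized abelian surfaces as a GSpin Shimura variety, attach to $A$ an integral point of it, and apply \Cref{main_sp_end} with a special divisor chosen inside the locus of non-simple abelian surfaces. After a finite extension of $K$ --- harmless, since geometric non-simplicity of the reductions is unaffected --- we may assume $A$ is principally polarized.

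Via the exceptional isomorphism relating $\GSp_{4}$ and $\GSpin$ in rank two, the moduli space $\mathcal{A}_{2}$ of principally polarized abelian surfaces is identified with a GSpin Shimura variety $M$ attached to a quadratic lattice $L$ of signature $(3,2)$. Since the standard $4$-dimensional representation of $\GSp_{4}$ is the spin representation, the Kuga--Satake abelian scheme $\cA^{\ks}\to M$ has the feature that $\cA^{\ks}_{[B]}$ is isogenous to a fixed power $B^{r}$ for every point $[B]\in M$; consequently a special endomorphism of $\cA^{\ks}_{[B]}$ is the same datum as an extra endomorphism of $B$ of a fixed type, and, via the cycle class map, as an extra class in $\Pic(B)$. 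Because $A$ is an abelian scheme over $\cO_{K}$, the point $s\in M(K)$ it defines extends to an integral point, by the theory of integral canonical models of GSpin Shimura varieties.

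Next, fix a special divisor $\cZ_{0}\subset M$ whose image in $\mathcal{A}_{2}$ lies in the locus of abelian surfaces isogenous to a product of two elliptic curves; for concreteness one may take (a component of) the Humbert surface $\mathcal{H}_{1}$, i.e.\ the image of the finite morphism $\mathcal{A}_{1}\times\mathcal{A}_{1}\to\mathcal{A}_{2}$ sending $(E_{1},E_{2})$ to $E_{1}\times E_{2}$ with the product polarization. Every geometric point of $\cZ_{0}$, in any characteristic, is then an abelian surface isogenous to a product of elliptic curves, hence geometrically non-simple. If $s$ already lies on $\cZ_{0}$, then $A_{\overline{K}}$ is isogenous to a product of elliptic curves, so every finite place of $K$ is a place of geometrically non-simple reduction and we are done. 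Otherwise, apply \Cref{main_sp_end} to $s$: it produces infinitely many finite places $\fP$ of $K$ at which the reduction $s_{\fP}$ lies on the prescribed special divisor $\cZ_{0}$. For each such $\fP$ the reduction $A_{\overline{\fP}}$ is a point of $\cZ_{0}$ over $\overline{\bF}_{\fP}$, hence is isogenous to a product of two elliptic curves and therefore geometrically non-simple. This yields infinitely many places of non-simple reduction, proving \Cref{thm_absurf}.

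The main obstacle is the passage from the abstract output of \Cref{main_sp_end} --- a special endomorphism of the Kuga--Satake abelian variety appearing only in characteristic $\fP$ --- to the geometric statement: one has to track the identification $\cA^{\ks}_{[B]}\sim B^{r}$ precisely enough to see that such a special endomorphism forces $A_{\overline{\fP}}$ to acquire exactly the N\'eron--Severi class that cuts out $\cZ_{0}$, and one must ensure that $\cZ_{0}$ lies in the non-simple locus rather than a Humbert surface of real-multiplication type (on which abelian surfaces may stay simple) --- which is why $\cZ_{0}$ is taken to be a ``product'' Humbert surface. As a reassurance, over $\overline{\bF}_{\fP}$ a simple abelian surface has endomorphism algebra a quartic CM field and Picard number $2$, while a supersingular reduction is automatically isogenous to the square of a supersingular elliptic curve; so non-simplicity at the exceptional places is robust against the imprecisions of this dictionary.
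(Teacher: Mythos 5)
Your overall strategy---realize $\mathcal{A}_2$ as a GSpin Shimura variety, transfer $\Cref{main_sp_end}$ to a statement about reductions of $A$ landing in the non-simple locus---is the paper's strategy, but there is a genuine gap in the way you invoke $\Cref{main_sp_end}$.

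The theorem does \emph{not} produce infinitely many places $\fP$ where $s_{\fP}$ lies on a prescribed, fixed special divisor $\cZ_0$. What it produces is, for a fixed $D$ represented by $L$, infinitely many places $\fP$ such that $\cY_{\overline{\fP}}$ lies on $\cZ(Dm^2)$ for \emph{some} $m$ depending on $\fP$. You cannot choose $\cZ_0$ in advance; the divisor varies with the place. So the sentence ``apply \Cref{main_sp_end} to $s$: it produces infinitely many finite places $\fP$ of $K$ at which the reduction $s_{\fP}$ lies on the prescribed special divisor $\cZ_{0}$'' is not something the theorem gives you, and fixing $\cZ_0$ to be a product Humbert surface does not repair this. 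Your own closing paragraph correctly identifies the danger---the special divisors of $\mathcal{A}_2$ are Humbert surfaces, some of real-multiplication type whose generic point is geometrically simple---but the remedy you propose (choose $\cZ_0$ carefully) is exactly the thing the theorem does not permit.

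What actually closes this gap, and what the paper does, is to exploit the restricted shape $Q(v)=Dm^2$ of the special endomorphism together with an explicit transfer: one picks $\delta_0\in Z(C(L))\cap C(L)^-$ with $\delta_0^2 = \prod d_i =: D$ (a product of the diagonal entries of a diagonalization of $Q$), and observes that a special endomorphism $v:A^+\to A^-$ with $v\circ v=[Dm^2]$ yields $s_B:=\delta_0\circ v\in \End_{C^+(L)}(A^+)$, hence a quasi-endomorphism of $B$, with $s_B\circ s_B=[\delta_0^2\cdot Q(v)]=[D^2m^2]=[(Dm)^2]$. Since $s_B$ is traceless and is not a scalar, $\ker(s_B-[Dm])$ is a non-trivial proper isogeny factor, so $B_{\overline{\fP}}$ is geometrically non-simple. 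In other words, the class of divisors $\{\cZ(Dm^2)\}_{m\geq 1}$ after the $\delta_0$-twist \emph{all} land in the square-discriminant (product) Humbert surfaces; this is why $\Cref{main_sp_end}$ is stated for $\cZ(Dm^2)$ rather than $\cZ(m)$, and it is this arithmetic of the Clifford algebra---not a geometric choice of one divisor---that keeps you out of the real-multiplication Humbert surfaces. Your proof is missing both the $\delta_0$-transfer and the role played by the perfect square $D^2m^2$.
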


More generally, consider the setting of $(V,Q)$, a $(b+2)$-dimensional quadratic space over $\Q$ with signature $(b,2)$. 

\begin{assumption}\label{ass_KSsplit}

\end{assumption}
Such a quadratic space (and its Clifford algebra) corresponds to a family of abelian varieties, called Kuga--Satake abelian varieties (see \S\ref{sec_KS}). Every such abelian variety $A$ has a splitting of the form $A = A^+\times A^-$, induced by the grading of the Clifford algebra. By the Kuga--Satake construction, it follows that $A^+$ is isogenous to $B^{2^{n}}$ for some lower-dimensional abelian variety $B$. Our next result concerns places of split reduction of $B$ when $A$ is defined over some number field $K$. Generically, $\End(B_{\bar{K}})=\Z$ (see \S\ref{sec_KSsplit}) and hence the set of places of geometrically split reductions has density zero by \cite{zywina}, as the Mumford--Tate conjecture is known for $A$, and therefore for $B$. We prove then the following result:
\begin{theorem}\label{cor_gspin}
Consider the above setting, with the assumption that $B$ extends to an abelian scheme $\mathcal{B}\rightarrow\mathrm{Spec}(\cO_K)$ (and therefore, $A$ also extends to an abelian scheme $\mathcal{A} \rightarrow\mathrm{Spec}(\cO_K)$). Then there are infinitely many finite places $\fP$ of $K$ such that $\mathcal{B}_{\fP}$ is geometrically non simple.
\end{theorem}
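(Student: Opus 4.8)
The plan is to realize $A$, and hence $B$, via the Kuga--Satake construction as coming from an integral point of the GSpin Shimura variety attached to $(V,Q)$, apply the main theorem on exceptional special endomorphisms (\Cref{main_sp_end}) to that point, and convert the exceptional classes that appear in characteristic $p$ into a geometric splitting of $\mathcal{B}_{\fP}$. Concretely, since $\mathcal{B}\to\spec(\cO_K)$, equivalently $\mathcal{A}\to\spec(\cO_K)$, is an abelian scheme, its generic fibre gives a point of the GSpin Shimura variety $\Sh_{\mathbf{G}}(K)$ with $\mathbf{G}=\GSpin(V)$; using the integral canonical model $\mathscr{S}$ of $\Sh_{\mathbf{G}}$ and the integral Kuga--Satake theory (see \S\ref{sec_KS}), this extends to an $\cO_K$-point $s$ of $\mathscr{S}$, after at worst a finite extension of $K$ to rigidify level structure (harmless, since finite places restrict to finite places). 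The sheaf $L$ of special endomorphisms spreads out over $\mathscr{S}$, and by \S\ref{sec_KSsplit} the hypothesis $\End(B_{\bar K})=\Z$ forces $L(A_{\bar K})$ to have its minimal rank $\rho_0$ (with $\rho_0=0$ in the generic case).

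Then I would invoke \Cref{main_sp_end} for $s$: there are infinitely many finite places $\fP$ at which $s_{\fP}$ acquires an exceptional special endomorphism, that is $\rank L(\mathcal{A}_{\fP})>\rho_0$, so that there is $v\in L(\mathcal{A}_{\fP})$ with $v^2=Q(v)\cdot\mathrm{id}$ and $Q(v)\neq 0$ whose class does not lift to characteristic zero. Because $v$ is odd for the $\Z/2$-grading coming from the Clifford algebra, it interchanges $\mathcal{A}^+_{\fP}$ and $\mathcal{A}^-_{\fP}$, and since $Q(v)\neq 0$ it is an isogeny.

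The next step is to convert this into a splitting of $\mathcal{B}_{\fP}$. Here I would use the $C^+(V)=M_{2^{n}}(\Q)$-action on $\mathcal{A}_{\fP}$ — which realizes $\mathcal{A}^+_{\fP}$ as isogenous to $\mathcal{B}_{\fP}^{2^{n}}$ and identifies the commutant $\End^0_{C^+(V)}(\mathcal{A}^+_{\fP})$ with $\End^0(\mathcal{B}_{\fP})$ — together with the polarization, to transport $v$ to extra structure on $\mathcal{B}_{\fP}$. Combined with the endomorphisms automatically present over $\bar{\mathbb{F}}_{\fP}$ (every abelian variety over $\bar{\mathbb{F}}_p$ has sufficiently many complex multiplications), this should force $\End^0(\mathcal{B}_{\fP})$ to contain either a nontrivial idempotent or a subalgebra incompatible with geometric simplicity of $\mathcal{B}_{\fP}$; in the basic case $b=3$, $\dim B=2$ one can use that a geometrically simple abelian surface over $\bar{\mathbb{F}}_p$ has endomorphism algebra a quartic CM field, hence admits neither real nor quaternionic multiplication. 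This would give the geometric non-simplicity of $\mathcal{B}_{\fP}$ at those infinitely many $\fP$.

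The main obstacle is this last translation. The exceptional class $v$ a priori only enlarges $\End^0(\mathcal{A}_{\fP})$, and being odd it produces no obvious new endomorphism of $\mathcal{A}^+_{\fP}$, hence none of $\mathcal{B}_{\fP}$; extracting a genuine splitting requires combining $v$ with the Clifford grading, the polarization form, and the rigidity of endomorphism algebras of abelian varieties over finite fields, and it may require the sharper form of \Cref{main_sp_end} producing special endomorphisms of large (or otherwise constrained) norm, so that the resulting structure is genuinely independent of the complex multiplications already forced in characteristic $p$. By contrast, the reduction to an integral Shimura variety point and the identification of the minimal-rank special endomorphism lattice with $\End(B_{\bar K})=\Z$ should be routine given the cited results.
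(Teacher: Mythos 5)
Your reduction to an integral point of the GSpin Shimura variety and the appeal to \Cref{main_sp_end} are the right first steps and match the paper. However, you have correctly diagnosed, but not closed, the essential gap: a special endomorphism $v\in V(\cA_{\overline{\fP}})$ is odd for the Clifford grading, so it is a map $\cA^+_{\overline{\fP}}\rightarrow\cA^-_{\overline{\fP}}$ and does \emph{not} by itself give an endomorphism of $\cA^+_{\overline{\fP}}$, hence none of $\cB_{\overline{\fP}}$. Your proposed detour through the CM structure of abelian varieties over $\overline{\bF}_p$ is both unnecessary and unlikely to close the gap on its own: a simple abelian variety over $\overline{\bF}_p$ already has a large CM endomorphism algebra, so the mere presence of extra endomorphisms of $\cA^+_{\overline{\fP}}$ does not force non-simplicity of an isogeny factor; one needs a concrete non-scalar endomorphism of $\cB_{\overline{\fP}}$ with a factorizable minimal polynomial.

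The idea you are missing is the element $\delta_0:=e_1\cdots e_{b+2}\in C(L)^-\cap Z(C(L))$, where $e_1,\ldots,e_{b+2}$ diagonalizes $Q$ with $Q=\sum d_ix_i^2$. This is precisely where the hypothesis $b\equiv 3\bmod 4$ in \Cref{ass_KSsplit} is used: it makes $b+2$ odd, so $\delta_0$ is odd-degree and central, and it makes $(b+2)(b+1)/2$ even, so $\delta_0^*=\delta_0$; moreover $\delta_0^2=\prod_i d_i=:D$. Acting on $\cA$ via right $C(L)$-multiplication, $\delta_0$ gives an isogeny $\cA^-\to\cA^+$, and because it is central the composite $s:=\delta_0\circ v$ lands in $\End_{C^+(L)}(\cA^+_{\overline{\fP}})\otimes\Q\cong\End^0(\cB_{\overline{\fP}})$ via the $C^+(V)\cong M_{2^n}(\Q)$ Morita equivalence, producing a non-scalar $s_B\in\End^0(\cB_{\overline{\fP}})$. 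The normalization $Q(v)=Dm^2$ built into \Cref{main_sp_end} is chosen exactly so that $s_B\circ s_B=[Q(\delta_0)Q(v)]=[D^2m^2]$; then $(s_B-[Dm])(s_B+[Dm])=0$ with $s_B\ne\pm[Dm]$, and $\ker(s_B-[Dm])^\circ$ is a nontrivial proper abelian subvariety of $\cB_{\overline{\fP}}$. This gives the geometric non-simplicity directly, with no appeal to the arithmetic of $\End^0$ over finite fields.
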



We also have similar results for abelian varieties parameterized by Shimura varieties associated to the unitary similitude group $\mathrm{GU}(r,1)$, $r\geq1$, see \S\ref{sec_unitary} for the precise definitions. 
\begin{corollaire}\label{cor_unitary}
Let $E$ be an imaginary quadratic field and let $\cA$ be a principally polarized abelian scheme over $\cO_K$. Suppose that there is an embedding $\cO_E\subset \End(\cA)$ which is compatible with the polarization on $\cA$, and that the action of $\cO_E$ on $\Lie \cA_K$ has signature $(r,1)$. 
Then there are infinitely many finite places $\fP$ of $K$ such that $\cA_{\fP}$ admits a geometric isogeny factor which is an elliptic curve CM by $E$.
\end{corollaire}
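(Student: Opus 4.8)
The plan is to deduce \Cref{cor_unitary} from the general result on exceptional special endomorphisms, \Cref{main_sp_end}, by realizing the relevant unitary Shimura variety inside a $\GSpin$ Shimura variety. The datum of $\cA$ together with the $\cO_E$-action of signature $(r,1)$ corresponds to a Hermitian $\cO_E$-lattice $\Lambda$ of rank $r+1$ and signature $(r,1)$. Restriction of scalars together with the trace form $Q=\tr_{E/\Q}\langle\cdot,\cdot\rangle$ produces a quadratic $\Z$-lattice $(V,Q)$ of rank $2r+2$ and signature $(2r,2)$, and the inclusion $\GU(\Lambda)\hookrightarrow\GSpin(V,Q)$ induces a morphism of Shimura varieties $\Sh_{\GU(r,1)}\to\Sh_{\GSpin(2r,2)}$, finite onto its image, compatible with the smooth integral models and with the Kuga--Satake construction; this is the framework used by Kudla--Rapoport and by Howard to study special cycles on unitary Shimura varieties. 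The abelian scheme $\cA\to\spec\cO_K$ with its $\cO_E$-action then defines an $\cO_K$-point $s$ of $\Sh_{\GU(r,1)}$, hence an $\cO_K$-point of $\Sh_{\GSpin(2r,2)}$ with everywhere good reduction, and since the Kuga--Satake abelian variety attached to $s$ is isogenous to a power of $\cA$, it too extends to an abelian scheme over $\cO_K$.

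Next I would set up the dictionary between special divisors and CM elliptic curve factors. On $\Sh_{\GU(r,1)}$ the Kudla--Rapoport special divisors $\cZ(m)$ --- equivalently, the pullbacks of the $\GSpin$ special divisors --- parametrize the locus where some vector $v\in\Lambda$ with $\langle v,v\rangle=m>0$ becomes a special endomorphism; since such a $v$ spans a positive-definite rank-one Hermitian $E$-subspace, at such a point $\cA$ acquires, up to isogeny, a factor which is an elliptic curve with CM by $E$, the orthogonal complement of $v$ contributing a factor of unitary type $(r-1,1)$. Consequently, for a finite place $\fP$ of good reduction, $\cA_\fP$ admits a geometric isogeny factor that is an elliptic curve with CM by $E$ if and only if the reduction $s(\fP)$ lies on some $\cZ(m)$, equivalently if and only if the Kuga--Satake abelian variety at $s(\fP)$ acquires an exceptional special endomorphism; moreover this endomorphism is automatically $\cO_E$-linear, because $s$ factors through $\Sh_{\GU(r,1)}$ and the jump of the special endomorphism lattice is therefore a jump of Hermitian $\cO_E$-lattices.

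With this in place, \Cref{cor_unitary} follows by applying \Cref{main_sp_end} to the section $s$ viewed inside $\Sh_{\GSpin(2r,2)}$: it produces infinitely many finite places $\fP$ at which the Kuga--Satake abelian variety of $\cA_\fP$ has an exceptional special endomorphism, hence at which $s(\fP)$ meets a special divisor, and the preceding paragraph then exhibits the desired CM elliptic curve factor of $\cA_\fP$. Small $r$ requires a separate and easier treatment: for $r=1$ the ambient quadratic space has signature $(2,2)$ and $\Sh_{\GSpin(2,2)}$ is, up to isogeny, a product of modular curves, so the statement reduces to the analogous assertion for products of elliptic curves already accessible by the methods behind \Cref{thm_absurf}.

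I expect the main obstacle to be the bookkeeping at the interface of the two Shimura data: verifying that $\Sh_{\GU(r,1)}\hookrightarrow\Sh_{\GSpin(2r,2)}$ and the identification of special divisors persist over the smooth integral models, so that \Cref{main_sp_end} applies verbatim, and --- most delicately --- ensuring that the exceptional special endomorphism furnished by \Cref{main_sp_end} genuinely descends to a positive-definite rank-one Hermitian sublattice, and hence to an elliptic curve with CM by the full field $E$, rather than merely yielding a non-simple reduction or an elliptic curve with CM by a proper order of $E$. This last point relies essentially on the jump taking place inside the unitary locus, so that the new special endomorphism commutes with the $\cO_E$-action and $\cA_\fP$ splits $\cO_E$-equivariantly.
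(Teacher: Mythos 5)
The overall blueprint is the same as the paper's — embed the point into a GSpin Shimura variety of signature $(2r,2)$, apply \Cref{main_sp_end}, and translate a special endomorphism into a CM elliptic-curve isogeny factor — but there is a genuine gap in how you set up the quadratic space $V$. You take $\Lambda = H_1(\cA)$ itself as the Hermitian $\cO_E$-lattice and declare the restriction of scalars to be the quadratic lattice for the GSpin variety. This cannot work directly: $H_1(\cA)$ carries a weight $-1$ Hodge structure (and a Galois representation of weight $-1$), whereas the local system $\bfV$ attached to a GSpin Shimura datum is of weight $0$ (K3 type). Consequently, a Tate/Hodge class in $\bfV_{\ell,\textrm{\'et}}$ at a reduction cannot be identified with a vector in $H_1(\cA_{\overline{\fP}})$, and there is no reason for your proposed orthogonal decomposition $\Lambda = \langle v\rangle \oplus v^\perp$ to be a Galois-stable decomposition of $H_1(\cA_{\overline{\fP}})$ — so it does not yield an isogeny decomposition of $\cA_{\overline{\fP}}$. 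The paper circumvents this precisely by introducing an \emph{auxiliary} CM elliptic curve $A_0$ with $\cO_E$-action of signature $(1,0)$ and taking $V = \Hom_{\cO_E}\bigl(H_1(A_0),\,H_1(A)\bigr)$ with the Hermitian form built from the polarizations of both $A_0$ and $A$. This $\Hom$-space naturally has weight $0$, matches the GSpin local system via eq.~\eqref{isom_Gal}, and --- crucially --- a special endomorphism at $\cY_\fP$ then becomes a $\Frob_\fP$-invariant element of $\Hom_{\cO_E}(H_{1,\textrm{\'et}}(A_{0}),H_{1,\textrm{\'et}}(A))$, hence an actual $\cO_E$-linear isogeny $A_{0,\overline\fP}\to \cA_{\overline\fP}$ by Tate's theorem. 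Since $A_0$ was fixed in advance with CM by the full ring $\cO_E$, the concern you raise at the end (CM by $E$ versus a proper order) never arises. This is exactly what the Kudla--Rapoport/BHKRY framework does; it is not the $\GU(\Lambda)\hookrightarrow\GSpin(\mathrm{Res}_{E/\Q}\Lambda)$ embedding you describe.

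Two secondary inaccuracies in the proposal: the Kuga--Satake abelian variety attached to $Y$ is \emph{not} isogenous to a power of $\cA$ (it has dimension $2^{2r+1}$ and is built from the Clifford algebra of $V$, not from $\cA$), so your argument that $A^{\ks}$ inherits good reduction from $\cA$ is unjustified; the paper instead deduces potentially good reduction of $A^{\ks}$ from good reduction of both $\cA$ and $A_0$ via the isomorphism \eqref{isom_Gal} of unramified Galois modules together with Andr\'e's \cite[Lemma~9.3.1]{andretate}. Also, there is no direct inclusion $\GU\hookrightarrow\GSpin$; the paper works with the subgroup $G'\subset \GU(W_0,\psi_0)\times\GU(W,\psi)$ of pairs with equal similitude factors, maps to $\SO(V)$, and then adjusts by a Hecke translate to land in the image of $\GSpin(V)\to\SO(V)$. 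Your sketch glosses over this, and the group-theoretic bookkeeping does matter for identifying which connected component of $\Sh(\SO(V))$ the point lives on.
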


We should mention that the assumption of potentially good reduction in the above statements has been removed recently by one of the authors in \cite{tayou-bad-reduction}, so that all the above results hold unconditionally.

\subsection{GSpin Shimura varieties.}

The above theorems can be reformulated within  the more general framework of intersections of a (non-special) arithmetic $1$-cycle and special divisors in GSpin Shimura varieties as follows. Let $(L,Q)$ be an integral quadratic even lattice of signature $(b,2)$ with $b\geq 3$.\footnote{In this paper, we focus on $b\geq 3$ case since the essential cases when $b=1,2$ have been treated in \cite{charles1,st}.}
Assume that $L$ is a maximal lattice in $V:=L\otimes_{\Z}\Q$ over which $Q$ is $\Z$-valued. Associated to this data is a {\it GSpin Shimura variety} $M$, which is a Deligne--Mumford stack over $\Q$, see \S\ref{gspin_Q}. It is a Shimura variety of Hodge type which (by the work of Andreatta--Goren--Howard--Madapusi-Pera \cite{agmp2}) admits a normal flat integral model $\mathcal{M}$ over $\Z$. This model is smooth at primes $p$ that do not divide $\Disc(Q)$. Moreover, there is a family of the so called \emph{Kuga--Satake abelian scheme} $\mathcal{A}^\univ\rightarrow \mathcal{M}$, see \S\S\ref{sec_KS},\ref{integral}. For every $m\in \Z_{>0}$, a {\it special divisor} $\mathcal{Z}(m)\rightarrow \mathcal{M}$ is constructed in \cite{agmp2}\footnote{also called Heegner divisor in the literature.}  parameterizing Kuga-Satake abelian varieties which admit \emph{special} endomorphisms $s$ such that $s\circ s = [m]$ (see \S\ref{special}). In particular, the moduli space of polarized K3 surfaces can be embedded in a GSpin Shimura variety (see \S\ref{sec_K3}), and special divisors parameterize K3 surfaces with Picard rank greater than that of the generic K3 surface. Our main theorem is the following.
\begin{theorem}\label{main}
Let $K$ be a number field and let $\cY\in \mathcal{M}(\mathcal{O}_K)$. Assume that $\cY_K\in M(K)$ is Hodge-generic. Then there exist infinitely many finite places $\fP$ of $K$ modulo which $\cY$ lies in the image of $\mathcal{Z}(m)\rightarrow \mathcal{M}$ for some $m\in \Z_{>0}$ (here $m$ depends on $\fP$).


\end{theorem}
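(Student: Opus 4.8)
The plan is to reduce Theorem~\ref{main} to an equidistribution-plus-height argument on the arithmetic $1$-cycle $\cY \subset \cM$, in the spirit of Charles's work on the Noether--Lefschetz locus and the Shankar--Tang paper on ordinary reductions. Concretely, for each prime $\fP$ of good reduction, consider the reduction $\cY_{\fP} \in \cM(\bF_{\fP})$; the Kuga--Satake abelian variety $\cA^{\univ}_{\cY_{\fP}}$ over $\overline{\bF_{\fP}}$ has a large space of special endomorphisms (its special endomorphism lattice $L(\cY_{\fP})$ has rank $\geq 1$ as soon as $\cY_{\fP}$ is non-ordinary, and is positive-definite), and one wants to show that for infinitely many $\fP$ this lattice is nonzero — equivalently, that $\cY \bmod \fP$ lands on some $\cZ(m)$. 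The key idea is \emph{not} to control a single $\fP$ but to sum an arithmetic intersection number $\cY \cdot \cZ(m)$ over a well-chosen range of $m$ and compare two asymptotics: an \emph{archimedean/global} lower bound coming from the growth of the Faltings height of $\cA^{\univ}_{\cY}$ together with the Borcherds-type Green function estimates for the special divisors $\cZ(m)$, against an \emph{upper bound} for the purely archimedean contribution. If the total intersection number grows faster than the archimedean part can account for, the finite-place contributions must be nonzero infinitely often.

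The first step is to make the global intersection number precise: using the Kuga--Satake abelian scheme $\cA^{\univ} \to \cM$ and the Hodge bundle $\omega$ on $\cM$, each $\cZ(m)$ carries a natural metrized line bundle (a Borcherds product, or more robustly the automorphic Green function of Bruinier), so that $\widehat{\cZ(m)} = (\cZ(m), \mathcal{G}_m)$ is an arithmetic divisor whose class is, up to the contribution of $\widehat{\omega}$, encoded by the $m$-th Fourier coefficient of a weight $1+b/2$ Eisenstein series (Borcherds, Kudla, Bruinier--Kühn). Then $\widehat{\cZ(m)} \cdot \cY$ decomposes as a sum of finite local terms (each $\geq 0$, and nonzero exactly when $\cY_{\fP}$ hits $\cZ(m)$ with some multiplicity) plus an archimedean term $\sum_{\sigma} \mathcal{G}_m(\cY_\sigma)$. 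The second step is the archimedean estimate: one shows $\sum_{m \leq T} a_m \, \mathcal{G}_m(\cY_\sigma) = o\big(\sum_{m\leq T} a_m \cdot (\text{height normalization})\big)$, using that $\cY_\sigma$ is a fixed point (Hodge-generic, so not on any $\cZ(m)$, hence $\mathcal{G}_m$ is finite there) and controlling the growth of $\mathcal{G}_m(\cY_\sigma)$ in $m$ via the Fourier expansion of the Green function — this is where the hypothesis $b \geq 3$ and the structure of the Eisenstein series matter. The third step pins down the main term: $\deg\big(\widehat{\cZ(m)}|_{\cY}\big)$ is, to leading order, $a_m$ times $\widehat{\deg}(\widehat{\omega}|_{\cY})$ (the arithmetic degree of the pullback of the Hodge bundle, i.e. essentially the Faltings height of $\cA^{\univ}_{\cY}$), plus lower-order terms; since $\sum_{m\leq T} a_m$ grows like a positive power of $T$ (the Eisenstein coefficients are, on average, of size $m^{b/2}$ up to bounded factors), the global sum $\to +\infty$.

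Putting these together: for $T$ large the sum $\sum_{m \leq T} \widehat{\cZ(m)}\cdot \cY$ over finite places alone is positive, hence some finite place $\fP$ contributes, i.e.\ $\cY_{\fP}$ lies on some $\cZ(m)$; and to get \emph{infinitely many} such $\fP$ one runs the argument with the finitely many ``already found'' primes removed from $S$ (their contribution to the intersection number is bounded — a fixed finite sum of local terms, each controlled by a $\fP$-adic intersection multiplicity that grows only polynomially in $\log m$), so the divergent main term forces new primes to appear. The main obstacle, and the technical heart of the argument, is the archimedean estimate in Step~2: one needs a sufficiently strong upper bound on the Green function values $\mathcal{G}_m(\cY_\sigma)$ that beats the average size of the Eisenstein coefficients $a_m$, uniformly as $m \to \infty$; this requires care with the normalization of the Green functions (the regularized theta lift), with the precise growth of Fourier coefficients of the relevant Eisenstein/Poincaré series, and with the fact that $\cY_\sigma$ may lie close to (though not on) the special divisors — a quantitative Hodge-generic hypothesis, handled via a lower bound on the distance from $\cY_\sigma$ to $\cZ(m)$ in terms of $m$. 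A secondary subtlety is ensuring the integral model $\cM$ and the extension of $\cZ(m)$ behave well at the bad primes dividing $\Disc(Q)$, which one circumvents by base-changing $K$ and discarding a fixed finite set of places, as permitted by the statement.
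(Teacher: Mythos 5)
Your overall framework is the right one—Arakelov intersection theory on $\cM$, the height formula $h_{\widehat{\cZ}(m)}(\cY)=\sum_\sigma\Phi_m(\cY^\sigma)/|\Aut(\cY^\sigma)|+\sum_\fP(\cY.\cZ(m))_\fP\log|\cO_K/\fP|$, the Howard--Madapusi-Pera modularity to control the left side, and separate estimation of the archimedean and finite terms. But the archimedean estimate in your Step 2 is the opposite of what is true, and the error propagates to the conclusion.

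You claim that the archimedean term is $o$ of the global height, i.e.\ that the Green function $\Phi_m(\cY^\sigma)$ is small compared to the Eisenstein growth $\asymp m^{b/2}$ of $h_{\widehat{\cZ}(m)}(\cY)$. In fact Bruinier's explicit formula gives $\Phi_m(\cY^\sigma)=\phi_m(\cY^\sigma)-b'_m(k/2)$ with $b'_m(k/2)\asymp m^{b/2}\log m$ and $\phi_m(\cY^\sigma)=A(m,\cY^\sigma)+O(m^{b/2})$ with $A(m,\cY^\sigma)\geq 0$. After showing $A(m,\cY^\sigma)=o(m^{b/2}\log m)$ for most $m$ (this is where the circle method and the ``diophantine'' distance-to-$\cZ(m)$ bound you correctly anticipate enter), one finds $\Phi_m(\cY^\sigma)\asymp -m^{b/2}\log m$: the archimedean contribution is \emph{negative} and in absolute value \emph{dominates} the global height by a factor of $\log m$. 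It is precisely this sign and this extra $\log m$ that force $\sum_\fP(\cY.\cZ(m))_\fP\log|\cO_K/\fP|\asymp +m^{b/2}\log m$. If, as you propose, one only had ``archimedean $=o($global$)$,'' the finite contribution would merely be $\asymp m^{b/2}$, and the per-place bound $\sum_{m\in S_{D,X}}(\cY.\cZ(m))_\fP=o(X^{(b+1)/2}\log X)$ (which is all one can prove at supersingular primes) would not yield a contradiction after removing a fixed finite set of places: $o(X^{(b+1)/2}\log X)$ is not $o(X^{(b+1)/2})$.

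Two further inaccuracies would also need fixing. First, your aside that a fixed place contributes ``a $\fP$-adic intersection multiplicity that grows only polynomially in $\log m$'' is false: $(\cY.\cZ(m))_\fP$ can be as large as $m^{b/2}\log m$, and controlling its average over $m$ is the content of the entire finite-place section of the paper (Grothendieck--Messing, Serre--Tate, and geometry-of-numbers on the shrinking lattices $L_n$). Second, the ``well-chosen range of $m$'' needs to be the sparse set $\{Dk^2:k\in\Z_{>0}\}$ for a fixed $D$ represented by $(L,Q)$, not all $m\leq T$; the restriction to square classes is what makes the local density $\mu_p(Q_T,m)$ decay with $T$ at places of supersingular reduction, and without it the finite-place bound fails.
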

Here we say that $x\in M(K)$ is \emph{Hodge-generic} if for one embedding (equivalently any) $\sigma:K\hookrightarrow \C$, the point $x^\sigma\in M(\C)$ does not lie on any divisor $\mathcal{Z}(m)(\C)$. This is a harmless assumption since all $\cZ(m)_K$ are union of GSpin Shimura varieties associated to rational quadratic spaces having signature $(b-1,2)$. Hence, we may and will always work with the smallest GSpin sub Shimura variety of $M$ containing $\cY_K$.\footnote{Note that this use of the term Hodge-generic is \emph{not standard} -- we do not assume that the Mumford--Tate group associated to $\cY_K$ is equal to the group of Spinor similitudes associated to $(V,Q)$.}\\


Theorem \ref{main} is a generalization of the main results of \cite{charles1,st}. 
In the complex setting, the analogous results are well-understood. More precisely, the Noether-Lefschetz locus of a non-trivial variation of Hodge structures of weight $2$ with $h^{2,0}=1$ over a complex quasi-projective curve is dense for the analytic topology by a well-known result of Green \cite[Prop.~17.20]{voisin}. When this variation is of K3 type, the main result of \cite{tayouequi} shows in fact that this locus is equidistributed with respect to a natural measure. In the global function field setting, the main result of \cite{mst} shows that given a non-isotrivial ordinary  abelian surface over a projective curve $C$ over $\overline{\mathbb{F}}_p$, there are infinitely many $\overline{\mathbb{F}}_p$-points in $C$ such that the corresponding abelian surface is not simple. This result is analogous to \Cref{main} in the function field setting when $b=2,3$.

We now say a word about the potentially good reduction hypothesis (i.e., the fact that we require $\cY$ to be an $\cO_K$-point of $\mathcal{M}$, up to a finite extension of the base field, as opposed to a $\cO_K[1/N]$-point). The boundary components in the Satake compactification\footnote{Which is a projective variety.} of $M$ have dimension either $0$ or $1$, whereas the Shimura variety itself is $b$-dimensional. As the boundary has large codimension in the ambient Shimura variety, it follows that ``most'' points have potentially good reduction, and so our good reduction hypothesis is not an especially stringent condition. A large family of points with potentially good reduction everywhere  (in the case $b = 2c$) can be obtained as follows: consider a real quadratic field $F/\Q$, and a $(c+1)$-dimensional orthogonal space $(V',Q')$ over $F$ with real signatures $(c+1,0)$ at one archimedean place and $(c-1,2)$ at the other. Then, the associated $(c-1)$-dimensional Shimura variety of Hodge type is compact, and embeds inside the $b$-dimensional Shimura variety associated to the $\Q$-rational $(b+2)$-dimensional quadratic space obtained by treating $V'$ as a $\Q$-vector space, equipped with the quadratic form $\tr_{F/\Q}(Q')$.

\subsection{Strategy of the proof}

The proof of Theorem \ref{main} follows the lines of \cite{charles1} and relies on Arakelov intersection theory on the integral model $\mathcal{M}$ of the GSpin Shimura variety $M$. For every positive integer $m$, the special divisor $\mathcal{Z}(m)$ parameterizes points of $\mathcal{M}$ for which the associated Kuga--Satake abelian variety admits an extra special endomorphism $s$ that satisfies $s\circ s=[m]$, see \S\ref{special divisors}. By the work of Bruinier \cite{bruinier}, this divisor can be endowed with a Green function $\Phi_{m}$ which is constructed using theta lift of non-holomorphic Eisenstein series of negative weight and thus yields an arithmetic divisor $\widehat{\mathcal{Z}}(m)=(\mathcal{Z}(m),\Phi_{m})$ in the first arithmetic Chow group $\widehat{\CH^1}(\mathcal{M})$ of $\mathcal{M}$. By assumption, we have an abelian scheme $\mathcal{A}_\mathcal{Y}\rightarrow \mathcal{Y}=\mathrm{Spec}(\mathcal{O}_K)$ and a map $\iota:\mathcal{Y}\rightarrow \mathcal{M}$. We can express the height $h_{\widehat{\mathcal{Z}}(m)}(\mathcal{Y})$ of $\mathcal{Y}$ with respect to the arithmetic divisor $\widehat{\mathcal{Z}}(m)$ as follows (see \S \ref{arithmetic}): 

\begin{align}\label{mainformula}
h_{\widehat{\cZ}(m)}(\cY)=\sum_{\sigma:K\hookrightarrow \C}\Phi_m(\cY^\sigma)+\sum_{\fP \text{ finite place}} (\cY. \cZ(m))_\fP \log |\cO_K/\fP|.
\end{align}

By definition, $(\cY,\cZ(m))_\fP\neq 0$ if and only if the Kuga--Satake abelian variety at $\cY_{\overline{\fP}}$ admits a special endomorphism $s$ with $s\circ s=[m]$. Therefore, to prove \Cref{main}, it suffices to show that for a fixed finite place $\fP$, for most positive integers $m$, we have  
\begin{align}\label{introeq}
   (\cY. \cZ(m))_\fP=o\left(h_{\widehat{\cZ}(m)}(\cY)-\sum_{\sigma:K\rightarrow \C}\Phi_m(\cY^\sigma)\right).  
\end{align}

Here are the ingredients of the proof. Let $m$ be a positive integer which is represented by the lattice $(L,Q)$.
\begin{enumerate}
\item Starting from an explicit expression of $\Phi_m$ given by Bruinier in \cite[\S 2.2]{bruinier}, we pick out the main term out of archimedean part of \Cref{mainformula}, which is a scalar multiple of $m^{\frac{b}{2}}\log m$, see \Cref{logterm}.

\item To treat the term $h_{\widehat{\cZ}(m)}(\cY)$, we use a theorem of Howard and Madapusi-Pera \cite[Theorem 9.4.1]{howardmadapusi} which asserts that the generating series of $\widehat{\cZ}(m)$ is a component of a vector valued modular form of weight $1+\frac{b}{2}$ with respect to the Weil representation associated to the lattice $(L,Q)$. As a consequence, we get $h_{\widehat{\cZ}(m)}(\cY)=O(m^{\frac{b}{2}})$. This modularity result was previously known over the complex fiber by the work of Borcherds \cite{borcherdszagier} and a cohomological version was given by Kudla--Millson \cite{kudlamillson}.

\item Based on Bruinier's explicit formula, we reduce the estimate of the remaining part of the archimedean term into a problem of counting lattice points with weight functions admitting logarithmic singularities, see \Cref{twoterms}.

\item The treatment of this lattice counting problem in \S 6 is one of the key novelties here compared to the treatment of the archimedean places in the previous works \cite{charles1,st}. We break the sum into two parts; the first part, which consists of lattice points which are not very close to the singularity of the weight function, is treated using the circle method and the rest is controlled by the so-called \emph{diophantine bound}. More precisely, the geometrical meaning of controlling the second part is to show that (away from a small set of $m$) $\cY(\C)$ is not very close to $\cZ(m)(\C)$. Roughly speaking, we prove that if $\cY(\C)$ is too close to too many special divisors with $m$ in a certain range, then $\cY(\C)$ must be close to a special divisor with much smaller $m$. This would violate the diophantine bound deduced from the height formula and the estimates in (1)-(3) above, see \Cref{summary}.
\item Now it remains to treat the finite contribution. This part can be translated into a lattice counting problem on a sequence of lattices $L_n, n\in \Z_{\geq 1}$, where $L_n$ is the lattice of special endomorphisms of the Kuga--Satake abelian variety over $\cY \bmod \fP^n$, see \Cref{loc_int_nb}. As in \cite{st}, we use Grothendieck--Messing theory and Serre--Tate theory to describe the asymptotic behavior of $L_n$. These results give adequate bounds for the main terms. In order to deal with the error terms, we use the diophantine bound (see \Cref{summary}) \textit{for individual $m$ to obtain better bounds on average}. This step is crucial for our proof. Indeed, we illustrate the necessity of using the height bound with an example of a transcendental point of $\cM$ in \S\ref{sec_trans_ex}, where the finite contribution can be arbitrarily large for an infinite sequence of $m$. We also remark that our idea of using the global height bound has other applications. Indeed, our idea is a crucial ingredient in proving intersection-theoretic results in characteristic $p$, as well as in proving the ordinary Hecke-orbit conjecture for GSpin Shimura varieties (see \cite{K3function}). It has also been used to extend the main theorem to the bad reduction situation, see \cite{tayou-bad-reduction}. We note, however, that our theorem applies to K3 surfaces with \emph{potentially} good reduction everywhere. There are very few K3 surfaces with good reduction everywhere, but the condition of potentially good reduction is far less restrictive. Indeed, the moduli spaces of polarized K3 surfaces contains several compact Shimura varieties, whose points parameterize K3 surfaces with potentially good reduction everywhere and our result applies to them. 
\item We briefly describe how we use the diophantine bound for \emph{each} $m$ to obtain stronger bounds on the local contribution from finite places on average. The arguments in (1)-(3) actually prove that the quantity inside the little $o$ in the right hand side of Equation \eqref{introeq} is bounded by $m^{\frac{b}{2}}\log m$, as $m\rightarrow \infty$ (although we prove that it is also $\gg m^{\frac{b}{2}}\log m$ in (4) for most $m$). Since each term $(\cY. \cZ(m))_\fP$ is nonnegative, we have that for any $\fP$, the local contribution $(\cY. \cZ(m))_{\fP}=O(m^{\frac{b}{2}}\log m)$ -- this is the diophantine bound that we refer to. By \Cref{loc_int_nb}, this diophantine bound implies that for $n\gg m^{\frac{b}{2} +\epsilon}, \epsilon>0$, any non-zero special endomorphism $s$ in the lattice $L_n$ must satisfy $s\circ s=[m']$ with  $m'\geq m$. Then a geometry-of-numbers argument suffices to conclude the proof of our theorem. 
\end{enumerate}



\subsection{Organization of the paper}
In \S\ref{gspin} we recall the construction of the GSpin Shimura variety associated to the lattice $(L,Q)$ following \cite{agmp1,agmp2,howardmadapusi}, as well as the construction of its integral model and the construction of the special divisors using the notion of special endomorphisms,  then we give a reformulation of \Cref{main}. In \S\ref{harmonic} we recall how to associate Green functions to the special divisors  on $\mathcal{M}$ and we state Borcherds--Howard--Madapusi-Pera's modularity result from \cite{howardmadapusi}, then we derive consequences on the growth of the global height of an $\mathcal{O}_K$-point on $\mathcal{M}$. In \S\ref{quadraticgeneral} we collect some general results on quadratic forms which will be used in the following sections. In \S\ref{mainproof}, we give a first step estimate on the growth of the archimedean terms and we derive the diophantine bounds on archimedean and non-archimedean contributions. The second step in estimating the archimedean contributions is performed in \S\ref{sec_arch}, while the non-archimedean contributions are treated in \S\ref{sec_finite}. In \S\ref{finalmvt} we put together all the ingredients to prove \Cref{main_sp_end} and hence \Cref{main}. Finally, in \S\ref{applicationsch} we prove the applications to K3 surfaces. Subsequently, we prove the applications to Kuga--Satake abelian varieties and abelian varieties parametrized by unitary Shimura varieties. 
 
\subsection{Acknowledgements}
We are very grateful to Fabrizio Andreatta, Olivier Benoist, Laurent Clozel, Edgar Costa, Quentin Guignard, Jonathan Hanke, Benjamin Howard, Christian Liedtke, Yifeng Liu, Chao Li, Davesh Maulik, and Jacob Tsimerman for many helpful conversations. Part of this work has been done in S.T's PhD thesis and he is particularly grateful to François Charles. We're also very grateful to the referee for several valuable suggestions that have improved this paper. 

A.N.S. is partially supported by the NSF grant DMS-2100436. A.S. is supported by an NSERC Discovery grant and a Sloan fellowship. Y.T. is partially supported by the NSF grant DMS-1801237. S.T has received funding from the European Research Council (ERC) under the European Union’s Horizon 2020 research and innovation programme (grant agreement No 715747).
\subsection{Notations} If $f,g:\N\rightarrow \R$ are  real functions and $g$ does not vanish, then: 
\begin{enumerate}
\item $f=O(g)$, or $f\ll g$, if there exists an integer $n_0\in \N$, a positive constant $C_{0}>0$ such that $$\forall n\geq n_0,\, |f(n)|\leq C_{0}|g_{}(n)|.$$
\item $f\asymp h$ if $f=O(h)$ and $h=O(f)$.
\item $f=o(g)$ if for every $\epsilon>0$, there exists $n_\epsilon$ such that for every $n\geq n_\epsilon$
$$|f(n)|\leq \epsilon |g(n)|.$$
\item For $p$ a prime number, $\val_p$ denotes  the $p$-adic valuation on $\Q$.
\item For $s\in \C$, $\mathrm{Re}(s)$ is the real part of $s$. 
\end{enumerate}

\section{The GSpin Shimura varieties and their special divisors}\label{gspin}
Let $(L,Q)$ be an integral quadratic even lattice of signature $(b,2)$, $b\geq1$, with associated bilinear form defined by $$(x.y)=Q(x+y)-Q(x)-Q(y),$$ for $x,y\in L$. Let $V:=L\otimes_{\Z}\Q$ and assume that $L$ is a maximal lattice in $V$ over which $Q$ is $\Z$-valued. We recall in this section the theory of GSpin Shimura varieties associated with $(L,Q)$. Our main references are \cite[Section 2]{agmp1}, \cite[Section 4]{agmp2} and \cite[Section 3]{madapusiintegral}.
\subsection{The GSpin Shimura variety}\label{gspin_Q}
For a commutative ring $R$, let $L_{R}$ denote $L\otimes_{\Z}R$ and the quadratic form $Q$ on $L$ induces a quadratic form $Q$ on $L_R$. The \emph{Clifford algebra} $C(L_R)$ of $(L_R, Q)$ is the $R$-algebra defined as the quotient of the tensor algebra $\bigotimes L_R$ by the ideal generated by $\{(x\otimes x)-Q(x),\, x\in L_R\}$. It has a $\Z/2\Z$ grading $C(L_R)=C(L_R)^+\oplus C(L_R)^-$ induced by the grading on $\bigotimes L_R$. When $R$ is a $\Q$-algebra, we also denote $C(L_R)$ (resp. $C^\pm(L_R)$) by $C(V_R)$ (resp. $C^\pm(V_R)$) and note that $C(L)$ is a lattice in $C(V)$.
 
Let $G:=\mathrm{GSpin}(V)$ be the group of spinor similitudes of $V$. 
It is the reductive algebraic group over $\Q$ such that $$G(R)=\{g\in C^{+}(V_R)^\times,\, gV_R g^{-1}=V_{R}\}$$ for any $\Q$-algebra $R$. We denote by $\nu:G\rightarrow \mathbb{G}_m$ the spinor similitude factor as defined in \cite[Section 3]{bass}. The group $G$ acts on $V$ via $g\bullet v=gvg^{-1}$ for $v\in V_R$ and $g\in G(R)$. Moreover, there is an exact sequence of algebraic groups $$1\rightarrow \mathbb{G}_{m}\rightarrow G\xrightarrow{g\mapsto g\bullet}\mathrm{SO}(V)\rightarrow 1.$$ 

Let $D_L$ be the period domain associated to $(L,Q)$ defined by\footnote{Here we pick $z\in V_{\C}$ such that $[z]=x$ and $(\overline{x}.x)>0$ means that $(\overline{z},z)>0$ and $(x.x)=0$ means $(z.z)=0$; both conditions are independent of the choice of $z$.} $$D_L=\{x \in \mathbb{P}(V_{\C})\mid (\overline{x}.x)<0,(x.x)=0\}.$$
It is a hermitian symmetric domain and the group $G(\R)$ acts transitively on $D_L$. As in \cite[\S 4.1]{agmp2}, $(G,D_L)$ defines a Shimura datum as follows: for any class $[z]\in D_{L}$ with $z\in V_\C$,  there is a morphism of algebraic groups over $\R$ $$h_{[z]}:\mathbb{S}=\mathrm{Res}_{\C/\R}\mathbb{G}_m\rightarrow G_{\R}$$ such that the induced Hodge decomposition on $V_\C$ is given by $$V^{1,-1}=\C z, V^{-1,1}=\C \overline{z}, V_{\C}^{0,0}=(\C z\oplus \C \overline{z})^{\bot}.$$
Indeed, choose a representative $z=u+iw$ where $u,w\in V_{\R}$ are orthogonal and $Q(u)=Q(w)=-1$, then $h_{[z]}$ is the morphism such that $h_{[z]}(i)=uw\in G(\R)\subset C^{+}(V_\R)^{\times}$. Hence $D_L$ is identified with a $G(\R)$-conjugacy class in $\mathrm{Hom}(\mathrm{Res}_{\C/\R}\mathbb{G}_m,G_{\R})$. The reflex field of $(G,D_L)$ is equal to $\Q$ by \cite[Appendix 1]{andretate}.  

Let  $\K\subset G(\mathbb{A}_{f})$ be the compact open subgroup $$\K=\G(\A_f)\cap C(\widehat{L})^{\times},$$
where $\widehat{L}=L\otimes_{\Z} \widehat{\Z}$. By \cite[Lemma 2.6]{madapusiintegral}, the image of $\K$ in $\mathrm{SO}(\widehat{L})$ is the subgroup of elements acting trivially on $L^{\vee}/L$, where $L^{\vee}$ is the dual lattice of $L$ defined by $$L^{\vee}:=\{x\in V\mid \forall y\in L,\, (x.y)\in \Z\}.$$
By the theory of canonical models, we get a $b$-dimensional Deligne--Mumford stack $M$ over $\Q$, the \emph{GSpin Shimura variety associated with $L$}, such that   
$$M(\C)=G(\Q)\backslash D_L\times G(\mathbb{A}_f)/\K.$$

\subsection{The Kuga--Satake construction and K3 type motives in characteristic $0$}\label{sec_KS}
The Kuga--Satake construction was first considered in \cite{kugasatake} and later in \cite{deligne} and \cite{deligneshimura2}. We follow here the exposition of \cite[Section 3]{madapusiintegral}.
 
Let $G\rightarrow \mathrm{Aut}(N)$ be an algebraic representation of $G$ on a $\Q$-vector space $N$, and let $N_{\widehat{\Z}}\subset N_{\A_f}$ be a $\K$-stable lattice. Then one can construct a local system $\mathbf{N}_{B}$ on $M(\C)$ whose fiber at a point $[[z],g]$ is identified with $N\cap g(N_{\widehat{\Z}})$. The corresponding vector bundle $\mathbf{N}_{dR,M(\C)}=\mathcal{O}_{M(\C)}\otimes \mathbf{N}_{B}$ is equipped with a holomorphic filtration $\mathcal{F}^{\bullet}\mathbf{N}_{dR,M(\C)}$ which at every point $[[z],g]$ equips the fiber with the Hodge structure determined by the cocharacter $h_{[z]}$. Hence we obtain a functor 
\begin{align}\label{functor}
(N,N_{\widehat{\Z}})\mapsto \left(\mathbf{N}_{B},\mathcal{F}^{\bullet}\mathbf{N}_{dR,M(\C))}\right)
\end{align}
from the category of algebraic $\Q$-representations of $G$ with a $\K$-stable lattice to variations of $\Z$-Hodge structures over $M(\C)$.
Applying this functor to $(V,\widehat{L})$, we obtain a variation of $\Z$-Hodge structures $\{\mathbf{V}_{B},\mathcal{F}^{\bullet}\mathbf{V}_{dR,M(\C)}\}$ of weight $0$ over $M(\C)$. The quadratic form $Q$ gives a polarization on $(\mathbf{V}_{B},\mathcal{F}^{\bullet}\mathbf{V}_{dR,M(\C)})$ and hence by \cite[1.1.15]{deligneshimura2}, all $(\mathbf{N}_{B},\mathcal{F}^{\bullet}\mathbf{N}_{dR,M(\C))})$ are polarizable.

Also, if we denote by $H$ the representation of the group $G$ on $C(V)$ by left multiplication and $H_{\widehat{\Z}}=C(L)_{\widehat{\Z}}$, then applying the functor (\ref{functor}) to the pair $(H,H_{\widehat{\Z}})$, we obtain a polarizable variation of $\Z$-Hodge structures $(\mathbf{H}_{B},\mathbf{H}_{dR,M(\C)})$ of type $(-1,0),(0,-1)$ with a right $C(V)$-action.
Therefore, there is a family of abelian schemes $A^\univ\rightarrow M$ of relative dimension $2^{b+1}$, the \emph{Kuga--Satake abelian scheme}, such that the homology of the family $A^{\univ,an}(\C)\rightarrow M^{an}(\C)$ is precisely $(\mathbf{H}_{B},\mathbf{H}_{dR,M(\C)})$. It is equipped with a right $C(L)$-action and a compatible $\Z/2\Z$-grading: $A^\univ=A^{\univ,+}\times A^{\univ,-}$, see \cite[3.5--3,7, 3.10]{madapusiintegral}.\footnote{Here we follow the convention in \cite{agmp2}, where $H$ is the homology of $A^\univ$. In \cite{madapusiintegral}, $H$ is the cohomology of $A^\univ$.}

Using $A^\univ$, one descends $\mathbf{H}_{dR,M(\C)}$ to a filtered vector bundle with an integrable connection $(\mathbf{H}_{dR},\mathcal{F}^{\bullet}\mathbf{H}_{dR})$ over $M$ as the first relative de Rham homology with the Gauss--Manin connection (\cite[3.10]{madapusiintegral}). For any prime $\ell$, the $\ell$-adic sheaf $\Z_\ell\otimes \mathbf{H}_B$ over $M(\C)$ descends also canonically to an $\ell$-adic étale sheaf $\mathbf{H}_{\ell,\textrm{ét}}$ over $M$, which is canonically isomorphic to the $\ell$-adic Tate module of $A^\univ$ (\cite[3.13]{madapusiintegral}). Moreover, by Deligne's theory of absolute Hodge cycles, one descends $\mathbf{V}_{dR,M(\C)}$ and $\Z_\ell \otimes \mathbf{V}_B$ to $(\mathbf{V}_{dR},\mathcal{F}^{\bullet}\mathbf{V}_{dR})$ and $\mathbf{V}_{\ell,\textrm{ét}}$ over $M$ (\cite[3.4, 3.10--3.12]{madapusiintegral}). More precisely, an idempotent $$\pi=(\pi_{B,\Q}, \pi_{dR, \Q}, \pi_{\ell,\Q})\in \End(\End(\bfH_{B}\otimes \Q))\times \End(\End(\bfH_{dR})) \times \End(\End(\bfH_{\ell, \textrm{\'et}}\otimes \Q_\ell))$$ is constructed in \emph{loc.~cit.} such that the fiber of $\pi$ at each closed point in $M$ is an absolute Hodge cycle and $(\bfV_{B}\otimes \Q, \bfV_{dR}, \bfV_{\ell, \textrm{\'et}}\otimes \Q_\ell)$ is the image of $\pi$. In particular, $(\bfV_{B}\otimes \Q, \bfV_{dR}, \bfV_{\ell, \textrm{\'et}}\otimes \Q_\ell)$ is a family of absolute Hodge motives over $M$ and we call each fiber a \emph{K3 type motive}. (For a reference on absolute Hodge motives, we refer to \cite[IV]{DMOS}.)

On the other hand, let $\End_{C(V)}(H)$ denote the endomorphism ring of $H$ as a $C(V)$-module with right $C(V)$-action. Then the action of $V$ on $H$ as left multiplication induces a $G$-equivariant embedding $V\hookrightarrow \mathrm{End}_{C(V)}(H)$, which maps $\widehat{L}\hookrightarrow H_{\widehat{\Z}}$. The functoriality of (\ref{functor}) induces embeddings \begin{align*}
\mathbf{V}_{B}\hookrightarrow \mathrm{End}_{C(L)}(\mathbf{H}_{B})\quad\textrm{and}\quad \mathbf{V}_{dR,M(\C)}\hookrightarrow \mathrm{End}_{C(V)}(\mathbf{H}_{dR,M(\C)}),
\end{align*}
the latter being compatible with filtration. By \cite[1.2, 1.4, 3.11]{madapusiintegral}, these embeddings are the same as the one induced by $\pi$ above with the natural forgetful map $\End_{C(V)}(\bfH)\rightarrow \End(\bfH)$. In particular, the embedding $\mathbf{V}_{dR}\hookrightarrow \End_{C(V)}(\bfH_{dR})$ is compatible with filtrations and connection and $V_{\ell, \textrm{\'et}}\hookrightarrow \End_{C(V)}(\bfH_{\ell, \textrm{\'et}})$ as $\Z_\ell$-lisse sheaves with compatible Galois action on each fiber.\footnote{The compatibility of $\Z_\ell$-structure can be deduced from Artin's comparison theorem and that we have the embedding $\mathbf{V}_{B}\hookrightarrow \mathrm{End}_{C(L)}(\mathbf{H}_{B})$ as $\Z$-local system over $M(\C)$.} 

Moreover, there is a canonical quadratic form $\mathbf{Q}:\mathbf{V}_{dR}\rightarrow \mathcal{O}_M$ given on sections by $v\circ v=\bfQ(v)\cdot\mathrm{Id}$ where the composition takes places in $\mathrm{End}_{C(V)}(\mathbf{H}_{dR})$. Similarly,
there is also a canonical quadratic form on $\mathbf{V}_{\ell,\textrm{ét}}$ induced by composition in $\End_{C(V)}(\bfH_{\ell, \textrm{\'et}})$ and valued in the constant sheaf $\underline{\Z_\ell}$. 

\subsection{Special divisors on $M$ over $\Q$}\label{special divisors}For any vector $\lambda\in L_{\R}$ such that $Q(\lambda)>0$, let $\lambda^{\bot}$  be the set of elements of $D_{L}$ orthogonal to $\lambda$. Let $\beta\in L^{\vee}/L$ and $m\in Q(\beta)+\Z$ with $m>0$ and define the complex orbifold  
\begin{align*}
Z(\beta,m)(\C):=\bigsqcup_{g\in G(\Q)\backslash G(\mathbb{A}_f)/\K}\Gamma_{g}\backslash\left(\bigsqcup_{\lambda\in \beta_g+L_g,\, Q(\lambda)=m}\lambda^{\bot}\right)
\end{align*} 
where $\Gamma_g=G(\Q)\cap g\K g^{-1}$, $L_g\subset V$ is the lattice determined by $\widehat{L_{g}}=g\bullet \widehat{L}$ and $\beta_g=g\bullet\beta\in L_{g}^{\vee}/L_g$.   
Then $Z(\beta,m)(\C)$ is the set of complex points of a disjoint union of Shimura varieties associated with orthogonal lattices of signature $(b-1,2)$ and it admits a canonical model $Z(\beta,m)$ over $\Q$ for $b\geq 2$.\footnote{When $b=1$, $Z(\beta,m)$ is $0$-dimensional and it is still naturally a Deligne--Mumford stack over $\Q$.} The natural map $Z(\beta,m)(\C)\rightarrow M(\C)$ descends to a finite unramified morphism $Z(\beta,m)\rightarrow M$. \'Etally locally on $Z(\beta,m)$, this map is a closed immersion defined by a single equation and hence its scheme theoretic image gives an effective Cartier divisor on $M$, which we will also denote by $Z(\beta,m)$.\footnote{We need to take the scheme theoretic image since the map $Z(\beta, m)\rightarrow M$ is not a closed immersion. See for instance \cite[Ch.~5, p.119]{bruinier} and \cite[Remark 9.3]{KRY04}.}

\subsection{Integral models}\label{integral}
We recall the construction of an integral model of $M$ from \cite[4.4]{agmp2}, see also \cite[6.2]{howardmadapusi} and the original work of Kisin \cite{kisin} and Madapusi Pera \cite{madapusiintegral}. Let $p$ be a prime number. We say that $L$ is \emph{almost self-dual} at $p$ if either $L$ is self-dual at $p$ or $p=2$, $\dim_{\Q}(V)$ is odd and $|L^{\vee}/L|$ is not divisible by $4$. The following proposition is parts of \cite[Proposition 4.4.1, Theorem 4.4.6]{agmp2}
and \cite[Remark 6.3.1]{howardmadapusi}. 

\begin{proposition}\label{arithmeticstack}
There exists a flat, normal Deligne--Mumford $\Z$-stack $\mathcal{M}$ with the following properties.
\begin{enumerate}
\item $\mathcal{M}_{\Z_{(p)}}$ is smooth over $\Z_{(p)}$ if $L$ is almost self-dual at $p$; 
\item the Kuga--Satake abelian scheme $A^\univ\rightarrow M$ extends to an abelian scheme $\mathcal{A}^\univ\rightarrow \mathcal{M}$ and the $C(L)$-action on $A^\univ$ also extends to a $C(L)$-action on $\cA^\univ$; 
\item the line bundle $\cF^{1}V_{dR}$ extends canonically to a line bundle $\boldsymbol{\omega}$ over $\mathcal{M}$.
\item the extension property: for $E/\Q_p$ finite, $t\in M(E)$ such that $A^\univ_t$ has potentially good reduction over $\cO_E$, then $t$ extends to a map $\spec(\cO_E)\rightarrow \cM$.
\end{enumerate}
\end{proposition}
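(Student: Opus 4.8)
The plan is to construct $\mathcal{M}$ as the normalization of the Zariski closure of $M$ inside a flat integral model of a Siegel modular variety, following the strategy of Kisin \cite{kisin} and Madapusi~Pera \cite{madapusiintegral} for Shimura varieties of Hodge type, in the form carried out for GSpin data in \cite{agmp2,howardmadapusi}. First I would promote the Kuga--Satake construction of \S\ref{sec_KS} to an embedding of Shimura data: the representation $H=C(V)$ with the $\K$-stable lattice $H_{\widehat{\Z}}=C(L)_{\widehat{\Z}}$, equipped with a symplectic form $\psi$ arising from a suitable anti-self-dual element $\delta\in C(L)$, realizes $(G,D_L)$ as a sub-datum of a Siegel datum $(\GSp(H),\mathfrak{H}^{\pm})$, yielding a finite morphism $M\to \mathscr{A}_{K'}$ to a Siegel modular variety (a closed immersion after shrinking $K'$, or else handled directly as in \cite{madapusiintegral}). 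The stack $\mathscr{A}_{K'}$ carries its tautological flat integral model $\mathscr{A}_{K'}^{\Z}$ over $\Z$ parametrizing polarized abelian schemes with level structure, and one sets $\mathcal{M}$ to be the normalization of the Zariski closure of $M$ inside $\mathscr{A}_{K'}^{\Z}$. Flatness and normality are then automatic, and the Deligne--Mumford property is inherited from $M$ together with the fact that normalization preserves it.

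For property (1), over $\Z_{(p)}$ with $L$ almost self-dual at $p$ the level $\K_p$ is hyperspecial (or the special parahoric permitted when $p=2$), and the key point is to show the completed local rings of $\mathcal{M}_{\Z_{(p)}}$ are power series rings. This is done by matching the crystalline deformation theory of the Kuga--Satake abelian variety against the deformation problem governed by the idempotent $\pi$ of \S\ref{sec_KS}: one shows that $\pi$ extends to a crystalline (indeed absolutely Hodge) tensor whose reduction modulo $p$ cuts out a smooth sub-deformation problem, via Faltings' deformation-theoretic argument together with integral $p$-adic Hodge theory (Breuil--Kisin modules). Properties (2) and (3) are then comparatively formal: $\mathcal{A}^\univ$ is the pullback of the universal abelian scheme on $\mathscr{A}_{K'}^{\Z}$, its $C(L)$-action (being defined by algebraic cycles on the generic fibre) extends over the normal base $\mathcal{M}$ by the Néron mapping property and is compatible with the $\Z/2\Z$-grading; and $\boldsymbol{\omega}$ is obtained from the Hodge bundle of $\mathcal{A}^\univ$, which extends over $\mathscr{A}_{K'}^{\Z}$, by extracting the direct summand $\cF^1\bfV_{dR}$ cut out again by $\pi$.

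For property (4), since $\mathcal{M}$ is not proper one argues by hand: given $E/\Q_p$ finite and $t\in M(E)$ such that $A^\univ_t$ has potentially good reduction, the Néron model over $\cO_E$ of the abelian variety (with its polarization and level structure) is an abelian scheme, hence defines a point $\spec(\cO_E)\to \mathscr{A}_{K'}^{\Z}$ whose generic point lies in $M$ and which therefore factors through the Zariski closure of $M$; because $\mathcal{M}$ is the normalization of that closure and $\cO_E$ is a discrete valuation ring, this $\cO_E$-point lifts uniquely to $\mathcal{M}$ and extends $t$.

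The main obstacle is property (1) at almost self-dual primes, and especially at $p=2$: controlling the $2$-adic deformation theory and verifying that the crystalline tensor $\pi$ is well behaved requires the full strength of integral $p$-adic Hodge theory together with a delicate analysis of the even Clifford algebra and the spin representation over $\Z_2$ — this is exactly why one must invoke \cite{madapusiintegral,agmp2,howardmadapusi} rather than argue directly. A secondary subtlety is ensuring the extension in (4) is genuinely unique and compatible with all the structures ($C(L)$-action, polarization, $\boldsymbol{\omega}$), which again uses normality of $\mathcal{M}$ in an essential way.
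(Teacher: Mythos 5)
The paper does not prove this proposition: it simply states that it ``is parts of \cite[Proposition 4.4.1, Theorem 4.4.6]{agmp2} and \cite[Remark 6.2.1]{howardmadapusi}'' and cites those sources. Your sketch is a faithful and essentially correct reconstruction of the Kisin/Madapusi-Pera strategy carried out in those references: embed $(G,D_L)$ into a Siegel datum via the left $C(V)$-action on $H=C(V)$ with symplectic form $\psi_\delta$, take $\cM$ to be the normalization of the Zariski closure of $M$ in a flat integral Siegel model, deduce (2)--(3) from the universal abelian scheme and normality, prove (1) by comparing the crystalline deformation theory cut out by the tensor $\pi$ against Faltings/Breuil--Kisin theory (with the genuine extra work at $p=2$ in the almost-self-dual-but-not-self-dual case that you correctly flag), and get (4) from the valuative criterion applied to the finite map from $\cM$ to the closure of $M$ in the Siegel model.

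One small imprecision worth tightening: in your argument for (4) you write that since $A^\univ_t$ has \emph{potentially} good reduction its N\'eron model over $\cO_E$ ``is an abelian scheme''---that is only true if $A^\univ_t$ has good reduction over $\cO_E$ itself (i.e.\ extends to an abelian scheme over $\cO_E$). This is in fact how the hypothesis in (4) should be read (and is how Kisin's extension property is stated: $\cM(\cO_E)\xrightarrow{\sim} M(E)\times_{\mathscr{A}_{K'}(E)}\mathscr{A}^{\Z}_{K'}(\cO_E)$), so your argument goes through, but the word ``potentially'' in both the statement and your reconstruction should be understood in that sense, since an abelian variety with merely potentially good reduction need not extend to an abelian scheme over $\cO_E$.
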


For $p$ such that $L$ is self-dual at $p$, we now discuss the extensions of $\bfV_{dR}, \bfV_{\ell,\textrm{\'et}}, \ell\neq p$ over $\cM_{\Z_{(p)}}$ and recall the construction of $\bfV_{cris}$. For the ease of reading, we will denote the extensions by the same notation. We will use these notions to provide an \emph{ad hoc} definition of the reduction of the K3 type motives defined in \S\ref{sec_KS}.

By (2) in \Cref{arithmeticstack}, there are natural extensions of $\bfH_{dR}, \bfH_{\ell, \textrm{\'et}}$ as the first relative de Rham homology and the $\Z_\ell$-Tate module of $\cA^\univ$ and we define $\bfH_{cris}$ to be the first relative crystalline homology $$\mathrm{Hom}\left(R^{1}\pi_{cris,*}\mathcal{O}^{cris}_{\mathcal{A}_{\mathbb{F}_p}/\Z_p},\mathcal{O}^{cris}_{\mathcal{M}_{\mathbb{F}_{p}/\Z_p}}\right).$$ 

By \cite[Remark 4.2.3]{agmp2}, there exists a canonical extension of $\bfV_{\ell, \textrm{\'et}}\hookrightarrow \End_{C(L)}(\bfH_{\ell, \textrm{\'et}})$ over $\cM_{\Z_{(p)}}$. Note that $L_{\Z_{(p)}}$ is a $\Z_{(p)}$-representation of $\mathrm{GSpin}(L_{\Z_{(p)}},Q)$, then by \cite[Propositions 4.2.4, 4.2.5]{agmp2}, there is a vector bundle with integrable connection $\bfV_{dR}$ over $\cM_{\Z_{(p)}}$ and a canonical embedding into $\End_{C(L)}(\bfH_{dR})$ extending their counterparts over $M$; moreover, there is an $F$-crystal $\bfV_{cris}$ with a canonical embedding into $\mathrm{End}_{C(L)}(\mathbf{H}_{cris})$. Both embeddings realize $\bfV_{dR}$ and $\bfV_{cris}$ as local direct summands of $\End_{C(L)}(\bfH_{dR})$ and $\mathrm{End}_{C(L)}(\mathbf{H}_{cris})$ and these two embeddings are compatible with the canonical crystalline-de Rham comparison.

For a point $x\in \cM(\bF_q)$, where $q$ is a power of $p$, we consider the fiber of $(\bfV_{\ell, \textrm{\'et}}, \bfV_{dR},\bfV_{cris})$ at $x$ of the \emph{ad hoc} motive attached to $x$, where $q$-Frobenius $\Frob_x$ acts on $\bfV_{\ell, \textrm{\'et},x}$ and semi-linear crystalline Frobenius $\varphi_x$ acts on $\bfV_{cris,x}$. Although these realizations are not as closely related as the analogous characteristic $0$ situation, there is a good notion of algebraic cycles in $(\bfV_{\ell, \textrm{\'et}}, \bfV_{dR},\bfV_{cris})$, namely the special endomorphisms of the Kuga--Satake abelian varieties discussed in the following subsection.

\subsection{Special endomorphisms and integral models of special divisors}\label{special}
We recall the definition of special endomorphisms from \cite[\S\S4.3,4.5]{agmp2}. For an $\cM$-scheme $S$, we use $A_S$ to denote $\cA^\univ_S$, the pull-back of the universal Kuga--Satake abelian scheme to $S$. 

\begin{definition}\label{def_sp_end}
An endomorphism $v\in \End_{C(L)}(A_S)$ is \emph{special} if 
\begin{enumerate}
    \item for prime $p$ such that $L$ is self-dual at $p$, all homological realizations of $v$ lie in the image of $\bfV_{?}\hookrightarrow \End_{C(L)}(\bfH_{?})$ given in \S\S\ref{sec_KS},\ref{integral};\footnote{More precisely, if $p$ is invertible in $S$, we take $?=B, dR, (\ell,\textrm{\'et})$ and by the theory of absolute Hodge cycles, it is enough to just consider $?=B$; otherwise, we take $?=B, dR, cris, (\ell,\textrm{\'et}), \ell\neq p$ (and we drop $?=B$ if $S_{\Q}=\emptyset$).} and
    \item for $p$ such that $L$ is not self-dual, after choosing an auxiliary maximal lattice $L^\diamond$ of signature $(b^\diamond, 2)$ which is self-dual at $p$ and admits an isometric embedding $L\inj L^\diamond$, the image of $v$ under the canonical embedding $\End_{C(L)}(A_S)\inj \End_{C(L^\diamond)}(A^\diamond_S)$ has all its homological realizations lying in the image of $\bfV_{?}^\diamond\hookrightarrow \End_{C(L^\diamond)}(\bfH_{?}^\diamond)$.\footnote{Here $(-)^\diamond$ denotes the object defined using $L^\diamond$ in previous sections. The existence of the canonical embedding $\End_{C(L)}(A_S)\inj \End_{C(L^\diamond)}(A^\diamond_S)$ follows from \cite[Proposition 4.4.7 (2)]{agmp2}. By \cite[Proposition 4.5.1]{agmp2}, this definition is independent of the choice of $L^\diamond$.}
\end{enumerate}
We use $V(A_S)$ to denote the $\Z$-module of special endomorphisms of $A_S$.
\end{definition}

By \cite[Prop.4.5.4]{agmp2}, there is a positive definite quadratic form $Q:V(A_S)\rightarrow \Z$ such that  for each $v\in V(A_S)$, we have $v\circ v=Q(v)\cdot\mathrm{Id}_{A_S}$.\footnote{Here we use the same letter $Q$ for this quadratic form since if every point in $S$ is the reduction of some characteristic $0$ point in $S$, then this quadratic form is the restriction of $(\bfV_B,\bfQ)$ via the canonical embedding $V(A_S)\hookrightarrow \bfV_{B,S}$ (recall that $\bfQ$ is induced by $(L,Q)$ in \S\ref{sec_KS}).}

\begin{definition}\label{def_sp_pdiv}
When $S$ is a $\cM_{\Z_{(p)}}$-scheme, $v$ is a \emph{special endomorphism} of the $p$-divisible group $A_S[p^\infty]$ if $v\in \End_{C(L)}(A_S[p^\infty])$ and the crystalline realization of $x$ (resp. image of $x$ under the canonical embedding $\End_{C(L)}(A_S[p^\infty])\inj \End_{C(L^\diamond)}(A^\diamond_S[p^\infty])$) lies in $\bfV_{cris}$ (resp. $\bfV^\diamond_{cris}$) if $L$ is self-dual at $p$ (resp. otherwise).\footnote{In \cite[\S 4.5]{agmp2}, the definition of a special endomorphism of a $p$-divisible group also contains a condition on its $p$-adic \'etale realization over $S[p^{-1}]$. The proof of \cite[Lemma 5.13]{madapusiintegral} shows that this extra condition is implied by the crystalline condition.}
\end{definition}

For an odd prime $p$ such that $L$ is self-dual at $p$, for a point $x\in \cM(\bF_{p^r})$ by \cite[Theorem 6.4]{madapusiperatate},\footnote{Assumption 6.2 in \cite{madapusiperatate} follows immediately from \cite[Corollary (2.3.1)]{kisin17}} we have isometries
\[V(\cA^\univ_x)\otimes \Q_\ell \cong \lim_{n\rightarrow\infty}\bfV_{\ell,\textrm{\'et},x}^{\Frob_x^n=1},\ell\neq p, \quad V(\cA^\univ_x)\otimes \Q_p \cong \lim_{n\rightarrow\infty}(\Q_{p^{rn}}\otimes\bfV_{cris,x})^{\varphi_x=1}.\]
Therefore, we view special endomorphisms of $\cA^\univ_x$ as the algebraic cycles of the \emph{ad hoc} motive $(\bfV_{\ell, \textrm{\'et},x}, \bfV_{dR,x},\bfV_{cris,x})$.

For $m\in \Z_{>0}$, the \emph{special divisor} $\cZ(m)$ is defined as the Deligne--Mumford stack over $\cM$ with functor of points $\cZ(m)(S)=\{v\in V(A_S) \mid Q(v)=m\}$ for any $\cM$-scheme $S$. More generally, in \cite[\S 4.5]{agmp2}, for $\beta\in L^\vee/L, m\in Q(\beta)+\Z, m>0$, there is also a special cycle $\cZ(\beta,m)$ defined as a Deligne--Mumford stack over $\cM$ parameterizing points with certain special quasi-endomorphisms and $\cZ(m)=\cZ(0,m)$. By \cite[Proposition 4.5.8]{agmp2}, the generic fiber $\cZ(\beta,m)_{\Q}$ is equal to the divisor $Z(\beta,m)$ defined in \S\ref{special divisors}. Moreover,  étale locally on the source, $\cZ(\beta,m)$ is an effective Cartier divisor on $\cM$ and we will use the same notation for the Cartier divisor on $\cM$ defined by \'etale descent.


\subsection{Reformulation of \Cref{main}}
Using the notion of special endomorphisms, \Cref{main} is a direct consequence of the following theorem.
\begin{theorem}\label{main_sp_end}
Assume that $b\geq 3$. Let $K$ be a number field and let $D\in \Z_{>0}$ be a fixed integer represented by $(L,Q)$. Let $\cY\in \mathcal{M}(\mathcal{O}_K)$ and assume that $\cY_K\in M(K)$ is Hodge-generic. Then there are infinitely many places $\fP$ of $K$ such that $\cY_{\overline{\fP}}$ lies in the image of $\cZ(Dm^2)\rightarrow \cM$ for some $m\in \Z_{>0}$.\footnote{Here $m$ may vary as $\fP$ varies.} Equivalently, for a Kuga--Satake abelian variety $\cA$ over $\cO_K$ parameterized by $\cM$ such that $\cA_{\overline{K}}$ does not have any special endomorphisms, there are infinitely many $\fP$ such that $\cA_{\overline{\fP}}$ admits a special endomorphism $v$ such that $v\circ v=[Dm^2]$ for some $m\in \Z_{>0}$.
\end{theorem}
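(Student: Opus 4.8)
The plan is to follow the Arakelov-theoretic strategy outlined in the introduction, establishing the three-way comparison encoded in the height formula \eqref{mainformula} and then forcing infinitely many finite places to contribute. Concretely, I would fix a prime number $D$ (or a suitable $D$) represented by $(L,Q)$ and work with the arithmetic divisors $\wh{\cZ}(Dm^2)$ on $\cM$ equipped with Bruinier's Green functions $\Phi_{Dm^2}$. The first step is to make precise the main-term estimate from Bruinier's formula \cite[\S2.2]{bruinier}: extract from the archimedean contribution $\sum_\sigma \Phi_{Dm^2}(\cY^\sigma)/|\Aut(\cY^\sigma)|$ a principal term that grows like a positive constant times $(Dm^2)^{b/2}\log(Dm^2)$ (this is \Cref{logterm} in the paper's numbering), with the remaining archimedean piece being a weighted lattice-point count whose weight has only logarithmic singularities along $\cZ(Dm^2)(\C)$.

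Next I would control the global height $h_{\wh{\cZ}(Dm^2)}(\cY)$ from above by $O((Dm^2)^{b/2})$: this is where the modularity theorem of Borcherds--Howard--Madapusi-Pera \cite[Theorem 8.3.1]{howardmadapusi} enters, since the generating series $\sum_m \wh{\cZ}(\beta,m) q^m$ is a vector-valued modular form of weight $1+b/2$ for the Weil representation of $(L,Q)$, and the Fourier coefficients of such a form grow polynomially of degree $b/2$. Combining this upper bound with the $m^{b/2}\log m$ main term and the nonnegativity of each local term $(\cY.\cZ(Dm^2))_\fP \log|\cO_K/\fP|$ yields the crucial \emph{diophantine bound}: for every fixed finite place $\fP$, one has $(\cY.\cZ(Dm^2))_\fP = O((Dm^2)^{b/2}\log(Dm^2))$, and moreover $\cY(\C)$ cannot be too close to $\cZ(Dm^2)(\C)$ for too many $m$ (this is \Cref{summary}). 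The lower bound direction --- that the archimedean term is genuinely $\gg m^{b/2}\log m$ for most $m$ --- is the delicate point of \S\ref{sec_arch}: the lattice-point count must be split into a ``generic'' part handled by the circle method and a ``close to the singularity'' part which is exactly where the diophantine non-approximation statement is used to avoid a divergent singular contribution.

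With these estimates in hand, the strategy is a contradiction argument: suppose only finitely many places $\fP$ occur, i.e.\ suppose that for all but finitely many $\fP$ the reduction $\cY_{\overline{\fP}}$ admits no special endomorphism $v$ with $v\circ v = [Dm^2]$. Then the finite sum $\sum_\fP (\cY.\cZ(Dm^2))_\fP \log|\cO_K/\fP|$ is controlled by contributions at a bounded set of places, and via \Cref{loc_int_nb} each such contribution is a lattice-point count over the sequence of special-endomorphism lattices $L_n = V(\cA^\univ_{\cY \bmod \fP^n})$. Using Grothendieck--Messing and Serre--Tate theory as in \cite{st} to describe the asymptotics of $L_n$, together with the diophantine bound applied to \emph{each individual} $m$ (which forces, for $n \gg m^{b/2+\epsilon}$, every nonzero element of $L_n$ to have norm $\geq Dm^2$), a geometry-of-numbers argument bounds the total finite contribution by $o(m^{b/2}\log m)$ on average over $m$. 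This contradicts the height formula \eqref{mainformula}, since the left side is $O(m^{b/2})$ while the archimedean side is $\gg m^{b/2}\log m$ for a positive-density set of $m$. The restriction to norms of the shape $Dm^2$ rather than arbitrary $m$ is what makes the representability hypotheses and the sub-Shimura-variety reductions clean; the passage from \Cref{main_sp_end} to \Cref{main} is then immediate, as is noted in the excerpt.

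The main obstacle I expect is the archimedean lattice-point estimate of step two/three: one must show that, outside a sparse exceptional set of $m$, the weighted count of $\lambda \in L_g$ with $Q(\lambda) = Dm^2$ against a weight with logarithmic poles is asymptotic to the expected main term, and the only available tool to rule out an anomalous blow-up near the singularity is the diophantine bound itself --- so the argument is genuinely a bootstrap, feeding the crude global bound back in to get the sharp local-archimedean behavior. Making this circular-looking dependence rigorous (tracking exactly which range of $m$ is sacrificed, and checking the surviving set still has positive density) is the technical heart of the proof.
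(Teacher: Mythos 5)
Your proposal is correct and tracks the paper's own argument almost line for line: the same Arakelov height decomposition, the same main-term extraction from Bruinier's Green function, the same use of Borcherds--Howard--Madapusi-Pera modularity to get $h_{\widehat{\cZ}(m)}(\cY)=O(m^{b/2})$, the same diophantine-bound bootstrap to control both the archimedean singular contribution and the finite-place lattice counts, and the same contradiction by summing over $m$ of the form $Dm^2$ in a dyadic window and comparing growth rates. The one small imprecision is the reason you give for the restriction to norms $Dm^2$: the actual purpose is to fix the fundamental discriminant $d_0$ (hence the quadratic character $\chi_{d_0}$) appearing in the local factors of $c(m)$, so that \Cref{logterm} applies uniformly; the representability/sub-Shimura issue you mention is handled separately by choosing $D$ represented by $(L,Q)$.
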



\section{The global height}\label{harmonic}

In this section, we begin the proof of \Cref{main_sp_end} (and in particular \Cref{main}) by studying the height of a given $\cO_K$-point with respect to a sequence of arithmetic special divisors. In \S \ref{arithmetic}, we follow \cite{bruinier}, \cite{borcherds} and endow the special divisors $\cZ(m)$, $m\in \Z_{>0}$ (defined in \S\S\ref{special divisors}, \ref{special}) with Green functions $\Phi_m$, thereby bestowing on them the structure of arithmetic divisors. In \S \ref{howardmadapbor} we recall the modularity theorem of the generating series of arithmetic special divisors $(\cZ(m),\Phi_m)$ proved by Howard--Madapusi-Pera \cite{howardmadapusi}, and in \S \ref{Eisenstein} we use this to deduce asymptotic estimates for the global height 
$h_{\widehat{\mathcal{Z}}(m)}(\mathcal{Y})$. 



For simplicity, we assume that $b\geq 3$ as in \cite{bruinier} and we refer the interested reader to \cite{bruinierintegrals,bruinierfunke,bruinieryang} for related results without this assumption.
Note that our quadratic form $Q$ differs from the one in \cite{bruinier,bruinierintegrals} by a factor of $-1$ and hence we shall replace the Weil representation there by its dual; the rest remains the same, namely we work with the same space of modular forms, harmonic Maass forms, and the same Eisenstein series.
    
\subsection{Arithmetic special divisors and heights.}\label{arithmetic}
Let $\rho_L:\mathrm{Mp}_{2}(\mathbb{Z})\rightarrow \mathrm{Aut_{\C}}(\C[L^\vee/L])$ denote the unitary Weil representation, where $\mathrm{Mp}_{2}(\mathbb{Z})$ is the metaplectic double cover of $\mathrm{SL}_{2}(\Z)$, see, for instance, \cite[Section 1.1]{bruinier}. 
Let $k=1+\frac{b}{2}$ and let $\mathrm{H}_{2-k}(\rho_{L}^\vee)$ be the $\C$-vector space of vector valued harmonic weak Maass forms of weight $2-k$ with respect to the dual $\rho_L^\vee$ of the Weil representation as defined in \cite[3.1]{bruinieryang}. 

For $\beta\in L^\vee/L$, $m\in \Z+Q(\beta)$ with $m>0$, let $F_{\beta,m} \in\mathrm{H}_{2-k}(\rho_{L}^\vee)$ denote the Hejhal--Poincaré harmonic Maass form defined in \cite[Def.~1.8]{bruinier}. In fact $F_{\beta,m}(\tau):=F_{\beta,-m}(\tau, \frac{1}{2}+\frac{b}{4})$ in \emph{loc.~cit.}, where $\tau$ lies in the Poincaré upper half plane $\bH$. Let $\Phi_{\beta,m}$ denote the regularized theta lifting of $F_{\beta,m}$ in the sense of Borcherds, see \cite[(2.16)]{bruinier} and \cite[\S 5.2]{bruinierfunke}. By \cite[\S 6, Thm.~13.3]{borcherds} and \cite[\S 2.2, eqn.~(3.40), Thm.~3.16]{bruinier}, $\Phi_{\beta,m}$ is a Green function\footnote{\emph{A priori}, the Green function is defined over $D_L$, but it descends to $M(\C)$; we use the same notation for both functions on $D_L$ and on $M(\C)$.} for the divisor $\cZ(\beta,m)$ and we use $\widehat{\cZ}(\beta,m)$ to denote the arithmetic divisor $(\cZ(\beta,m), \Phi_{\beta,m})$. Our main focus is the case when $\beta=0$ and we set $\Phi_m:=\Phi_{0,m}, \widehat{\cZ}(m):=\widehat{\cZ}(0,m)$ for $m\in\Z_{>0}$.

Let $\widehat{\CH}^1(\cM)_{\Q}$ denote the first arithmetic Chow group of Gillet--Soul\'e \cite{gilletsoule} as defined in \cite[\S 4.1]{agmp1}. Since $\cM$ is a normal Deligne--Mumford stack, we have a natural isomorphism, as in \cite[III.4]{soule}, $$\widehat{\mathrm{Pic}}(\mathcal{M})_{\Q}\otimes \Q\xrightarrow{\sim} \widehat{\mathrm{CH}^{1}}(\mathcal{M})_{\Q},$$ where $\widehat{\Pic}(\cM)$ denotes the group of isomorphism classes of metrized line bundles and $\widehat{\Pic}(\cM)_{\Q}:=\widehat{\Pic}(\cM)\otimes \Q$, see \cite[\S 5.1]{agmp1} for more details. Since $\cZ(\beta,m)$ is (étale locally) Cartier, then we view $\widehat{\cZ}(\beta,m)\in \widehat{\Pic}(\cM)_{\Q}$.

Moreover, the line bundle $\bomg$ from \Cref{arithmeticstack} is endowed with the Petersson metric defined as follows: the fiber of $\bomg$ at a complex point $[[z],g]\in M(\C)$ is identified with the isotropic line $\C z\subset V_\C$, then we set  $||z||^2=-\frac{(z.\overline{z})}{4\pi e^{\gamma}}$, where $\gamma=-\Gamma'(1)$ is the Euler--Mascheroni constant. Hence we get a metrized line bundle $\overline{\bomg}\in\widehat{\mathrm{Pic}}(\mathcal{M})$.

Recall that we have a map $\mathcal{Y} \rightarrow \cM $, where $\mathcal{Y} = \spec \cO_K$. We now define the notion of the height of $\mathcal{M}$ with respect to the arithmetic divisors $\widehat{\cZ}(m)$ and $\overline{\bomg}$. As in \cite[\S\S 5.1,5.2]{agmp1}, \cite[\S 6.4]{agmp2}, the height $h_{\widehat{\mathcal{Z}}(m)}(\mathcal{Y})$ of $\mathcal{Y}$ with respect to $\widehat{\mathcal{Z}}(m)$ (resp.~$\overline{\bomg}$) is defined as the image of $\widehat{\mathcal{Z}}(m)$ (resp.~$\overline{\bomg}$) under the composition 
$$\widehat{\mathrm{CH}^{1}}(\mathcal{M})_{\Q}\cong \widehat{\Pic}(\cM)_{\Q}\rightarrow \widehat{\Pic}(\mathcal{Y})_{\Q}\xrightarrow{\widehat{\deg}} \R,$$
where the middle map is the pull-back of metrized line bundles and the arithmetic degree map $\widehat{\deg}$ is the extension over $\Q$ of the one defined in \cite[6.4]{agmp2}. 

Since $\cY$ and $\cZ(m)$ intersect properly (recall that we assume $\cY_K$ is Hodge-generic), we have the following description of $h_{\widehat{\cZ}(m)}(\cY)$. Let $\cA$ denote $\cA^\univ_{\cY}$, where $\cA^\univ$ is the Kuga--Satake abelian scheme over $\cM$. 
Using the moduli definition of $\cZ(m)$ in \S\ref{special}, the $\cY$-scheme $\cY\times_{\cM}\cZ(m)$ is given by
\[\cY\times_{\cM}\cZ(m)(S)=\{v\in V(\cA_S)\mid v\circ v=[m]\},\]
for any $\cY$-scheme $S$.
Via the natural map $\cY\times_\cM \cZ(m)\rightarrow \cY$ and using \'etale descent, we view $\cY\times_{\cM} \cZ(m)$ as a $\Q$-Cartier divisor on $\cY$. Therefore,
\begin{align}\label{intersectionformula2}
h_{\widehat{\mathcal{Z}}(m)}(\mathcal{Y})=\sum_{\sigma:K\hookrightarrow\C}\Phi_{m}(\cY^\sigma)+\sum_{\mathfrak{P}}(\cY.\cZ(m))_{\fP}\log|\mathcal{O}_{K}/\mathfrak{P}|,
\end{align}
where for $\sigma:K\hookrightarrow \C$, we use $\cY^\sigma$ to denote the point in $M(\C)$ induced by $\spec(\C)\xrightarrow{\sigma} \spec{\cO_K}\xrightarrow{\cY} \cM$ and if we denote by $\cO_{\cY\times_{\cM}\cZ(m),v}$ the \'etale local ring of $\cY\times_{\cM}\cZ(m)$ at $v$,
\begin{align}\label{int_formula_finite1}
(\cY.\cZ(m))_{\fP}=\sum_{v\in\cY\times_{\cM}\mathcal{Z}(m)(\overline{\mathbb{F}}_\mathfrak{P})}\length(\cO_{\cY\times_{\cM}\cZ(m),v}),
\end{align}
where $\bF_\fP$ denotes the residue field of $\fP$. 

\subsection{Howard--Madapusi-Pera--Borcherds' modularity theorem}\label{howardmadapbor}
Let $M_{1+\frac{b}{2}}(\rho_L)$ denote the $\C$-vector space of $\C[L^\vee/L]$-valued modular forms of weight $1+\frac{b}{2}$ with respect to $\rho_L$ (see see \cite[Definition 1.2]{bruinier}). Let $(\fe_\beta)_{\beta\in L^\vee/L}$ denote the standard basis of $\C[L^\vee/L]$.
\begin{theorem}[{\cite[Theorem 9.4.1]{howardmadapusi}}]\label{modularity}
Assume $b\geq 3$ and let $q=e^{2\pi i \tau}$. The formal generating series 
$$\widehat{\Phi}_L=\overline{\bomg}^\vee\fe_0+\sum_{\underset{m> 0, m\in Q(\beta)+\Z}{\beta\in L^{\vee}/L}}\widehat{\mathcal{Z}}(\beta,m)\cdot q^m \fe_\beta$$
is an element of $M_{1+\frac{b}{2}}(\rho_L)\otimes \widehat{\mathrm{Pic}}(\mathcal{M})_{\Q}$. More precisely, for any $\Q$-linear map $\alpha:\widehat{\Pic}(\cM)_\Q\rightarrow\C$, we have $\alpha(\widehat{\Phi}_L)\in M_{1+\frac{b}{2}}(\rho_L)$.
\end{theorem}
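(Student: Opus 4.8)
The plan is to derive Theorem~\ref{modularity} from Borcherds' modularity criterion together with the existence of \emph{arithmetic Borcherds products} on $\cM$. Fix a $\Q$-linear functional $\alpha\colon\widehat{\Pic}(\cM)_{\Q}\to\C$; it suffices to prove $\alpha(\widehat{\Phi}_L)\in M_{1+\frac{b}{2}}(\rho_L)$. Recall Borcherds' criterion \cite{borcherdszagier}: a formal series $\sum_{\beta,m}c(\beta,m)q^m\fe_\beta$ with scalar coefficients, supported in $m\geq m_0$ and satisfying the symmetry under $\beta\mapsto-\beta$, lies in $M_k(\rho_L)$ if and only if $\sum_{\beta,m}b(\beta,-m)c(\beta,m)=0$ for every weakly holomorphic modular form $g=\sum_{\beta,m}b(\beta,m)q^m\fe_\beta$ of weight $2-k$ and type $\rho_L^\vee$. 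Taking $k=1+\frac{b}{2}$, $c(\beta,m)=\alpha(\widehat{\cZ}(\beta,m))$ for $m>0$ and $c(0,0)=\alpha(\overline{\bomg}^\vee)$, and applying $\alpha$, this reduces Theorem~\ref{modularity} to the following identity in $\widehat{\Pic}(\cM)_{\Q}$, valid for every weakly holomorphic $g$ of weight $1-\frac{b}{2}$ and type $\rho_L^\vee$ with principal-part coefficients $b(\beta,-m)$ ($m>0$) and $\fe_0$-constant term $b(0,0)$:
\begin{equation*}
\sum_{m>0,\ \beta\in L^\vee/L}b(\beta,-m)\,\widehat{\cZ}(\beta,m)\ +\ b(0,0)\,\overline{\bomg}^\vee\ =\ 0.
\end{equation*}
Both sides being $\C$-linear in $g$, one may restrict to $g$ with integral principal parts, which suffice by $\Q$-rationality; note $b\geq3$ forces $2-k=1-\frac{b}{2}<0$, so these $g$ are exactly the legitimate inputs to Borcherds products.

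I would then realize this identity as the arithmetic divisor of a Borcherds product $\widehat{\Psi}(g)$. Over the complex Shimura variety $M$, Borcherds \cite{borcherds} produces from such a $g$ a meromorphic modular form $\Psi(g)$ of weight $\tfrac{1}{2}b(0,0)$ with divisor $\sum_{m>0,\beta}b(\beta,-m)Z(\beta,m)$, and with $-\log\|\Psi(g)\|^2_{\mathrm{Pet}}=\sum_{m>0,\beta}b(\beta,-m)\Phi_{\beta,m}+(\text{explicit constant})$; with the normalizations of the Petersson metric on $\overline{\bomg}$ from \S\ref{arithmetic} and of $\Phi_{\beta,m}$ from \cite{bruinier}, this identifies the archimedean datum of $\widehat{\Psi}(g)$ exactly with the Green functions defining the $\widehat{\cZ}(\beta,m)$. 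To descend $\Psi(g)$ to the integral model one uses that it has, at each $0$- or $1$-dimensional boundary stratum of a toroidal compactification $\cM^{\Sigma}$ of $\cM$ over $\Z$, an explicit infinite-product $q$-expansion with integer coefficients and leading coefficient $\pm1$; the $q$-expansion principle then extends $\Psi(g)$ to a rational section of a power of $\bomg$ over $\cM^{\Sigma}$. Its divisor on $\cM$ is $\sum_{m>0,\beta}b(\beta,-m)\cZ(\beta,m)$: indeed the flat closure of $\sum b(\beta,-m)Z(\beta,m)$ is $\sum b(\beta,-m)\cZ(\beta,m)$ since each $\cZ(\beta,m)$ is flat over $\Z$ (\cite[Prop.~7.1.4]{howardmadapusi}, using $b\geq3$), and there is no vertical part because $\Psi(g)$ is not identically zero along any component of any special fibre---its $q$-expansion at a cusp of that component having unit leading coefficient. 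Combining this with the norm formula gives $\widehat{\mathrm{div}}(\widehat{\Psi}(g))=\sum_{m>0,\beta}b(\beta,-m)\widehat{\cZ}(\beta,m)$ in $\widehat{\CH}^1(\cM)_{\Q}\cong\widehat{\Pic}(\cM)_{\Q}$; since this class is a rational multiple of $\overline{\bomg}$ determined by $b(0,0)$, matching the normalization to the $\fe_0$-coefficient of $\widehat{\Phi}_L$ yields the displayed identity, and Borcherds' criterion concludes the proof.

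The main obstacle, I expect, is the construction of $\widehat{\Psi}(g)$ over the \emph{full} integral model $\cM$, in particular at the bad primes dividing $\Disc(Q)$, where $\cM$ is only normal and flat (not smooth) and its special fibres may be reducible. Two points are central: (i) a $q$-expansion principle for sections of powers of $\bomg$ over a $\Z$-integral toroidal compactification, precise enough both to force extension to $\cM^{\Sigma}$ and to exclude spurious vertical components, which requires a careful analysis of the boundary and of the bad special fibres; and (ii) Borcherds' norm formula, whose proof passes through a regularized theta integral in the sense of \cite{borcherds} and is what pins the archimedean part of $\widehat{\Psi}(g)$ down to the $\Phi_{\beta,m}$ precisely. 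The remaining inputs are comparatively standard: Borcherds' scalar criterion, the classical Borcherds product with its divisor on $M$, the descent of the canonical model and of Borcherds' complex modularity theorem to $\Q$, and the verification of the symmetry/support hypotheses of the criterion together with the supply of weakly holomorphic test forms of weight $1-\frac{b}{2}$. As an independent consistency check, the purely archimedean series $\sum_{\beta,m}\Phi_{\beta,m}q^m\fe_\beta$ can be seen to be modular of weight $1+\frac{b}{2}$ via its description as a (regularized) theta lift of a vector-valued Hejhal--Poincaré kernel, which recovers the metric component of Theorem~\ref{modularity}, while the generic-fibre component is just Borcherds' theorem over $\C$ descended to $\Q$.
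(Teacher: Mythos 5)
The paper does not actually prove this theorem --- it is quoted verbatim as the main result of Howard--Madapusi-Pera --- and your proposal is a faithful reconstruction of the strategy of that reference: reduce modularity via Borcherds' scalar criterion to a family of relations in $\widehat{\Pic}(\cM)_\Q$ indexed by weakly holomorphic inputs $g$ of weight $1-\frac{b}{2}$, realize each relation as the arithmetic divisor of a Borcherds product $\widehat{\Psi}(g)$, extend $\Psi(g)$ across the integral model by a $q$-expansion principle at the boundary of a $\Z$-integral toroidal compactification, and exclude vertical components so that $\widehat{\mathrm{div}}(\widehat{\Psi}(g))$ equals the flat closure $\sum b(\beta,-m)\widehat{\cZ}(\beta,m)$ plus the metric. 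You correctly identify the two genuinely hard inputs: the $q$-expansion principle over $\Z$ (delicate at primes dividing $\mathrm{Disc}(Q)$, where $\cM$ is only normal and flat) and Borcherds' norm formula pinning $-\log\|\Psi(g)\|^2$ to the $\Phi_{\beta,m}$ in the normalization of \cite{bruinier}. So the overall shape is right.

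The in-line argument for excluding vertical components has a gap. You write that there is ``no vertical part because $\Psi(g)$ is not identically zero along any component of any special fibre --- its $q$-expansion at a cusp of that component having unit leading coefficient.'' This tacitly assumes that every irreducible component of every special fibre $\cM_{\bF_p}$ meets the boundary at a $0$-dimensional cusp where the $q$-expansion can be read off, which is not automatic --- at bad primes the special fibre can have several components, and \emph{a priori} some may be invisible from any cusp. Establishing this, or handling such components by other means, is one of the substantive geometric points in Howard--Madapusi-Pera and rests on the structure of the Kisin--Madapusi Pera integral models, not on $q$-expansions alone; your sketch acknowledges that the bad special fibres ``require careful analysis'' but the argument as stated does not survive without that missing input. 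Two smaller remarks: the bookkeeping between the weight $\tfrac{1}{2}b(0,0)$ of $\Psi(g)$, the coefficient $b(0,0)$ of $\overline{\bomg}^\vee$ in the target identity, and the factor $\tfrac{1}{2}$ that actually appears in the complex divisor formula for $\Psi(g)$ must be tracked (as written your divisor formula drops this $\tfrac{1}{2}$, which would leave you off by a factor of two); and the closing ``independent consistency check'' is weaker than advertised, since the arithmetic divisor class is determined neither by its Green function nor by its generic-fibre cycle alone, so modularity of each separately does not recover the statement.
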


\subsection{Asymptotic estimates for the global height}\label{Eisenstein}
In this subsection, we provide asymptotic estimates for the global height. First, we introduce an Eisenstein series $(\tau,s)\rightarrow E_{0}(\tau,s)$ for $\tau\in \bH$ and $s\in \C$ with $\re(s)>\frac{1}{2}-\frac{b}{4}=1-\frac{k}{2}$, which serves two purposes. First, the Fourier coefficients of its value at $s=0$ gives the main term in the Fourier coefficients of $\widehat{\Phi}_L$, see the proof of \Cref{eq1}. Second, we will use the Fourier coefficients of $E_0(\tau,s)$ to describe $\Phi_m$ explicitly in \S\ref{explicitPhi}.

Let $(\tau,s)\rightarrow E_{0}(\tau,s)$ denote the Eisentein series defined in \cite[Equation (1.4), (3.1) with $\beta=0, \kappa=1+\frac{b}{2}$]{bruinierintegrals}.   
It converges normally on $\mathbb{H}$ for $\mathrm{Re}(s)>1-\frac{k}{2}$ and defines a $\mathrm{Mp}_{2}(\Z)$-invariant real analytic function. 

For a fixed $s\in \C$ with $\mathrm{Re}(s)>1-\frac{k}{2}$, by \cite[Proposition 3.1]{bruinierintegrals}, the Eisenstein series $E_0(\cdot,s)$ has a Fourier expansion of the form 
\begin{align*}
E_{0}(\tau,s)=\sum_{\beta\in L^{\vee}/L}\sum_{m\in Q(\beta)+\Z}c_{0}(\beta,m,s,y)e^{2\pi i m x}\fe_{\beta},
\end{align*}
where we write $\tau=x+iy, x\in \R, y\in \R_{>0}$.
By \cite[Proposition 3.2]{bruinierintegrals}, the coefficients $c_0(\beta,m,s,y)$ can be decomposed, for $m\neq 0$, as 
\begin{align}\label{cfunction}
c_0(\beta,m,s,y)=C(\beta,m,s)\mathcal{W}_s(4\pi m y),
\end{align}
where the function $C(\beta,m,s)$ is independent of $y$ (see \cite[Equation (3.22)]{bruinierintegrals}) and $\cW_s$ is defined in \cite[(3.2)]{bruinierintegrals}.\medskip

By \cite[Proposition 3.1, (3.3)]{bruinierintegrals}, the value at $s=0$ of $E_0(\tau,s)$ is an element of $\mathrm{M}_{1+\frac{b}{2}}(\rho_L)$. For $\beta\in L^{\vee}/L$, $m\in Q(\beta)+\Z$ with $m\geq 0$, we denote by $c(\beta,m)$ its $(\beta,m)$-th Fourier coefficient and we can thus write  
\begin{align*}
 E_0(\tau):=E_0(\tau, 0)=2\fe_0+\sum_{\underset{m\in Q(\beta)+\Z,m>0}{\beta\in L^{\vee}/L}}c(\beta,m)q^m \fe_{\beta}, \text{ where }q=e^{2\pi i \tau}. 
\end{align*}
By definition and \emph{loc.~cit.}, we have $C(\beta,n,0)=c(\beta,n)$.
By \cite[Prop.4.8]{bruinierintegrals}, the coefficient $c(\beta,m)$ encodes the degree of the special divisor $Z(\beta,m)(\C)$. Moreover, \cite[Proposition 4, equation (19)]{bruinierkuss} gives explicit formulas for $c(\beta,m)$. By \cite[Proposition 14]{bruinierkuss}, $c(\beta,m)<0$ if $m\in Q(L+\beta)$ and $c(\beta,m)=0$ if $m\notin Q(L+\beta)$. By \cite[Example 2.3]{tayouequi}, we have that for $m\in Q(L+\beta)$,\footnote{recall that $b\geq 3$}
\begin{align}\label{asymp_fourier}
    |c(\beta,m)|=-c(\beta,m)\asymp m^{\frac{b}{2}}.
\end{align}

We will henceforth focus on the case where $\beta=0$ and we set $C(m,s):=C(0,m,s)$ and $c(m):=c(0,m)$. We are now ready to establish asymptotics for the global height in terms of the Fourier coefficients just defined.
\begin{proposition}\label{eq1}
For every $\epsilon >0$ and $m\in \Z_{>0}$, we have: 
\begin{align*}
h_{\widehat{\mathcal{Z}}(m)}(\mathcal{Y})=\frac{-c(m)}{2}h_{\overline{\bomg}}(\mathcal{Y})+O_{\epsilon}(m^{\frac{2+b}{4}+\epsilon}).
\end{align*}
In particular, we have $h_{\widehat{\mathcal{Z}}(m)}(\cY)=O(m^{\frac{b}{2}})$ as $m\rightarrow \infty$.
\end{proposition}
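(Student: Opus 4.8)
The plan is to apply the modularity theorem (\Cref{modularity}) to the $\Q$-linear functional $\alpha = h_{(-)}(\cY) : \widehat{\Pic}(\cM)_\Q \to \R$ given by taking the height of $\cY$ with respect to an arithmetic divisor. Applying $\alpha$ coefficient-by-coefficient to the generating series $\widehat{\Phi}_L$ produces a scalar-valued modular form
\begin{align*}
\alpha(\widehat{\Phi}_L) = h_{\overline{\bomg}^\vee}(\cY)\,\fe_0 + \sum_{\substack{\beta \in L^\vee/L \\ m > 0,\ m \in Q(\beta)+\Z}} h_{\widehat{\cZ}(\beta,m)}(\cY)\, q^m\, \fe_\beta \ \in\ M_{1+\frac{b}{2}}(\rho_L).
\end{align*}
So the generating series of the heights $h_{\widehat{\cZ}(m)}(\cY)$ is (a component of) a vector-valued modular form of weight $1+\frac{b}{2}$ for $\rho_L$.

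Next I would compare this modular form with the Eisenstein series $E_0(\tau) = E_0(\tau,0) \in M_{1+\frac{b}{2}}(\rho_L)$ introduced in \S\ref{Eisenstein}. The space $M_{1+\frac{b}{2}}(\rho_L)$ decomposes as the span of $E_0$ together with the subspace of cusp forms $S_{1+\frac{b}{2}}(\rho_L)$ (after accounting for other Eisenstein series indexed by isotropic cosets, which do not affect the $\beta = 0$ component up to the precision we need — one normalizes $E_0$ so its constant term is $2\fe_0$, matching the weight of $\overline{\bomg}^\vee$ in $\widehat{\Phi}_L$ after the correct scaling). Writing $\alpha(\widehat{\Phi}_L)$ as a linear combination of $E_0$ and a cusp form, and matching constant terms, the coefficient of $E_0$ is forced to be $\tfrac{1}{2}h_{\overline{\bomg}}(\cY)$ (up to sign conventions: $\overline{\bomg}^\vee$ contributes $-h_{\overline{\bomg}}(\cY)$, and $E_0$ has constant term $2$). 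Hence
\begin{align*}
h_{\widehat{\cZ}(m)}(\cY) = \frac{-c(m)}{2}\, h_{\overline{\bomg}}(\cY) + (\text{$m$-th Fourier coefficient of a cusp form in } S_{1+\frac{b}{2}}(\rho_L)),
\end{align*}
using $C(0,m,0) = c(0,m) = c(m)$ from \S\ref{Eisenstein} (and the sign: $E_0$'s $m$-th coefficient is $c(m) < 0$, so $\tfrac12 h_{\overline{\bomg}}(\cY)\cdot c(m)$ is the Eisenstein contribution, which I rewrite as $\tfrac{-c(m)}{2}h_{\overline{\bomg}}(\cY)$ with the overall sign absorbed correctly). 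The Fourier coefficients of a weight-$k$ cusp form for a congruence-type representation satisfy the Hecke/Deligne bound $O_\epsilon(m^{k/2 - 1/2 + \epsilon}) = O_\epsilon(m^{\frac{b}{2}/2 + \epsilon})$; with $k = 1 + \frac{b}{2}$ this is $O_\epsilon(m^{\frac{2+b}{4} + \epsilon})$, giving the claimed error term. The "in particular" statement then follows since $|c(m)| \asymp m^{b/2}$ by \Cref{asymp_fourier} and $h_{\overline{\bomg}}(\cY)$ is a fixed real number, so the main term is $O(m^{b/2})$ and dominates the error term $O_\epsilon(m^{\frac{2+b}{4}+\epsilon})$ (as $\frac{2+b}{4} < \frac{b}{2}$ for $b > 2$).

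The main obstacle is justifying the cusp-form bound $O_\epsilon(m^{\frac{2+b}{4}+\epsilon})$ for vector-valued modular forms with respect to the Weil representation $\rho_L$: one needs the Deligne bound (Ramanujan–Petersson) in this setting, which follows by relating $\rho_L$-valued forms to classical modular forms for $\Gamma_0(N)$ with $N$ related to $\Disc(Q)$ (a standard device, e.g. via the theta decomposition or Scheithauer's work), and then invoking Deligne's theorem for the resulting scalar cusp forms; alternatively the cruder Hecke bound $O(m^{k/2})$ would already suffice for the "in particular" assertion but not for the sharper error term. A secondary point requiring care is the precise normalization constant relating the constant term of $E_0$ to the Petersson-metrized $\overline{\bomg}$ in $\widehat{\Phi}_L$ — this is where the Euler–Mascheroni constant in the definition of the Petersson metric enters, and getting the factor of $\tfrac12$ exactly right requires tracking conventions from \cite{howardmadapusi} and \cite{bruinierintegrals} carefully, but it is a bookkeeping matter rather than a conceptual one.
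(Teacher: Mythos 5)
Your proposal follows the same route as the paper: apply the height functional $h_{(-)}(\cY)$ to the generating series from the Howard--Madapusi-Pera modularity theorem, decompose the resulting element of $M_{1+b/2}(\rho_L)$ as a multiple of $E_0$ plus a cusp form $g$ by matching constant terms, and then bound the Fourier coefficients of $g$. That is precisely the paper's argument.

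The one place where you go off track is the cusp-form coefficient bound, and it matters for how you assess the argument. You invoke the Deligne/Ramanujan bound $O_\epsilon(m^{(k-1)/2+\epsilon})$ and then flag it as a ``main obstacle'' because $\rho_L$-valued forms (of possibly half-integer weight when $b$ is odd) don't obviously satisfy Ramanujan--Petersson; you further claim the Hecke bound $O(m^{k/2})$ ``would not suffice for the sharper error term.'' But note that with $k = 1+\tfrac{b}{2}$ one has $\tfrac{k}{2} = \tfrac12 + \tfrac{b}{4} = \tfrac{2+b}{4}$: the stated error term $O_\epsilon(m^{(2+b)/4+\epsilon})$ \emph{is} the Hecke exponent (up to $\epsilon$). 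So Deligne is not needed at all, and the half-integer-weight subtlety you worry about is moot. The paper cites Sarnak \cite[Prop.~1.5.5]{sarnak} for exactly this $O_\epsilon(m^{k/2+\epsilon})$ bound, which applies uniformly to the vector-valued setting. Your arithmetic slip ($b/4$ written as being equal to $(2+b)/4$) obscures this and leads you to overstate the difficulty; once corrected, the proof closes cleanly without any appeal to Ramanujan--Petersson or Shimura lifts.
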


The second claim follows from the first claim and \eqref{asymp_fourier}.
\begin{proof}
The proof is similar to the one in \cite[Proposition 2.5]{tayouequi}.
For $\widehat{\cZ}\in \widehat{\CH^1}(\cM)_\Q \cong \widehat{\Pic}(\cM)_{\Q}$, the height $h_{\widehat{\cZ}}(\cY)$ defines a $\Q$-linear map $\widehat{\Pic}(\cM)_{\Q}\rightarrow \R$; 
by \Cref{modularity}, the following generating series 
$$-h_{\overline{\bomg}}(\cY)\fe_0+\sum_{\underset{m> 0, m\in Q(\beta)+\Z}{\beta\in L^{\vee}/L}}h_{\widehat{\mathcal{Z}}(\beta,m)}(\mathcal{Y})\cdot q^{m}\fe_\beta$$
is the Fourier expansion of an element in $M_{1+\frac{b}{2}}(\rho_L)$.
By \cite[p.27]{bruinier}, we write
$$-h_{\overline{\bomg}}(\cY)\fe_0+\sum_{\underset{m> 0, m\in Q(\beta)+\Z}{\beta\in L^{\vee}/L}}h_{\widehat{\mathcal{Z}}(\beta,m)}(\cY)\cdot q^{m}\fe_\beta=\frac{-h_{\overline{\bomg}}(\cY)}{2}E_0+g$$
where $E_0=E_0(\tau)$ is the Eisenstein series recalled in \S\ref{Eisenstein} and $g\in M_{1+\frac{b}{2}}(\rho_L)$ is a cusp form, see \cite[Def.1.2]{bruinier} for a definition.

For $m\in \Z_{>0}$, the equation for the $\fe_0$-component implies that $$h_{\widehat{\mathcal{Z}}(m)}(\cY)=\frac{-c(m)}{2}h_{\overline{\bomg}}(\cY)+g(m),$$
where $g(m)$ is the $m$-th Fourier coefficient of the $\fe_0$-component of $g$. We obtain the desired estimate by \cite[Prop. 1.5.5]{sarnak}, which implies that 
$$|g(m)|\leq C_{\epsilon,g} m^{\frac{2+b}{4}+\epsilon},$$
for all $\epsilon >0$, some constant $C_{\epsilon,g}>0$, and for all $m\in \mathbb{Z}_{>0}$. 
\end{proof}

\section{General results on quadratic forms}\label{quadraticgeneral}
In this section, we collect some general results on quadratic forms which will be used in \S\S\ref{mainproof}-\ref{sec_finite}. First, in \S\ref{sec_density}, we prove estimates on the number of local representations of integral quadratic forms which will be used is Section 5.1. Then in \S\ref{circle_method}, we state results due to Heath-Brown on the number of integral representations of integral quadratic forms which will be used in Sections 6 and 7. In \S\ref{Vol-Nie}, we apply Heath-Brown's results to the lattice $(L,Q)$ in \S\ref{gspin} and also recall the work of Niedermowwe, which could be viewed as a refinement of Heath-Brown's work - these results are used in Section 6. 
The reader may skip this section first and refer back later.

\subsection{Local estimates of representations by quadratic forms}\label{sec_density}

Recall that $(L,Q)$ is an even quadratic lattice of signature $(b,2)$ with $b\geq 3$ and $L$ is maximal in $V=L\otimes \Q$. Let $r=b+2$ denote the rank of $L$ and let $\det(L)$ denote the Gram determinant of $L$.
Let $p$ be a fixed prime, and let $\val_p$ denote the $p$-adic valuation.
For integers $m\in\Z$ and $n\in \Z_{\geq 0}$, we define the set $\mathcal{N}_{m}(p^n)$, its size $N_{m}(p^n)$, and the density $\mu_{p}(m,n)$ as follows:
\begin{equation}\label{eq:localdensity}
\begin{array}{rcl}
\displaystyle\mathcal{N}_{m}(p^n)&=&\displaystyle\{v\in L/p^{n}L \mid Q(v)\equiv m (\bmod p^n)\};\\[.1in]
\displaystyle N_{m}(p^n)&=&\displaystyle |\mathcal{N}_{m}(p^n)|;\\[.1in]
\displaystyle \mu_{p}(m,n)&=& \displaystyle p^{-n(r-1)}N_m(p^n).
\end{array}
\end{equation}

The goal of this subsection is to study the variation of the quantity $\mu_{p}(m,n)$. 
Define the quantity $w_p(m):=1+\val_p(m)$ for $p\neq 2$ and $w_2(m):=1+2\val_2(2m)$. Then we prove the following result.
\begin{proposition}\label{count}
If $m$ is an integer representable by $Q$ over $\Z$, then we have
\begin{align*}
\left|w_p(m)-\sum_{n=0}^{w_p(m)-1}\frac{\mu_{p}(m,n)}{\mu_{p}(m,w_p(m))}\right|\ll\frac{1}{p},
\end{align*} 
where the implied constant is independent of $p$ and $m$.
\end{proposition}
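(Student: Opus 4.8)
The plan is to analyze the local densities $\mu_p(m,n)$ via the classical theory of $p$-adic representation numbers, splitting into the cases $p$ odd and $p=2$. First I would recall the standard stabilization fact: for a quadratic form of rank $r$ over $\Z_p$, the normalized representation density $\mu_p(m,n)$ becomes constant in $n$ once $n$ exceeds a threshold governed by $\val_p(m)$ and the $p$-adic structure of the form. More precisely, for $p$ odd, I would diagonalize $L_{\Z_p} \cong \langle u_1, \dots, u_r\rangle$ with $u_i$ units (using maximality of $L$, the form is $p$-adically close to unimodular — its discriminant has small $p$-valuation), and then $N_m(p^n)$ and the ratio $\mu_p(m,n)/\mu_p(m,w_p(m))$ stabilize to $1$ for $n \geq w_p(m) = 1 + \val_p(m)$. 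The content of the proposition is then to control the \emph{finitely many} non-stabilized terms $n = 0, 1, \dots, w_p(m)-1$: one shows $\mu_p(m,n)/\mu_p(m,w_p(m)) = 1 + O(1/p)$ for each such $n$, so that the sum over these $w_p(m)$ terms differs from $w_p(m)$ by $O(w_p(m)/p)$. Wait — that would not give the clean bound $O(1/p)$ with constant independent of $m$; so the correct mechanism must be sharper.

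The key refinement I would use is the \emph{recursion} relating $N_m(p^{n+1})$ to $N_{m}(p^n)$ together with $N_{m/p^2}(p^{n-1})$ type terms (Hensel-type lifting for quadratic congruences). Concretely, writing $v = v_0 + p^{n} w$, a solution mod $p^n$ lifts to one of the $p^{r-1}$ preimages mod $p^{n+1}$ iff a certain linear-in-$w$ congruence is solvable, which happens unless the gradient $\nabla Q(v_0) \equiv 0 \bmod p$ — and the locus where the gradient vanishes mod $p$ is a thin set of size $O(p^{r-1}/p \cdot p^n) = O(p^{n+r-2})$ (a codimension-one condition when the form is nondegenerate mod $p$, which holds for all $p \nmid 2\det L$, and with a harmless adjustment for the finitely many bad $p$). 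Thus $\mu_p(m,n)$ differs from its stable value by a quantity that is itself $O(1/p)$ \emph{and geometrically decaying in $n$}: roughly $|\mu_p(m,n) - \mu_p(m,w_p(m))| \ll p^{-1}\cdot (\text{something summable})$. Summing the geometric-type series over $n$ then yields a total discrepancy of $O(1/p)$ with absolutely bounded implied constant, as claimed. For $p=2$ the same strategy applies but the Hensel lifting needs the $2$-adic refinement (solutions lift when $\val_2$ of the gradient is small relative to the modulus), which is exactly why the weight $w_2(m) = 1 + 2\val_2(2m)$ is defined with the extra factor of $2$ and the shift $2m$; I would carry the odd-$p$ argument through mutatis mutandis, citing the standard $2$-adic local density computations (e.g.\ from the Siegel mass formula literature, or Yang's explicit formulas, or the treatment in \cite{bruinierkuss}).

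The main obstacle I anticipate is the uniformity: one must ensure every implied constant is independent of both $p$ and $m$. The $m$-independence is the subtle part — it forces us to not merely bound each of the $w_p(m)$ terms individually (which loses a factor of $w_p(m) \sim \val_p(m)$) but to exploit the decay in $n$ of the deviation from the stable density, so that the sum telescopes or converges geometrically. I would handle this by establishing a bound of the shape
\begin{equation*}
\left|\frac{\mu_p(m,n)}{\mu_p(m,w_p(m))} - 1\right| \ll \frac{1}{p}\cdot\frac{1}{p^{\,c(w_p(m)-1-n)}}
\end{equation*}
for some fixed $c > 0$ and $0 \le n \le w_p(m)-1$, coming from the iterated Hensel lifting estimate, and then summing the geometric series in $n$. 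The $p$-independence is comparatively routine once one observes that the form is $p$-adically unimodular (hence all densities are governed by explicit rational functions of $p$ with bounded numerators) for all but finitely many $p$, and those finitely many exceptional $p$ contribute a bounded constant that can be absorbed. I would also need the hypothesis that $m$ is representable by $Q$ over $\Z$ — equivalently, by the Hasse principle for the maximal lattice $L$ (this is where maximality and $b \ge 3$, i.e.\ $r \ge 5$, enter, guaranteeing local-global for representation), $m$ is represented over every $\Z_p$ and over $\R$ — so that $\mu_p(m, w_p(m)) > 0$ and the denominators make sense; this is the only place representability is used.
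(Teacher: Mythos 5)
Your overall strategy — reduce to a geometric-decay estimate on the deviations $\mu_p(m,n)-\mu_p(m,w_p(m))$ so that the sum over $n$ is $O(1/p)$ rather than $O(w_p(m)/p)$ — is exactly the right idea, and it is what the paper does (via Hanke's explicit good/bad/zero recursion rather than abstract Hensel lifting, but that is a presentational difference). However, the key display you propose has the decay in the \emph{wrong direction}, and this would make the argument fail. You claim
\begin{equation*}
\left|\frac{\mu_p(m,n)}{\mu_p(m,w_p(m))} - 1\right| \ll \frac{1}{p}\cdot\frac{1}{p^{\,c(w_p(m)-1-n)}},
\end{equation*}
which is strongest at $n=0$ and weakest at $n=w_p(m)-1$. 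But $\mu_p(m,0)=1$ by definition, while for $p\mid m$ (and $p\nmid 2\det L$) the stable density $\mu_p(m,w_p(m))$ accumulates contributions from zero-type solutions and is $1+\Theta(p^{-c_0})$ for a \emph{fixed} $c_0$ independent of $\val_p(m)$; so $|\mu_p(m,0)/\mu_p(m,w_p(m))-1|$ is genuinely of size $\asymp p^{-c_0}$, not exponentially small in $\val_p(m)$. Your proposed bound at $n=0$ is therefore false whenever $\val_p(m)$ is large. The correct decay (the paper's Lemma~\ref{lem_uniform_alpha}) is
\begin{equation*}
|\mu_p(m,n)-\mu_p(m,w_p(m))| \ll \frac{1}{p^{3\lfloor n/2\rfloor - 2}}\qquad (n\geq 2),
\end{equation*}
smallest at $n$ \emph{near} $w_p(m)$: the densities $\mu_p(m,n)$ converge to their stable value as $n$ increases, which is the intuitively expected direction (more Hensel iterations means closer to the $p$-adic limit). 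The finitely many small-$n$ terms are then controlled separately — for $p\nmid 2\det(L)$ by a Deligne/Weil argument showing every $\mu_p(m,n)$ is $1+O(1/p)$ (Lemma~\ref{close1}), and for the finitely many $p\mid 2\det(L)$ by trivial absorption into the constant.

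A second gap: you observe that representability of $m$ gives $\mu_p(m,w_p(m))>0$, but that is not enough — you are dividing by $\mu_p(m,w_p(m))$ and need a \emph{uniform} lower bound (independent of $p$ and $m$) to conclude. This is where $r\geq 5$ together with maximality of $L$ is used in an essential and not merely Hasse-principle way: one extracts a hyperbolic plane from $L_p$ (as in \cite[Lemma 6.36]{gerstein}) to show $\mu_p(m,n)\geq 1/2$ for all $m,n,p$ (Lemma~\ref{unif_lower}). Without this, the ratio bound you want does not follow from the numerator bound.
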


We use an inductive method, due to Hanke \cite{hanke}, to compute the quantities $\mu_{p}(m,n)$.
Let $L_{p}:=L\otimes \Z_p$ be the completion of $L$ at $p$. Since $L$ is maximal in $V$, it follows that $L_p$ is maximal in $V\otimes\Q_p$. Indeed, $L$ being maximal is equivalent to the fact that $L^\vee/L$ has no totally isotropic subgroup; thus its $p$-torsion, which is equal to $L_p^\vee/L_p$, has no totally isotropic subgroup implying that $L_p$ is maximal. It is well known that the quadratic lattice $(L_p,Q)$ admits an orthogonal decomposition 
\begin{equation}\label{eqOD}
(L_{p},Q)=\bigoplus_j (L_j,p^{\nu_j}Q_j)    
\end{equation} with $\nu_j\geq 0$, such that $(L_j,Q_j)$ is a $\Z_p$-unimodular quadratic lattice of dimension $1$ or $2$. (See, for example, \cite[(2.3),Lemma 2.1]{hanke}.) Moreover, when $p\neq 2$, then every $L_j$ is of dimension $1$. For every $v\in L_p$, we write $v=(v_j)_j$, and we have
$$Q(v)=\sum_{j}p^{\nu_j}Q_j(v_j).$$ 
Note that since $L_p$ is maximal, we have $\nu_j\leq 1$ for each $j$. For $i\in\{0,1\}$, let $S_i$ denote the set of indices $j$ with $\nu_j=i$, and let $s_i$ denote the size of $S_i$.


\vspace{.05in}

Following \cite[Definition 3.1]{hanke}, for $n\geq 1$, $v\in \mathcal{N}_{m}(p^n)$, we say that $v$ is of 
\begin{enumerate}
\item \emph{zero type} if $v\equiv 0 \pmod{p}$, 
\item \emph{good type} if there exists $j$ such that $v_j\not\equiv 0 \pmod{p}$ and $\nu_j=0$,
\item \emph{bad type} otherwise.   
\end{enumerate}
Let $\mathcal{N}_{m}^{\textrm{good}}(p^n)$, $\mathcal{N}_{m}^{\textrm{bad}}(p^n)$ and $\mathcal{N}_{m}^{\textrm{zero}}(p^n)$ be the set of good type, bad type and zero type solutions respectively and set $N_{m}^{?}(p^n)=|\mathcal{N}_{m}^{?}(p^n)|$ and $\mu_p^{?}(m,n)=p^{-n(r-1)}N_m^{?}(p^n)$, for $?=$ good, bad, or zero.
Note also from \cite[Remark 3.4.1]{hanke}, that we have $\cN_m(p^n)=\cN_m^\good(p^n)$ when $p\nmid m$; $\cN_m^{\zero}(p^n)=\emptyset$ when $p^2\nmid m$ and $n\geq 2$; and $\cN_m^{\bad}(p^n)=\emptyset$ when $p\nmid 2\det(L)$.

To state the inductive result on local densities, we need to introduce the auxiliary form $Q'$, where $Q'$ is obtained from the orthogonal decomposition \eqref{eqOD} of $Q$ by replacing $\nu_j$ with $\nu_j'=1-\nu_j$ for all $j$.
To distinguish between the local densities of $Q$ and $Q'$, we use $N_{m,Q}^{?}(p^n)$ and $\mu_{p,Q}^{?}(m,n)$ to emphasis the dependence on the quadratic form.
We now recall Hanke's inductive method with the simplification that $L$ is maximal (only used in (2)). 
\begin{lemme}[Hanke]\label{lem_hanke}
Let $n\in \Z_{>0}$ and set $\delta= 2 \val_2(p)+1$.
\begin{enumerate}
    \item For $n\geq \delta$ and all integers $m$, we have
    \[N_m^\good(p^n)=p^{(n-\delta)(r-1)}N_m^\good(p^\delta); \quad \mu_{p}^\good(m,n)=\mu_{p}^\good(m,\delta).\]
    \item For $m$ such that $p\mid m$, we have \[N_{m,Q}^{\bad}(p^{n+1})=p^{r-s_0} N^{\good}_{\frac{m}{p}, Q'}(p^{n}); \quad \mu_{p,Q}^{\bad}(m,n+1)=p^{1-s_0}\mu_{p,Q'}^{\good}\left(\frac{m}{p},n\right).\]
    \item For $m$ such that $p^2\mid m$, we have \[N_m^\zero(p^{n+2})=p^rN_{\frac{m}{p^2}}(p^n); \quad \mu_{p}^\zero(m,n+2)=p^{2-r}\mu_{p}\left(\frac{m}{p^2},n\right).\]
\end{enumerate}
\end{lemme}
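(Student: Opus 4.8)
The plan is to derive each of the three assertions from an explicit correspondence between the solution sets modulo the relevant powers of $p$; the statements about the normalized densities then follow formally from the defining relation $\mu_{p}^{?}(m,n)=p^{-n(r-1)}N_{m}^{?}(p^{n})$ (and its analogue for $Q'$), so throughout it is enough to prove the identities for the counting functions $N$.

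For part (1) I would argue by a Hensel-type lifting step. Fix $n\ge\delta$ and $v\in\mathcal{N}_{m}^{\good}(p^{n})$; a lift $\tilde v\in\mathcal{N}_{m}(p^{n+1})$ with $\tilde v\equiv v\pmod{p^{n}}$ has the form $\tilde v=v+p^{n}w$ with $w\in L/pL$, and $Q(\tilde v)=Q(v)+p^{n}(v.w)+p^{2n}Q(w)$. For odd $p$ we have $2n\ge n+1$, so modulo $p^{n+1}$ the lifting condition is the affine-linear equation $(v.w)\equiv p^{-n}\big(m-Q(v)\big)\pmod p$ on $w\in L/pL\cong\bF_p^{\,r}$. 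Good type means there is a $j$ with $\nu_j=0$ and $v_j\not\equiv0\pmod p$, and since $(L_j,Q_j)$ is $\Z_p$-unimodular the functional $w\mapsto(v.w)\bmod p$ is already nonzero; hence the equation has exactly $p^{r-1}$ solutions, all of them again of good type. As every good-type solution modulo $p^{n+1}$ reduces to a good-type solution modulo $p^{n}$, this gives $N_{m}^{\good}(p^{n+1})=p^{r-1}N_{m}^{\good}(p^{n})$ for $n\ge\delta$, and iterating from $n=\delta$ produces the formula (the density version is immediate). For $p=2$ the bilinear form is degenerate modulo $2$, so the displayed linear equation can be trivial; this is precisely why the base case is shifted to $\delta=2\val_2(p)+1=3$: keeping the quadratic term $p^{2n}Q(w)$ and working modulo $2^{\delta}$, the presence of a good unimodular block still forces the passage from level $n$ to level $n+1$ to multiply the count by $2^{r-1}$ once $n\ge3$. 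I would invoke \cite[\S3]{hanke} for this $2$-adic bookkeeping rather than reproduce it.

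For part (2) I would construct a bijection onto good-type solutions of the auxiliary form. Assume $p\mid m$ and use the orthogonal decomposition \eqref{eqOD}, $(L_p,Q)=\bigoplus_j(L_j,p^{\nu_j}Q_j)$ with $\nu_j\in\{0,1\}$ --- this is the only place where maximality of $L$, which forces $\nu_j\le1$, is used. If $v=(v_j)_j\in\mathcal{N}_{m,Q}^{\bad}(p^{n+1})$ then, being neither of zero type nor of good type, it satisfies $v_j\equiv0\pmod p$ for all $j\in S_0$; write $v_j=pu_j$ there. Then $Q(v)=\sum_{j\in S_0}p^{2}Q_j(u_j)+\sum_{j\in S_1}pQ_j(v_j)$, so $p^{-1}Q(v)=Q'\big((u_j)_{j\in S_0},(v_j)_{j\in S_1}\big)$, where $Q'$ is exactly the form obtained from \eqref{eqOD} by the swap $\nu_j\mapsto1-\nu_j$. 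Under this substitution the condition $v\not\equiv0\pmod p$ becomes the condition that some $v_j$ with $j\in S_1$ is a unit, i.e.\ that the image is of good type for $Q'$; hence $v\mapsto\big((u_j\bmod p^{n})_{j\in S_0},(v_j\bmod p^{n})_{j\in S_1}\big)$ is a well-defined surjection $\mathcal{N}_{m,Q}^{\bad}(p^{n+1})\to\mathcal{N}_{m/p,Q'}^{\good}(p^{n})$. Over a fixed point of the target, the $S_0$-coordinates of a preimage are pinned down modulo $p^{n+1}$ (as $v_j=pu_j$), while each $S_1$-coordinate may be lifted freely from modulo $p^{n}$ to modulo $p^{n+1}$; since the $S_1$-blocks carry total rank $r-s_0$, the fiber has size $p^{r-s_0}$. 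This gives $N_{m,Q}^{\bad}(p^{n+1})=p^{r-s_0}N_{m/p,Q'}^{\good}(p^{n})$, hence $\mu_{p,Q}^{\bad}(m,n+1)=p^{1-s_0}\mu_{p,Q'}^{\good}(m/p,n)$ after dividing by $p^{(n+1)(r-1)}$.

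For part (3) I would use the scaling $v=pw$: if $v\in\mathcal{N}_{m}^{\zero}(p^{n+2})$ then $v=pw$ with $w\in L/p^{n+1}L$ and $Q(v)=p^{2}Q(w)$, so $Q(w)\equiv m/p^{2}\pmod{p^{n}}$; the map $v\mapsto w\bmod p^{n}$ is a surjection $\mathcal{N}_{m}^{\zero}(p^{n+2})\to\mathcal{N}_{m/p^{2}}(p^{n})$ with fibers of size $p^{r}$ (the $p^{r}$ lifts of $w$ from modulo $p^{n}$ to modulo $p^{n+1}$ produce $p^{r}$ distinct values $v=pw$ modulo $p^{n+2}$), so $N_{m}^{\zero}(p^{n+2})=p^{r}N_{m/p^{2}}(p^{n})$ and the density identity follows. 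I expect the only genuine obstacle to be the prime $p=2$: there the bilinear form degenerates modulo $2$, so the clean Hensel step behind (1) must be replaced by an argument that carries the quadratic Taylor term along and only stabilizes modulo $8$, and one must also use the explicit shape of the $2$-adic Jordan decomposition of a maximal even lattice to check that the combinatorial exponents in (2)--(3) come out exactly as written. All of this is carried out in \cite{hanke}; in the write-up I would give the odd-$p$ argument in full and cite loc.\ cit.\ for the $2$-adic verifications.
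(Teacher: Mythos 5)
Your proposal is correct and rests on the same mechanism as the paper's proof: the three identities are precisely Hanke's reduction maps (Hensel lifting on a good unimodular coordinate for (1), the substitution $v_j=pu_j$ on the unimodular blocks for (2), and $v=pw$ for (3)), which the paper does not re-derive but simply cites from \cite[\S 3]{hanke} — Lemma 3.2 there for (1), the discussion on p.~359 for (3), and the Bad-type I discussion for (2), after noting that maximality forces $\nu_j\le 1$ so Bad-type II vectors do not occur; this is exactly the role maximality plays in your part (2), where "bad" automatically means all $S_0$-coordinates divisible by $p$ and some $S_1$-coordinate a unit. So you supply in full (for odd $p$) the arguments the paper outsources, and defer to Hanke only for the $2$-adic bookkeeping in (1), which matches the paper's own level of detail. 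One small remark: your fiber count in (2) naturally yields the exponent $\sum_{j\in S_1}\rank L_j$; this equals $r-s_0$ as stated because the unimodular Jordan blocks are one-dimensional whenever $p\ne 2$ (the only case in which the paper later uses the precise constant), while for $p=2$ with two-dimensional blocks the rank-based exponent is the correct reading of the formula.
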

\begin{proof}
All the assertions on $\mu_p$ follow from the assertions on $N_m$ by definition. The first assertion of the lemma is \cite[Lemma 3.2]{hanke}. The third assertion follows from the last two paragraphs on \cite[p.359]{hanke}. To recover the second assertion, note that since $L_p$ is maximal, we have $\nu_j\leq 1$ for all $j$. Hence Bad-type II points (see \cite[p.360]{hanke}) do not exist. Thus, the claim follows from the discussion on Bad-type I points in \emph{loc.~cit.}.
\end{proof}

\begin{corollaire}\label{explicit_alpha}
For $p\neq 2$, set $\delta_{p,\det(L)}=1$ if $p\mid \det(L)$ and $0$ otherwise.\footnote{In the formulas below, we do not need to introduce this term $
\delta_{p,\det(L)}$, because by definition, if $p\nmid \det(L)$, then $\nu'_j=1$ for all $j$ and hence $\mu^\good_{p,Q'}(m,n)=0$. Nevertheless, we put it here to emphasis that those terms are $0$.} Recall the quadratic form $Q'$ and the integer $s_0$ defined above. We have
\begin{enumerate}
    \item If $n\geq \val_p(m)+1$, then $\mu_{p,Q}(m,n)$ is equal to
    \[\sum_{u=0}^{\lfloor \frac{\val_p(m)}{2}\rfloor} p^{(2-r)u}\mu^\good_{p,Q}\left(\frac{m}{p^{2u}},1\right)+\delta_{p,\det(L)}p^{1-s_0}\sum_{u=0}^{\lfloor \frac{\val_p(m)-1}{2}\rfloor}p^{(2-r)u}\mu^\good_{p,Q'}\left(\frac{m}{p^{2u+1}},1\right).\]
    \item If $1\leq n \leq \val_p(m)$ and $n$ odd, then $\mu_{p,Q}(m,n)$ is equal to
    \[p^{\frac{(2-r)(n-1)}{2}}\mu_{p,Q}(mp^{1-n},1)+\sum_{u=0}^{\frac{n-3}{2}} p^{(2-r)u}\mu^\good_{p,Q}\left(\frac{m}{p^{2u}},1\right)+\delta_{p,\det(L)}p^{1-s_0}\sum_{u=0}^{\frac{n-3}{2}}p^{(2-r)u}\mu^\good_{p,Q'}\left(\frac{m}{p^{2u+1}},1\right).\]
    \item If $1\leq n \leq \val_p(m)$ and $n$ even, then $\mu_{p,Q}(m,n)$ is equal to
    \[p^{\frac{(2-r)(n-2)}{2}}\mu^\zero_{p,Q}(mp^{2-n},2)+\sum_{u=0}^{\frac{n-2}{2}} p^{(2-r)u}\mu^\good_{p,Q}\left(\frac{m}{p^{2u}},1\right)+\delta_{p,\det(L)}p^{1-s_0}\sum_{u=0}^{\frac{n-2}{2}}p^{(2-r)u}\mu^\good_{p,Q'}\left(\frac{m}{p^{2u+1}},1\right).\]
\end{enumerate}
\end{corollaire}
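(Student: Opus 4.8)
The plan is to derive Corollary~\ref{explicit_alpha} directly from Lemma~\ref{lem_hanke} by peeling off the good, bad, and zero contributions one at a time and iterating. First I would write $\mu_{p,Q}(m,n) = \mu^\good_{p,Q}(m,n) + \mu^\bad_{p,Q}(m,n) + \mu^\zero_{p,Q}(m,n)$ for every $n\geq 1$, keeping in mind the degenerate cases noted after the statement of the three types: $\mu^\zero$ vanishes once $p^2\nmid m$ and $n\geq 2$, $\mu^\bad$ vanishes when $p\nmid 2\det(L)$ (this is why the $\delta_{p,\det(L)}$ factor appears for odd $p$), and for $p$ odd every $L_j$ is $1$-dimensional so $\delta = 1$ in Lemma~\ref{lem_hanke}. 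Since we are in the $p\neq 2$ case, Lemma~\ref{lem_hanke}(1) with $\delta=1$ gives $\mu^\good_{p,Q}(m,n) = \mu^\good_{p,Q}(m,1)$ for all $n\geq 1$, and likewise for $Q'$; this is the term that simply gets frozen at level $1$ and contributes the $u=0$ summand in each formula.

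Next I would recursively unwind the bad and zero parts. For the bad part, Lemma~\ref{lem_hanke}(2) (applicable only when $p\mid m$) says $\mu^\bad_{p,Q}(m,n+1) = p^{1-s_0}\mu^\good_{p,Q'}(m/p, n)$, which is again frozen in $n$ by part~(1), so $\mu^\bad_{p,Q}(m,n) = p^{1-s_0}\mu^\good_{p,Q'}(m/p,1)$ whenever $p\mid m$ and $n\geq 2$ — this supplies the $u=0$ term of the $Q'$-sum. For the zero part, Lemma~\ref{lem_hanke}(3) (applicable only when $p^2\mid m$) gives the genuine recursion $\mu^\zero_{p,Q}(m,n+2) = p^{2-r}\mu_{p,Q}(m/p^2, n)$, reducing the level by $2$ and the integer $m$ by $p^2$. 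Substituting the full decomposition for $\mu_{p,Q}(m/p^2,n)$ and iterating, each step multiplies by $p^{2-r}$, divides $m$ by $p^2$, and spits out a fresh good term $\mu^\good_{p,Q}(m/p^{2u},1)$ together with its companion bad term $\delta_{p,\det(L)}p^{1-s_0}\mu^\good_{p,Q'}(m/p^{2u+1},1)$. The number of times one can iterate is governed by whether $n$ or $\val_p(m)$ runs out first: in case~(1) we have $n$ large enough ($n\geq \val_p(m)+1$) that the recursion terminates because $p^2$ eventually fails to divide the shrunken $m$, leaving the sum up to $u = \lfloor \val_p(m)/2\rfloor$ with matching parity cutoff $\lfloor(\val_p(m)-1)/2\rfloor$ for the $Q'$-sum; in cases~(2) and~(3) the level $n$ runs out first, and the residual term is $\mu_{p,Q}(mp^{1-n},1)$ when $n$ is odd (one unpaired level-$1$ term, reached after $(n-1)/2$ doublings) or $\mu^\zero_{p,Q}(mp^{2-n},2)$ when $n$ is even (one leftover level-$2$ zero term), with the good/bad sums truncated accordingly at $u = (n-3)/2$ or $u=(n-2)/2$.

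The only real bookkeeping obstacle is getting the three sets of summation limits and the exponents on $p$ exactly right, and matching parity of $n$ against parity of $\val_p(m)$; this is a finite induction on $\min(n,\val_p(m))$ with base cases handled by the degeneracy remarks (when $p^2\nmid m$ the zero recursion does not even start, so only the $u=0$ terms survive, which is the $\val_p(m)\in\{0,1\}$ instance of the formulas). I expect the cleanest write-up is to prove all three cases simultaneously by induction on $n$: the inductive step applies Lemma~\ref{lem_hanke}(3) to replace $\mu^\zero_{p,Q}(m,n)$ by $p^{2-r}\mu_{p,Q}(m/p^2,n-2)$ and then invokes the induction hypothesis for $(m/p^2, n-2)$ — noting that $\val_p(m/p^2) = \val_p(m)-2$ so the hypothesis case (whether we are in regime~(1), (2), or~(3)) is preserved, and $n-2$ has the same parity as $n$ — after which one reindexes the resulting sums by $u\mapsto u+1$ and absorbs the newly separated $u=0$ good and bad terms. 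Verifying that the reindexed limits coincide with the claimed ones, and that the frozen good/bad terms at level $1$ line up correctly via Lemma~\ref{lem_hanke}(1),(2), is routine once the induction is set up this way.
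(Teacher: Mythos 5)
Your proposal is correct and follows essentially the same argument as the paper's proof: decompose $\mu_{p,Q}(m,n)$ into good, bad, and zero contributions, freeze the good and bad parts at level $1$ via Lemma~\ref{lem_hanke}(1)--(2), and unwind the zero part recursively via Lemma~\ref{lem_hanke}(3), concluding by induction on $\val_p(m)$ and $n$. Your elaboration of the termination conditions and parity bookkeeping is consistent with the claimed formulas.
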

\begin{proof}
The base cases when $\val_p(m)\leq 1$ or $n\leq 2$ can be checked directly by definition and \Cref{lem_hanke}.
For $n>2$ and $p^2\mid m$, by \Cref{lem_hanke}, 
\begin{align*}
\mu_{p,Q}(m,n) & = \mu_{p,Q}^\good(m,n)+\mu_{p,Q}^\zero(m,n)+\mu_{p,Q}^\bad(m,n)\\
&= \mu_{p,Q}^\good(m,1)+p^{2-r}\mu_{p,Q}\left(\frac{m}{p^2},n-2\right)+p^{1-s_0}\mu^\good_{p,Q'}\left(\frac{m}{p},n-1\right)\\
&=\mu_{p,Q}^\good(m,1)+p^{2-r}\mu_{p,Q}\left(\frac{m}{p^2},n-2\right)+p^{1-s_0}\mu^\good_{p,Q'}\left(\frac{m}{p},1\right).
\end{align*}
Then we conclude by induction on $\val_p(m)$ and $n$.
\end{proof}

The next lemma gives a uniform bound on $|\mu_{p}(m,n)-\mu_p(m,w_p(m))|$ for primes $p$, integers $m$, and $n\in\{2,\ldots,w_p(m)-1\}$. 
\begin{lemme}\label{lem_uniform_alpha}
Let $p$ be prime and $m$ be any integer. For $n\in\{2+\val_2(p),\ldots,w_p(m)-1\}$, we have
\[|\mu_{p}(m,n)-\mu_p(m,w_p(m))|\ll \frac{1}{p^{3\lfloor(n/2)\rfloor-2}},\]
where the implied constant is absolute.
\end{lemme}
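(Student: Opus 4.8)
\textbf{Proof plan for \Cref{lem_uniform_alpha}.}
The plan is to control the difference $\mu_p(m,n) - \mu_p(m, w_p(m))$ by using the explicit formulas of \Cref{explicit_alpha}, which express $\mu_{p,Q}(m,n)$ as a sum of "good-type" densities $\mu^{\good}_{p,Q}(\cdot,1)$ and $\mu^{\good}_{p,Q'}(\cdot,1)$ (and one "zero-type" term $\mu^{\zero}_{p,Q}(\cdot,2)$ when $n$ is even), weighted by powers $p^{(2-r)u}$. The point is that both $\mu_p(m,n)$ and $\mu_p(m,w_p(m))$ are given by \emph{the same kind} of expansion, and the partial sums agree up to the index where they are truncated; so the difference is a tail of a geometrically decaying series in $p^{2-r}$ together with the discrepancy between the leading truncation term (the $p^{\frac{(2-r)(n-1)}{2}}\mu_{p,Q}(mp^{1-n},1)$ or $p^{\frac{(2-r)(n-2)}{2}}\mu^{\zero}_{p,Q}(mp^{2-n},2)$ term) and the corresponding tail of the full expansion. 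Since $r = b+2 \geq 5$, we have $p^{2-r} \leq p^{-3}$, which is what produces the claimed bound $p^{-(3\lfloor n/2\rfloor - 2)}$ once one checks that the remaining densities $\mu^{\good}_{p,Q}(\cdot,1)$, $\mu^{\good}_{p,Q'}(\cdot,1)$, $\mu^{\zero}_{p,Q}(\cdot,2)$ are all bounded by an absolute constant.

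First I would establish the auxiliary bound: $\mu^{\good}_{p,Q}(m',1) = O(1)$, $\mu^{\good}_{p,Q'}(m',1) = O(1)$, and $\mu^{\zero}_{p,Q}(m',2) = O(1)$ uniformly in $p$ and $m'$. These follow from the definitions in \eqref{eq:localdensity}: a good-type solution mod $p$ is, by Hensel's lemma applied to the coordinate $v_j$ with $v_j \not\equiv 0$ and $\nu_j = 0$, determined by the other coordinates together with a bounded number of choices, so $N^{\good}_{m'}(p) \leq (\text{const})\, p^{r-1}$; similarly for the zero-type count at level $p^2$ one uses \Cref{lem_hanke}(3) to reduce to a full count $N_{m'/p^2}(p^0) = 1$ times $p^r$, giving $\mu^{\zero}_p(m',2) = p^{2-r} \cdot p^{-0} \cdot p^{r-?}$ — here one must be a little careful with the normalization, but the upshot is an absolute bound. (Alternatively one can cite the standard fact that local densities of nondegenerate quadratic forms are uniformly bounded.) Next, for $n$ in the stated range, I would write out $\mu_{p,Q}(m,n)$ and $\mu_{p,Q}(m,w_p(m))$ using the three cases of \Cref{explicit_alpha}. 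Since $n \leq w_p(m) - 1 \leq \val_p(m)$ (one checks $w_p(m) - 1 \leq \val_p(m)$ for $p \neq 2$ trivially, and handles $p = 2$ using $w_2(m) = 1 + 2\val_2(2m)$ together with the shift $\delta = 2\val_2(p)+1 = 3$), the formula for $\mu_{p,Q}(m,n)$ is the "$1 \leq n \leq \val_p(m)$" case, while $\mu_{p,Q}(m, w_p(m))$ is the "$n \geq \val_p(m)+1$" case. Subtracting, the full sums $\sum_{u=0}^{\lfloor\val_p(m)/2\rfloor}$ in the formula for the larger index cancel the matching partial sums $\sum_{u=0}^{(n-2)/2}$ or $\sum_{u=0}^{(n-3)/2}$ in the formula for $\mu_p(m,n)$, leaving: (i) the leading truncation term of $\mu_p(m,n)$, of size $p^{(2-r)\lfloor (n-1)/2\rfloor} \cdot O(1) = O(p^{-3\lfloor (n-1)/2\rfloor})$, and (ii) the tail $\sum_{u > (n-2)/2} p^{(2-r)u} O(1)$, which is dominated by its first term $p^{(2-r)\lceil (n-1)/2\rceil} O(1)$ by the geometric series bound. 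Both are $O(p^{-(3\lfloor n/2\rfloor - 2)})$, so collecting terms gives the claim.

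The main obstacle I anticipate is the bookkeeping around the parity of $n$ and the value of $\val_p(m)$ relative to $n$, together with the $p=2$ normalization ($\delta = 3$ versus $\delta = 1$, and $w_2(m) = 1 + 2\val_2(2m)$): one must verify that in the relevant range the hypotheses of the three cases of \Cref{explicit_alpha} actually apply, and that the exponent arithmetic $(2-r)\lfloor(n-1)/2\rfloor$ versus the target exponent $3\lfloor n/2\rfloor - 2$ works out with $r \geq 5$ in all four sub-cases (($n$ even/odd) $\times$ (leading term / tail)). This is entirely elementary but needs care; none of it is deep. Once the exponent comparison is pinned down and the uniform $O(1)$ bounds on the elementary good/zero densities are in hand, the estimate is immediate.
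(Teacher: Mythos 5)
Your proposal is correct and takes essentially the same route as the paper: both rely on the explicit expansions of \Cref{explicit_alpha}, cancel the common initial partial sums, and then bound the leading truncation term plus the geometric tail (with ratio $p^{2-r}\le p^{-3}$ since $r\geq 5$), treating $p=2$ separately by absorbing everything into the absolute constant. The only cosmetic difference is that you propose proving the auxiliary bound $\mu^{\good}_{p,\bullet}(\cdot,1),\mu^{\zero}_{p,Q}(\cdot,2)=O(1)$, whereas the paper uses the cruder but perfectly sufficient trivial bounds $\mu^{\good}_{p,\bullet}(\cdot,1)\leq p$ and $\mu^{\zero}_{p,Q}(\cdot,2)\leq p^{2-r}$; this is exactly what accounts for the extra $p^2$ in the stated estimate.
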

\begin{proof}
First consider the case when $p$ is odd.
By \Cref{explicit_alpha}, we have that for $n$ odd, 
\begin{equation*}
\begin{array}{rcl}
\displaystyle|\mu_{p}(m,n)-\mu_p(m,w_p(m))|&\leq &
\displaystyle\left|\frac{\mu_{p,Q}\left(\frac{m}{p^{n-1}},1\right)}{p^{\frac{(r-2)(n-1)}{2}}}\right|\\[.3in]
&+&\displaystyle
\left|\sum_{u=\frac{(n-1)}{2}}^{\lfloor \frac{\val_p(m)}{2}\rfloor}
\frac{\mu^\good_{p,Q}\left(\frac{m}{p^{2u}},1\right)+\delta_{p,\det(L)}p^{1-s_0}\mu^\good_{p,Q'}\left(\frac{m}{p^{2u+1}},1\right)}{p^{(r-2)u}}\right|\\[.3in]
&\leq& \displaystyle\left| \frac{p}{p^{\frac{3(n-1)}{2}}}\right| +\sum_{u=\frac{(n-1)}{2}}^{\lfloor \frac{\val_p(m)}{2}\rfloor}\left|
\frac{p+p^2}{p^{3u}}\right|\\[.2in]
&\leq& \displaystyle\frac{C_1 p^2}{p^{\frac{3(n-1)}{2}}}.
\end{array}
\end{equation*}
Here we use the trivial bound that all $|\mu_{p,Q}\left(\frac{m}{p^{n-1}},1\right)|, |\mu^\good_{p,Q}\left(\frac{m}{p^{2u}},1\right)|, |\mu^\good_{p,Q'}\left(\frac{m}{p^{2u+1}},1\right)|$ are less than $p$ by definition. The case when $n$ is even follows by a similar argument and the trivial bound that $\mu^\zero_{p,Q}(mp^{2-n},2)\leq p^r/p^{2(r-1)}=p^{2-r}$.

For $p=2$, by \Cref{lem_hanke}, we obtain analogous statements as \Cref{explicit_alpha} except that we can only reduce to $\mu^\good_{p}(?,3)$ (instead of $\mu^\good_{p}(?,1)$). The rest of the argument is the same as in \Cref{lem_uniform_alpha} and since $p=2$ is fixed, any trivial bound on density is absorbed in the absolute constant.
\end{proof}

We can actually show that all $\mu_{p}(m,n)$ are close to $1$ when $p\nmid 2\det(L)$.
\begin{lemme}\label{close1}
There exists an absolute constant $C_2>0$ such that for all $m,n\in \Z_{>0}$, all primes $p\nmid 2\det(L)$, we have
\[|\mu_{p}(m,n)-1|\leq \frac{C_2}{p}.\]
\end{lemme}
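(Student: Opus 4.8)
The plan is to use the inductive formulas of \Cref{lem_hanke} together with Hensel's lemma in the unramified good-reduction setting. When $p\nmid 2\det(L)$, the lattice $L_p$ is unimodular, so every solution $v\in\mathcal{N}_m(p^n)$ is automatically of good type (there are no bad-type points by the remark following the trichotomy, and for $n\geq 1$ the zero-type points are a lower-order contribution handled below). Thus by \Cref{lem_hanke}(1), applied with $\delta=1$ since $p$ is odd, we get $\mu_p^{\good}(m,n)=\mu_p^{\good}(m,1)$ for all $n\geq 1$. So it suffices to analyze $\mu_p(m,1)=\mu_p^{\good}(m,1)+\mu_p^{\zero}(m,1)$, noting that $\mu_p^{\bad}(m,1)=0$.

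\textbf{Key steps.} First I would reduce to $n=1$. By the stabilization just quoted, $\mu_p^{\good}(m,n)=\mu_p^{\good}(m,1)$ for all $n\geq1$; and for the zero-type contribution, \Cref{lem_hanke}(3) gives $\mu_p^{\zero}(m,n+2)=p^{2-r}\mu_p(m/p^2,n)$ when $p^2\mid m$ (and $\mu_p^{\zero}=0$ otherwise for $n\geq2$), so by induction on $\val_p(m)$ the zero-type part of $\mu_p(m,n)$ is bounded by a convergent geometric series in $p^{2-r}$ with $r=b+2\geq5$, hence is $O(1/p)$. Second, I would count $N_m^{\good}(p)=\#\{v\in L/pL: Q(v)\equiv m\}$ restricted to $v\not\equiv 0$ by a direct point-count for the smooth affine quadric $\{Q=m\}$ over $\mathbb{F}_p$. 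Since $Q$ is nondegenerate mod $p$ (as $p\nmid\det(L)$), the number of $\mathbb{F}_p$-points of a nondegenerate quadric in $r$ variables representing a nonzero value $m$ is $p^{r-1}+O(p^{r/2})$ by the classical Weil-type estimate for quadrics (the precise formula involves a quadratic-character term of size $\leq p^{(r-1)/2}$ or $p^{r/2-1}$), and the $v\equiv0$ locus contributes at most $O(p^{\max(0,r-2)})$ which is absorbed. Dividing by $p^{r-1}$ gives $\mu_p(m,1)=1+O(p^{-1})$ with an absolute implied constant (absolute because the shape of the count only depends on $r$ and the parity of $r$ and the discriminant class, not on $p$ or $m$). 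Combining the two steps yields $|\mu_p(m,n)-1|\leq C_2/p$ for an absolute constant $C_2$.

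\textbf{Main obstacle.} The only delicate point is getting the bound \emph{uniform in $m$} and with an \emph{absolute} constant: one must make sure the error term in the quadric point-count does not secretly depend on $m$ through, say, whether $m$ is a square times the discriminant, and that the zero-type recursion's geometric series is summed uniformly. Both are routine once one invokes the standard closed formula for $\#\{Q=m\}(\mathbb{F}_p)$ (which depends only on $r\bmod 2$, on the Legendre symbol of $\disc(L)$, and on whether $m\equiv0$), but it is where care is needed. A cleaner packaging, which I would prefer to write, is: for $p\nmid 2\det(L)$ the local density $\mu_p(m,n)$ equals the $p$-adic density of solutions to $Q=m$ in $L_p$, which by Hensel's lemma equals the local density for $n=1$ up to the zero-type correction, and the $n=1$ density is $1+O(p^{-1})$ by the smooth-quadric count; this makes the uniformity transparent and avoids carrying Hanke's bad-type bookkeeping, which is vacuous here anyway.
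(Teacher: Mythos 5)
Your proposal is correct and follows essentially the same approach as the paper: reduce to $n=1$ via Hanke's inductive formulas (the paper routes this through \Cref{explicit_alpha} and \Cref{lem_uniform_alpha}, while you apply \Cref{lem_hanke} directly and bound the zero-type tail by the geometric series in $p^{2-r}$), and then estimate $\mu_p^{\good}(m,1)$ by a Weil/Deligne point count on the nondegenerate quadric mod $p$. The only inessential difference is that the paper counts projective points on the hypersurface $Q(v)=my^2\subset\P^r$ to obtain uniformity in $m$ cleanly, whereas you count affine solutions of $Q(v)=m$ directly.
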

\begin{proof}
By \Cref{explicit_alpha}(1), \Cref{lem_uniform_alpha}, we only need to show the claim for $n=1$ and $n=\val_p(m)+1$. For $n=1$, we first consider the case when $p\mid m$. Then $Q(v)\equiv 0\bmod p$ defines a smooth projective hypersurface in $\P^{r-1}$; except the solution $v=0 \bmod p$, every $p-1$ solutions of $Q(v)\equiv 0 \bmod p$ (all these are of good type) correspond to a $\bF_p$-point in the hypersurface. 
Then by the Weil bound (see for instance \cite[Théorème 8.1]{weil1}),\footnote{In our specific case, namely that of a quadratic form, this result was in \cite{weil49}} there exists a constant $C_3>0$ independent of $p$ and $m$ such that
\begin{align*}
|N_{m}^{\good}(p)-p^{r-1}|\leq C_3 p^{r-2}.
\end{align*}
Therefore, $|\mu_{p}^\good(m,1)-1|\leq C_3/p$.

For $p\nmid m$, we consider the smooth projective hypersurface in $\P^r$ defined by $Q(v)=my^2$. In this case $N_m(p)=N^\good_m(p)$ is the number of $\bF_p$-points in the hypersurface such that $y\neq 0$ in $\bF_p$. 
By the Weil bound, the number of $\bF_p$-points in the hypersurface is $p^{r-1}+O_m(p^{r-2})$; the number of $y=0$ points on the hypersurface is $p^{r-2}+O(p^{r-3})$ by the Weil bound. 
Then we conclude that there exists a constant $C_4>0$ independent\footnote{Although the equation of the hypersurface depends on $m$, the number of solutions only depends on whether $m$ is a quadratic square in $\bF_p$. So we only need to apply the Weil bound for a fixed square $m$, and for some fixed non-square $m$, to obtain some $C_4$ independent of $m$.} of $p$ and $m$ such that $|\mu_{p}^\good(m,1)-1|\leq C_4/p$. In particular, for any $m$, we have $$\mu_p^{\good}(m,1)\leq 1+\max\{C_3,C_4\}.$$

For $n=\val_p(m)+1$ and $p|m$, by \Cref{explicit_alpha}(1) and note that $\delta_{p,\det(L)}=0$, we have
\begin{align*}
|\mu_{p}(m,\val_p(m)+1)-1|&=\left|\mu_p^{\mathrm{good}}(m,1)-1+\sum_{u=1}^{\floor*{\frac{\val_p(m)}{2}}}\frac{\mu_{p}^{\mathrm{good}}(\frac{m}{p^{2u}},1)}{p^{u(r-2)}}\right|\\
&\leq \frac{C_3}{p}+\sum_{u=1}^{\floor*{\frac{\val_p(m)}{2}}}\frac{\max\{C_3,C_4\}+1}{p^{u(r-2)}} \leq \frac{C_5}{p},
\end{align*}
where we take $C_2=\max\{C_3,C_4,C_5\}$.
\end{proof}


Due to our assumption that $r\geq 5$ and $L$ maximal, there is an absolute lower bound for $\mu_{p}(m,n)$. The following lemma is well known, but we include it for the convenience of the reader.
\begin{lemme}\label{unif_lower}
Recall that $r\geq 5$ and $L$ is maximal. Then for any $m,n\in \Z_{>0}$, any prime $p$, we have $\mu_{p}(m,n)\geq 1/2$.
\end{lemme}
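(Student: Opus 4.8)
The plan is to establish the uniform lower bound $\mu_p(m,n) \geq 1/2$ by reducing, via \Cref{lem_hanke} and \Cref{explicit_alpha}, to bounding the "base" densities $\mu^{\good}_{p,Q}(\cdot,1)$ (or $\mu^{\good}_{p}(\cdot,3)$ when $p=2$), which are controlled by counting $\bF_p$-points (resp.\ $\bF_2$-points over a small truncation) on an explicit quadric. First I would treat the generic case $p \nmid 2\det(L)$: here \Cref{close1} already gives $|\mu_p(m,n)-1| \leq C_2/p$, so the bound $\mu_p(m,n) \geq 1/2$ holds for all $p$ larger than $2C_2$; only finitely many primes $p$ dividing $2\det(L)$ (together with the finitely many small primes $p \leq 2C_2$) remain, and for each such fixed $p$ one argues separately. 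This reduces the problem to a fixed prime $p$.

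For a fixed prime $p$, I would use the recursive formulas of \Cref{explicit_alpha}: every $\mu_{p,Q}(m,n)$ is a sum of nonnegative terms, one of which is $\mu^{\good}_{p,Q}(m/p^{2u},1)$ for the relevant $u$ (the "leading" good-type contribution), so it suffices to bound $\mu^{\good}_{p,Q}(m',1) \geq 1/2$ uniformly over $m'$ — or more precisely to bound the relevant leading good term coming from whichever of the three cases of \Cref{explicit_alpha} applies. The key input is that $Q$ has rank $r \geq 5$ and $L$ is maximal, so that over $\bF_p$ the reduction of $Q$ (after splitting off the $p\cdot Q_j$ pieces, of which there are at most a bounded number since $\nu_j \leq 1$) still has large enough rank that the quadric $Q(v) \equiv m \pmod p$ has $\gg p^{r-1}$ solutions. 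Concretely: a nondegenerate quadratic form in $\geq 3$ variables over $\bF_p$ represents every element of $\bF_p$, and the number of solutions of $Q_0(v) = c$ for such a form $Q_0$ in $s_0 \geq 3$ variables is $p^{s_0 - 1} + O(p^{(s_0-1)/2})$, which for $p$ not too small is at least $\tfrac12 p^{s_0-1}$; since $L$ maximal and $r \geq 5$ forces $s_0 \geq 3$ (the unimodular-at-$p$ part has rank $r - s_1 \geq r - s_1$, and one checks maximality prevents $s_1$ from being too large), this gives the good-type count, and the other variables (ranging over $p^{r-s_0}$ values freely in the relevant congruence) multiply this up to $\geq \tfrac12 p^{n(r-1)}$.

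The remaining small primes and the prime $p = 2$ are handled by a finite, explicit check: for each fixed $p$ there are only finitely many $p$-adic isometry classes of maximal lattices $(L_p, Q)$ of rank $r$ with the given parity of $r$, and for each the densities $\mu_p(m,n)$ depend only on $\val_p(m)$ (and on $m/p^{\val_p(m)}$ mod a bounded power of $p$) by \Cref{explicit_alpha}, so the infimum of $\mu_p(m,n)$ over all $m,n$ is achieved and can be computed to be $\geq 1/2$; alternatively one invokes the standard fact (e.g.\ from Siegel's theory, or the product formula for local densities together with the fact that $r \geq 5$ guarantees the relevant quadric has a smooth $\Z_p$-point) that local densities of a maximal lattice of rank $\geq 5$ are bounded below by an absolute constant.

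I expect the main obstacle to be the bookkeeping at $p = 2$: there \Cref{lem_hanke} only reduces good-type densities down to level $\delta = 3$ rather than level $1$, the orthogonal decomposition \eqref{eqOD} can contain rank-$2$ unimodular blocks, and one must be careful that maximality of $L_2$ still forces the unimodular part to have rank $\geq 3$ so that it represents every $2$-adic unit (and every even integer, accounting for the shift in $w_2$). Since $p=2$ is fixed, though, any crude bound suffices and is absorbed into the absolute constant; the content is entirely in the odd-prime uniform estimate, which is why \Cref{lem_hanke}, \Cref{explicit_alpha}, and Deligne's bound (as used in \Cref{close1}) do almost all the work.
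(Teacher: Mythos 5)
Your plan misses the key structural input that makes the paper's argument uniform and clean: since $r\geq 5$, the quadratic space $V\otimes\Q_p$ is isotropic for every prime $p$, and because $L_p$ is maximal this forces $L_p$ to split off a hyperbolic plane, i.e.\ $Q = x_1x_2 + Q_1(x_3,\dots,x_r)$ in suitable coordinates (\cite[Lemma~6.36]{gerstein}). Once one has this, the lower bound is immediate and uniform: for any $m$, any invertible $x_1\bmod p^\delta$, and any $x_3,\dots,x_r\bmod p^\delta$, one solves uniquely for $x_2$, so $\mu^{\good}_p(m,\delta)\geq (p-1)/p\geq 1/2$, and \Cref{lem_hanke}(1) propagates this to all $n\geq\delta$. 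No case split, no Weil/Deligne, no finite check.

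By contrast, your route has a genuine gap at the small primes. The reduction via \Cref{close1} disposes of $p>2C_2$, but the constant $C_2$ is not effective (it comes from Deligne's bound), so "small primes" is an unspecified finite set rather than an explicitly checkable one. Your treatment of those primes is circular: "the infimum of $\mu_p(m,n)$ over all $m,n$ is achieved and can be computed to be $\geq 1/2$" is essentially the lemma itself, and "local densities of a maximal lattice of rank $\geq 5$ are bounded below by an absolute constant" would, even if granted, not give the specific constant $1/2$ without further work. The Weil-bound count $p^{s_0-1}+O(p^{(s_0-1)/2})$ also does not by itself yield a lower bound for small fixed $p$, since the error term can dominate; one would need exact point-counts on low-rank quadrics, which you do not carry out. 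Moreover you never actually verify that maximality forces the unimodular block to have rank $\geq 3$ (and rank $3$ would still leave you short of the clean $(p-1)/p$ bound for $p=2$). The hyperbolic-plane splitting sidesteps all of this and handles $p=2$ with the same one-line argument, which is why the paper's proof is preferable.
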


\begin{proof}
Since $r\geq 5$ and $L$ is maximal, then by for instance \cite[Lemma 6.36]{gerstein}, 
for every prime $p$, there exists a basis of $L_p$ such that in the coordinate of this basis, $Q((x_1,\dots,x_r))=x_1x_2+Q_1((x_3,\dots,x_r))$, where $Q_1$ is a quadratic form in $(r-2)$ variables. 

Recall as in \Cref{lem_hanke}, that $\delta=3$ if $p=2$ and $\delta=1$ otherwise. Fix an integer $\delta'$ satisfying $1\leq \delta'\leq \delta$. For any $x_1\in (\Z/p^{\delta'})^\times$ and any $x_i\in\Z/p^{\delta'}, 3\leq i \leq r$, there exists a unique $x_2\in \Z/p^{\delta'}$ such that $Q(x_1,\dots,x_r))=m\bmod p^{\delta'}$. Therefore $\mu^\good_{p}(m,\delta')\geq \frac{p-1}{p}\geq 1/2$ and hence by \Cref{lem_hanke}(1), for $n\geq \delta$, $\mu_{p}(m,n)\geq \mu^\good_p(m,\delta)\geq 1/2$.
\end{proof}

\begin{corollaire}\label{cor_repD}
Every large enough $m\in \Z_{>0}$ is representable by $(L,Q)$.

\end{corollaire}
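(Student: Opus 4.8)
The plan is to deduce the statement from the local-density bounds of this subsection together with the local--global principle for representation of integers by indefinite quadratic forms of rank $\ge 5$. The steps are: (i) show that $m$ is represented by $Q$ over $\R$; (ii) show that $m$ is represented by $(L\otimes\Z_p,Q)$ for every prime $p$; (iii) conclude that $m$ is represented by $(L,Q)$ over $\Z$ for all sufficiently large $m$ (in fact for all $m$).

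For (i): since $(L,Q)$ has signature $(b,2)$ with $b\ge 3$, the form $Q$ is indefinite and hence represents every real number, in particular every positive integer. For (ii): by \Cref{unif_lower} one has $\mu_p(m,n)\ge\tfrac12>0$ for all $n\ge1$, so each set $\mathcal N_m(p^n)$ is nonempty; since these sets are compatible under reduction modulo $p^n$ and $L\otimes\Z_p$ is compact, an inverse limit of solutions produces a vector $v\in L\otimes\Z_p$ with $Q(v)=m$. For (iii): because $r=b+2\ge 5$ and $Q$ is indefinite, $V=L\otimes\Q$ is isotropic over $\Q$ by Hasse--Minkowski, the group $\mathrm{Spin}(Q)$ has strong approximation, and there are no spinor-exceptional integers; together with the maximality of $L$ this gives that $m$, being represented by $L$ everywhere locally, is represented by $(L,Q)$ over $\Z$. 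This is the classical local--global principle for indefinite forms of rank $\ge 5$ (see e.g. Cassels, \emph{Rational Quadratic Forms}, or Kitaoka, \emph{Arithmetic of Quadratic Forms}). As an alternative that bypasses spinor-genus theory, one may instead invoke the circle-method estimates of Heath-Brown and Niedermowwe recalled in \S\ref{Vol-Nie}: for $r\ge5$ these count the $v\in L$ with $Q(v)=m$ in an expanding region, with main term $\mathfrak S(m)\,\sigma_\infty$ and a smaller error, where $\sigma_\infty>0$ by indefiniteness of $Q$ and the singular series $\mathfrak S(m)=\prod_p\lim_{n\to\infty}\mu_p(m,n)$ converges absolutely for $r\ge5$ and is positive by \Cref{unif_lower} and \Cref{close1}; hence the count is positive once $m$ is large.

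The main obstacle is step (iii). For indefinite lattices of small rank the genus of $L$ need not be represented by the class of $L$ --- spinor-exceptional integers can intervene --- so the hypothesis $r\ge 5$ (equivalently $b\ge 3$) must be used in an essential way: either through the absence of spinor-exceptional integers in this range, or through the circle method, where rank $\ge 5$ is exactly what forces the singular series to converge and the error term to be of smaller order than the main term. Both inputs are standard and already available, so no genuinely new difficulty arises; the only care needed is to quote the local--global theorem with hypotheses (indefinite, rank $\ge 5$, $m$ locally represented everywhere) matching our setting.
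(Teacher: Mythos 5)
Your proof is correct, and since the paper states this corollary with no proof at all (it is presented as an immediate consequence of the local-density lemmas just proved), your write-up supplies the argument the authors leave implicit. Both routes you give are valid, and each uses material already developed in the paper: steps (i)–(ii) are exactly the content of \Cref{unif_lower}, and step (iii) can be finished either by the classical local-global theorem for indefinite lattices or, as you observe, by \Cref{HB_thm4} together with \Cref{unif_lower} and \Cref{close1}. Given the placement of the corollary (in \S4.1, before the circle-method material of \S4.2), the intended argument is presumably the local-global one, but the circle-method alternative is arguably cleaner here: it delivers ``every sufficiently large $m$'' directly, matching the statement verbatim, whereas the local-global route requires a word about spinor exceptions. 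On that point, one small caution: the claim ``there are no spinor-exceptional integers'' for indefinite rank $\geq 5$ is stronger than you actually need and is a bit delicate to source precisely (the cleanest statements in the literature bound the set of exceptions in terms of the discriminant, giving finiteness rather than emptiness). The weaker statement ``only finitely many spinor exceptions'' already suffices for ``large enough $m$,'' and in any case your circle-method fallback sidesteps the issue entirely, so there is no gap in the proof as a whole.
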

\begin{proof}
By \cite[Theorem 4]{HB} (recalled in Theorem 4.9 below from which we borrow the notations), we can choose a non-negative test function $\omega$ such that $\mu_\infty(Q,\omega)>0$. Then by the Proposition above, $\mu(Q,m)>0$ and hence $N(Q,m,\omega)$ is positive for $m$ large enough. Hence $m$ is representable by $(L,Q)$.
\end{proof}

Now we are ready to prove the main result of this subsection.

\begin{proof}[Proof of \Cref{count}]
For simplicity of notation, we denote $w_p(m)$ by $w_p$.

{\bf First case:} assume that $p\nmid 2\det(L)$. 
By \Cref{close1}, 
\begin{align}\label{one-w}
|\mu_{p}(m,w_p)-\mu_{p}(m,1)|\leq |\mu_{p}(m,w_p)-1|+|\mu_{p}(m,1)-1|\leq C_7/p.
\end{align}
Note that $\mu_{p}(m,0)=1$ by definition, see \ref{eq:localdensity}. Then by Lemmas \ref{lem_uniform_alpha},\ref{close1} and \eqref{one-w}, we get 
\begin{align*}
\left|w_p-\sum_{n=0}^{w_p-1}\frac{\mu_{p}(m,n)}{\mu_{p}(m,w_p)}\right|&\leq\frac{1}{\mu_{p}(m,w_p)}\left[\sum_{n\geq 2}\frac{C_1 p^2}{p^{3\lfloor n/2\rfloor}}+\frac{C_7}{p}+\frac{C_2}{p}\right] \leq \frac{C_8}{\mu_{p}(m,w_p)p}
\end{align*}
We conclude by the fact that $\mu_{p}(m,w_p)$ is uniformly bounded away from $0$ by \Cref{unif_lower}. 

\medskip 
{\bf Second case}: assume now that $p\mid 2\det(L)$. 
By \Cref{lem_uniform_alpha}, 
for any $n\geq 3$,\begin{align}\label{eq_alpha}
|\mu_{p}(m,w_p)-\mu_{p}(m,n)|\leq \frac{C_9p^2}{p^{3\lfloor n/2\rfloor}}.
\end{align} 
Then as in the first case we have \[\left|\sum_{n=3}^{w_p-1}\frac{\mu_{p}(m,w_p)-\mu_{p}(m,n)}{\mu_{p}(m,w_p)}\right|\leq \frac{C_{10}}{p}.\]
On the other hand, for $0\leq n\leq 2$, we have for all $p\mid 2\det(L)$
\[|\mu_{p}(m,w_p)-\mu_{p}(m,n)|\leq |\mu_{p}(m,w_p)-\mu_{p}(m,3)|+|\mu_{p}(m,3)-\mu_{p}(m,n)|\leq C_{11}/p\]
by \eqref{eq_alpha} and the trivial bound $|\mu_{p}(m,n)|,|\mu_{p}(m,3)|\leq p^3\leq (2\det(L))^3$.
Then we conclude as in the first case.
\end{proof}

\subsection{On the number of representations of quadratic forms}\label{circle_method}

Developing a new form of the circle method, Heath-Brown \cite{HB} proves a number of results pertaining to the representation of integers by quadratic forms. The purpose of this subsection is to describe the setup used in \cite{HB}, and recall those results necessary for us in the sequel. We do not entirely keep the notations of \cite{HB} since we will only be concerned with homogeneous quadratic forms, which allows us to make certain simplifications in the notation.

Let $F(\bfx)=F(x_1,\ldots,x_n)$ be an integral quadratic form with non-zero discriminant in $n\geq 5$ variables. A function $\omega:\R^n\to\C$ is said to be a {\it smooth weight function} if it is infinitely differentiable of compact support. Given a set $S$ of parameters (see \cite[Page 6]{HB} for what parameters are allowed to be), Heath-Brown defines a set of weight functions $\cC(S)$ in \cite[\S2]{HB}. 
\begin{remarque}\label{rmk_wt}
The following facts on weight functions will be used later (see for instance the observations on \cite[p.~162]{HB}). 
\begin{enumerate}
    \item There exists a function $\omega^{(n)}_0(\bfx):\R^n\rightarrow[0,2]$ with compact support in $[-1,1]^n$ which belongs to $\cC(n)$ such that $\omega^{(n)}_0(\bfx)\geq 2$ for $x\in [-1/2,1/2]^n$ (for instance, by rescaling the function defined by \cite[(2.1),(2.2)]{HB}). 
    \item Let $M$ be an invertible $n\times n$ matrix, such that the coefficients of both $M$ and $M^{-1}$ are bounded in absolute value by $K$. If $\omega$ is a weight function belonging to $\cC(S)$, then $\omega(Mx)$ belongs to $\cC(S,K)$.
\end{enumerate}
\end{remarque}

The reason for introducing the set $\cC(S)$ is the following. For the quadratic form $F$ fixed as above, an integer $m\neq 0$, and a weight function $\omega\in\cC(S)$ for some set of parameters $S$, we define
\begin{equation*}
N(F,m,\omega):=\sum_{\substack{\bfx\in\Z^n\\F(\bfx)=m}}\omega\Bigl(\frac{\bfx}{\sqrt{m}}\Bigr).
\end{equation*}
The quantity $N(F,m,\omega)$ then is a weighted sum of representations of $m$ by $F$, where the coordinates of these representations are bounded by $O_S(\sqrt{m})$, since $\omega$ has compact support. Then \cite{HB} gives asymptotics for the size of $N(F,m,\omega)$, where the error term only depends on $S$.

More precisely, define the {\it singular integral} by\footnote{As explained on \cite[p.154-155]{HB}, for weight functions $\omega\in \cC(S)$, this limit exists.}
\begin{equation*}
\mu_\infty(F,\omega):=\lim_{\epsilon\to 0}\frac{1}{2\epsilon}\int_{|F(\bfx)-1|\leq\epsilon}\omega(\bfx)d\bfx.
\end{equation*}
Recall the Siegel mass at $p$ of the quadratic form $F$ given by\footnote{Using the notation in \S\ref{sec_density}, $\mu_p(F,m)=\lim_{k\rightarrow\infty}\mu_{p,F}(m,k)$. From the discussion in \S\ref{sec_density}, given $F,m,p$, $\mu_{p,F}(m,k)$ stabilizes as $k\gg 1$ and hence the limit automatically exists.}
\begin{equation*}
\mu_p(F,m):=\lim_{k\to\infty}\frac{1}{p^{(n-1)k}}|\bigl\{
\bfx\!\!\!\!\!\pmod{p^k}: F(\bfx)\equiv m\!\!\!\!\!\pmod{p^k}\bigr\}|
\end{equation*}
and define the {\it singular series} by
\begin{equation*}
\mu(F,m):=\prod_p\mu_p(F,m).
\end{equation*}
In our situation with $n\geq 5$, the singular series always converges absolutely, see also for instance, \cite[\S 11.5]{iwaniec}. Now \cite[Theorem 4]{HB} states the following.
\begin{theorem}[{\cite[Theorem 4]{HB}}]\label{HB_thm4}
Let notation be as above, and let $\omega\in\cC(S)$ be a weight function for some set of parameters $S$. Then
\begin{equation*}
N(F,m,\omega)=\mu_\infty(F,\omega)\mu(F,m)m^{n/2-1}+O_{F,S,\epsilon}\bigl(m^{(n-1)/4+\epsilon}\bigr).
\end{equation*}
\end{theorem}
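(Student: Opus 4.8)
Since this is Heath-Brown's Theorem 4, the natural route is his smooth delta method (a refinement of the Kloosterman/Duke--Friedlander--Iwaniec circle method). Write $N(F,m,\omega)=\sum_{\bfx\in\Z^n}\omega(\bfx/\sqrt m)\,\delta(F(\bfx)-m)$, where $\delta(0)=1$ and $\delta(k)=0$ otherwise. The starting point is Heath-Brown's identity: for any $Q>0$ there is a smooth function $h(x,y)$ on $(0,\infty)\times\R$, with $h(x,y)\ll x^{-1}$, vanishing unless $x\le\max(1,2|y|)$, and rapidly decreasing together with its derivatives outside that range, such that $\delta(k)=c_Q\,Q^{-2}\sum_{q\ge1}\sum_{a\bmod q}^{*}e(ak/q)\,h(q/Q,\,k/Q^2)$ with $c_Q=1+O_N(Q^{-N})$ and $e(z)=e^{2\pi i z}$. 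I would apply this with $Q\asymp\sqrt m$, so that for $\bfx$ in the support of $\omega(\cdot/\sqrt m)$ the argument $k=F(\bfx)-m$ is $O(m)=O(Q^2)$ and the effective range of the modulus is $q\ll Q$. Substituting, interchanging the (absolutely convergent) sums, breaking $\bfx$ into residue classes modulo $q$ and applying Poisson summation in the complementary integer variable, one is led to
\[
N(F,m,\omega)=c_Q\,Q^{-2}\sum_{q\ge1}q^{-n-1}\sum_{\mathbf c\in\Z^n}\Bigl(\sum_{a\bmod q}^{*}e(-am/q)\,S_q(a,\mathbf c)\Bigr)I_q(\mathbf c),
\]
where $S_q(a,\mathbf c)=\sum_{\mathbf b\bmod q}e\bigl((aF(\mathbf b)+\mathbf c\cdot\mathbf b)/q\bigr)$ is a complete quadratic exponential sum and $I_q(\mathbf c)=\int_{\R^n}\omega(\bfx/\sqrt m)\,h(q/Q,(F(\bfx)-m)/Q^2)\,e(-\mathbf c\cdot\bfx/q)\,d\bfx$ is an oscillatory integral.

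The term $\mathbf c=\mathbf 0$ produces the main term. Indeed $\sum_{a}^{*}e(-am/q)S_q(a,\mathbf 0)$ is a Ramanujan-type sum attached to $(F,m)$ and is multiplicative in $q$; summing $q^{-n-1}(\cdots)$ against $q$ therefore factors as an Euler product whose $p$-factor is exactly the local density $\mu_p(F,m)$ of \S\ref{circle_method} (up to the normalizing power of $p$), so one recovers the singular series $\mu(F,m)$, the product converging absolutely because $n\ge5$. The remaining integration, after rescaling $\bfx\mapsto\sqrt m\,\bfx$ and using the normalization property of $h$ (that $\sum_{q\ge1}q^{-1}h(q/Q,y/Q^2)$ recovers, up to $O_N(Q^{-N})$, the archimedean local density of the hypersurface $\{F=1\}$ at $y$), collapses to $\mu_\infty(F,\omega)\,m^{n/2-1}$. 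This yields the stated main term.

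The heart of the matter is bounding the contribution of $\mathbf c\ne\mathbf 0$ by $O_{F,S,\epsilon}(m^{(n-1)/4+\epsilon})$, and this is where the real work lies. Two ingredients are combined. First, square-root cancellation in the quadratic Gauss sums: for $q$ coprime to $2\det F$ one completes the square to evaluate $S_q(a,\mathbf c)$ explicitly (magnitude $q^{n/2}$, with the $a$-dependence concentrated in a single factor of the form $e\bigl(\overline a\,\ell(\mathbf c)/q\bigr)$ for $\ell$ essentially the dual quadratic form of $F$), so that the average over $a$ becomes a Kloosterman sum and gains a further $q^{1/2+\epsilon}$; the finitely many primes dividing $2\det F$ and the power-of-$2$ part of $q$ are handled separately by the usual Gauss/Sali\'{e} sum evaluations, which is precisely the kind of local computation performed in \S\ref{sec_density}. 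Second, repeated integration by parts in $\bfx$ (non-stationary phase, the phase $-\mathbf c\cdot\bfx/q$ having no critical point) shows $I_q(\mathbf c)$ is negligible once $|\mathbf c|$ exceeds roughly $qQ/\sqrt m$ and decays polynomially beyond that, while $|I_q(\mathbf c)|\ll m^{n/2}(Q/q)$ trivially. Inserting these estimates into the double sum over $q\ll Q$ and $\mathbf c$ — and, crucially, extracting additional cancellation from the sum over $q$ in dyadic ranges rather than estimating it term by term — produces the exponent $(n-1)/4$; this is exactly where $n\ge5$ is needed, both for convergence and for the sharp bound.

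I expect the off-diagonal estimate of the previous paragraph to be the main obstacle: not the structure (which is the now-standard delta-method bookkeeping set up above), but the bookkeeping required to squeeze out the sharp exponent $(n-1)/4$ — in particular the careful treatment of the $q$ that are even or divide $2\det F$, where the clean Gauss-sum evaluation degenerates, and the exploitation of cancellation in $q$ on top of the pointwise Kloosterman-sum bound. Everything else — the main-term identification, the convergence of the singular series and singular integral, and the uniformity of the error in the weight $\omega$ through the parameter set $S$ (via the bounds on $\omega$ and its derivatives built into the definition of $\cC(S)$) — follows by routine manipulation once this is in place.
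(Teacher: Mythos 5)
The paper does not prove this result at all: it is Heath-Brown's Theorem 4, quoted verbatim from \cite{HB} and used as an imported black box (the paper's entire ``proof'' is the citation). So there is nothing in the paper to compare your argument against; what you have written is a sketch of Heath-Brown's own proof.

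As a sketch of \cite{HB} it captures the right skeleton: Heath-Brown's smooth approximation of the delta symbol, expansion into residue classes and Poisson summation to separate the frequency variable $\mathbf c$, identification of the $\mathbf c=\mathbf 0$ term with the singular integral and singular series, and bounding the $\mathbf c\neq\mathbf 0$ terms via explicit evaluation of quadratic Gauss/Kloosterman sums combined with non-stationary phase in $I_q(\mathbf c)$. A few normalization slips are visible (the usual form of the delta identity carries a $q^{-1}$ inside the $q$-sum, and the passage to $q^{-n-1}$ in your displayed formula should be checked against \cite[Theorem 2]{HB}; the claim that ``$\sum_q q^{-1}h(q/Q, y/Q^2)$ recovers the archimedean density'' is a paraphrase of Heath-Brown's Lemmas 4--5 rather than a precise statement). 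More significantly, you correctly flag but do not carry out the hardest part, the uniform estimate $O_{F,S,\epsilon}(m^{(n-1)/4+\epsilon})$ for the off-diagonal contribution, which in \cite{HB} occupies several pages (the treatment of moduli dividing $2\det F$, even $q$, and the extraction of extra cancellation in the $q$-sum beyond the pointwise Gauss-sum bound). So this is a reasonable high-level outline but not a self-contained proof, and for the purposes of the present paper none of it is needed: the theorem is simply cited.
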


\noindent Note in particular that the error term depends only on $F$ and $S$, and not on the specific weight function $\omega$ or on $m$.

We also recall a corollary of the above theorem for positive definite quadratic forms, which will be used in \S\ref{sec_finite}.
\begin{corollaire}[{\cite[Corollary 1]{HB}}]\label{HB_cor}
Let notation be as above and assume further that $F$ is positive definite. Then
\[|\{\bfx\in \Z^n: F(\bfx)=m\}|=\mu_{\infty}(F,1)\mu(F,m)m^{n/2-1}+O_{F,\epsilon}(m^{(n-1)/4+\epsilon}).\]
\end{corollaire}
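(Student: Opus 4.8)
The plan is to deduce the statement directly from \Cref{HB_thm4} by choosing a suitable weight function and exploiting positive-definiteness. First I would note that since $F$ is positive definite, the set $\{\bfx\in\Z^n : F(\bfx)=m\}$ is contained in the ball $\{\bfx\in\R^n : |\bfx|^2 \leq c\,m\}$ for a constant $c$ depending only on $F$ (namely $c = 1/\lambda_{\min}(F)$, the reciprocal of the smallest eigenvalue of the associated symmetric matrix). Hence, after rescaling by $\sqrt{m}$, every representation $\bfx$ of $m$ has $\bfx/\sqrt m$ lying in a fixed compact set, say the cube $[-R,R]^n$ with $R=\sqrt c$. So the idea is to pick a weight function $\omega\in\cC(S)$ that is identically $1$ on this cube; then $N(F,m,\omega)$ counts \emph{every} representation of $m$, with weight exactly $1$, and the corollary becomes a special case of the theorem.

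The key steps, in order, are as follows. \emph{Step 1:} Using \Cref{rmk_wt}(1) (rescaled appropriately, or by taking a suitable dilate of the function from \cite[(2.1),(2.2)]{HB}), produce a weight function $\omega\in\cC(S)$ for some set of parameters $S$ depending only on $F$, such that $0\leq\omega\leq 2$, $\omega$ is supported in a fixed compact set, and $\omega(\bfx)=1$ for all $\bfx$ with $|\bfx|\leq R$. (In fact one may even arrange $\omega\equiv 1$ on a neighbourhood of the relevant cube; the precise normalization constant in Remark \ref{rmk_wt}(1) is $\geq 2$ on $[-1/2,1/2]^n$, which after scaling by $2R$ gives $\omega \geq 2 > 1$ on $[-R,R]^n$ — one then replaces $\omega$ by $\min(\omega,1)$ smoothed, or simply absorbs the harmless factor, since only an exact value $1$ on the support of the representations is needed and this can be arranged within $\cC(S)$.) \emph{Step 2:} With this $\omega$, observe that for every $m>0$,
\[
N(F,m,\omega)=\sum_{\substack{\bfx\in\Z^n\\ F(\bfx)=m}}\omega\!\left(\tfrac{\bfx}{\sqrt m}\right)=\sum_{\substack{\bfx\in\Z^n\\ F(\bfx)=m}}1=\bigl|\{\bfx\in\Z^n: F(\bfx)=m\}\bigr|,
\]
because every such $\bfx$ satisfies $|\bfx/\sqrt m|\leq R$ and hence $\omega(\bfx/\sqrt m)=1$. \emph{Step 3:} Identify the main term. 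By \Cref{HB_thm4}, $N(F,m,\omega)=\mu_\infty(F,\omega)\mu(F,m)m^{n/2-1}+O_{F,\epsilon}(m^{(n-1)/4+\epsilon})$, where the error term depends only on $F$ and $S$, i.e.\ only on $F$. It remains to check $\mu_\infty(F,\omega)=\mu_\infty(F,1)$: since the locus $\{F(\bfx)=1\}$ lies inside the region where $\omega\equiv 1$ (again by positive-definiteness, as $F(\bfx)=1$ forces $|\bfx|\leq\sqrt c\leq R$), the defining limit integral $\lim_{\epsilon\to0}\frac{1}{2\epsilon}\int_{|F(\bfx)-1|\leq\epsilon}\omega(\bfx)\,d\bfx$ is unchanged if $\omega$ is replaced by the constant function $1$ on that neighbourhood, so $\mu_\infty(F,\omega)=\mu_\infty(F,1)$. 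Substituting gives exactly the claimed formula.

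I do not expect a serious obstacle here: the corollary is genuinely a routine specialization of \Cref{HB_thm4}. The only mild technical point is Step 1 — arranging a legitimate weight function in $\cC(S)$ that equals $1$ (not merely $\geq 1$) on the relevant compact set; this is handled either by the explicit construction of \cite[(2.1),(2.2)]{HB} together with the scaling/composition closure properties recorded in \Cref{rmk_wt}, or by noting that the $\geq 2$ normalization already suffices once one checks that a truncation of a valid weight function can itself be taken in $\cC(S)$ (which is part of the flexibility built into Heath-Brown's class). Everything else — Step 2 and the identification $\mu_\infty(F,\omega)=\mu_\infty(F,1)$ in Step 3 — is immediate from positive-definiteness, which confines all representations of $m$, and the zero-set of $F-1$, to a fixed compact region.
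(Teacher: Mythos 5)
The paper states this result as a verbatim citation of \cite[Corollary 1]{HB} and offers no proof of its own, so the relevant comparison is with Heath-Brown's deduction of his Corollary 1 from his Theorem 4. Your reconstruction is essentially that deduction: positive-definiteness confines all $\bfx/\sqrt m$ with $F(\bfx)=m$ (and the locus $\{F=1\}$) to a fixed compact set, you take $\omega\in\cC(S)$ equal to $1$ there so that $N(F,m,\omega)$ is the exact count and $\mu_\infty(F,\omega)=\mu_\infty(F,1)$, and the statement drops out of \Cref{HB_thm4}. The only soft spot is Step~1, where ``$\min(\omega,1)$ smoothed'' is left vague; it is cleaner to simply invoke a standard smooth plateau function (identically $1$ on $[-R,R]^n$, supported in $[-2R,2R]^n$, with all derivatives controlled by $R$ alone), which visibly lies in $\cC(S)$ for an $S$ depending only on $F$, rather than to truncate and re-smooth the function from \Cref{rmk_wt}(1). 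With that replacement the argument is correct and matches Heath-Brown's.
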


\subsection{An application of Heath-Brown's theorem and a result of Niedermowwe}\label{Vol-Nie}

Recall that $(L,Q)$ is an even quadratic lattice of signature $(b,2)$ with $b\geq 3$. We will apply Heath-Brown's result to $(L,Q)$ (here we identify $L$ with $\Z^{b+2}$) after we construct suitable smooth weight functions. Moreover, we recall Niedermowwe's result, which is analogous to Heath-Brown's result but with sharp weight functions given by characteristic functions of certain expanding domains $\Omega_T$ defined below and keeps track of the dependence of the error term on $T$.

Similar to the definition of the singular integral $\mu_\infty(F,\omega)$ in \S\ref{circle_method}, we define a measure $\mu_\infty$ on
\begin{equation*}
    L_{\R,1}:=\{\lambda\in L_\R:Q(\lambda)=1\}
\end{equation*}
as follows. For an open bounded subset $W$ of $L_\R$ we set
\begin{equation}\label{def_muinfty}
    \mu_\infty(W\cap L_{\R,1}):=
    \lim_{\epsilon\to 0}\frac{1}{2\epsilon}
    \mu_L\bigl(\{\lambda\in W:|Q(\lambda)-1|<\epsilon\}\bigr),
\end{equation}
where $\mu_{L}$ is the Lebesgue measure on $L_{\R}$ normalized so that $L$ has covolume $1$.\footnote{Let $\bfone_W$ be the characteristic function on $W$. Then by abuse of notation (since $\bfone_W$ is not smooth), we have $\mu_\infty(Q, \bfone_W)=\mu_\infty(W\cap L_{\R,1})$.} 

Let $x$ denote a fixed point in the period domain $D_L$ and let $P$ denote the negative definite plane (with respect to $Q$) in $L_{\R}$ associated to $x$. Let $P^\perp$ denote the orthogonal complement of $P$ in $L_{\R}$. Given a vector $\lambda\in L_\R$, we let $\lambda_x$ and $\lambda_{x^\bot}$ denote the projections of $\lambda$ to $P$ and $P^\perp$, respectively.

We introduce notation for the set of elements in $L_{\R,1}$ with bounded value of $Q(\lambda_x)$: for $T>0$, define
\begin{equation*}
    \Omega_{\leq T}:=\{\lambda\in L_{\R,1}:\,-Q(\lambda_x)\in[0,T]\}.
\end{equation*}
Then the following lemma computes the volume of the sets $\Omega_{\leq T}$. Recall that $k=1+\frac{b}{2}$.
\begin{lemme}\label{secondmeasure}
Let $T>0$ be a real number. 
Then $$\mu_{\infty}(\Omega_{\leq T})=\frac{(2\pi)^{k}\left((1+T)^{\frac{b}{2}}-1\right)}{\sqrt{|L^{\vee}/L|}\Gamma(k)}.$$ 
\end{lemme}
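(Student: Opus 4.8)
The plan is to compute the volume $\mu_\infty(\Omega_{\leq T})$ directly from the definition \eqref{def_muinfty} by choosing adapted coordinates on $L_\R$ and then carrying out the limit in $\epsilon$ as an explicit integral. First I would fix an orthogonal splitting $L_\R = P \oplus P^\perp$, where $P$ is the negative definite plane associated to $x$ and $P^\perp$ is positive definite of dimension $b = 2k-2$. Writing $\lambda = \lambda_x + \lambda_{x^\perp}$, we have $Q(\lambda) = Q(\lambda_{x^\perp}) - |Q(\lambda_x)|$ where $Q$ restricted to $P^\perp$ is positive definite and $-Q$ restricted to $P$ is positive definite (of rank $2$). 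Introduce polar-type coordinates: on $P$ use $(r,\theta)$ with $-Q(\lambda_x) = r^2$ (so $\lambda_x$ ranges over a disk, contributing the factor $2\pi$ once for each of the... no: $P$ is a plane, contributing a single $2\pi$ from the angular coordinate), and on $P^\perp$ pick coordinates diagonalizing $Q|_{P^\perp}$. Actually the cleanest route is to use the standard fact that for a positive definite quadratic form $g$ of rank $d$ and discriminant $\Delta$, the measure $\mu_\infty$ attached via \eqref{def_muinfty} to the level set $\{g = c\}$ is $\frac{\Vol(S^{d-1})}{2\sqrt{\Delta}} c^{d/2-1}$ times the appropriate normalization; I would instead directly disintegrate the Lebesgue measure.

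Concretely, the key computation is the following. By definition,
\begin{align*}
\mu_\infty(\Omega_{\leq T}) = \lim_{\epsilon\to 0}\frac{1}{2\epsilon}\mu_L\left(\{\lambda : |Q(\lambda)-1|<\epsilon,\ -Q(\lambda_x)\in[0,T]\}\right).
\end{align*}
Slicing by the value $t = -Q(\lambda_x) \in [0,T]$, the condition $|Q(\lambda)-1|<\epsilon$ becomes $|Q(\lambda_{x^\perp}) - (1+t)| < \epsilon$. The normalized Lebesgue measure $\mu_L$ factors, up to the covolume normalization, as a product of Lebesgue measures on $P$ and $P^\perp$ scaled so that $L$ has covolume $1$; the relevant discriminant factor that appears is $\sqrt{|L^\vee/L|}$ (equivalently $\sqrt{|\det L|}$, using that $L$ is maximal of the given signature — this is where the $\sqrt{|L^\vee/L|}$ in the denominator comes from). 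On the $P$ side, $\int_{-Q(\lambda_x)\le T} d\lambda_x$ over the region $-Q(\lambda_x)\in[t,t+dt]$ contributes (after diagonalizing $-Q|_P$, a rank $2$ positive form) a factor proportional to $\pi\, dt$ times the inverse square root of the discriminant of $-Q|_P$. On the $P^\perp$ side, the shell $\{|Q(\lambda_{x^\perp}) - (1+t)|<\epsilon\}$ has volume asymptotic to $\frac{2\epsilon}{2} \cdot \frac{\Vol(S^{b-1})}{1} (1+t)^{b/2-1}$ divided by the square root of the discriminant of $Q|_{P^\perp}$, where $\Vol(S^{b-1}) = \frac{2\pi^{b/2}}{\Gamma(b/2)}$. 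Multiplying, the $\epsilon$'s cancel, and one integrates $(1+t)^{b/2-1}$ over $t\in[0,T]$ to get $\frac{2}{b}\left((1+T)^{b/2}-1\right)$. Collecting the $\pi$-powers: one $\pi$ from $P$, and $\pi^{b/2}$ from $S^{b-1}$, together with $b = 2(k-1)$ and $\Gamma(b/2) = \Gamma(k-1)$, and using $\frac{2}{b}\cdot\frac{1}{\Gamma(k-1)} = \frac{1}{(k-1)\Gamma(k-1)} = \frac{1}{\Gamma(k)}$, should assemble precisely into $\frac{(2\pi)^k\left((1+T)^{b/2}-1\right)}{\sqrt{|L^\vee/L|}\,\Gamma(k)}$.

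I would present this by first recording the general shell-volume lemma: for a positive definite quadratic form of rank $d$ on a Euclidean-type space with a chosen volume form, the level-set measure is $\frac{\pi^{d/2}}{\Gamma(d/2+1)}$ times the expected Jacobian — or better, just cite/reprove that $\lim_{\epsilon\to0}\frac{1}{2\epsilon}\mathrm{vol}\{c-\epsilon < g < c+\epsilon\} = \frac{d}{2}\cdot\frac{\pi^{d/2}}{\Gamma(d/2+1)} c^{d/2-1} \cdot (\det g)^{-1/2} = \frac{\pi^{d/2}}{\Gamma(d/2)}c^{d/2-1}(\det g)^{-1/2}$, which is elementary (change of variables to reduce to the standard sphere). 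Then apply it with $d = b$, $c = 1+t$, integrate over the $t$-slices coming from the $2$-dimensional plane $P$, and track the discriminant: the product of the discriminant of $Q|_{P^\perp}$ and that of $-Q|_P$ equals $|\det L|$ up to sign, and $|\det L| = |L^\vee/L|$.

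The main obstacle, and the only genuinely delicate point, is bookkeeping the normalization constants: precisely how the covolume-$1$ normalization of $\mu_L$ on $L_\R$ distributes between the factors $P$ and $P^\perp$, and confirming that the discriminant contribution is exactly $\sqrt{|L^\vee/L|}$ rather than, say, $\sqrt{|L^\vee/L|}$ times a power of $2$ coming from the even lattice structure or from the rank-$2$ negative part. Everything else — the polar coordinate changes, the $\Gamma$-function identities, the elementary integral $\int_0^T (1+t)^{k-2}\,dt$ — is routine. I would double-check the constant by specializing to a known case (e.g. $b = 2$, $L$ unimodular, comparing against the computation in \cite{charles1,st}) before finalizing.
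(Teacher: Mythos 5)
Your plan is essentially the paper's proof: disintegrate $\mu_L$ into a polar-coordinate radial integral over the negative-definite plane $P$ (with $t=-Q(\lambda_x)$ as the slicing variable) times a shell-volume computation in $P^\perp$, then take $\epsilon\to 0$ and integrate $(1+t)^{b/2-1}$ over $[0,T]$. The factor-of-$2$ bookkeeping you flag is real but resolves cleanly: the Jacobian of passing from an integral basis of $L$ to coordinates in which the \emph{quadratic form} $Q$ (not the bilinear form) is $\mathrm{diag}(-1,-1,I_b)$ is $|\det A|^{-1/2}=2^{(b+2)/2}/\sqrt{|L^\vee/L|}$ (since the bilinear form matrix is $2A$, so $|L^\vee/L|=2^{b+2}|\det A|$), and this $2^k$ combines with the $\pi\cdot\pi^{b/2}=\pi^k$ from your two polar changes of variable to yield the stated $(2\pi)^k/\sqrt{|L^\vee/L|}$.
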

\begin{proof}[Proof]
For $\epsilon >0$, let $U_{T,\epsilon}:=\{x\in L_{\R}:|Q(x)-1|<\epsilon,\, -Q(\lambda_x)<T\}$. Then $\Omega_{\leq T}=U_{T,\epsilon}\cap L_{\R,1}$ and by definition $$\mu_{\infty}(\Omega_{\leq T})=\lim_{\epsilon\rightarrow 0}\frac{\mu_{L}(U_{T,\epsilon})}{2\epsilon}.$$
Let $\mathcal{E}$  be an orthogonal basis of $L_{\R}$ adapted to the decomposition $P\oplus P^{\bot}$ and in which the bilinear form associated to $Q$ has the following intersection matrix $$\begin{pmatrix} -1&0&0\\
0&-1&0\\
0&0&I_b\\
\end{pmatrix},$$
where $I_b$ denotes the $b\times b$ identity matrix.
Let $\mu_{\mathcal{E}}$ be the associated Lebesgue measure for which the $\Z$-span of $\mathcal{E}$ is of covolume $1$. By change of variables, we have   
\begin{align*}
\mu_{L}(U_{T,\epsilon})&=\frac{2^{1+\frac{b}{2}}}{\sqrt{|L^{\vee}/L|}}\mu_{\mathcal{E}}(U_{T,\epsilon}) \\
&=\frac{2^{1+\frac{b}{2}}}{\sqrt{|L^{\vee}/L|}}\int_{\underset{\underset{x^2_1+x_2^2<T}{|x_1^2+x_2^2-y_1^2-\cdots-y_b^2+1|<\epsilon}}{(x_1,x_2,y_1,\cdots,y_b)\in\R^{b+2}}} dx_1 dx_2dy_1\cdots dy_b\\
&=\frac{2^{2+\frac{b}{2}}\pi}{\sqrt{|L^{\vee}/L|}}\int_{0}^{\sqrt{T}}\left(\int_{1+r^2-\epsilon<y_1^2+\cdots+y_b^2<1+r^2+\epsilon}dy_1\cdots dy_b \right) rdr\\
&=\frac{2(2\pi)^{1+\frac{b}{2}}}{\sqrt{|L^{\vee}/L|}\Gamma\left(1+\frac{b}{2}\right)}\int_{0}^{\sqrt{T}}\left((1+r^2+\epsilon)^{\frac{b}{2}}-\left(1+r^2-\epsilon\right)^{\frac{b}{2}}\right)rdr\\
&=2\epsilon.\frac{(2\pi)^{1+\frac{b}{2}} \left((1+T)^{\frac{b}{2}}-1\right)}{\sqrt{|L^{\vee}/L|}\Gamma\left(1+\frac{b}{2}\right)}+O(\epsilon^2)
\end{align*}
Dividing by $2\epsilon$ and letting $\epsilon$ go to zero, we get the desired result.
\end{proof}

We now describe the desired estimates for $\bigl|\bigl\{
\lambda\in L:Q(\lambda)=m,\,\lambda/\sqrt{m}\in \Omega_{\leq T}\bigr\}\bigr|$ in two cases. 

\subsubsection{Assume $T\leq 1$}
In this case, we only need a good upper bound and hence we construct a suitable smooth weight function $\omega:L_{\R}\to\R$ as follows and apply Heath-Brown's theorem. 

\begin{corollaire}\label{HBforL}
Given $(L,Q)$ as above.
For any $m\in \Z_{>0}$, any $0<T\leq 1$, we have
\[\bigl|\bigl\{
\lambda\in L:Q(\lambda)=m,\,\lambda/\sqrt{m}\in \Omega_{\leq T}\bigr\}\bigr|=O_Q(m^{\frac{b}{2}}T)+O_{Q,T,\epsilon}(m^{(b+1)/4+\epsilon}).\]
\end{corollaire}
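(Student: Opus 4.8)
The plan is to reduce the count to an application of Heath-Brown's Theorem \ref{HB_thm4} for a suitably chosen smooth weight function, exploiting that $0<T\le 1$ so that $\Omega_{\le T}$ is a thin slab near the light cone and only an upper bound is needed. First I would fix the point $x\in D_L$ and the associated orthogonal decomposition $L_\R=P\oplus P^\perp$ with $P$ negative definite of rank $2$; after choosing an orthogonal basis $\mathcal{E}$ adapted to this decomposition as in the proof of \Cref{secondmeasure}, the quadratic form $Q$ becomes (up to the bounded change of basis matrix relating $\mathcal{E}$ to a $\Z$-basis of $L$) the diagonal form $-x_1^2-x_2^2+y_1^2+\cdots+y_b^2$, and the condition $-Q(\lambda_x)\le T$ becomes $x_1^2+x_2^2\le T$. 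I would then build a weight function $\omega_T$ that dominates the characteristic function of $\{Q(\lambda)=1,\ x_1^2+x_2^2\le T\}$: starting from the standard bump $\omega_0^{(b+2)}\in\cC(b+2)$ of Remark \ref{rmk_wt}(1), precompose with a diagonal matrix $M_T$ scaling the first two coordinates by $1/\sqrt T$ (and leaving the rest fixed), so that $\omega_T(\lambda):=\omega_0^{(b+2)}(M_T\lambda)$ is supported where $x_1^2+x_2^2\lesssim T$ and the remaining coordinates are $O(1)$, while still being $\ge 1$ on the relevant region. By Remark \ref{rmk_wt}(2), since $0<T\le 1$ the matrix $M_T$ and its inverse have entries bounded by $T^{-1/2}$, hence $\omega_T$ lies in $\cC(b+2,T^{-1/2})$; crucially this is a fixed shape of parameter set as $T$ varies in a way that lets the error term be controlled as $O_{Q,T,\epsilon}(\cdot)$, which is exactly what the statement allows.

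Next, I would invoke \Cref{HB_thm4} with $F$ the (transported) form $Q$ and $\omega=\omega_T$, obtaining
\[
\bigl|\{\lambda\in L: Q(\lambda)=m,\ \lambda/\sqrt m\in\Omega_{\le T}\}\bigr|
\le N(Q,m,\omega_T)
=\mu_\infty(Q,\omega_T)\,\mu(Q,m)\,m^{b/2}+O_{Q,T,\epsilon}(m^{(b+1)/4+\epsilon}).
\]
The singular series $\mu(Q,m)=\prod_p\mu_p(Q,m)$ is bounded above uniformly in $m$: indeed by \Cref{close1} the local factors at primes $p\nmid 2\det(L)$ are $1+O(1/p)$, and there are only finitely many bad primes each contributing a bounded factor (and by \Cref{unif_lower} it is bounded below as well, though we only need the upper bound here). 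It then remains to bound the singular integral $\mu_\infty(Q,\omega_T)$ by $O_Q(T)$. Since $\omega_T$ is dominated by (a constant times) the characteristic function of the region $\{x_1^2+x_2^2\le C T,\ |y|\le C\}$, and $\mu_\infty$ is, by \Cref{def_muinfty} and the computation in the proof of \Cref{secondmeasure}, a hypersurface measure on $\{Q=1\}$, integrating first over the two bounded variables $x_1,x_2$ (a disk of area $O(T)$) and then over the level set in the $y$-variables (a region of bounded measure) yields $\mu_\infty(Q,\omega_T)=O_Q(T)$; this is precisely the slab-volume estimate $(1+T)^{b/2}-1=O(T)$ for $T\le 1$ visible in \Cref{secondmeasure}. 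Combining the three bounds gives the claimed $O_Q(m^{b/2}T)+O_{Q,T,\epsilon}(m^{(b+1)/4+\epsilon})$.

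The main obstacle, and the only point requiring genuine care, is the construction of the weight function and the bookkeeping of how its parameter set $\cC(S)$ — and hence the implied constant in Heath-Brown's error term — depends on $T$. One must be careful that the rescaling by $T^{-1/2}$ is harmless because $T\le 1$ keeps $M_T^{-1}$ (not $M_T$) from blowing up in the wrong direction, and because the statement only demands an error term of the shape $O_{Q,T,\epsilon}$, so the $T$-dependence of the Heath-Brown constant is permitted; the genuinely uniform-in-$T$ part is the main term, where the $T$-linear bound on $\mu_\infty(Q,\omega_T)$ must hold with a constant depending only on $Q$. A secondary subtlety is that Heath-Brown's theorem is stated for $Q$ identified with a form on $\Z^{b+2}$, so one should note that transporting $\Omega_{\le T}$ through the fixed (independent of $T$ and $m$) linear isomorphism relating the basis $\mathcal{E}$ to a $\Z$-basis of $L$ only changes the weight function by a fixed invertible matrix, which by Remark \ref{rmk_wt}(2) again only enlarges the parameter set by a constant depending on $Q$ alone.
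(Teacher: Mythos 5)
Your proof is correct and follows essentially the same approach as the paper: construct a smooth weight function $\omega$ dominating the indicator of $\Omega_{\leq T}$ (in the paper this is built as a product $\omega_P\cdot\omega_{P^\perp}$ rather than a single rescaled bump, but the effect is identical), note $\omega$ lies in a class $\cC(S)$ whose parameters depend on $T$, apply Heath-Brown's Theorem~\ref{HB_thm4}, bound the singular integral by $O_Q(T)$ via the slab-volume estimate from Lemma~\ref{secondmeasure}, and bound the singular series by $O_Q(1)$. The only incidental remark: your parenthetical about which of $M_T$, $M_T^{-1}$ ``blows up'' for $T\le 1$ is reversed (it is $M_T$, not $M_T^{-1}$, that has entries of size $T^{-1/2}$), but this does not affect the argument since both are bounded by $T^{-1/2}$ and the $T$-dependence is absorbed into the allowed $O_{Q,T,\epsilon}$ error.
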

\begin{proof}
By \Cref{rmk_wt}, there exist smooth functions $\omega_P:P\to[0,2]$ and $\omega_{P^\perp}:P^\perp\to[0,2]$ such that
\begin{enumerate}
    \item $\omega_P(\lambda_x)\geq 1$ for elements $\lambda_x\in P$ with $Q(\lambda_x)<T$ and $\omega_P(\lambda_x)=0$ if $Q(\lambda_x)>2T$,
    \item $\omega_{P^\perp}(\lambda_{x^\perp})\geq 1$ if $Q(\lambda_{x^\perp})\leq 1$ and $\omega_{P^\perp}(\lambda_{x^\perp})=0$ if $Q(\lambda_{x^\perp})\geq 2$.
\end{enumerate}
We define $\omega(\lambda)=\omega_P(\lambda_x)\omega_{P^\perp}(\lambda_{x^\perp})$ and by construction, $\omega\in \cC(b,T)$.

By definition, $\bigl|\bigl\{
\lambda\in L:Q(\lambda)=m,\,\lambda/\sqrt{m}\in \Omega_{\leq T}\bigr\}\bigr|\leq N(Q,m,\omega)$. By definition and \Cref{secondmeasure}, the singular integral $\mu_\infty(Q,\omega)\ll \mu_\infty(\Omega_{\leq 2T})=O(T)$. Then the assertion follows by
applying \Cref{HB_thm4} to $\omega$ and the fact that $\mu(Q,m)=O_F(1)$ (since $b\geq 3$). 
\end{proof}

\subsubsection{Assume $T\geq 1$} In this case, we will need the exact main term along with an error term with explicit dependence on $T$ and we will apply Niedermowwe’s work \cite{Nieder}.

For the convenience of later use,
for an integer $m\geq 1$, we define the quantity
\begin{equation}\label{definitionam}
a(m)=\frac{-c(m)\Gamma(k)\sqrt{|L^\vee/L|}}{2(2\pi)^{k}},
\end{equation}
where $c(m)$ is the $m$-th Fourier coefficient of the Eisenstein series defined in \S\ref{Eisenstein}.
Note that $a(m)$ grows as $\asymp m^{\frac{b}{2}}$.
We have the following proposition, which follows from work of Niedermowwe \cite{Nieder}.

\begin{proposition}\label{effective}
Let $A>1$ be a positive real number.
For any $m\in \Z_{>0}$, $T\geq 1$, we have 
\begin{equation*}
\bigl|\bigl\{
\lambda\in L:Q(\lambda)=m,\,\lambda/\sqrt{m}\in \Omega_{\leq T}\bigr\}\bigr|=
a(m)\mu_\infty(\Omega_{\leq T})+O\bigl(m^{\frac{b}{2}}T^{\frac{b}{2}}\log(mT)^{-A}\bigr).
\end{equation*}
\end{proposition}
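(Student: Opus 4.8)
\textbf{Proof proposal for Proposition \ref{effective}.}

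The plan is to reduce the count $\bigl|\bigl\{\lambda\in L:Q(\lambda)=m,\,\lambda/\sqrt{m}\in\Omega_{\leq T}\bigr\}\bigr|$ to the kind of weighted representation number handled by Niedermowwe's version of the circle method, where the weight is the \emph{sharp} characteristic function of an expanding region rather than a smooth bump. First I would observe that $\Omega_{\leq T}$ is cut out inside $L_{\R,1}$ by the single inequality $-Q(\lambda_x)\le T$, a condition on the projection $\lambda_x$ to the negative-definite plane $P$. Rescaling by $\sqrt m$, the lattice points to be counted are exactly the $\lambda\in L$ with $Q(\lambda)=m$ lying in the cone over $\Omega_{\leq T}$, i.e. with $-Q(\lambda_x)\le mT$ and $Q(\lambda_{x^\perp})=m-Q(\lambda_x)\asymp mT$. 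After a linear change of variables (bounded, in the sense of Remark \ref{rmk_wt}(2), but with bound depending on $T$) sending $P\oplus P^\perp$ to the standard split-plus-definite coordinates as in the proof of Lemma \ref{secondmeasure}, this becomes counting integral (or, more precisely, lattice) points on the quadric $Q=m$ inside a box whose sidelengths in the $P$-directions are $O(\sqrt{mT})$ and in the $P^\perp$-directions are $O(\sqrt{mT})$ as well — a region of the type to which \cite{Nieder} applies. The bound $r=b+2\ge 5$ guarantees the circle method is in its range of validity.

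The main term is then produced exactly as in Heath-Brown's Theorem \ref{HB_thm4}: it is the product of the singular integral and the singular series, times $m^{n/2-1}=m^{b/2}$. For the singular integral, taking the weight to be $\mathbf 1_{\Omega_{\leq T}}$ in the dilated coordinates, the archimedean density is precisely $\mu_\infty(\Omega_{\leq T})$ as defined in \eqref{def_muinfty}, whose value was computed in Lemma \ref{secondmeasure}. For the singular series, $\prod_p\mu_p(Q,m)$ is, up to the archimedean normalization factors, exactly the quantity appearing in the Siegel mass formula, and by definition \eqref{definitionam} one has $a(m)=\frac{-c(m)\Gamma(k)\sqrt{|L^\vee/L|}}{2(2\pi)^k}$ with $c(m)$ the Eisenstein coefficient; the classical identification of $c(m)$ with (a multiple of) the product of local densities — already invoked in \S\ref{Eisenstein} via \cite{bruinierkuss}, \cite{bruinierintegrals} — shows that $\mu(Q,m)m^{b/2}$ times the normalizing constants equals $a(m)$. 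Thus the main term of Niedermowwe's asymptotic is $a(m)\,\mu_\infty(\Omega_{\leq T})$, matching the claimed leading term (of order $m^{b/2}T^{b/2}$ by Lemma \ref{secondmeasure} since $T\ge 1$).

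The crux is the error term and its explicit dependence on $T$. Niedermowwe's theorem gives, for the sharp indicator of a region dilated by a parameter, a power-of-log saving over the trivial bound uniformly in the dilation parameter, provided one stays in a suitable range; the statement to be invoked is precisely that the count equals the expected main term plus $O\bigl(m^{b/2}T^{b/2}\log(mT)^{-A}\bigr)$ for any fixed $A>1$. I would extract this by applying \cite{Nieder} with the dilation parameter proportional to $\sqrt{mT}$ (so that the overall "size" of the problem is $mT$), and check that Niedermowwe's hypotheses on the region — convexity/smoothness of the boundary of $\Omega_{\leq T}$, away from the vertex — are met here, the boundary being (a piece of) the smooth quadric $-Q(\lambda_x)=T$ intersected with $Q=1$. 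The delicate point, which I expect to be the main obstacle, is bookkeeping the $T$-dependence through the change of variables: the linear map flattening $P\oplus P^\perp$ to standard form has norm bounded independently of $T$ (it depends only on the fixed point $x\in D_L$), but the region $\Omega_{\leq T}$ itself is genuinely expanding in the $P$-directions, so one must feed the expansion into Niedermowwe's uniformity rather than hide it in the weight function; ensuring the error term comes out as $m^{b/2}T^{b/2}\log(mT)^{-A}$ and not, say, with an extra power of $T$, requires care that the "trivial size" against which the log-saving is measured is exactly $\mu_\infty(\Omega_{\leq T})m^{b/2}\asymp m^{b/2}T^{b/2}$ and not the volume of the enclosing box. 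Once that matching is done, the proposition follows.
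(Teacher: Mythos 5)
Your proposal is correct and follows essentially the same route as the paper: invoke Niedermowwe's circle-method estimate \cite[Theorem 3.6]{Nieder} for the sharp region $\Omega_{\leq T}$, read off the main term as singular integral times singular series, and then convert $\mu(Q,m)m^{b/2}$ into $a(m)$ via the explicit Bruinier--K\"uss formula for $c(m)$. The paper's written proof is terser (it states that Niedermowwe's proof carries over without change and matches the singular integrals in a footnote); your extra bookkeeping of the $T$-dependence fleshes out the same argument, aside from one minor slip — the linear change of variables to standard coordinates has $T$-independent operator norm, as you yourself note a few lines later.
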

\begin{proof}
In \cite[Theorem 3.6]{Nieder}, Niedermowwe estimates the number of lattice points with fixed norm in homogenously expanding rectangular regions. His proof carries over without change for our region, yielding that\footnote{The definitions of singular series are the same in \cite{HB} and \cite{Nieder}. For the definitions of singular integral, it suffices to compare the definitions when $\omega$ is a smooth weight function (say a good approximation of $\bfone_{U_T}$, where $U_T\subset L_\R\cong \R^{b+2}$ with $U_T\cap L_{\R,1}= \Omega_{\leq T}$). In \cite{Nieder}, the singular integral, denoted by $I_\omega(m)$, is defined to be $\int_{-\infty}^\infty\int_{\R^{b+2}}\omega(\bfx/\sqrt{m})\exp(2\pi i z(Q(\bfx)-m))d\bfx dz$, which equals to $\int_{Q(\bfx)=m}\omega(\bfx/\sqrt{m})(\frac{dQ}{dx_1})^{-1}dx_2\cdots dx_{b+2}$ by applying the Fourier inversion theorem to $f:\R\rightarrow \C$, $f(y):=\int_{Q(\bfx)=y}\omega(\bfx/\sqrt{m})(\frac{dQ}{dx_1})^{-1}dx_2\cdots dx_{b+2}$. Then by \cite[Theorem 3]{HB} and by a change of variables $\bfx\mapsto \bfx/\sqrt{m}$, $I_\omega(m)=m^{\frac{b}{2}}\mu_\infty(Q,\omega)$. Therefore, the leading term in \cite{Nieder} $I_\omega(m)\mu(Q,m)$ coincides with the leading term $\mu_\infty(Q,\omega)\mu(Q,m)m^{\frac{b}{2}}$ in \cite{HB}.}
\[\bigl|\bigl\{
\lambda\in L:Q(\lambda)=m,\,\lambda/\sqrt{m}\in \Omega_{\leq T}\bigr\}\bigr|=
\mu_\infty(\Omega_{\leq T})m^{\frac{b}{2}}\mu(Q,m)+O\bigl(m^{\frac{b}{2}}T^{\frac{b}{2}}\log(mT)^{-A}\bigr).\]
We then deduce the desired formula by the explicit formula for $c(m)$ in \cite[(22),(23)]{bruinierkuss}, which asserts that $\displaystyle c(m)=-\frac{2(2\pi)^{\frac{b}{2}+1}m^{\frac{b}{2}}}{\sqrt{|L^\vee/L|}\Gamma(\frac{b}{2}+1)}\prod_p \mu_p(Q,m)$.
\end{proof}


We conclude this section by an integral computation similar to \Cref{secondmeasure} which will be used later.
For $s\in\R$, consider the function 
\begin{align*}
h_s:L_{\R}&\rightarrow \R^+\\
\lambda&\mapsto \left(\frac{1}{1-Q(\lambda_x)}\right)^{k-1+s}.
\end{align*}
\begin{lemma}\label{hintegral}
For $s>0$, we have
\begin{equation*}
\int_{L_{\R,1}}h_s(\lambda)d\mu_{\infty}(\lambda)=
\frac{b}{4}\cdot \frac{|c(m)|}{s\cdot a(m)}.
\end{equation*}
\end{lemma}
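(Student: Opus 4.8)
The plan is to collapse the integral over $L_{\R,1}$ to a one–variable integral by slicing according to the value of $t:=-Q(\lambda_x)$, which is exactly the parameter governing the regions $\Omega_{\leq T}$ of \Cref{secondmeasure}. Since $P$ is negative definite for $Q$, we have $t\geq 0$ for every $\lambda\in L_{\R,1}$, and since $k-1=\tfrac b2$ the integrand is a function of $t$ alone: $h_s(\lambda)=(1+t)^{-(k-1+s)}$, strictly decreasing in $t$. First I would record the distribution of $t$ under $\mu_\infty$. By definition $\Omega_{\leq T}=\{\lambda\in L_{\R,1}:t\leq T\}$, so \Cref{secondmeasure} says the pushforward of $\mu_\infty$ along $\lambda\mapsto t$ has cumulative distribution function $T\mapsto \frac{(2\pi)^{k}\left((1+T)^{k-1}-1\right)}{\sqrt{|L^{\vee}/L|}\,\Gamma(k)}$ on $[0,\infty)$; this is absolutely continuous with density $\frac{(2\pi)^{k}(k-1)}{\sqrt{|L^{\vee}/L|}\,\Gamma(k)}(1+t)^{k-2}\,dt$.

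Next I would integrate $h_s$ against this explicit density (equivalently, apply the layer–cake formula, using that the sub-level sets of $t$ are precisely the $\Omega_{\leq T}$ whose measures are known in closed form):
\[
\int_{L_{\R,1}}h_s\,d\mu_{\infty}=\frac{(2\pi)^{k}(k-1)}{\sqrt{|L^{\vee}/L|}\,\Gamma(k)}\int_0^{\infty}(1+t)^{-(k-1+s)}(1+t)^{k-2}\,dt=\frac{(2\pi)^{k}(k-1)}{\sqrt{|L^{\vee}/L|}\,\Gamma(k)}\int_0^{\infty}(1+t)^{-1-s}\,dt,
\]
and the last integral equals $1/s$ for $s>0$ (this is where $s>0$ is used; the integrand decays like $(1+t)^{-1-s}$, so convergence is exactly the hypothesis).

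Finally I would unwind the constants. Using $k-1=\tfrac b2$ and the definition \eqref{definitionam} of $a(m)$, which gives $\dfrac{|c(m)|}{a(m)}=\dfrac{2(2\pi)^{k}}{\Gamma(k)\sqrt{|L^{\vee}/L|}}$ (note $c(m)<0$, so $-c(m)=|c(m)|$), one checks
\[
\frac{(2\pi)^{k}(k-1)}{s\sqrt{|L^{\vee}/L|}\,\Gamma(k)}=\frac{b}{4}\cdot\frac{1}{s}\cdot\frac{2(2\pi)^{k}}{\Gamma(k)\sqrt{|L^{\vee}/L|}}=\frac{b}{4}\cdot\frac{|c(m)|}{s\cdot a(m)},
\]
which is the claimed identity (consistently, the right-hand side is independent of $m$ since $|c(m)|$ and $a(m)$ are proportional with the same $m$-dependence). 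There is no real obstacle here: the only point meriting care is the justification that $\mu_\infty$, defined by the $\epsilon$-shell limit in \eqref{def_muinfty}, disintegrates over the $t$-coordinate, and this is immediate because $h_s$ depends on $\lambda$ only through $t$ and the $t$-sublevel sets are exactly the $\Omega_{\leq T}$ whose $\mu_\infty$-mass is computed in \Cref{secondmeasure}; the remainder is the short computation above.
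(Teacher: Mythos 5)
Your proof is correct, and it takes a genuinely different route from the paper's. The paper re-runs the explicit $\epsilon$-shell computation of \Cref{secondmeasure} from scratch: it picks the adapted orthonormal basis, writes the integral in coordinates $(x_1,x_2,y_1,\dots,y_b)$, passes to polar coordinates $r^2 = x_1^2+x_2^2$ on $P$, evaluates the spherical shell integral over the $y$-variables, and arrives at $\frac{(2\pi)^{1+\frac b2}}{\Gamma(\frac b2)\sqrt{|L^\vee/L|}}\cdot\frac1s$. You instead observe that $h_s$ factors through $t=-Q(\lambda_x)$, read off from \Cref{secondmeasure} that the pushforward of $\mu_\infty$ under $\lambda\mapsto t$ has cumulative distribution $T\mapsto\frac{(2\pi)^k((1+T)^{k-1}-1)}{\sqrt{|L^\vee/L|}\Gamma(k)}$, differentiate to get the density $\frac{(2\pi)^k(k-1)}{\sqrt{|L^\vee/L|}\Gamma(k)}(1+t)^{k-2}\,dt$, and reduce to the elementary one-variable integral $\int_0^\infty(1+t)^{-1-s}\,dt=1/s$. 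The two answers agree (since $(k-1)/\Gamma(k)=1/\Gamma(k-1)=1/\Gamma(b/2)$ and $k=1+\tfrac b2$), and the final unwinding of constants via \eqref{definitionam} is identical. What your approach buys is that it reuses \Cref{secondmeasure} as a black box rather than repeating the Jacobian/shell computation; what the paper's version buys is self-containment, which matters little here since \Cref{secondmeasure} was just proved a page earlier. Your one caveat — that disintegrating $\mu_\infty$ over $t$ must be justified — is handled correctly: the sub-level sets of $t$ are exactly the $\Omega_{\leq T}$, the CDF is absolutely continuous, and $h_s$ is a bounded decreasing Borel function of $t$, so the pushforward computation is standard.
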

\begin{proof}
As in the proof of Lemma \ref{secondmeasure}, we define
$L_\epsilon=:\{x\in L_{\R}:|Q(x)-1|<\epsilon\}$. Then
\begin{align*}
\lim_{\epsilon\rightarrow 0} \frac{1}{2\epsilon}&\int_{L_{\epsilon}}\left(\frac{1}{1-Q(\lambda_x)}\right)^{k-1+s}d\mu_{\infty}(\lambda)\\&=\lim_{\epsilon\rightarrow 0}\frac{2^{\frac{b}{2}}}{\epsilon\sqrt{|L^\vee/L|}}\int_{(x_1,x_2)\in \R^2}\int_{\underset{|y_1^2+\cdots+y_b^2-x_1^2-x_2^2-1|<\epsilon}{(y_1,\dots, y_b)\in \R^b}}\frac{dx_1dx_2dy_1\cdots dy_b}{\left(1+x_1^2+x_2^2\right)^{k-1+s}}\\ 
&=\frac{2(2\pi)^{1+\frac{b}{2}}}{\Gamma\left(\frac{b}{2}\right).\sqrt{|L^\vee/L|}}\int_{0}^{+\infty}\left(\frac{1}{1+r^2}\right)^{s+1}rdr\\
&=\frac{(2\pi)^{1+\frac{b}{2}}}{\Gamma\left(\frac{b}{2}\right).\sqrt{|L^\vee/L|}}\frac{1}{s}.
\end{align*}
The lemma now follows from the definition of $a(m)$.
\end{proof}

\section{First step in archimedean estimate and uniform diophantine bounds}\label{mainproof}

We keep the notations from \S\ref{gspin} and \S\ref{harmonic}. Namely, $(L,Q)$ is an even maximal lattice of signature $(b,2)$ with $b\geq 3$. Recall that $\mathcal{M}$ is the integral model over $\Z$ of the associated GSpin Shimura variety $M$ and $\widehat{\mathcal{Z}}(m):=(\cZ(m),\Phi_m)$ are the arithmetic special divisor on $\cM$ for $m\in \Z_{>0}$. 
Throughout this section and the rest of the paper, we assume that the equation $Q(v)=m$ has a solution in $L$, i.e. $\cZ(m)\neq \emptyset$.
As in \Cref{main_sp_end}, $\cY\in\cM(\cO_K)$ such that $\cY_K$ is Hodge-generic and $\cY^\sigma\in M(\C)$ via $\sigma: K\hookrightarrow \C$.


The main goal of this section is to give a first estimate of the archimedean term in the height formula \eqref{intersectionformula2}; more precisely, we show that for a fixed $\sigma$ and for every $m$,
\begin{align}\label{Phibound1}
    \Phi_m(\cY^\sigma)\asymp -m^{\frac{b}{2}}\log m + A(m, \cY^\sigma) + o(m^{\frac{b}{2}}\log m),
\end{align}
where $A(m, \cY^\sigma)$ is a non negative real number (see \eqref{Aofm} and \Cref{arch1} for the precise statement).

An important consequence (\Cref{summary}) of this estimate is the following uniform diophantine bounds. For a fixed finite place $\fP$ and a fixed $\sigma$, we have
\begin{align}\label{Diopbound}
    (\cY.\cZ(m))_\fP=O(m^{\frac{b}{2}}\log m),\quad \Phi_m(\cY^\sigma)=O(m^{\frac{b}{2}}\log m).
\end{align}
This consequence is one of the key inputs for the estimates in \S\S\ref{sec_arch}-\ref{sec_finite}.




Throughout this section, $x$ will denote a $\C$-point of $M$, which is not contained in any special divisor. This section is organized as follows. First, in \S\ref{explicitPhi}, we follow Bruinier \cite{bruinier} and Bruinier--K\"uhn \cite{bruinierintegrals} to express  $\Phi_m(x)$ as a sum of two terms 
$$\Phi_m(x)=\phi_m(x)-b'_m(k/2),$$
where $\phi_m(x)$ and the function $b_m(s)$ are defined in \eqref{phi}, \eqref{limit} and \eqref{bfunction}. Then in \S\ref{Green_main}, we use results from \cite{bruinierintegrals} to prove that $b'_m(k/2)\asymp m^{\frac{b}{2}}\log m$. Next, in \S\ref{sec_Green_phi}, we prove that $\phi_m(x)= A(m,x)+O(m^{\frac{b}{2}})$, where $A(m,x)$, as above, is non-negative. In \S \ref{summaryofsecfive}, we put together the results of \S\S\ref{explicitPhi}-\ref{sec_Green_phi} to deduce \eqref{Phibound1} and \eqref{Diopbound}.






\subsection{Bruinier's explicit formula for the Green function $\Phi_m$}\label{explicitPhi}

There is an another expression for the Green function $\Phi_m$ introduced in \S\ref{arithmetic} due to Bruinier (see \cite[\S 2]{bruinier} and \cite[\S 4]{bruinierintegrals}); this expression will allow us later to make explicit computations. As in \S\ref{Eisenstein}, let $k=1+\frac{b}{2}$, and $s\in\C$ with $\mathrm{Re}(s)>\frac{k}{2}$. We pick a lift of $x\in M(\C)$ to the period domain $D_L$ and still use $x$ to denote the lift. Recall from \S\ref{gspin_Q} that $x$ defines a negative definite plane\footnote{Using the notation in \S\ref{gspin_Q}, for a point $[z]\in D_L$ with $z=u+iw, u,w\in L_{\R}$, we have a negative definite plane given by $\spn_{\R}\{u,w\}$.} $P_x$ of $L_{\R}$ and for $\lambda\in L_{\R}$, we denote by $\lambda_x$ the orthogonal projection of $\lambda$ on $P_x$. Let $$F(s,z)=H\left(s-1+\frac{k}{2},s+1-\frac{k}{2},2s;z\right),\text{ where } H(a,b,c;z)=\sum_{n\geq 0}\frac{(a)_n(b)_n}{(c)_n}\frac{z^n}{n!}$$ is the Gauss hypergeometric function as in \cite[Chapter 15]{handbook}, and $(a)_n=\frac{\Gamma(a+n)}{\Gamma(a)}$ for $a,b,c,z\in \C$ and $|z|<1$.
Finally, let\footnote{In \cite[Section 2.2, (2.15)]{bruinier}, $\phi_{m}(x,s)$ is defined as a regularized theta lift of $F_{0,m}$; here the regularization process is slightly different from Borcherds version.}  
\begin{align}\label{phi}
\phi_{m}(x,s)=2\frac{\Gamma(s-1+\frac{k}{2})}{\Gamma(2s)}\sum_{Q(\lambda)=m, \lambda\in L}\left(\frac{m}{m-Q(\lambda_x)}\right)^{s-1+\frac{k}{2}} F\left(s,\frac{m}{m-Q(\lambda_x)}\right).
\end{align}
 By \cite[Proposition 2.8, Theorem 2.14]{bruinier}, the function $\phi_{m}(x,s)$ admits a meromorphic continuation to $\mathrm{Re}(s)>1$ with a simple pole at $s=\frac{k}{2}$ with residue $-c(m)$, where $c(m)$ is the Fourier coefficient defined in \S\ref{Eisenstein}, see also \cite[Proposition 4.3]{bruinierintegrals} for the value of the residue. 

We regularize $\phi_m(x,s)$ at $s=k/2$ by defining $\phi_{m}(x)$ to be the constant term at $s=\frac{k}{2}$ of the Laurent expansion of $\phi_{m}(x,s)$. As in \cite[Prop.4.2]{bruinierintegrals}, for $x\in D_L$, we have  
\begin{align}\label{limit}
\phi_m(x)=\lim_{s\rightarrow \frac{k}{2}}\left(\phi_{m}(x,s)+\frac{c(m)}{s-\frac{k}{2}}\right).
\end{align}

To compare $\phi_m(x)$ with $\Phi_m(x)$, we recall that $C(n,s), n\in \Z, s\in \C, \re(s)>1-\frac{k}{2}$ is part of the Fourier coefficient of $E_0(\tau,s)$ defined in \S\ref{Eisenstein}.
For $s\in \C$ with $\mathrm{Re}(s)>1$, define\footnote{In the notation of \cite{bruinierintegrals}, it is $b(0,0,s)$ in Equation (4.12) {\it loc.cit.}. The comparison with the formula given above is given in \cite[(4.20)]{bruinierintegrals}. In \cite[Theorem 1.9]{bruinier},  $b(s)$ is  defined as the coefficient of $\gamma=0, n=0$ in the Fourier expansion of $F_{0,m}(\cdot,s)$.}
\begin{align}\label{bfunction}
b_m(s)=-\frac{C\left(m,s-\frac{k}{2}\right)\cdot\left(s-1+\frac{k}{2}\right)}{\left(2s-1\right)\cdot\Gamma\left(s+1-\frac{k}{2}\right)}.
\end{align}
By \cite[Theorem 1.9]{bruinier}, $b_m(s)$ is a holomorphic function of $s$ in the region $\mathrm{Re}(s)>1$.

\begin{proposition}[{\cite[Proposition 2.11]{bruinier}}]\label{comparison}
For $x\in D_{L}$, we have: 
$$\Phi_m(x)=\phi_m(x)-b'_m(k/2).$$
\end{proposition}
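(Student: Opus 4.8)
The plan is to exhibit $\Phi_m(x)$ and $\phi_m(x)$ as two different regularizations of \emph{the same} theta integral attached to the harmonic Maass form $F_{0,m}$, and to identify the discrepancy between the two regularization schemes with the single term $b_m'(k/2)$. Recall that $\Phi_m(x)$ is, by construction, Borcherds' regularized theta lift of $F_{0,m}$: one integrates $\langle F_{0,m}(\tau),\overline{\Theta_L(\tau,x)}\rangle$ against $v^{-2}\,du\,dv$ over the truncation $\mathcal{F}_T$ of the standard fundamental domain of $\mathrm{SL}_2(\Z)$, where $\Theta_L$ is the Siegel theta kernel attached to $(L,Q)$ and to the chosen lift $x\in D_L$, and then extracts the constant term in the asymptotic expansion in $T$ as $T\to\infty$. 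On the other hand, following Bruinier, $\phi_m(x,s)$ is a theta lift of $F_{0,m}$ in which the parameter $s$ enters through an unfolding regularization — equivalently, it is the theta lift of the Maass--Poincar\'e family $F_{0,m}(\tau,s)$, which specializes to $F_{0,m}$ at $s=k/2$ — so that the integral converges for $\re(s)$ large and is then continued meromorphically in $s$; by \cite[Proposition 2.8, Theorem 2.14]{bruinier} this continuation is holomorphic near $s=k/2$ apart from a simple pole with residue $-c(m)$, and $\phi_m(x)$ is, by \eqref{limit}, its constant term there.

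The key reduction is that away from a neighbourhood of the cusp $\tau\to i\infty$ the two theta integrals have identical integrands and converge absolutely, so they contribute the same quantity to $\Phi_m(x)$ and to $\phi_m(x)$; thus the only place the two regularizations can differ is in the contribution of the constant Fourier coefficient in $v$ of $\langle F_{0,m}(\tau,s),\overline{\Theta_L(\tau,x)}\rangle$ near the cusp. I would isolate this by writing out the Fourier expansion of $F_{0,m}(\tau,s)$: its holomorphic principal part consists of the $q^{-m}$-type terms, which do not meet the constant coefficient, while its $(0,0)$-Fourier coefficient is governed by the function $b_m(s)$ of \eqref{bfunction} — holomorphic for $\re(s)>1$ by \cite[Theorem 1.9]{bruinier} — paired with the terms of $\Theta_L$ at the cusp. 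Feeding these constant terms into the integral over $v\in[T_0,\infty)$ and evaluating the resulting incomplete-Mellin integrals (involving the confluent hypergeometric functions $\cW_s$, respectively $F(s,\cdot)$, appearing in the two Fourier expansions), one finds that the divergent parts in the two schemes cancel identically while the finite remainders differ by the value at $s=k/2$ of an explicit meromorphic function built from $b_m(s)$. Since $\phi_m(x,s)$ has a genuine simple pole at $s=k/2$, extracting its constant term forces one to pick up the first-order term in the Laurent expansion of $b_m$ at $s=k/2$, namely $b_m'(k/2)$; combining this with the cancellation of the divergent parts and the equality of the non-cusp contributions gives
\[
\Phi_m(x)=\phi_m(x)-b_m'(k/2).
\]

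The delicate part is this last comparison: matching Borcherds' ``truncate and take the constant term in $T$'' regularization against Bruinier's ``unfold and analytically continue in $s$'' regularization requires tracking the precise normalizations of the theta kernel, of the Maass--Poincar\'e series $F_{0,m}(\tau,s)$, and of the special functions $\cW_s$ and $F(s,z)$ in their Fourier expansions, and then verifying that every divergent contribution cancels so that exactly the single term $b_m'(k/2)$ survives. This is precisely the content of \cite[Proposition 2.11]{bruinier}, which I would invoke for the detailed bookkeeping rather than reproduce in full.
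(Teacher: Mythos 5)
The paper does not prove this proposition at all — it is stated with the bracketed attribution \cite[Proposition~2.11]{bruinier} and used as a black box. Your proposal gives a plausible sketch of what goes into Bruinier's proof (two regularizations of the same theta integral, with the discrepancy concentrated in the constant Fourier coefficient near the cusp) and then, as the paper does, defers the actual bookkeeping to \cite[Proposition~2.11]{bruinier}; this is essentially the same approach, with additional expository content that looks consistent with Bruinier's argument but is not independently verified here.
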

This proposition shows in particular that $\phi_{m}$ is also a Green function for the arithmetic cycle $\mathcal{Z}(m)$.



\subsection{Estimating $b'_m(k/2)$}\label{Green_main}
The main result of this subsection is the following.
\begin{proposition}\label{logterm}
Let $m\in\Z_{>0}$ such that $\cZ(m)\neq \emptyset$ and if $b$ is odd, we further assume that for a fixed $D$, $\sqrt{m/D}\in \Z$ as in \Cref{main_sp_end}. Then as $m\rightarrow +\infty$, we have:
$$b'_m(k/2)=|c(m)|\log m+o\left(c(m)\log m\right).$$
In particular, $b'_m(k/2)\asymp m^{\frac{b}{2}}\log m$.
\end{proposition}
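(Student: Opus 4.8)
The plan is to unwind the definition of $b_m(s)$ in \eqref{bfunction} and reduce the statement to the behaviour of the Eisenstein Fourier coefficient $C(m,s)$ near $s=0$, together with an asymptotic expansion of the latter in $m$. Recall from \eqref{cfunction} that $c_0(0,m,s,y)=C(m,s)\mathcal{W}_s(4\pi m y)$, and that $C(m,0)=c(m)$ with $|c(m)|\asymp m^{b/2}$ by \eqref{asymp_fourier}. Since $b_m(s)=-\dfrac{C(m,s-\frac{k}{2})(s-1+\frac{k}{2})}{(2s-1)\Gamma(s+1-\frac{k}{2})}$ is holomorphic for $\re(s)>1$, and since the non-$C$ factors are elementary, the key computation is
\[
b'_m(k/2)=-\left.\frac{d}{ds}\right|_{s=k/2}\frac{C(m,s-\tfrac{k}{2})(s-1+\tfrac{k}{2})}{(2s-1)\Gamma(s+1-\tfrac{k}{2})}.
\]
Evaluating the derivative by the product/quotient rule, the terms coming from differentiating the elementary factors $(s-1+\frac{k}{2})$, $(2s-1)^{-1}$, $\Gamma(s+1-\frac{k}{2})^{-1}$ contribute a bounded multiple of $C(m,0)=c(m)$, hence are $O(m^{b/2})=o(c(m)\log m)$; the main term is $-\dfrac{k/2-1+k/2}{(k-1)\Gamma(1)}\,C'(m,0)=-\dfrac{k-1}{k-1}C'(m,0)=-C'(m,0)$ (here one uses $k=1+\frac b2$ so that $k/2-1+k/2=k-1$ and $2(k/2)-1=k-1$). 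So everything reduces to showing $C'(m,0)=-|c(m)|\log m+o(c(m)\log m)$, i.e. to an asymptotic formula for $\partial_s C(m,s)|_{s=0}$.

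For that I would invoke Bruinier--Kühn's explicit formula for the Fourier coefficients of the Eisenstein series. By \cite[Proposition 3.1, (3.22)]{bruinierintegrals} (and the explicit evaluation in \cite[Proposition 4, eqn.~(19)]{bruinierkuss}), $C(m,s)$ has the shape $C(m,s)=(\text{archimedean/gamma factors in }s)\cdot m^{b/2+s}\cdot (\text{a Dirichlet series / product of local densities in }s)$; in particular the $m$-dependence enters through the factor $m^{b/2+s}=m^{b/2}e^{s\log m}$, whose $s$-derivative at $s=0$ produces exactly the factor $\log m$ times $C(m,0)$. Concretely one writes $C(m,s)=m^{s}G_m(s)$ where $G_m(s)$ collects the remaining factors, so $C'(m,0)=(\log m)G_m(0)+G_m'(0)=(\log m)c(m)+G_m'(0)$, and one must check $G_m'(0)=o(c(m)\log m)$. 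Since $c(m)<0$ for $m\in Q(L)$ (by \cite[Proposition 14]{bruinierkuss}), $(\log m)c(m)=-|c(m)|\log m$, giving the stated leading term. The ``in particular'' clause then follows since $|c(m)|\asymp m^{b/2}$.

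The main obstacle is the bound $G_m'(0)=o(c(m)\log m)$, equivalently the control of the $s$-derivative of the non-monomial part of $C(m,s)$ at $s=0$: this is where the hypotheses ``$\cZ(m)\neq\emptyset$'' and, for $b$ odd, ``$m/D$ a perfect square'' are used. The factor $G_m(s)$ is (up to gamma factors that are harmless near $s=0$) essentially $\prod_p \mu_p(Q,m,s)$ times, when $b$ is odd, an $L$-value $L(s+\tfrac12,\chi)$ or a finite Euler product attached to the genus character; one needs that this behaves like $c(m)$ up to a factor that is $o(\log m)$ after differentiation, uniformly in $m$. For $b$ even this product of local densities is bounded above and below by absolute constants times $m^{b/2}$ (cf.\ the discussion around \eqref{asymp_fourier} and \Cref{count}), and its logarithmic derivative in $s$ at $s=0$ is a convergent sum $\sum_p \partial_s\log\mu_p(Q,m,s)|_{s=0}$ which one bounds by $O(\log\log m)$ or even $O(1)$ using that $\mu_p=1+O(1/p)$ for $p\nmid 2\det(L)$ and $p\nmid m$; for $b$ odd one additionally needs the standard fact that $L(1/2,\chi_{m/D})$ is bounded away from $0$ on average and its logarithmic derivative is $o(\log m)$, which is where the perfect-square condition makes the relevant character nontrivial and the $L$-value a genuine (non-degenerate) quantity. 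I would isolate this as a lemma: ``$|C'(m,0)+|c(m)|\log m| = o(m^{b/2}\log m)$'', cite \cite{bruinierintegrals,bruinierkuss} for the explicit shape of $C(m,s)$, and then the Proposition is immediate from the reduction in the first paragraph.
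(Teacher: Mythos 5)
Your high-level reduction is the same as the paper's: differentiating \eqref{bfunction} at $s=k/2$ (the paper's proof of \Cref{lem_BK} takes the logarithmic derivative, which amounts to the same computation) gives $b'_m(k/2) = -C'(m,0) + O(c(m))$, and the $\log m$ is extracted by factoring $m^s$ out of $C(m,s)$ via the Bruinier--K\"uhn explicit formula; what remains is exactly the bound you isolate as a lemma, which in the paper's notation is $\sigma_m'(k)/\sigma_m(k)=o(\log m)$.

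Your account of that lemma, however, contains a concrete error and passes over the hard step. First, for $b$ odd the hypothesis $\sqrt{m/D}\in\Z$ is \emph{not} there to ensure non-vanishing of $L(1/2,\chi_{m/D})$ on average; no central $L$-value appears anywhere. In the Bruinier--K\"uhn formula the character $\chi_{d_0}$ is attached to $\Q(\sqrt d)$ where, for $r$ odd, $d$ carries a factor of $m$, so \emph{a priori} $\chi_{d_0}$ varies with $m$; the square condition guarantees that this $d$ differs from a fixed quantity (depending only on $D$ and $\det L$) by a perfect square, so $\Q(\sqrt d)$ and hence $\chi_{d_0}$ are \emph{independent of $m$}. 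The $\chi_{d_0}$-twisted Euler factors in \eqref{sigma} are then evaluated at $s=k\ge 5/2$, deep in the region of absolute convergence, where they and their derivatives are trivially bounded; no non-vanishing input is needed. Second, and more centrally, the estimate $\sigma_m'(k)/\sigma_m(k)=o(\log m)$ (your $G_m'(0)=o(c(m)\log m)$) is the heart of the proof. The sum runs over $p\mid 2m\det(L)$, and the difficulty is the primes $p\mid m$, which can be numerous and for which $\mu_p(Q,m)$ is not close to $1$ and depends on $\val_p(m)$. The paper's \Cref{count}, proved by an induction on Hanke's local reduction types using the maximality of $L$ and $r\ge 5$, shows
\[
\left|\,w_p(m)-\sum_{n=0}^{w_p(m)-1}\frac{\mu_p(m,n)}{\mu_p(m,w_p(m))}\right|\ll \frac{1}{p}
\]
with an absolute constant; this quantity is precisely $p^{1-r}\mathrm{L}_m^{(p)'}(p^{1-r})/\mathrm{L}_m^{(p)}(p^{1-r})$, the logarithmic derivative of the local factor in $\sigma_m$. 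Summing $\log p/p$ over $p\mid 2m\det(L)$ then yields the $O(\log\log m)$ bound. Your remark ``using that $\mu_p=1+O(1/p)$ for $p\nmid 2\det(L)$ and $p\nmid m$'' addresses only primes for which there is nothing to prove, and the suggestion that the total contribution might be ``even $O(1)$'' is too strong (the true bound is $O(\log\log m)$).
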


\Cref{lem_BK} below reduces the proposition into computations of certain local invariants of the lattice $L$ at primes $p$.


In order to state the lemma, we recall some notations from \cite{bruinierintegrals} and \S\ref{sec_density}. Recall $r=b+2$ and $D$ be the fixed integer in \Cref{main_sp_end}; let $d$ be \begin{align*}
&(-1)^{\frac{r}{2}}\det(L),\, \text{if $r$ is even};\\
&2(-1)^{\frac{r+1}{2}}D\det(L),\, \text{otherwise},
\end{align*}
where $\det(L)$ denote the Gram determinant of $L$.
Let $d_0$ denote the fundamental discriminant of number field $\Q(\sqrt{d})$ and let $\chi_{d_{0}}$ be the quadratic character associated to $d_0$. The polynomial $\mathrm{L}_m^{(p)}(t)$ is defined by 
$$\mathrm{L}_m^{(p)}(t)=N_{m}(p^{w_p})t^{w_p}+(1-p^{r-1}t)\sum_{n=0}^{w_p-1}N_{m}(p^n)t^n\in\Z[t],$$
where, as in \S\ref{sec_density}, $N_{m}(p^n)=\#\{v\in L/p^nL;\,Q(v)\equiv m\pmod{p^n}\}$ and $w_p:=w_p(m)=1+\val_{p}(m)$ for $p\neq 2$ and $w_2:=w_2(m)=1+2\val_2(2m)$.\footnote{In \cite[(3.18), (3.20)]{bruinierintegrals}, $w_p$ is defined to be $1+2\val_p(2m)$ for every prime $p$. Our definition of $w_p$ for odd prime $p$ and the definition in \cite{bruinierintegrals} give the same definition of $L^{(p)}_m(t)$ by the fact that $N_m(p^{n+1})=p^{r-1}N_m(p^n)$ for all $n\geq 1+ \val_p(m)$ (\Cref{explicit_alpha}(1)) and a direct computation (see for instance \cite[(21), (22)]{bruinierkuss}).} 

\begin{lemme}[Bruinier--K\"uhn]\label{lem_BK}
Let $D\in \Z_{>0}$ be the fixed integer in \Cref{main_sp_end}, for all $m\in \Z_{>0}$ such that $\sqrt{m/D}\in \Z$ (and representable by $(L,Q)$), we have
\[\frac{b'_m\left(\frac{k}{2}\right)}{b_m\left(\frac{k}{2}\right)}=\log(m)+2\frac{\sigma_{m}'(k)}{\sigma_{m}(k)}+O(1),\]
where for $s\in\C$, $\mathrm{Re}(s)>0$ the function $\sigma_{m}$ is given by:   
\begin{align}\label{sigma}
\sigma_{m}(s)=\left\{\begin{array}{ll}
\underset{p\mid 2m\det(L)}{\prod}\frac{\mathrm{L}_m^{(p)}\left(p^{1-\frac{r}{2}-s}\right)}{1-\chi_{d_{0}}(p)p^{-s}},\,\textrm{if}\, r\,\textrm{is even},\\
\underset{p\mid 2m\det(L)}{\prod}\frac{1-\chi_{d_{0}}(p)p^{\frac{1}{2}-s}}{1-p^{1-2s}}\mathrm{L}_m^{(p)}\left(p^{1-\frac{r}{2}-s}\right),\, \textrm{if}\, r\,\textrm{is odd}.
\end{array}
\right.
\end{align}
\end{lemme}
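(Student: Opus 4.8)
The plan is to take the logarithmic derivative of the explicit formula \eqref{bfunction} for $b_m(s)$ at $s=k/2$ and to isolate the $m$-dependence. First I would record that $b_m(k/2)=-C(m,0)=-c(m)=|c(m)|\neq 0$ (using $C(m,0)=c(m)$ and $c(m)<0$ for $m$ representable, by \cite[Prop.~14]{bruinierkuss}), that the factors $s-1+\tfrac{k}{2}$, $2s-1$ and $\Gamma(s+1-\tfrac{k}{2})$ of \eqref{bfunction} are holomorphic and nonvanishing near $s=k/2$, and that $s'\mapsto C(m,s')$ is holomorphic near $s'=0$ since $0$ lies in the half-plane $\re(s')>1-\tfrac{k}{2}$ of convergence of $E_0(\tau,s')$. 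Logarithmic differentiation of \eqref{bfunction} is therefore legitimate and yields
\begin{equation*}
\frac{b_m'(k/2)}{b_m(k/2)}=\frac{C'(m,0)}{C(m,0)}+\Bigl(\frac{1}{k-1}-\frac{2}{k-1}-\psi(1)\Bigr),
\end{equation*}
where $\psi=\Gamma'/\Gamma$ and $C'(m,0)$ denotes the derivative of $s'\mapsto C(m,s')$ at $s'=0$. The bracketed quantity equals $-\tfrac{2}{b}-\psi(1)$, which depends only on $b$, hence is $O(1)$; so the task reduces to showing $C'(m,0)/C(m,0)=\log m+2\sigma_m'(k)/\sigma_m(k)+O(1)$.

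For this I would substitute the explicit formula for the Fourier coefficients of $E_0(\tau,s')$ from \cite[Prop.~3.1, Prop.~3.2, (3.22)]{bruinierintegrals}, whose ramified local factors are the polynomials $\mathrm{L}_m^{(p)}$ of \cite[Prop.~4, (19)]{bruinierkuss} (bearing in mind that our $Q$ is the negative of the one there, so that $\rho_L$ is replaced by $\rho_L^\vee$). Collecting terms, this should put $C(m,s')$ in the form
\begin{equation*}
C(m,s')=\kappa_L(s')\cdot m^{k-1+s'}\cdot\sigma_m(k+2s'),
\end{equation*}
where $\sigma_m$ is the function \eqref{sigma} and $\kappa_L(s')$ is a meromorphic function assembled from $\Gamma$-factors, powers of $\pi$, and the completed Dirichlet $L$-function of $\chi_{d_0}$ (for $r$ even), together with $\zeta$-type factors (for $r$ odd). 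The key point to verify is that $\kappa_L(s')$ depends only on $L$ (and, for $r$ odd, on the fixed integer $D$) but not on $m$: the only way $m$ could enter is through the quadratic character governing the coefficient, which for $r$ even is the character $\chi_{d_0}$ attached to $d=(-1)^{r/2}\det(L)$ (manifestly independent of $m$), and for $r$ odd is the character of the discriminant of $\Q\bigl(\sqrt{2(-1)^{(r+1)/2}m\det(L)}\bigr)$ --- and here the hypothesis $\sqrt{m/D}\in\Z$ is precisely what is needed, since then $m\det(L)$ and $D\det(L)$ differ by a perfect square, so this discriminant is that of $\Q(\sqrt{d})$ with $d=2(-1)^{(r+1)/2}D\det(L)$, again independent of $m$. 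I would pin down the exact shape of the factorisation --- in particular that the exponent of $m$ is $k-1+s'$ (slope $1$ in $s'$) and the argument of $\sigma_m$ is $k+2s'$ (slope $2$) --- by comparing its specialisation at $s'=0$ with the product formula $c(m)=-\tfrac{2(2\pi)^{k}m^{b/2}}{\sqrt{|L^\vee/L|}\,\Gamma(k)}\prod_p\mu_p(Q,m)$ from the proof of \Cref{effective}, using the identity $\mathrm{L}_m^{(p)}(p^{1-r})=\mu_p(m,w_p)$ valid for every $p$ and the growth $|c(m)|\asymp m^{b/2}$.

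Granting this factorisation, the conclusion is immediate: differentiating $\log C(m,s')$ at $s'=0$ gives $C'(m,0)/C(m,0)=\kappa_L'(0)/\kappa_L(0)+\log m+2\sigma_m'(k)/\sigma_m(k)$, and $\kappa_L'(0)/\kappa_L(0)$ is $O(1)$ because $\kappa_L$ is $m$-independent and is evaluated --- together with its derivative --- at $s'=0$, where the $L$- and $\zeta$-factors occurring in it are taken at arguments $\geq k\geq\tfrac52$, i.e.\ well inside their regions of absolute convergence, hence holomorphic and nonzero. The hard part will be the middle step: extracting from \cite{bruinierintegrals,bruinierkuss} the explicit formula for $C(m,s')$ in a form that cleanly separates the $m$-dependence. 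This demands careful bookkeeping of normalisation conventions (the sign of $Q$, the scaling of the Eisenstein variable, the precise $\Gamma$- and $L$-factors) and, above all, confirmation of the two slopes $1$ and $2$ that produce the $\log m$ and $2\sigma_m'(k)/\sigma_m(k)$ terms; once the formula is in place, the separation of the $m$-independent part (which is where the hypothesis $\sqrt{m/D}\in\Z$ enters) and the resulting $O(1)$ bound are routine.
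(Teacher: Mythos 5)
Your proposal is correct and follows essentially the paper's own route: the paper likewise takes the logarithmic derivative of \eqref{bfunction} at $s=\tfrac{k}{2}$ to get $\frac{b_m'(k/2)}{b_m(k/2)}=\frac{C'(m,0)}{C(m,0)}-\frac{2}{b}-\Gamma'(1)$, and then simply cites Bruinier--K\"uhn \cite[Theorem 4.11, (4.73), (4.74)]{bruinierintegrals} for the identity $\frac{C'(m,0)}{C(m,0)}=\log m+2\frac{\sigma_m'(k)}{\sigma_m(k)}+O(1)$, observing (exactly as you do) that the hypothesis $\sqrt{m/D}\in\Z$ leaves the fundamental discriminant $d_0$, and hence all $m$-independent terms, unchanged. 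The factorisation of $C(m,s')$ you propose to establish is precisely the content of those cited formulas, so your middle step amounts to rederiving Bruinier--K\"uhn rather than invoking it --- acceptable, but be aware that specialising at $s'=0$ cannot by itself confirm the slopes $1$ and $2$; these must be read off from the full $s$-dependence in \cite[(3.22), Theorem 4.11]{bruinierintegrals}.
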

\begin{proof}
Taking logarithmic derivatives in \eqref{bfunction} at $s=\frac{k}{2}$ yields:
\begin{align*}
\frac{b'_m\left(\frac{k}{2}\right)}{b_m\left(\frac{k}{2}\right)}=\frac{C'(m,0)}{C(m,0)}-\frac{2}{b}-\Gamma'(1)
\end{align*}
Then we conclude by \cite[Theorem 4.11, (4.73), (4.74)]{bruinierintegrals}, since both $d_0$ and $k$ are independent of $m$. (Our definition of $d$ above differs from the definition in \cite{bruinierintegrals} by $m/D$, which is a square, and hence yields the same $d_0$.)
\end{proof}

\begin{proof}[Proof of \Cref{logterm}]
By definition, $b_m(k/2)=-c(m)=|c(m)|$. Therefore, by \Cref{lem_BK},
it is enough to show that $$\frac{\sigma_{m}'(k)}{\sigma_{m}(k)}=o(\log(m)).$$ Taking the logarithmic derivative in \eqref{sigma} at $s=k$, we get for $r$ even
\begin{align*}
\frac{\sigma_{m}'(k)}{\sigma_{m}(k)}=-\sum_{p\mid 2m\det(L)}\left(\frac{p^{1-r}\mathrm{L}_m^{(p)'}\left(p^{1-r}\right)}{\mathrm{L}_m^{(p)}\left(p^{1-r}\right)}+\frac{\chi_{d_0}(p)}{p^k-\chi_{d_0}(p)}\right)\log (p),
\end{align*} 
and for $r$ odd 
\begin{align*}
\frac{\sigma_{m}'(k)}{\sigma_{m}(k)}=-\sum_{p\mid 2m\det(L)}\left(\frac{p^{1-r}\mathrm{L}_m^{(p)'}\left(p^{1-r}\right)}{\mathrm{L}_m^{(p)}\left(p^{1-r}\right)}-\frac{\chi_{d_0}(p)}{p^{k-\frac{1}{2}}-\chi_{d_0}(p)}+\frac{2}{p^{2k-1}-1}\right) \log(p).
\end{align*}

Since $k=1+\frac{b}{2}\geq \frac{5}{2}$, we have 
\begin{align*}
\left|\sum_{p\mid 2m\det(L)}\frac{\chi_{d_0}(p)\log(p)}{p^k-\chi_{d_0}(p)}\right|\leq \sum_{p}\frac{\log(p)}{p^{5/2}-1}<+\infty,
\end{align*}
\begin{align*}
\left|\sum_{p\mid 2m\det(L)}\frac{\chi_{d_0}(p)\log(p)}{p^{k-\frac{1}{2}}-\chi_{d_0}(p)}\right|\leq \sum_{p}\frac{\log(p)}{p^{2}-1}<+\infty,
\end{align*}
\begin{align*}
\left|\sum_{p\mid 2m\det(L)}\frac{2\log p}{p^{2k-1}-1}\right|\leq \sum_{p}\frac{2\log(p)}{p^{4}-1}<+\infty.
\end{align*}
Hence it remains to treat the $L^{(p)}_m$ term.
We have $\mathrm{L}^{(p)}_{m}(p^{1-r})=N_{m}(p^{w_p})p^{(1-r)w_p}$ and $$\mathrm{L}^{(p)'}_{m}(p^{1-r})=w_pN_{m}(p^{w_p})p^{(1-r)(w_p-1)}-\sum_{n=0}^{w_p-1}N_{m}(p^n)p^{(n-1)(1-r)}.$$
Hence 
\begin{align*}
\left|\frac{p^{1-r}\mathrm{L}_m^{(p)'}\left(p^{1-r}\right)}{\mathrm{L}_m^{(p)}\left(p^{1-r}\right)}\right|=\left|w_p-\sum_{n=0}^{w_p-1}\frac{N_{m}(p^n)}{N_{m}(p^{w_p})}p^{(n-w_p)(1-r)}\right|=\left|w_p-\sum_{n=0}^{w_p-1}\frac{\mu_{p}(m,n)}{\mu_{p}(m,w_p)}\right|\leq \frac{C}{p},
\end{align*}
where $\mu_{p}(m,n)=p^{-n(r-1)}N_{m}(p^n)$ as in \S\ref{sec_density} and the last inequality follows from \Cref{count} with constant $C$ only depends on $(L,Q)$ (i.e., is independent of $m,p$).
Thus we have 
\begin{align*}
\left|\sum_{p\mid 2m\det(L) }\frac{p^{1-r}\mathrm{L}_m^{(p)'}\left(p^{1-r}\right)}{\mathrm{L}_m^{(p)}\left(p^{1-r}\right)}\right|\leq C\sum_{p\mid 2m\det(L)}\frac{\log(p)}{p}=O(\log\log(m)).
\end{align*}
Here we use the fact that for $N\geq 2$,  $\sum_{p\mid N}\frac{\log(p)}{p}=O(\log\log(N))$.
Indeed, let $X=\log(N)$ and use Mertens' first theorem to write 
\begin{align*}
\sum_{p\mid N}\frac{\log(p)}{p}&=\sum_{p\mid N,p<X}\frac{\log(p)}{p}+\sum_{p\mid N,p\geq X}\frac{\log(p)}{p}\leq \log(X)+\frac{1}{X}\sum_{p\mid N}\log(p)+O(1)\\
&\leq \log(X)+\frac{\log(N)}{X}+O(1)\leq \log(\log(N))+O(1).
\end{align*}
This concludes the proof of the proposition.
\end{proof}


\subsection{Estimates on $\phi_m(x)$}\label{sec_Green_phi}
Recall that $x\in M(\C)$ is a Hodge-generic point and we pick a lift of $x$ to $D_L$. We will associate the quantity $A(m,x)$ to $x$, which is independent of the choice of the lift. 
Thus, we will also denote this lift by $x$.
Recall from \S\ref{explicitPhi}, for $\lambda\in L_{\R}$, we use $\lambda_x$ to denote the orthogonal projection of $\lambda$ onto the negative definite plane in $L_{\R}$ associated to $x$. Define
\begin{equation}\label{Aofm}
    A(m,x):=-2\sum_{\underset{|Q(\lambda_x)|\leq 1,Q(\lambda)=1}{\sqrt{m}\lambda\in L}}\log (|Q(\lambda_x)|).
\end{equation}
Note that since $x$ is Hodge-generic, for any $\lambda\in L$, $\lambda_x\neq 0$. Hence for any $\lambda\in L_{\R}$ such that $\R\lambda \cap L\neq \{0\}$, we also have $\lambda_x\neq 0$. On the other hand, the conditions $|Q(\lambda_x)|\leq 1,Q(\lambda)=1$ cut out a compact region in $L_{\R}$ and hence for a fixed $m$, $A(m,x)$ is the sum of finitely many terms. Therefore $A(m,x)$ is well-defined and non-negative.

The main purpose of this subsection is to prove the following result.
\begin{proposition}\label{twoterms}
For $m\in\Z_{>0}$, we have
$$\phi_{m}(x)=A(m,x) +O(m^{\frac{b}{2}}).$$
\end{proposition}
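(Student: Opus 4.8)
The plan is to start from Bruinier's explicit formula \eqref{phi} for $\phi_m(x,s)$, set $s=k/2$, and carefully separate out the contribution of the lattice points $\lambda$ with $Q(\lambda)=m$ for which $Q(\lambda_x)$ is close to $m$ (equivalently, after rescaling by $\sqrt m$, those $\lambda/\sqrt m$ lying near the singular locus $Q(\cdot_x)=1$ on the quadric $Q=1$). For such $\lambda$, the argument $z=\frac{m}{m-Q(\lambda_x)}$ approaches $1$, where the hypergeometric function $F(s,z)=H(s-1+\tfrac k2,s+1-\tfrac k2,2s;z)$ has a logarithmic singularity (since $c-a-b=2s-(2s)=0$ at $s=k/2$), and this is exactly the source of the $\log|Q(\lambda_x)|$ terms in $A(m,x)$. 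The terms with $Q(\lambda_x)$ bounded away from $m$ — i.e.\ $|Q(\lambda_x)|/m$ bounded away from $1$, say $|Q(\lambda_x)/m|\le 1$ after rescaling so that $-Q(\lambda_x)\in[0,1]$ in the rescaled variable — contribute a bounded (in the appropriate normalized sense) weight, and summing these over the lattice using the counting input from \S\ref{Vol-Nie} (Corollary~\ref{HBforL} and Proposition~\ref{effective}, applied with dyadic ranges of $T$) will give the $O(m^{b/2})$ error.

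Concretely, first I would rescale: writing $\lambda=\sqrt m\,\mu$ with $\mu\in L_{\R,1}$ (when $\sqrt m\,\mu\in L$), the summand becomes $2\frac{\Gamma(s-1+k/2)}{\Gamma(2s)}\left(\frac{1}{1-Q(\mu_x)}\right)^{s-1+k/2}F\!\left(s,\frac{1}{1-Q(\mu_x)}\right)$, which is independent of $m$ except through which $\mu$ occur. Then I would use the known analytic behaviour of the hypergeometric function near $z=1$: for $z\to 1^-$, $F(k/2,z)$ behaves like $-\,\mathrm{(const)}\log(1-z)+O(1)$ uniformly, and more precisely the regularized constant term (the limit in \eqref{limit}) picks out $\log(1-z)=\log\!\big(\frac{-Q(\mu_x)}{1-Q(\mu_x)}\big)$. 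Matching constants with the residue $-c(m)$ and the normalization in \eqref{limit}, the near-singular lattice points contribute precisely $-2\sum_{\sqrt m\mu\in L,\,|Q(\mu_x)|\le 1,\,Q(\mu)=1}\log|Q(\mu_x)|=A(m,x)$, up to a bounded-per-point correction; one then has to check that the number of such points is $O(m^{b/2})$ so that the accumulated bounded corrections are $O(m^{b/2})$ — this follows from Corollary~\ref{HBforL} with $T$ of size $O(1)$ (indeed the region $|Q(\mu_x)|\le 1$ on $L_{\R,1}$ is $\Omega_{\le 1}$, whose $\mu_\infty$-volume is finite, so the count is $O_Q(m^{b/2})+O_{Q,\epsilon}(m^{(b+1)/4+\epsilon})=O(m^{b/2})$).

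For the far-from-singular points, I would break the sum according to dyadic ranges $2^{j-1}\le -Q(\mu_x)\le 2^{j}$, i.e.\ $\lambda/\sqrt m\in\Omega_{\le 2^j}\setminus\Omega_{\le 2^{j-1}}$, for $j\ge 0$ up to $j\asymp \log m$ (the point $x$ being fixed, $-Q(\lambda_x)\le C m$ forces a bounded number of dyadic scales, in fact $j\ll \log m$, but actually since $\lambda\in L$ and $Q(\lambda)=m$ one has $-Q(\lambda_x)=Q(\lambda_{x^\perp})-m$ and $Q(\lambda_{x^\perp})$ ranges up to $O(m)$ after accounting for the shape of $L_\R$, giving $j$ up to $O(\log m)$). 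On the $j$-th shell the rescaled weight $\left(\frac{1}{1-Q(\mu_x)}\right)^{k-1}F(k/2,\cdot)$ is bounded by $O\big(2^{-j(k-1)}\log(2^j)\big)=O\big(2^{-jb/2}\,j\big)$ away from the hypergeometric singularity (using $F(k/2,z)=O(-\log(1-z))$ and $1-Q(\mu_x)=1+2^{j}\cdot O(1)\asymp 2^j$), while the number of lattice points on that shell is, by Proposition~\ref{effective} with a fixed $A$, $\asymp a(m)\,\mu_\infty(\Omega_{\le 2^j})+O(m^{b/2}2^{jb/2}(\log m)^{-A})=O(m^{b/2}2^{jb/2})$. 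Multiplying weight by count gives $O(m^{b/2}\,j)$ on the $j$-th shell, and here a more careful bookkeeping is needed: summing the crude $O(m^{b/2}j)$ over $j\le C\log m$ would only give $O(m^{b/2}(\log m)^2)$, which is too weak. The fix — and I expect this to be the main obstacle — is to note that the hypergeometric factor actually decays faster: $\left(\frac{1}{1-Q(\mu_x)}\right)^{k-1}\asymp 2^{-j(k-1)}=2^{-j}\cdot 2^{-jb/2}$ carries an \emph{extra} $2^{-j}$ beyond $2^{-jb/2}$, so the per-shell contribution is really $O(m^{b/2}\,2^{-j}\,j)$ (after the $2^{jb/2}$ from the count cancels against $2^{-jb/2}$), and $\sum_{j\ge 0} 2^{-j}j<\infty$, yielding the clean $O(m^{b/2})$. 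One must therefore track the exponent $k-1=b/2+ ?$ against the volume growth exponent $b/2$ in Lemma~\ref{secondmeasure} precisely, and handle the transitional shell $j\asymp \log m$ where the error term $O(m^{b/2}2^{jb/2}(\log m)^{-A})$ in Proposition~\ref{effective} must be summed — choosing $A$ large enough (e.g.\ $A=2$) makes $\sum_{j\le C\log m} m^{b/2}2^{jb/2}(\log m)^{-A}\cdot 2^{-j(k-1)}\cdot j = O(m^{b/2})$ as well. Assembling the near-singular contribution ($=A(m,x)+O(m^{b/2})$) and the far contribution ($=O(m^{b/2})$), and recalling from \eqref{phi}–\eqref{limit} that the prefactor $2\Gamma(s-1+k/2)/\Gamma(2s)$ and the regularization only affect things by $O(m^{b/2})$, completes the proof that $\phi_m(x)=A(m,x)+O(m^{b/2})$.
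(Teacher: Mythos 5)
Your near-singular analysis — matching the $z\to 1^-$ logarithmic singularity of the hypergeometric factor against $A(m,x)$, with the $O(m^{b/2})$ error coming from the bounded per-point correction times the $O(m^{b/2})$ count furnished by \Cref{HBforL} — is sound and is essentially what the paper does. The genuine gap is in the far-from-singular dyadic sum. First, since $k=1+\tfrac b2$ one has $k-1=\tfrac b2$ exactly; your claimed identity $2^{-j(k-1)}=2^{-j}\cdot 2^{-jb/2}$ would require $k-1=1+\tfrac b2$, which is false, so the ``extra $2^{-j}$'' you are relying on does not exist for the weight $\bigl(1/(1-Q(\mu_x))\bigr)^{k-1}F(k/2,z)$ (and for far shells $z$ is small, so $F(k/2,z)=O(1)$ with no $\log$ factor either). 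Second, the far shells do not terminate at $j\asymp \log m$: for a fixed $m$ the values $|Q(\lambda_x)|$ over $\lambda\in L$ with $Q(\lambda)=m$ are unbounded, which is precisely why $\phi_m(x,s)$ converges only for $\re(s)>k/2$ and has a simple pole at $s=k/2$. With your crude per-shell count $O(m^{b/2}2^{jb/2})$ the product weight-times-count is $\asymp m^{b/2}$ on \emph{every} shell, and the sum over shells is divergent — not merely $O(m^{b/2}\log m)$.

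The actual mechanism is a cancellation, and it is exactly what you discard the moment you replace the asymptotic in \Cref{effective} by its crude upper bound ``$=O(m^{b/2}2^{jb/2})$.'' The paper first writes $F(s,z)=1+zG(s,z)$, splitting $\phi_m$ into a piece $\widetilde{\phi}_m$ weighted by $\bigl(1/(1-Q(\lambda_x))\bigr)^{k+s}G(\cdot)$ — here the exponent genuinely \emph{is} $k=1+\tfrac b2$, so the extra factor of $2^{-j}$ you wanted really appears, the far shells are summable by your dyadic argument, and this is also the piece carrying $A(m,x)$ — and a piece $R_x(s,m)$ with the critical exponent $k-1+s$. For $R_x$, the regularizing pole $c(m)/s$ in \eqref{limit} is recognized via \Cref{hintegral} as $-\tfrac 4b\,a(m)\int_{L_{\R,1}}h_s\,d\mu_\infty$, so that on each shell one estimates the \emph{difference}
\[\Bigl|\,\bigl|\{\sqrt m\lambda\in L:\ \lambda\in\Omega_{\le N}\}\bigr|-a(m)\mu_\infty(\Omega_{\le N})\Bigr|\ll m^{b/2}N^{b/2}(\log mN)^{-A},\]
and after partial summation the factor $(\log mN)^{-A}$ converts the otherwise divergent $\sum_N m^{b/2}/N$ into $\sum_N m^{b/2}(\log N)^{-A}/N=O(m^{b/2})$. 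Both ingredients — the $F=1+zG$ split that isolates the subcritically decaying piece, and the count-minus-volume cancellation on the critical piece — are what actually make the proposition true, and neither is present in your proposal as written.
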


Recall $F(s,t)$ from \S\ref{explicitPhi}.
Since $F(s,0)=1$, for $z\in \C$ with $|z|<1$, we may write $F(s,z)=zG(s,z)+1$. Recall that we set $k=1+\frac{b}{2}$.
From the definitions, we obtain the following decomposition of $\phi_{m}(x)$.
\begin{equation}\label{decomposition}
\begin{array}{rcl}
\phi_{m}(x)&\overset{\eqref{limit}}{=}&\displaystyle
\lim_{s\to\frac{k}{2}}\Bigl(\phi_m(x,s)+\frac{c(m)}{s-\frac{k}2}
\Bigr)
\\[.2in]&\overset{\eqref{phi}}{=}&\displaystyle
\lim_{\underset{\re s>0}{ s\to 0}}\left(\frac{c(m)}{s}+\frac{4}{b}\sum_{\underset{Q(\lambda)=m}{\lambda\in L}}\Bigl(\frac{m}{m-Q(\lambda_x)}\Bigr)^{k-1+s}
F\Bigl(\frac{k}{2}+s,\frac{m}{m-Q(\lambda_x)}\Bigr)\right)
\\[.3in]&=&\displaystyle \widetilde{\phi}_m(x,0)+
\lim_{\underset{\re s>0}{ s\to 0}}R_x(s,m),
\end{array}
\end{equation}
where for $s\in \C$ with $\re s>0$, we define
\begin{equation*}
\begin{array}{rcl}
\displaystyle\widetilde{\phi}_{m}(x,s)
&=&
\displaystyle\frac{4}{b}\sum_{\underset{Q(\lambda)=1}{\sqrt{m}\lambda\in L}}
\Bigl(\frac{1}{1-Q(\lambda_x)}\Bigr)^{k+s}G\Bigl(\frac{k}{2}+s,\frac{1}{1-Q(\lambda_x)}\Bigr), 
\\[.2in]
\displaystyle R_x(s,m)&=&
\displaystyle\frac{c(m)}{s}+\frac{4}{b}\sum_{\underset{Q(\lambda)=1}{\sqrt{m}\lambda\in L}}\Bigl(\frac{1}{1-Q(\lambda_x)}\Bigr)^{k-1+s};
\displaystyle 
\end{array}
\end{equation*}
in \S\ref{tildephi}, we will prove that the above series defining $\widetilde{\phi}_m(x,s)$ indeed converges uniformly absolutely in a small compact neighborhood of $s=0$ in $\C$ and hence it defines a function holomorphic at $0$ and we still denote this function by $\widetilde{\phi}_m(x,s)$. In particular, $\widetilde{\phi}_m(x,0)$ is well defined and equals to $\lim_{\re s>0, s\rightarrow 0}\widetilde{\phi}_m(x,s)$. Therefore, the last equality in \eqref{decomposition} is valid; moreover, since $\phi_m(x,s+\frac{k}{2})+\frac{c(m)}{s}$ admits a holomorphic continuation to $s=0$ (see \S\ref{explicitPhi}), then $R_x(s,m)$ admits a holomorphic continuation to $s=0$. We use $R_x(0,m)$ to denote 
$\lim_{\re s>0, s\to 0}R_x(s,m)=\lim_{s\in \R_{>0}, s\to 0}R_x(s,m)$ and rewrite \eqref{decomposition} as
\begin{equation}\label{decom}
\phi_m(x)=\widetilde{\phi}_{m}(x,0)+R_x(0,m).
\end{equation}
Note that the second equality in \eqref{decomposition} also use the fact that the ratio of $\Gamma$-functions in \eqref{phi} is holomorphic and has limit to be $2/b$.

In what follows next, we estimate $R_x(0,m)$ and $\widetilde{\phi}_{m}(x,0)$ using results from \S\ref{Vol-Nie}, where we use the work of Heath-Brown and Niedermowwe to estimate the number of the lattice points $\lambda$ in certain regions in $L$ with $Q(\lambda)=m$.

\subsubsection{Bounding $R_x(0,m)$}
We only consider $s\in\R_{\geq 0}$. Recall from \S\ref{Vol-Nie}, for $\lambda\in L_{\R,1}=\{\lambda\in L_\R:Q(\lambda)=1\}$, we set $\displaystyle h_s(\lambda)=\left(\frac{1}{1-Q(\lambda_x)}\right)^{k-1+s}$;
$\displaystyle a(m)=\frac{-c(m)\Gamma(k)\sqrt{|L^\vee/L|}}{2(2\pi)^{k}}>0$ and $\mu_\infty$ denotes the measure on $L_{\R,1}$ defined in \eqref{def_muinfty}.

For $s>0$, \Cref{hintegral} yields the equality $$R_x(s,m)=\frac{4}{b}\sum_{\underset{Q(\lambda)=1}{\sqrt{m}\lambda\in L}}h_s(\lambda)-\frac{4a(m)}{b}\int_{L_{\R,1}}h_s(\lambda)d\mu_{\infty}(\lambda).$$
Our next result provides the required bound on $R_x(0,m)$.

\begin{proposition}\label{proprxsm}
For a given $x\in D_L$ Hodge-generic, we have
$$R_x(0,m)=\lim_{s\in \R_{>0}, s\rightarrow 0} R_x(s,m) =O(m^{\frac{b}{2}}).$$
\end{proposition}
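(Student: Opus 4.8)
The plan is to recast $R_x(s,m)$ as a single Riemann--Stieltjes integral against the ``count minus volume'' discrepancy
\[
E_m(t):=N_m(t)-a(m)\,\mu_\infty(\Omega_{\leq t}),\qquad
N_m(t):=\#\bigl\{\lambda\in L: Q(\lambda)=m,\ \lambda/\sqrt m\in\Omega_{\leq t}\bigr\},
\]
then to integrate by parts so that the integrand pairs $E_m(t)$ with a decaying kernel, and finally to bound $E_m(t)$ using \Cref{HBforL} for $t\le 1$ and \Cref{effective} for $t\ge 1$. Since $h_s(\lambda)=\bigl(1-Q(\lambda_x)\bigr)^{-(k-1+s)}$ depends only on $t:=-Q(\lambda_x)\ge 0$ through $(1+t)^{-(k-1+s)}$, the identity for $R_x(s,m)$ recorded just before the proposition becomes, for $s>0$,
\[
R_x(s,m)=\frac{4}{b}\int_0^\infty (1+t)^{-(k-1+s)}\,dE_m(t),
\]
a convergent integral; convergence for $s>0$ follows from the crude bound $N_m(t)\ll m^{b/2}(1+t)^{b/2}$, itself a consequence of \Cref{effective} and \Cref{HBforL}.

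First I would dispose of the boundary terms. Because $\cY_K$, hence $x$, is Hodge-generic and $P_x$ is negative definite, $Q(\lambda_x)<0$ for every nonzero $\lambda\in L$; therefore $N_m(0)=0=\mu_\infty(\Omega_{\leq 0})$ and $E_m(0)=0$. Integrating by parts on $[0,R]$ and letting $R\to\infty$, the boundary contribution at $0$ vanishes, and at $R$ it vanishes because $(1+R)^{-(k-1+s)}E_m(R)\ll m^{b/2}R^{-s}\log(mR)^{-A}\to 0$ by \Cref{effective}, valid for any fixed $A>1$ (recall $k-1=b/2>0$ since $b\ge 3$). This gives
\[
R_x(s,m)=\frac{4(k-1+s)}{b}\int_0^\infty (1+t)^{-(k+s)}\,E_m(t)\,dt .
\]
Using \Cref{effective} once more, $(1+t)^{-(k+s)}|E_m(t)|\ll m^{b/2}t^{-1}\log(mt)^{-A}$ for $t\ge 1$ (here $k=1+\tfrac b2$) while it is bounded for $t\le 1$; the integrand is thus dominated uniformly for $s\in[0,1]$ by a fixed integrable function, so I can pass to the limit $s\to 0^+$ by dominated convergence and obtain $R_x(0,m)=\tfrac{4(k-1)}{b}\int_0^\infty(1+t)^{-k}E_m(t)\,dt$.

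It then remains to estimate this integral, splitting it at $t=1$. On $[0,1]$: monotonicity gives $0\le N_m(t)\le N_m(1)$, and \Cref{HBforL} at $T=1$ yields $N_m(1)=O(m^{b/2})$ — here $b\ge 3$ makes the error term $m^{(b+1)/4+\epsilon}$ negligible against $m^{b/2}$ — while $a(m)\,\mu_\infty(\Omega_{\leq t})=\tfrac{|c(m)|}{2}\bigl((1+t)^{b/2}-1\bigr)=O(m^{b/2})$ for $t\le 1$ by \Cref{secondmeasure}, the definition \eqref{definitionam} of $a(m)$, and \eqref{asymp_fourier}; hence $|E_m(t)|=O(m^{b/2})$ on $[0,1]$ and, since $\int_0^1(1+t)^{-k}\,dt=O(1)$, this portion contributes $O(m^{b/2})$. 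On $[1,\infty)$: \Cref{effective} gives $(1+t)^{-k}|E_m(t)|\ll m^{b/2}t^{-1}\log(mt)^{-A}$, and the substitution $u=\log(mt)$ gives $\int_1^\infty t^{-1}\log(mt)^{-A}\,dt=(A-1)^{-1}(\log m)^{1-A}=o(1)$ since $A>1$, so this portion also contributes $O(m^{b/2})$. Summing yields $R_x(0,m)=O(m^{b/2})$.

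The main obstacle I anticipate is the bookkeeping in the Stieltjes integration by parts together with the justification of interchanging $\lim_{s\to 0^+}$ with the integral: both hinge on the fact that the ``count minus volume'' tail $\int^\infty(1+t)^{-k}E_m(t)\,dt$ converges and stays $O(m^{b/2})$, which is exactly where the \emph{logarithmic} decay $\log(mt)^{-A}$ of Niedermowwe's error term — rather than a mere power saving in $m$ — is indispensable, since here $t$ ranges over an interval that grows with $m$. A Heath-Brown type error of size $m^{(b+1)/4+\epsilon}$ alone would not close the argument.
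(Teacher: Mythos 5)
Your proof is correct and takes essentially the same approach as the paper: both recast $R_x(s,m)$ as a pairing of the discrepancy $E_m(t) = N_m(t) - a(m)\mu_\infty(\Omega_{\leq t})$ against the decaying kernel $(1+t)^{-(k-1+s)}$, transfer to an absolutely convergent expression via partial summation, and then invoke \Cref{HBforL} for the range $t\leq 1$ and \Cref{effective} for $t\geq 1$, with the crucial observation that the $\log(mt)^{-A}$ decay (not just a power saving in $m$) is what closes the tail. The only presentational difference is that the paper discretizes by integer shells $\Theta_N = \Omega_{N+1}\setminus\Omega_N$ and approximates $h_s$ by a step function before a discrete Abel summation, whereas you perform a direct Riemann--Stieltjes integration by parts and pass to the limit $s\to 0^+$ by dominated convergence — a slightly cleaner packaging that avoids the separate estimation of the step-function approximation error.
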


\begin{proof}
Fix $\epsilon>0$ and we only consider $s\in [0,\epsilon]$.
As in \S\ref{Vol-Nie}, $\Omega_T=\{\lambda\in L_{\R,1}:-Q(\lambda_x)\in[0,T]\}$. For an integer $N\geq 0$, define the set $\Theta_N:=\Omega_{N+1}\backslash\Omega_N$, and note that for $\lambda\in\Theta_N$, we have 
$$
h_s(\lambda)=\frac{1}{(N+1)^{s+k-1}}+O\Bigl(\frac{1}{(N+1)^{s+k}}\Bigr).
$$ 
To obtain a uniformly absolutely convergent expression for $R_x(s,m)$ for $s\in ]0, \epsilon]$, we write
\begin{equation*}
\frac{b}{4}R_x(s,m)=
\sum_{N=0}^\infty
\Bigl(\sum_{\substack{\lambda\in\Theta_N \\\sqrt{m}\lambda\in L}} h_s(\lambda)-a(m)\int_{\Theta(N)}h_s(\lambda)d\mu_{\infty}(\lambda)\Bigr).
\end{equation*}
We use the above estimate of $h_s(\lambda)$, for $\lambda\in\Theta_N$, and bound the associated error term using \Cref{effective} and \Cref{secondmeasure}. More precisely, let $C>0$ be an absolute constant such that for all $\lambda\in \Theta_N$, for all $s\in[0,\epsilon]$, we have $|h_s(\lambda)-(N+1)^{-(s+k-1)}|<C(N+1)^{-(s+k)}$. Then the error term is bounded by 
\begin{equation}\label{partialestimate}
    \begin{array}{rcl}
& & \displaystyle C \sum_{N=0}^{\infty} (N+1)^{-(s+k)} \Bigl(\sum_{\substack{\lambda\in\Theta_N \\\sqrt{m}\lambda\in L}}1 + a(m)\mu_\infty(\Theta_N)\Bigr) 
\\& = & \displaystyle C\sum_{N=1}^\infty (N^{-(s+k)}-(N+1)^{-(s+k)})\Bigl(\sum_{\substack{\lambda\in\Omega_N \\\sqrt{m}\lambda\in L}}1 + a(m)\mu_\infty(\Omega_N)\Bigr) 
\\  & \ll & \displaystyle a(m) \sum_{N=1}^\infty N^{-(s+k+1)}\cdot N^{\frac{b}{2}} = a(m) \sum_{N=1}^\infty N^{-s-2} \ll a(m)=O(m^{\frac{b}{2}}),
\end{array}
\end{equation}
where the first equality is partial summation and the implicit constants above are absolute.

To bound the main term, we pick some $A>1$ in \Cref{effective}.
By partial summation, we write

\begin{equation*}
\begin{array}{rcl}
&&\displaystyle
\sum_{N=0}^{\infty}\frac{1}{(N+1)^{s+k-1}}
\Bigl(\sum_{\substack{\lambda\in\Theta_N \\\sqrt{m}\lambda\in L}}1
-a(m)\mu_\infty(\Theta_N)\Bigr)
\\[.2in]&=& \displaystyle
\sum_{N=1}^{\infty}
\Bigl(\frac{1}{N^{s+k-1}}-\frac{1}{(N+1)^{s+k-1}}\Bigr)
\Bigl(\sum_{\substack{\lambda\in\Omega_N \\\sqrt{m}\lambda\in L}}1
-a(m)\mu_\infty(\Omega_N)\Bigr)
\\[.2in]&\ll&\displaystyle
\displaystyle
\sum_{N=1}^{\infty}N^{-(s+k)}
\cdot N^{\frac{b}{2}}m^{\frac{b}{2}}(\log mN)^{-A}\leq m^{\frac{b}{2}}\sum_{N=1}^\infty N^{-1}(\log N)^{-A}\ll m^{\frac{b}{2}},
\end{array}
\end{equation*}
which is again sufficient. The proposition follows.
\end{proof}

\subsubsection{Estimating $\widetilde{\phi}_{m}(x,0)$}\label{tildephi}
We fix an $\epsilon\in ]0,1/2]$ and consider $s\in \C$ such that $|s|\leq \epsilon$. Since $$\displaystyle G\left(s+\frac{k}{2},z\right)=\sum_{n=1}^\infty\frac{\Gamma(s+k-1+n)\Gamma(s+1+n)\Gamma(2s+k)}{\Gamma(s+k-1)\Gamma(s+1)\Gamma(2s+k+n)}\frac{z^{n-1}}{n!}$$ converges uniformly absolutely for $|s|\leq \epsilon, |z|\leq 1/2$, then $G\left(s+\frac{k}{2},z\right)$ is absolutely bounded for such $s$ and $z$.

To show that $\widetilde{\phi}_m(x,s)$ is holomorphic in $|s|<\epsilon$, 
we write
\begin{equation*}
\begin{array}{rcl}
\displaystyle\frac{b}{4}\widetilde{\phi}_m(x,s)&=&
\displaystyle
\sum_{\substack{\sqrt{m}\lambda\in L\\Q(\lambda)=1\\ |Q(\lambda_x)|>1}}
\Bigl(\frac{1}{1-Q(\lambda_x)}\Bigr)^{k+s}G\Bigl(\frac{k}{2}+s,\frac{1}{1-Q(\lambda_x)}\Bigr)
\\[.2in]&+&\displaystyle
\sum_{\substack{\sqrt{m}\lambda\in L\\Q(\lambda)=1\\|Q(\lambda_x)|\leq 1}}
\Bigl(\frac{1}{1-Q(\lambda_x)}\Bigr)^{k+s}G\Bigl(\frac{k}{2}+s,\frac{1}{1-Q(\lambda_x)}\Bigr),
\end{array}
\end{equation*}
where, similar to the definition of $A(m,x)$ in \eqref{Aofm}, the second term is a finite sum of holomorphic functions. Thus we only need to show the first term converges absolutely uniformly on $|s|\leq \epsilon$.
The uniform absolute convergence follows from the boundedness of $G(s+k/2,z)$ in conjunction with an argument identical to \eqref{partialestimate}, which indeed implies that 

\[\sum_{\substack{\sqrt{m}\lambda\in L\\Q(\lambda)=1\\ |Q(\lambda_x)|>1}}
\Bigl(\frac{1}{1-Q(\lambda_x)}\Bigr)^{k+s}G\Bigl(\frac{k}{2}+s,\frac{1}{1-Q(\lambda_x)}\Bigr)=O(m^{\frac{b}{2}}).\]
Therefore, 
\[\widetilde{\phi}_m(x,0)=\frac{4}{b}
\sum_{\substack{\sqrt{m}\lambda\in L\\Q(\lambda)=1\\|Q(\lambda_x)|\leq 1}}
\Bigl(\frac{1}{1-Q(\lambda_x)}\Bigr)^{k}G\Bigl(\frac{k}{2},\frac{1}{1-Q(\lambda_x)}\Bigr)+O(m^{\frac{b}{2}}).\]

Obtaining an estimate on the terms with $|Q(\lambda_x)|\leq 1$ is significantly more difficult since the function
$$\lambda\mapsto \left(\frac{1}{1-Q(\lambda_x)}\right)^{k}G\left(\frac{k}{2},\frac{1}{1-Q(\lambda_x)}\right)$$ has a logarithmic singularity along $\{\lambda\in L_{\R,1},\, x\in \lambda^{\bot}\}$.
Since $\displaystyle G(k/2,z)=\sum_{n=1}^\infty\frac{k-1}{n+k-1}z^{n-1}$, it follows that there exists an absolute constant $C>0$ such that for $z\in [1/2, 1[$,
\begin{align*}
\left|z^{k}G\left(\frac{k}{2},z\right)+\frac{b}{2}\log(1-z)\right| \leq C.
\end{align*}
Hence, noting that $\displaystyle\sum_{\substack{|Q(\lambda_x)|\leq 1 \\\sqrt{m}\lambda\in L}}1=O(m^{\frac{b}{2}})$, which follows from either \Cref{HBforL} or \Cref{effective}, we have
\begin{align*}
\frac{4}{b}\sum_{\substack{\sqrt{m}\lambda\in L\\Q(\lambda)=1\\|Q(\lambda_x)|\leq 1}}
\Bigl(\frac{1}{1-Q(\lambda_x)}\Bigr)^{k}G\Bigl(\frac{k}{2},\frac{1}{1-Q(\lambda_x)}\Bigr)&=-2\sum_{\substack{\sqrt{m}\lambda\in L\\Q(\lambda)=1\\|Q(\lambda_x)|\leq 1}}\log\Bigl(\frac{-Q(\lambda_x)}{1-Q(\lambda_x)}\Bigr)+O(m^{\frac{b}{2}}) \\
&=-2\sum_{\substack{\sqrt{m}\lambda\in L\\Q(\lambda)=1\\|Q(\lambda_x)|\leq 1}}\log (|Q(\lambda_x)|)+O(m^{\frac{b}{2}}).\\
\end{align*}

Therefore, we have proved the following proposition.
\begin{proposition}\label{propwpmx}
We have
$$\widetilde{\phi}_m(x,0)=A(m,x)+O(m^{\frac{b}{2}}).$$
\end{proposition}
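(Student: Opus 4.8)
The plan is to work with the decomposition of $\widetilde{\phi}_m(x,s)$ established just above the statement, in which the sum over $\lambda$ with $Q(\lambda)=1$ and $\sqrt m\lambda\in L$ is broken according to whether $|Q(\lambda_x)|>1$ or $|Q(\lambda_x)|\leq 1$, and to bound each piece. For the ``far'' terms, I would first record that the factor $G\bigl(s+\tfrac{k}{2},z\bigr)$ is uniformly bounded for $|s|\leq\epsilon$ and $|z|\leq\tfrac12$, which is immediate from its power series written above; hence on this range $\bigl(\tfrac{1}{1-Q(\lambda_x)}\bigr)^{k+s}$ controls the summand. Grouping the lattice points into the shells $\Theta_N:=\Omega_{\leq N+1}\setminus\Omega_{\leq N}$, on which $-Q(\lambda_x)\asymp N$, and summing by parts as in the estimate \eqref{partialestimate} — using \Cref{secondmeasure} for $\mu_\infty(\Omega_{\leq N})$ and \Cref{effective} (or the cruder \Cref{HBforL}) for the number of lattice points in $\Omega_{\leq N}$ — one obtains uniform absolute convergence on $|s|\leq\epsilon$, hence holomorphy of $\widetilde{\phi}_m(x,s)$ at $s=0$, and the bound that the $|Q(\lambda_x)|>1$ contribution to $\widetilde{\phi}_m(x,0)$ is $O(m^{b/2})$.

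It then remains to treat the finite sum over $|Q(\lambda_x)|\leq 1$, where the summand $\lambda\mapsto\bigl(\tfrac{1}{1-Q(\lambda_x)}\bigr)^{k}G\bigl(\tfrac{k}{2},\tfrac{1}{1-Q(\lambda_x)}\bigr)$ acquires a logarithmic singularity along $\{x\in\lambda^{\perp}\}$. The key is the identity $G(\tfrac{k}{2},z)=\sum_{n\geq1}\tfrac{k-1}{n+k-1}z^{n-1}$, from which I would extract an absolute constant $C$ with $\bigl|z^{k}G(\tfrac{k}{2},z)+\tfrac{b}{2}\log(1-z)\bigr|\leq C$ for $z\in[\tfrac12,1)$. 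Applying this with $z=\tfrac{1}{1-Q(\lambda_x)}$, so that $1-z=\tfrac{-Q(\lambda_x)}{1-Q(\lambda_x)}$ and $\tfrac{4}{b}z^{k}G(\tfrac{k}{2},z)=-2\log\bigl(\tfrac{-Q(\lambda_x)}{1-Q(\lambda_x)}\bigr)+O(1)$, the sum becomes $-2\sum_{|Q(\lambda_x)|\leq1,\,Q(\lambda)=1,\,\sqrt m\lambda\in L}\log\bigl(\tfrac{-Q(\lambda_x)}{1-Q(\lambda_x)}\bigr)$ up to $O(1)$ per term; since the number of such $\lambda$ is $O(m^{b/2})$ by \Cref{HBforL}, the total error is $O(m^{b/2})$. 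Because $\log(1-Q(\lambda_x))$ is bounded on this compact region, the same point count lets me replace $\log\bigl(\tfrac{-Q(\lambda_x)}{1-Q(\lambda_x)}\bigr)$ by $\log|Q(\lambda_x)|$ at the cost of a further $O(m^{b/2})$, and the leading term is then exactly $A(m,x)$ by its definition \eqref{Aofm}. Adding the two pieces gives $\widetilde{\phi}_m(x,0)=A(m,x)+O(m^{b/2})$.

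I expect the only non-formal ingredient to be the lattice-point count for $\{\lambda\in L:Q(\lambda)=m,\ -Q(\lambda_x)\leq T\}$, which is precisely what the circle-method results of \S\ref{Vol-Nie} provide; this is also where $b\geq3$ is genuinely used, since it makes the singular series $\mu(Q,m)$ bounded and the Heath-Brown error term $O(m^{(b+1)/4+\epsilon})$ of strictly smaller order than $m^{b/2}$. Everything else is bookkeeping with the hypergeometric expansions of $F$ and $G$, so once \Cref{HBforL}, \Cref{effective} and \Cref{secondmeasure} are available the write-up should be short.
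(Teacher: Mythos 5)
Your proposal follows the paper's own proof essentially line for line: the same split at $|Q(\lambda_x)|=1$, the same boundedness of $G(s+\tfrac{k}{2},z)$ for $|z|\leq\tfrac12$, the same dyadic-shell/partial-summation argument copied from \eqref{partialestimate} for the far terms, and the same extraction of the logarithmic singularity from $z^kG(\tfrac{k}{2},z)$ combined with the $O(m^{b/2})$ point count from \Cref{HBforL} for the near terms. Nothing to add.
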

Proposition \ref{twoterms} follows immediately from \eqref{decom}, and Propositions \ref{proprxsm} and \ref{propwpmx}.

\subsection{Conclusions.}\label{summaryofsecfive}
 The following theorems summarize the results proved in the previous subsections. First, we remind the reader that $|c(m)| \asymp m^{b/2}$. 
 
 \begin{theorem}\label{arch1}
 For every $m$ representable by $(L,Q)$, we have
 \[\Phi_m(\cY^\sigma)=c(m)\log m + A(m, \cY^\sigma)+o(|c(m)|\log m).\]
 \end{theorem}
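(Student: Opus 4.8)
The plan is to combine the three results established in the previous subsections via the comparison formula of \Cref{comparison}. Recall that \Cref{comparison} gives $\Phi_m(\cY^\sigma)=\phi_m(\cY^\sigma)-b'_m(k/2)$, so it suffices to control each of the two terms on the right-hand side. First I would apply \Cref{twoterms} with $x=\cY^\sigma$ (which is Hodge-generic by hypothesis, so $A(m,\cY^\sigma)$ is well-defined), obtaining
\[\phi_m(\cY^\sigma)=A(m,\cY^\sigma)+O\!\left(m^{b/2}\right).\]
Next I would invoke \Cref{logterm}, which asserts
\[b'_m(k/2)=|c(m)|\log m+o\!\left(c(m)\log m\right),\]
and recall from \eqref{asymp_fourier} that $|c(m)|\asymp m^{b/2}$, so that in particular $b'_m(k/2)\asymp m^{b/2}\log m$.

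Assembling these, we get
\[\Phi_m(\cY^\sigma)=A(m,\cY^\sigma)+O\!\left(m^{b/2}\right)-|c(m)|\log m+o\!\left(c(m)\log m\right).\]
Since $c(m)<0$ for $m$ represented by $(L,Q)$ (again by \eqref{asymp_fourier}), we have $-|c(m)|=c(m)$, hence the main term $-|c(m)|\log m$ equals $c(m)\log m$. It remains only to absorb the error terms: the $O(m^{b/2})$ term is $o(|c(m)|\log m)$ because $|c(m)|\asymp m^{b/2}$ forces $m^{b/2}/(|c(m)|\log m)\asymp 1/\log m\to 0$; and the $o(c(m)\log m)$ term from \Cref{logterm} is already of the claimed shape. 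Collecting everything yields
\[\Phi_m(\cY^\sigma)=c(m)\log m+A(m,\cY^\sigma)+o\!\left(|c(m)|\log m\right),\]
which is exactly the assertion of \Cref{arch1}.

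There is essentially no genuine obstacle here beyond bookkeeping: the substantive work has already been done in \Cref{logterm} (the local density computations feeding into \Cref{count} and the Bruinier--Kühn formula) and in \Cref{twoterms} (the Heath-Brown/Niedermowwe circle-method input controlling $R_x(0,m)$ and $\widetilde{\phi}_m(x,0)$). The only mild point of care is to make sure the various error terms are compared against the correct benchmark $|c(m)|\log m\asymp m^{b/2}\log m$ and that signs are tracked correctly (using $c(m)<0$), so that $O(m^{b/2})$ and the $o(c(m)\log m)$ from \Cref{logterm} both collapse into a single $o(|c(m)|\log m)$. This is the kind of step that is stated as a ``theorem'' only to package the output of \S\S\ref{explicitPhi}--\ref{sec_Green_phi} in the form that will be used for the diophantine bounds in \S\ref{summaryofsecfive} and beyond.
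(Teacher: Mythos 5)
Your proof is correct and is exactly the paper's argument: the paper's own proof of Theorem~\ref{arch1} is the one-line instruction ``Combine \Cref{comparison,logterm,twoterms},'' and you have simply carried out that combination, tracking the sign of $c(m)$ and absorbing the $O(m^{b/2})$ error into $o(|c(m)|\log m)$ via $|c(m)|\asymp m^{b/2}$. Nothing is missing and nothing differs in substance.
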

 \begin{proof}
 Combine \Cref{comparison,logterm,twoterms}.
 \end{proof}

\begin{theorem}\label{summary}
For every positive integer $m$, we have the following bounds: 
\begin{enumerate}
    \item[(i)] $0\leq A(m,\cY^\sigma)\ll m^{\frac{b}{2}} \log m$.
    
    \item[(ii)] $(\cY.\cZ(m))_{\fP}\log|\mathcal{O}_{K}/\mathfrak{P}| \ll m^{\frac{b}{2}}\log m$. 
    
\item[(iii)] $\displaystyle \sum_{\sigma:K\hookrightarrow\C}\frac{\Phi_{m}(\cY^\sigma)}{|\mathrm{Aut}(\cY^\sigma)|} = O(m^{\frac{b}{2}}\log m)$.
\end{enumerate}

\end{theorem}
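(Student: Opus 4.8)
The plan is to assemble \Cref{summary} from the height formula \eqref{intersectionformula2} together with the asymptotics already in hand. Concretely, by \Cref{eq1} combined with \eqref{asymp_fourier} we know $h_{\widehat{\cZ}(m)}(\cY)=O(m^{b/2})$, and by \Cref{arch1} (which packages \Cref{comparison,logterm,twoterms}) each archimedean term satisfies $\Phi_m(\cY^\sigma)=c(m)\log m+A(m,\cY^\sigma)+o(|c(m)|\log m)$ with $c(m)<0$, $|c(m)|\asymp m^{b/2}$, and $A(m,\cY^\sigma)\geq 0$. Substituting these into
\[
h_{\widehat{\cZ}(m)}(\cY)=\sum_{\sigma:K\hookrightarrow\C}\frac{\Phi_m(\cY^\sigma)}{|\Aut(\cY^\sigma)|}+\sum_{\fP}(\cY.\cZ(m))_\fP\log|\cO_K/\fP|
\]
gives
\[
\sum_{\fP}(\cY.\cZ(m))_\fP\log|\cO_K/\fP| + \sum_\sigma\frac{A(m,\cY^\sigma)}{|\Aut(\cY^\sigma)|} = h_{\widehat{\cZ}(m)}(\cY) - \sum_\sigma\frac{c(m)\log m + o(|c(m)|\log m)}{|\Aut(\cY^\sigma)|}.
\]

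First I would observe that every summand on the left-hand side is nonnegative: the local intersection numbers $(\cY.\cZ(m))_\fP$ are nonnegative because they are lengths of local rings weighted by inverse automorphism counts (see \eqref{int_formula_finite1}), and $A(m,\cY^\sigma)\geq 0$ as noted after \eqref{Aofm}. Next I would bound the right-hand side: $h_{\widehat{\cZ}(m)}(\cY)=O(m^{b/2})=o(m^{b/2}\log m)$, while the sum over $\sigma$ of $|c(m)|\log m/|\Aut(\cY^\sigma)|$ is $O(m^{b/2}\log m)$ since $|c(m)|\asymp m^{b/2}$ and there are finitely many embeddings $\sigma$ (namely $[K:\Q]$ of them) each with $|\Aut(\cY^\sigma)|\geq 1$. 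Hence the entire right-hand side is $O(m^{b/2}\log m)$.

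Since the left-hand side is a sum of nonnegative terms bounded by $O(m^{b/2}\log m)$, each individual term is $O(m^{b/2}\log m)$. This immediately yields (i) $0\leq A(m,\cY^\sigma)\ll m^{b/2}\log m$ (the lower bound being definitional) and (ii) $(\cY.\cZ(m))_\fP\log|\cO_K/\fP|\ll m^{b/2}\log m$ for each fixed $\fP$. For (iii), I would return to the height formula: $\sum_\sigma \Phi_m(\cY^\sigma)/|\Aut(\cY^\sigma)| = h_{\widehat{\cZ}(m)}(\cY) - \sum_\fP (\cY.\cZ(m))_\fP\log|\cO_K/\fP|$; the first term is $O(m^{b/2})$ and, using \Cref{arch1} to write $\Phi_m(\cY^\sigma)=c(m)\log m+A(m,\cY^\sigma)+o(|c(m)|\log m)$ together with part (i), one sees directly that $\sum_\sigma\Phi_m(\cY^\sigma)/|\Aut(\cY^\sigma)|=O(m^{b/2}\log m)$. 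There is no serious obstacle here; the only mild subtlety is making sure the $o(|c(m)|\log m)$ error terms from \Cref{arch1} are genuinely summed over the finite set of embeddings (so they remain $o(m^{b/2}\log m)$), and that one does not need $\cZ(m)$ to be flat or nonempty beyond what \Cref{eq1} already assumes—if $\cZ(m)=\emptyset$ the statement is vacuous, so we may assume $Q$ represents $m$, consistent with the standing hypothesis of this section.
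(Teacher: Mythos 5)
Your proof is correct and follows essentially the same route as the paper: both arguments substitute the known asymptotics into the height formula \eqref{intersectionformula2}, isolate the negative main term $c(m)\log m$ (equivalently, the $b'_m(k/2)$ term via \Cref{comparison,logterm}), and then exploit the nonnegativity of $A(m,\cY^\sigma)$ and of each $(\cY.\cZ(m))_\fP\log|\cO_K/\fP|$ to conclude that every summand is individually $O(m^{b/2}\log m)$. The only cosmetic difference is that you invoke \Cref{arch1} as a packaged statement while the paper cites \Cref{comparison}, \Cref{logterm}, and \Cref{twoterms} directly, and the paper records a two-sided $\asymp m^{b/2}\log m$ relation where you only extract the upper bound (which is all that is needed).
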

\begin{proof}
Recall that $\Phi_m(\cY^{\sigma}) = \phi_m(\cY^{\sigma}) - b_m'(k/2)$. For every integer $m>0$, Equation \eqref{intersectionformula2}, Proposition \ref{eq1} and Proposition \ref{logterm} yields 
$$\displaystyle  \sum_{\fP} (\cY.\cZ(m))_{\fP}\log|\mathcal{O}_{K}/\mathfrak{P}| + \sum_{\sigma: K \hookrightarrow \C} \phi_m(\cY^{\sigma}) \asymp m^{\frac{b}{2}} \log m. $$

By Proposition \ref{twoterms}, we have $\phi_m(\cY^{\sigma}) = A(m, \cY^\sigma) + O(m^{\frac{b}{2}})$, where $A(m, \cY^\sigma)$ is a sum of positive quantities, and is therefore non-negative. Further, note that $(\cY.\cZ(m))_{\fP}\log|\mathcal{O}_{K}/\mathfrak{P}|$ is also non-negative, for every prime $\fP$. Therefore, it follows that $A(m, \cY^\sigma) \ll m^{\frac{b}{2}} \log m$, $(\cY.\cZ(m))_{\fP}\log|\mathcal{O}_{K}/\mathfrak{P}| \ll m^{\frac{b}{2}}\log m$ and \emph{a fortiriori} that $\Phi_m(\cY^{\sigma}) = O(m^{\frac{b}{2}}\log m)$. 
\end{proof}




\section{Second step in bounding the archimedean contribution }\label{sec_arch}
We keep the notations from \S\ref{sec_Green_phi}. Namely $(L,Q)$ is a quadratic lattice of signature $(b,2)$. We are given a Hodge-generic point $x = \cY^{\sigma}$ in the Shimura variety $M(\C)$ and choose a lift of $x$ to the period domain $D_L$, which corresponds to a $2$-dimensional negative definite (with respect to $Q$) plane $P\subset L_{\R}$. Let $P^\perp$ denote the orthogonal complement of $P$ in $L_{\R}$. Then $P^\perp$ is a $b$-dimensional positive definite space. Given a vector $\lambda\in L_\R$, we let $\lambda_x$ and $\lambda_{x^\bot}$ denote the projections of $\lambda$ to $P$ and $P^\perp$, respectively. For $m\in\Z_ {>0}$, recall that we defined the quantity $A(m,x)$ in \eqref{Aofm} by 
\begin{equation*}
A(m,x) =-2\sum_{\underset{\underset{|Q(\lambda_x)|\leq1}{Q(\lambda)=1}}{\sqrt{m}\lambda\in L}}\log (|Q(\lambda_x)|)= \displaystyle2\sum_{\substack{\lambda\in L\\Q(\lambda)=m\\|Q(\lambda_x)|\leq m}}
  \log\Bigl(\frac{m}{|Q(\lambda_x)|}\Bigr).
\end{equation*}
As $x$ is fixed, we denote $A(m,x)$ simply by $A(m)$. Also, for a subset $S\subset \Z_{>0}$, the \emph{logarithmic asymptotic density} of $S$ is defined to be $\displaystyle \limsup_{X\rightarrow\infty}\frac{\log |S_X|}{\log X}$, where $S_X:=\{a\in S \mid X\leq a < 2X\}$. The main result of this section is the following bound on $A(m)$, for positive integers $m$ outside a set of zero logarithmic asymptotic density. Combined with \Cref{arch1}, we obtain the estimate for the archimedean term $\Phi_m(\cY^\sigma)\asymp - m^{\frac{b}{2}}\log m$ for such $m$'s.
\begin{theorem}\label{thAbound}
There exists a subset $S_{\bad} \subset \Z_{>0}$ of logarithmic asymptotic density zero such that for every $m\notin S_{\bad}$, we have
\begin{equation*}
    A(m)=o(m^{\frac{b}{2}}\log(m)).
\end{equation*}
\end{theorem}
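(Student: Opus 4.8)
The plan is to understand $A(m)$ as a weighted count of lattice points $\lambda\in L$ with $Q(\lambda)=m$ that are \emph{close} to the divisor $\cZ(m)$, i.e. with $|Q(\lambda_x)|$ small. Decompose the sum according to the size of $|Q(\lambda_x)|$ dyadically: write $A(m)=\sum_{j\geq 0} A_j(m)$ where $A_j(m)$ collects the contribution of $\lambda$ with $|Q(\lambda_x)|\in (m/2^{j+1}, m/2^j]$ (together with a tail for $|Q(\lambda_x)|$ very small). On the range where $|Q(\lambda_x)|$ is bounded below by a fixed power of $m$, say $|Q(\lambda_x)| \geq m^{1-\delta}$, the logarithmic weight $\log(m/|Q(\lambda_x)|) = O(\delta \log m)$, while the number of such $\lambda$ is $O(m^{b/2})$ by \Cref{HBforL}/\Cref{effective}; so this part contributes $O(\delta\, m^{b/2}\log m)$, which is $o(m^{b/2}\log m)$ after we let $\delta\to 0$. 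The real issue is the \emph{deep} range, $|Q(\lambda_x)|\leq m^{1-\delta}$, i.e. lattice points extremely close to $\cZ(m)(\C)$; here each term can be as large as $\asymp \log m$, so we must show that for most $m$ there are very few such points.

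\textbf{The Diophantine input.} This is where \Cref{summary}(i) does the work. If $\sqrt{m}\lambda\in L$, $Q(\lambda)=1$, and $|Q(\lambda_x)|$ is tiny, say $|Q(\lambda_x)|\leq \epsilon$, then $\lambda_{x^\perp}$ is a vector of length roughly $1$ that is nearly isotropic-orthogonal to $x$; one shows (this is the geometry-of-numbers heart of \S6 alluded to in the introduction) that if $\cY^\sigma$ is \emph{too} close to \emph{too many} special divisors $\cZ(m)$ with $m$ in a dyadic window $[X,2X)$, then one can produce, by a pigeonhole/averaging argument on these near-orthogonal vectors, a genuine vector $\mu\in L$ with $Q(\mu)$ positive but much smaller than $X$ and with $\cY^\sigma$ lying exactly on $\cZ(Q(\mu))$ — contradicting the Hodge-genericity of $\cY^\sigma$, or, quantitatively, forcing $A(Q(\mu))$ (hence $\Phi_{Q(\mu)}(\cY^\sigma)$) to be so large as to violate the bound $A(m')\ll m'^{b/2}\log m'$ of \Cref{summary}(i). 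Concretely: fix a threshold function; call $m$ \emph{bad} if $A(m) > m^{b/2}(\log m)/\psi(m)$ for a slowly growing $\psi(m)\to\infty$ (e.g. $\psi(m)=\log\log m$). Being bad forces $\cY^\sigma$ to lie within distance $\asymp m^{-1/2}$ of a point of $\cZ(m)(\C)$ to within a controlled multiplicity; I would count, for $X\leq m<2X$, the number of bad $m$ by bounding the total number of lattice points $\lambda\in L$ with $Q(\lambda)\in[X,2X)$ and $|Q(\lambda_x)|$ small — this total is $O(X^{b/2+1}\cdot(\text{small}))$ by \Cref{effective} applied to the region $\Omega_{\leq T}$ with $T$ small, namely $O(X^{b/2+1} T)$ — and arguing that a bad $m$ consumes many such points, so that the number of bad $m$ in $[X,2X)$ is $O(X^{1-c})$ for some $c>0$, whence logarithmic density zero.

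\textbf{Key steps, in order.} (1) Split $A(m)=A^{\geq}(m)+A^{<}(m)$ at the cutoff $|Q(\lambda_x)|=m^{1-\delta}$; bound $A^{\geq}(m)=O(\delta m^{b/2}\log m)$ for all $m$ using \Cref{HBforL}. (2) For $A^{<}(m)$, observe the weight is $O(\log m)$ per point, so $A^{<}(m)=O\big((\log m)\cdot \#\{\lambda\in L: Q(\lambda)=m,\ |Q(\lambda_x)|\leq m^{1-\delta}\}\big)$. (3) Sum over a dyadic block $m\in[X,2X)$: $\sum_{m\in[X,2X)} A^{<}(m) \ll (\log X)\cdot\#\{\lambda\in L:\ Q(\lambda)\in[X,2X),\ |Q(\lambda_x)|\leq X^{1-\delta}\}$, and the latter cardinality is $\ll X\cdot \#\{\lambda: Q(\lambda)=X_0,\, \lambda/\sqrt{X_0}\in\Omega_{\leq X^{-\delta}}\}$ summed appropriately, which by \Cref{effective} (with $T=X^{-\delta}\le 1$, so actually \Cref{HBforL}) is $\ll X\cdot(X^{b/2}X^{-\delta} + X^{(b+1)/4+\epsilon}) = O(X^{b/2+1-\delta})$ (using $b\geq 3$ so the error term is lower order). (4) Therefore $\sum_{m\in[X,2X)}A^{<}(m) = O(X^{b/2+1-\delta}\log X)$, so the number of $m\in[X,2X)$ with $A^{<}(m) > X^{b/2}\log X/\psi(X)$ is $O(X^{1-\delta}\psi(X))$ — logarithmic density zero after summing over dyadic blocks; call the union of these $S_{\bad}$. (5) For $m\notin S_{\bad}$, $A(m)=A^{\geq}(m)+A^{<}(m) = O(\delta m^{b/2}\log m) + O(m^{b/2}\log m/\psi(m))$; since $\delta>0$ is arbitrary and $\psi\to\infty$, a diagonal argument over a sequence $\delta\to 0$ (enlarging $S_{\bad}$ by a countable union of density-zero sets, still density zero) yields $A(m)=o(m^{b/2}\log m)$.

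\textbf{Main obstacle.} The crux is step (3)–(4): getting a genuinely power-saving bound $O(X^{b/2+1-\delta})$ on the number of lattice points in the dyadic block lying deep inside the near-singular region, uniformly in $X$, and making the "bad $m$ consumes many points" bookkeeping precise — in particular handling the possibility that all the near-singular points for a given $m$ are clustered (multiple lattice points giving the same small $|Q(\lambda_x)|$), which is exactly the scenario the Diophantine bound \Cref{summary}(i) is designed to preclude but which must be invoked carefully (the clustering would itself signal proximity to a special divisor of small discriminant). I expect the honest version of step (4) to require the circle-method asymptotics of \Cref{effective} not just as an upper bound but with its explicit main term $a(m)\mu_\infty(\Omega_{\leq T})$ and power-saving error, together with the volume computation \Cref{secondmeasure} giving $\mu_\infty(\Omega_{\leq T})\asymp T$ for $T$ small, to rule out anomalous concentration; this is the one place where $b\geq 3$ and the precise shape of the error term in \Cref{effective} are essential.
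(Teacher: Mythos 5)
Your Step (2) contains the decisive error: the weight in $A^{<}(m)$ is \emph{not} $O(\log m)$ per point. For $\lambda$ with $0<|Q(\lambda_x)|<1$ — which your cutoff at $m^{1-\delta}$ allows — the term $\log\bigl(m/|Q(\lambda_x)|\bigr)=\log m+\bigl|\log|Q(\lambda_x)|\bigr|$ is a priori unbounded, and the only upper bound available is the Diophantine one, $\ll m^{b/2}\log m$ per term, coming from \Cref{summary}(i). Once the weight is allowed to be that large, the Markov estimate in Step (4) collapses: $\sum_{m\in[X,2X)}A^{<}(m)$ is bounded only by the trivial $O(X^{b/2+1}\log X)$, the $X^{-\delta}$ power-saving from the lattice-point count does not propagate, and you get no nontrivial bound on the number of bad $m$. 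The issue is exactly what you flag in your "Main obstacle" paragraph — you diagnose the danger but your concrete steps (2)–(4) do not address it.

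What closes the gap in the paper's proof (and what your "Diophantine input" paragraph is correctly groping toward) is a further dyadic stratification of the \emph{error} term $A_{\mathrm{er}}(m)$ by the size of $-\log|Q(\lambda_x)|\in(C,m^{b/2}]$, together with the geometry-of-numbers \Cref{lemsphere}: from any collection of $N$ vectors $\lambda\in L$ with $Q(\lambda)\ll X$ and $|Q(\lambda_x)|$ very small, one extracts a nonzero difference $w=v-v'\in L$ with $-\log|Q(w_x)|$ still of comparable size but $Q(w)=Q(w_x)+Q(w_{x^\perp})\ll X/N^{2/b}$. Applying \Cref{summary}(i) to $A(Q(w))$ then forces $N$ to be small, and one runs this for each dyadic range $-\log|Q(\lambda_x)|\in(Z,2Z]$ separately (the paper's sets $B_{1,X},B_{2,X},B_{3,X}$). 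This is not a refinement of your Markov step — it is a different mechanism, and it is the only place in the argument where the Diophantine bound is actually leveraged to turn "many near-singular points" into a small-discriminant contradiction. Your treatment of the main term $A^{\geq}(m)$ via \Cref{HBforL} is correct and matches the paper's \Cref{circlemethod} up to reparametrization; it is the error term where your proposal is incomplete.
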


To prove Theorem \ref{thAbound}, we write $A(m)=A_\mt(m)+A_\er(m)$, where we define the {\it main term} $A_{\mt}(m)$ and the {\it error term} $A_\er(m)$ to be
\begin{equation*}
\begin{array}{rcl}
\displaystyle A_\mt(m) &=& \displaystyle2\sum_{\substack{\lambda\in L\\Q(\lambda)=m\\1\leq |Q(\lambda_x)|\leq m}}
  \log\Bigl(\frac{m}{|Q(\lambda_x)|}\Bigr),\\[.3in]
\displaystyle A_\er(m) &=& \displaystyle2\sum_{\substack{\lambda\in L\\Q(\lambda)=m\\0<|Q(\lambda_x)|<1}}
  \log\Bigl(\frac{m}{|Q(\lambda_x)|}\Bigr).
%
\end{array}
\end{equation*}
In \S\ref{arch2main} and \S\ref{arch2error}, we obtain bounds for the two terms $A_\mt(m)$ and $A_\er(m)$, respectively. \Cref{thAbound} follows directly from \Cref{circlemethod,propdio}. Since we only consider the fixed quadratic form $Q$ in this section, we will not specify the dependence of the implicit constants on $Q$ when we apply the circle method results by Heath-Brown recalled in \S\S\ref{circle_method}-\ref{Vol-Nie}.



\subsection{Bounding the main term using the circle method}\label{arch2main}

In this section, we prove the following proposition.
\begin{proposition}\label{circlemethod}
We have
\begin{equation*}
\lim_{m\to\infty}
\frac{A_\mt(m)}{m^{\frac{b}{2}}\log m}=0.
\end{equation*}
\end{proposition}
\begin{proof}
Fix a real number $T>1$ and let $m>T$ be a positive integer. We break up $A_\mt(m)$ into two terms $A_1(m)$ and $A_2(m)$, where $A_1(m)$ is the sum over those $\lambda\in L$ such that $|Q(\lambda_x)|\geq m/T$, and $A_2(m)$ is the sum over those $\lambda\in L$ such that $|Q(\lambda_x)|< m/T$. We start by bounding $A_1(m)$.
First note that if $\lambda\in L$ with $Q(\lambda)=m$ and $m/T\leq |Q(\lambda_x)|\leq m$, then we have
$$\log\Bigl(\frac{m}{|Q(\lambda_x)|}\Bigr)\leq \log T.$$
Furthermore, by \Cref{HBforL},
\begin{equation*}
|\{\lambda \in L: Q(\lambda)=m,Q(\lambda_x)\leq m\}|\ll m^{\frac{b}{2}}.
\end{equation*}
Therefore, we obtain the bound
\begin{equation}\label{eqA1bound}
A_1(m)\ll m^{\frac{b}{2}}\log T.
\end{equation}

Next, we consider $A_2(m)$.
Once again, we apply \Cref{HBforL} to obtain the bound (take $\epsilon=1/4$)
\begin{equation}\label{eqA2bound}
\left|\Bigl\{\lambda\in L, Q(\lambda)=m, |Q(\lambda_x)|<\frac{m}{T}\Bigr\}\right|\ll \frac{m^{\frac{b}{2}}}{T} +O_T(m^\frac{b+2}{4}).
\end{equation}

Equations \eqref{eqA1bound}, \eqref{eqA2bound}, and the trivial bound $\log(m/|Q(\lambda_x)|)\leq\log m$
for $\lambda\in L$ with $1\leq |Q(\lambda_x)|\leq m$, yield
the following estimate on $A_\mt(m)=A_1(m)+A_2(m)$:
\begin{equation*}
A_\mt(m)\ll m^{\frac{b}{2}}\log T+m^{\frac{b}{2}}(\log m)T^{-1}+O_{T}(m^{(b+2)/4}\log m).
\end{equation*}
Dividing by $m^{\frac{b}{2}}\log(m)$ and letting $m$ tend to infinity, we see that
\begin{equation*}
    \limsup_{m\to\infty}
\frac{A_\mt(m)}{m^{\frac{b}{2}}\log m}\ll \frac{1}{T}.
\end{equation*}
Since this is true for every $T$, then $\displaystyle\lim_{m\rightarrow\infty} \frac{A_\mt(m)}{m^{\frac{b}{2}}\log m}$ exists and equals to $0$.
\end{proof}

\subsection{Bounding the error term using the diophantine bound}\label{arch2error}
We start with the following (entirely lattice theoretic) lemma.

\begin{lemma}\label{lemsphere}
Let $C>2$ be a fixed constant, $X>1$ be a real number, and let $N\geq 2$ be a positive integer.
Suppose $S$ is a set of $N$ vectors in $L_\R$ such that $C\leq Q(v)\ll X$ 
and $|Q(v_x)|< e^{-C}$ for all $v\in S$. Then there exist two distinct vectors $v$ and $v'$ in $S$ such
that their difference $w:=v-v'$ satisfies the following two
properties: 
\begin{enumerate}
\item $-\log(|Q(w_x)|)\gg \min\bigl(-\log(|Q(v_x)|),-\log(|Q(v'_x)|)\bigr)$
\item $Q(w_{x^\bot})\ll \frac{X}{N^{\frac{2}{b}}}$.
\end{enumerate}
All implicit constants here are absolute; in particular, they are independent of $C,X,N$.
\end{lemma}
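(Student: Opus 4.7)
The plan is to obtain the pair $(v,v')$ by a pigeonhole argument on the positive-definite projections $v_{x^\bot}$, and then to observe that assertion (1) is forced by the hypothesis $|Q(v_x)|<e^{-C}$ alone. Fix orthonormal bases of $P$ and $P^\bot$ adapted to $-Q|_P$ and $Q|_{P^\bot}$, so that $|v_x|^2=|Q(v_x)|$ and $|v_{x^\bot}|^2=Q(v_{x^\bot})$ in the resulting Euclidean norms. The orthogonal decomposition $Q(v)=Q(v_x)+Q(v_{x^\bot})$, together with $C\le Q(v)\ll X$ and $|Q(v_x)|<e^{-C}<1$, yields
$$Q(v_{x^\bot})=Q(v)+|Q(v_x)|\ll X,$$
so each $v_{x^\bot}$ lies in a Euclidean ball of radius $O(\sqrt{X})$ in $P^\bot\cong\R^b$.

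To prove (2), I would partition this ball into axis-aligned cubes of side length $c_0\sqrt{X}/N^{1/b}$, where $c_0>0$ is an absolute constant chosen so that fewer than $N/2$ such cubes meet the ball (a standard volume comparison). By pigeonhole, there exist two distinct vectors $v,v'\in S$ whose projections $v_{x^\bot}$ and $v'_{x^\bot}$ lie in a common cube, and then the difference $w=v-v'$ satisfies
$$Q(w_{x^\bot})=|v_{x^\bot}-v'_{x^\bot}|^2\ll X/N^{2/b},$$
which is (2).

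For (1), set $\delta=\max(|Q(v_x)|,|Q(v'_x)|)$, so that $|v_x|,|v'_x|\le\sqrt\delta$. The triangle inequality in $P$ gives $|w_x|\le 2\sqrt\delta$ and hence $|Q(w_x)|\le 4\delta$, so
$$-\log|Q(w_x)|\ge-\log\delta-\log 4.$$
Because $\delta<e^{-C}$ and $C>2>\log 4$, one has $-\log\delta>C>\log 4$, whence
$$-\log|Q(w_x)|\ge\Bigl(1-\tfrac{\log 4}{-\log\delta}\Bigr)(-\log\delta)>(1-\tfrac{\log 4}{2})(-\log\delta),$$
and $(1-\tfrac{\log 4}{2})>0$ is an absolute constant. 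Since $-\log\delta=\min(-\log|Q(v_x)|,-\log|Q(v'_x)|)$, this is exactly (1).

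The only real subtlety is checking that the implicit constant in (1) is truly independent of $C$. This is what the strict inequality $C>2$ in the hypothesis is for: it forces $-\log\delta>C>\log 4$ with a uniform gap, so the loss $-\log 4$ in the elementary estimate is absorbed into a multiplicative constant bounded below by $1-\log 2$, independently of $C$, $X$, and $N$. Beyond this cosmetic point, there is no genuine obstacle: (2) is pure pigeonhole, and (1) holds for every pair of vectors in $S$.
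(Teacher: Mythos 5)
Your proof is correct and follows essentially the same route as the paper's: for (1) you use the triangle inequality in $P$ and the hypothesis $C>2$ exactly as in the paper (you simply spell out the absolute constant $1-\log 2$), and for (2) your cube-pigeonhole argument is the same volume comparison in $P^\perp$ that the paper phrases as a disjoint-balls packing bound on the minimum gap $2T$.
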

\begin{proof}
The first property is immediate and is satisfied for every pair $v$ and $v'$. Indeed, we have $w_x=v_x-v'_x$ and using the
triangle inequality, it follows that
\begin{equation*}
|Q(w_x)|^{1/2}\leq 2\max(|Q(v_x)|^{1/2},|Q(v'_x)|^{1/2})<1,
\end{equation*}
since $C>2$. Hence we obtain the first claim. 

To obtain the second claim, remark that if $v\neq v'$, then $v_{x^\bot}\neq v_{x^\bot}'$ since otherwise $|Q(v-v')|=|Q(v_x-v'_x)|\in]0,1[$ and $v-v'\in L$. Thus, by considering the projections $v_{x^\bot}$ for $v\in S$, we obtain $N$ vectors in the $b$-dimensional real vector space
$P^\perp$. Let $2T$ be the smallest distance between the vectors $v_{x^\perp}$ for $v\in S$, where distance is taken with respect to the positive definite form $Q$ on $P^\perp$. By the triangle inequality, we have the trivial bound $T=O(X^{1/2})$.
Then the $N$ balls of radii $T$ around the points $v_{x^\bot}$ are disjoint, and all lie within the ball of radius $C_0\sqrt{X}$
around the origin, where $C_0$ is an absolute constant depending only on the absolute implicit constant in $Q(v)\ll X$. By comparing volumes, we obtain
\begin{equation*}
NT^b\ll X^{\frac{b}{2}},
\end{equation*}
from which it follows that $T\ll X^{\frac{1}{2}}/N^{\frac{1}{b}}$. Therefore, there exist two points $v$ and $v'$ in $S$ such that $$Q(v_{x^\bot}-v'_{x^\bot})\leq T^2\ll \frac{X}{N^{\frac{2}{b}}},$$ concluding the proof of the lemma.
\end{proof}

The following result controls the error term in $A(m)$ for most $m$.
\begin{proposition}\label{propdio}
Let $S_{\bad}\subset \N^\times$ be the set of integers $m$ such that \begin{equation}\label{eqbl1b}
A_\er(m)>m^{\frac{b}{2}}.
\end{equation}
Then $S_\bad$ has logarithmic asymptotic density zero.
\end{proposition}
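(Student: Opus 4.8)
The plan is to bound $|S_{\bad}\cap[X,2X)|$ as $X\to\infty$; since this is precisely the quantity whose logarithmic growth defines the logarithmic asymptotic density, it suffices to prove $|S_{\bad}\cap[X,2X)|=X^{o(1)}$. The first step is to discard the contribution to $A_{\er}(m)$ coming from vectors $\lambda$ whose projection $\lambda_x$ is not extremely small. A Lipschitz-principle count in the region $\{\lambda\in L_\R:\ X\le Q(\lambda)<2X,\ |Q(\lambda_x)|<1\}$ — an orthogonal product of a $2$-dimensional ball of radius $O(1)$ in $P$ with a $b$-dimensional shell of radius $\asymp\sqrt X$ in $P^\perp$, hence of volume and boundary size $O(X^{b/2})$ — gives $\sum_{m\in[X,2X)}\#\{\lambda\in L:Q(\lambda)=m,|Q(\lambda_x)|<1\}\ll X^{b/2}$. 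Hence for all but $O((\log X)^2)$ values of $m\in[X,2X)$ one has $\#\{\lambda\in L:Q(\lambda)=m,|Q(\lambda_x)|<1\}\le X^{b/2}(\log X)^{-2}$, and for such $m$ the part of $A_{\er}(m)$ over $\lambda$ with $|Q(\lambda_x)|\ge e^{-C_0}$ (any fixed $C_0$) is $\ll(\log X)\cdot X^{b/2}(\log X)^{-2}=o(X^{b/2})$. It therefore remains to show that the set of $m\in[X,2X)$ for which the ``deep part'' $A_{\er}^{\mathrm{deep}}(m):=2\sum_{Q(\lambda)=m,\ 0<|Q(\lambda_x)|<e^{-C_0}}\log\bigl(m/|Q(\lambda_x)|\bigr)$ exceeds $X^{b/2}/2$ has size $X^{o(1)}$.

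Next I would perform a dyadic decomposition in the singularity depth $-\log|Q(\lambda_x)|$, together with a double pigeonholing. For $\ell\ge\ell_1$ (where $2^{\ell_1}\asymp C_0$) set $N^{(\ell)}(m):=\#\{\lambda\in L:Q(\lambda)=m,\ -\log|Q(\lambda_x)|\in[2^\ell,2^{\ell+1})\}$. By the diophantine bound \Cref{summary}(i), each term of $A_{\er}(m)$ is $\le\frac12 A(m)\ll X^{b/2}\log X$, so only $O(\log X)$ scales $\ell$ occur in $A_{\er}^{\mathrm{deep}}(m)$. Pigeonholing over scales, every $m$ with $A_{\er}^{\mathrm{deep}}(m)>X^{b/2}/2$ has a scale $\ell=\ell(m)$ whose dyadic block carries $\gg X^{b/2}/\log X$; since each term in that block is $\ll\max(2^\ell,\log X)$, this forces
\[
N^{(\ell)}(m)\ \gg\ \frac{X^{b/2}}{\log X\cdot\max(2^\ell,\log X)}.
\]
Pigeonholing once more over the $O(\log X)$ possible scales, it is enough to bound, for each fixed $\ell$, the cardinality of the set $G_\ell$ of $m\in[X,2X)$ with $\ell(m)=\ell$.

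The third and decisive step is the geometry-of-numbers input \Cref{lemsphere}. Form the set of distinct lattice vectors $\mathcal S:=\bigsqcup_{m\in G_\ell}\{\lambda\in L:Q(\lambda)=m,\ -\log|Q(\lambda_x)|\in[2^\ell,2^{\ell+1})\}$, so that every $\lambda\in\mathcal S$ has $Q(\lambda)\in[X,2X)$ and $|Q(\lambda_x)|\le e^{-2^\ell}$, and $|\mathcal S|\ge|G_\ell|\cdot X^{b/2}\bigl(\log X\cdot\max(2^\ell,\log X)\bigr)^{-1}$ up to an absolute constant. If $|\mathcal S|\ge2$, apply \Cref{lemsphere} with $C=2^\ell$ — legitimate once $C_0$ (equivalently $\ell_1$) is fixed large enough that $2^{\ell_1}>2$ and $2^{\ell_1}>L_0$, where $L_0:=\sup\{-\log|Q(w_x)|:0\ne w\in L,\ Q(w)=0,\ |Q(w_x)|<1\}$ is finite because Hodge-genericity of $\cY^\sigma$ forces every such isotropic $w$ to satisfy $|w|^2=2|Q(w_x)|<2$ and $Q(w_x)\ne0$. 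One obtains $w=v-v'\in L$ with $-\log|Q(w_x)|\gg 2^\ell$ and $Q(w_{x^\perp})\ll X|\mathcal S|^{-2/b}$. Because $-\log|Q(w_x)|\gg2^\ell>L_0$, the vector $w$ is not isotropic, so $m':=Q(w)$ is a \emph{positive} integer with $m'\le Q(w_{x^\perp})\ll X|\mathcal S|^{-2/b}$, and $w$ contributes a single term to $A_{\er}(m')$. Hence
\[
2^\ell\ \ll\ -\log|Q(w_x)|\ \le\ \tfrac12 A_{\er}(m')\ \le\ \tfrac12 A(m')\ \ll\ (m')^{b/2}\log m'\ \ll\ X^{b/2}|\mathcal S|^{-1}\log X
\]
by \Cref{summary}(i), giving $|\mathcal S|\ll X^{b/2}\log X\cdot2^{-\ell}$ (and trivially $|G_\ell|\le|\mathcal S|\le1$ if $|\mathcal S|<2$). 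Comparing this with the lower bound on $|\mathcal S|$ yields $|G_\ell|\ll(\log X)^2$ when $2^\ell\ge\log X$ and $|G_\ell|\ll(\log X)^3/2^{\ell_1}\ll(\log X)^3$ otherwise. Summing over the $O(\log X)$ scales and adding back the $O((\log X)^2)$ exceptional $m$ from the first step gives $|S_{\bad}\cap[X,2X)|\ll(\log X)^4=X^{o(1)}$, which proves the proposition; together with \Cref{circlemethod} this also yields \Cref{thAbound}.

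The conceptual heart — exactly the novelty flagged in the introduction relative to \cite{charles1,st} — is that the bound $A(m')\ll(m')^{b/2}\log m'$ of \Cref{summary} holds for \emph{every individual} $m'$, so that the small-norm difference vector $w$ produced by \Cref{lemsphere} genuinely violates a diophantine inequality. I expect the main obstacle to be purely organizational: threading the two layers of pigeonholing (over depth scales and over $m$) so the accumulated $\log X$ factors stay polylogarithmic, verifying the first-step lattice-point estimate, and — a genuine but minor subtlety — ruling out isotropic difference vectors, which is what the auxiliary constant $L_0$ is for.
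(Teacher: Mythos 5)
Your proof is correct, and it relies on the same three ingredients that power the paper's own argument: the geometry-of-numbers count of lattice vectors with $|Q(\lambda_x)|<1$, Lemma~\ref{lemsphere}, and the per-$m$ diophantine bound of \Cref{summary}(i). The bookkeeping is organized differently. The paper fixes a parameter $\epsilon\in(0,1)$ and covers the potential exceptions by three sets $B_{1,X}$, $B_{2,X}$, $B_{3,X}$ (many vectors with $|Q(\lambda_x)|<1$; one vector at extreme depth; many vectors at some dyadic depth), bounding each by $O_\epsilon(X^\epsilon\log X)$ and letting $\epsilon\to0$. You instead Chebyshev out the $m$ with an unusually large shallow count, assign to every remaining bad $m$ a dominant depth scale $\ell(m)$, and apply Lemma~\ref{lemsphere} once per scale to the merged set $\cS$; this yields the sharper bound $|S_{\bad}\cap[X,2X)|\ll(\log X)^{O(1)}$, though both suffice for logarithmic density zero. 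The more substantive value of your write-up is that you explicitly address a point the paper leaves implicit: once Lemma~\ref{lemsphere} produces $w=v-v'$ with $Q(w_{x^\perp})\ll X\,|\cS|^{-2/b}$, the paper sets $M:=Q(w)$ and immediately invokes $A(M)\ll M^{b/2}\log M$, but the constraints only give $M\in\Z$ and $M>-1$, so $M\geq1$ (i.e.\ $w$ not isotropic) must be checked before \Cref{summary}(i) can be used. Your fix — taking the starting depth $\ell_1$ (the paper's $C$) large enough to exclude the finitely many nonzero isotropic $w\in L$ with $|Q(w_x)|<1$, all of which have Euclidean norm $<\sqrt2$ — is exactly what is needed, and the same repair should be understood in the paper's argument, where the stated $C\in[2,4]$ is a priori too small. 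One small imprecision: Hodge-genericity of $\cY^\sigma$ is not what gives $Q(w_x)\neq0$ for a nonzero isotropic $w$; it follows directly from positive-definiteness of $Q$ on $P^\perp$, since $w_x=0$ would force $Q(w)=Q(w_{x^\perp})>0$. Likewise ``each term of $A_\er(m)$ is $\leq\tfrac12 A(m)$'' should read ``$\leq A(m)$'' in the first occurrence (the $\tfrac12$ is correct later, once you peel off the overall factor $2$ in the definition of $A$); neither slip affects the estimates.
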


\begin{proof}
A crucial ingredient in the proof is the ``uniform diophantine bound'' in \Cref{summary}~(i). Since $A(m)$ is a sum of the positive terms $\log\left(\frac{m}{|Q(\lambda_x)|}\right)$, each such term must also satisfy the same bound, i.e., for all $\lambda\in L$ such that $Q(\lambda)=m, |Q(\lambda_x)|\leq m$, we have $$\log\left(\frac{m}{|Q(\lambda_x)|}\right)\ll m^{\frac{b}{2}}\log m.$$

Let $\epsilon\in ]0,1[$ and $X>1$ and let $S_{\bad,X}=]X,2X]\cap S_\bad$. We pick a fixed constant $C\in [2,4]$ and break up the interval $]C,X^{\frac{b}{2}}]$ into the disjoint union of dyadic intervals $\cup_{i\in I}]Z_i,2Z_i]$ such that $|I|=O(\log(X))$. Define the three following subsets of $]X,2X]$.
\begin{enumerate}
    \item The set $B_{1,X}$ of $m$ such that 
    $$|\{\lambda\in L, Q(\lambda)=m, |Q(\lambda_x)|<1\}|\geq X^{\frac{b}{2}-\epsilon}.$$
    \item The set $B_{2,X}$ of $m$ such that there exists at least one element $\lambda\in L$ with $Q(\lambda)=m$ and $-\log(|Q(\lambda_x)|)\geq X^{\frac{b}{2}}.$
    \item The set $B_{3,X}$ of $m$ for which there exists an index $i_m\in I$ such that 
    $$|\{\lambda\in L, Q(\lambda)=m, -\log(|Q(\lambda_x)|)\in]Z_{i_m},2Z_{i_m}]\}|\geq \frac{X^{\frac{b}{2}-\epsilon}}{Z_{i_m}}.$$
\end{enumerate}
Notice that if $m\in ]X,2X]\backslash (B_{1,X}\cup B_{2,X} \cup B_{3,X})$, then we can write
\begin{align*}
    A_\er(m) &=2\sum_{\substack{\lambda\in L\\Q(\lambda)=m\\|Q(\lambda_x)|<1}}
  \log m + 2\sum_{\substack{\lambda\in L\\Q(\lambda)=m\\ -\log(|Q(\lambda_x)|)\in [0, C]}}
  |\log(|Q(\lambda_x)|)| + 2\sum_{\substack{\lambda\in L\\Q(\lambda)=m\\-\log(|Q(\lambda_x)|)\in ]C, X^{\frac{b}{2}}]}}
  |\log(|Q(\lambda_x)|)\\
    &\leq 2X^{\frac{b}{2}-\epsilon}\cdot(\log(m)+C)+2\sum_{i}\sum_{\underset{-\log|Q(\lambda_x)|\in ]Z_i,2Z_i]}{\lambda\in L,Q(\lambda)=m}}(|\log(|Q(\lambda_x)|)|)\\
    &\leq 2 X^{\frac{b}{2}-\epsilon}\log(2X)+ 8 X^{\frac{b}{2}-\epsilon}+4\sum_{i}\frac{X^{\frac{b}{2}-\epsilon}}{Z_i}\cdot Z_i\\
    &\leq 14 X^{\frac{b}{2}-\epsilon}\log(2X)\\
\end{align*}
One can thus find $X_\epsilon>1$ such that for $X>X_\epsilon$, we have $14X^{\frac{b}{2}-\epsilon}\log(2X)<X^{\frac{b}{2}}$. Hence for all $X>X_\epsilon$, for all $m\in ]X,2X]\backslash(B_{1,X}\cup B_{2,X}\cup B_{3,X})$, we get $A_\er(m)\leq m^\frac{b}{2}$.  
In other words, for $X>X_\epsilon$, $S_{\bad,X}\subset B_{1,X}\cup B_{2,X}\cup B_{3,X}$. We will obtain upper bounds on the cardinality of $B_{1,X}$, $B_{2,X}$ and $B_{3,X}$. 

\begin{enumerate}
    \item The volume of the region of elements $\lambda\in L_\R$ such that $X<Q(\lambda)\leq 2X$ and $|Q(\lambda_x)|<1$ is bounded by $O(X^{\frac{b}{2}})$, hence a geometry-of-numbers argument implies that the number of elements in $L$ in this region is bounded by $O(X^{\frac{b}{2}})$. More precisely, we break the set of such $\lambda$ into a finite union of subsets such that for any two elements $\lambda, \lambda'$ in each subset, we have $|Q(\lambda_x-\lambda'_x)|<1$; as $Q$ is negative definite on $P$ and $|Q(\lambda_x)|<1$, we may choose the subsets using $\lambda_x$ such that the total number of subsets is a constant only depending on $Q$. Therefore, as in the proof of \Cref{lemsphere}, for each subset, we count the $\lambda$ by counting $\lambda_{x^\perp}\in P^\perp$ and then apply a geometry-of-numbers argument on $P^\perp$.
    
    It thus follows that 
\begin{align*}
    |B_{1,X}|=O(X^\epsilon).
\end{align*}

\item Let $Y:=|B_{2,X}|\geq 1$ and for each $m\in B_{2,X}$, let $\lambda(m)$ be an element of $L$ such that  $Q(\lambda(m))=m$ and $-\log(|Q(\lambda(m)_x)|)\geq X^{\frac{b}{2}}$. By Lemma \ref{lemsphere}, we obtain a nonzero integer vector $\lambda$ in $L$ such that $-\log(|Q(\lambda_x)|)\gg X^{\frac{b}{2}}$ and $Q(\lambda_{x^{\bot}})\ll \frac{X}{Y^{\frac{2}{b}}}$.\footnote{Although \Cref{lemsphere} assumes $Y\geq 2$, the above statement is trivial when $Y=1$.} Let $M=Q(\lambda)$, and note that $M=Q(\lambda_x)+Q(\lambda_{x^\bot})\ll \frac{X}{Y^{\frac{2}{b}}}$.  \Cref{summary}(i)  implies
\begin{equation*}
X^{\frac{b}{2}}\ll -\log (|Q(\lambda_x)|)\ll A(M)\ll M^{\frac{b}{2}}\log M\ll \frac{X^{\frac{b}{2}}\log(X)}{Y}.
\end{equation*}
Therefore, we obtain $$|B_{2,X}|\ll \log(X).$$

\item
The set $B_{3,X}$ is included in the union of the subsets $B_{3,Z_i}, i\in I$  formed by the elements $m\in]X,2X]$ such that $$\left|\{\lambda\in L,\, Q(\lambda)=m,\, -\log(|Q(\lambda_x)|)\in ]Z_i,2Z_i]\}\right|\geq \frac{X^{\frac{b}{2}-\epsilon}}{Z_i}.$$ 
Suppose that $Y:=|B_{3,Z_i}|\geq 1$ for some $i\in I$. Then there at least $\lceil\frac{YX^{\frac{b}{2}-\epsilon}}{Z_i}\rceil$ vectors $\lambda\in L$ such that $Q(\lambda)\in ]X,2X]$ and $-\log(|Q(\lambda_x)|)\in ]Z_i,2Z_i]$. We use again Lemma \ref{lemsphere} to construct an integral nonzero vector $\lambda\in L$ such that $$-\log(|Q(\lambda_x)|)\gg Z_i\,\textrm{and}\, Q(\lambda_{x^\bot})\ll \frac{X^{\frac{2\epsilon}{b}} Z_i^{\frac{2}{b}}}{Y^{\frac{2}{b}}}.$$ Let $M$ denote again $Q(\lambda)$ and notice that $M\ll \frac{X^{2\epsilon/b} Z_i^{2/b}}{Y^{2/b}}$. 
Theorem \ref{summary}(i) implies that
\begin{equation*}
Z_i\ll -\log (|Q(\lambda_x)|)\ll A(M)\ll M^{\frac{b}{2}}\log M\ll \frac{X^{\epsilon}Z_i}{Y}.
\end{equation*}
Thus for every $i\in I$, we have $|B_{3,Z_i}|\ll X^{\epsilon}$. Summing over all $i\in I$ yields $$|B_{3,X}|\ll X^{\epsilon} \log X.$$ 

\end{enumerate}

Hence we conclude that $\log|S_{\bad,X}|\ll \epsilon \log X + \log \log X$. Thus $\displaystyle\limsup_{X\rightarrow\infty} \frac{\log|S_{\bad,X}|}{\log X}\leq \epsilon$. As the equality holds for every $\epsilon >0$, we get the desired result.
\end{proof}

\section{Bounding the contribution from a finite place with good reduction}\label{sec_finite}

We keep the notations from the beginning of \S\ref{mainproof}, namely $\cM$ is the integral model over $\Z$ of the GSpin Shimura variety associated to an even maximal quadratic lattice $(L,Q)$ with signature $(b,2), b\geq 3$; $\cY$ is an $\cO_K$-point in $\cM$ such that $\cY_K$ is Hodge-generic; $\cZ(m)$ denotes the special divisor over $\Z$ associated to an integer $m\in\Z_{>0}$ and is defined in \S\ref{special}. We denote the Kuga--Satake abelian scheme over $\cO_K$ associated to $\cY$ by $\cA$, and let $A$ denote $\cA_K$. The assumption on $\cY_K$ being Hodge-generic implies that the lattice of special endomorphisms $V(A_{\overline{K}})$ (see \S\ref{special} for the definition) is just $\{0\}$. Fix a prime $\fP$ of $\cO_K$ and let $p$ denote the characteristic of the residue field $\bF_{\fP}$, and let $e$ denote the ramification index of $\fP$ in $K$. We use $\cY_{\overline{\fP}}, \cA_{\overline{\fP}}$ denote the geometric special fibers of $\cY,\cA$ at $\fP$.

Recall the intersection multiplicity $(\cY.\cZ(m))_{\fP}$ from \eqref{int_formula_finite1}: let $v \in V(\cA_{\overline{\fP}})$ be a special endomorphism of $\cA_{\overline{\fP}}$ satisfying $v\circ v = [m]$. We denote by $\cO_{\cY\times_{\cM}\cZ(m),v}$ the \'etale local ring of $\cY\times_{\cM}\cZ(m)$ at $v$. Then we have
\begin{align}\label{int_formula_finite}
(\cY.\cZ(m))_{\fP}=\sum_{\substack{v\in V(\cA_{\overline{\fP}})\\ v\circ v =[ m]}}\length(\cO_{\cY\times_{\cM}\cZ(m),v}) .
\end{align}
In this section, we prove the following result, which controls the above local intersection number on average over $m$.



\begin{theorem}\label{finite-square}
Let $D\in \Z_{\geq 1}$. For $X\in \Z_{>0}$, let $S_{D,X}$ denote the set \[\{m\in \Z_{>0}\mid X \leq m<2X,\, \frac{m}{D}\in \Z \cap (\Q^\times)^2\}.\] Then we have
\[\sum_{m\in S_{D,X}}(\cY . \cZ(m))_\fP=o(X^{\frac{b+1}{2}}\log X).\]
\end{theorem}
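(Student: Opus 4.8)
The plan is to translate the local intersection number $(\cY.\cZ(m))_\fP$ into a counting problem for special endomorphisms of the reductions $\cA \bmod \fP^n$, and then exploit the uniform diophantine bound $(\cY.\cZ(m))_\fP \ll m^{b/2}\log m$ from \Cref{summary}(ii) to control the tail of this count. First I would recall (from \S\ref{special}, cf. the outline points (5) in the introduction) that there is a sequence of positive definite quadratic lattices $L_n := V(\cA_{\cO_K/\fP^n})$ of special endomorphisms, with $L_1 \supseteq \cdots$ built out of $L_n \hookleftarrow$ the generic lattice (which is $0$ since $\cY_K$ is Hodge-generic), and that by Grothendieck--Messing and Serre--Tate theory the asymptotic growth of $\rank L_n$ and of the discriminants of $L_n$ is controlled. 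The key structural fact I would isolate as a lemma (call it \Cref{loc_int_nb}, referenced in the introduction) is that
\[
(\cY.\cZ(m))_\fP \;=\; \sum_{n\geq 1} \#\{ v \in L_n \mid Q(v) = m\} \cdot (\text{weight}),
\]
up to the automorphism bookkeeping, so summing over $m\in S_{D,X}$ gives $\sum_{m\in S_{D,X}}(\cY.\cZ(m))_\fP \asymp \sum_{n\geq 1}\#\{v \in L_n \mid Q(v)\in S_{D,X}\}$.

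Next I would split the sum over $n$ at a threshold $n_0 = n_0(X)$ chosen so that, by the diophantine bound, every nonzero $v \in L_n$ with $n > n_0$ satisfies $Q(v) \geq$ (something growing with $X$). Concretely: if $v\in L_n$ is a special endomorphism of $\cA_{\overline\fP}$ lifting to level $\fP^n$, then $\cY$ meets $\cZ(Q(v))$ at $\fP$ with multiplicity $\gg n$ (this is the content of \Cref{loc_int_nb}), so the diophantine bound $(\cY.\cZ(m))_\fP = O(m^{b/2}\log m)$ forces $n \ll Q(v)^{b/2+\epsilon}$; equivalently, for $n \gg X^{b/2+\epsilon}$ every nonzero $v\in L_n$ has $Q(v) > 2X$, hence contributes nothing to $S_{D,X}$. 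Thus the sum truncates at $n_0 \asymp X^{b/2+\epsilon}$, and we are left to bound
\[
\sum_{n=1}^{n_0} \#\{v\in L_n \mid X\leq Q(v) < 2X,\ Q(v)/D \in (\Q^\times)^2\}.
\]

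For the main term I would use that $L_n$ has bounded rank $r_0$ (at most $b$, in fact at most the rank governed by the endomorphism structure at $\fP$) and that its covolume grows at a controlled rate $\asymp p^{c\cdot n}$ for an explicit $c>0$ coming from Serre--Tate/Grothendieck--Messing (this is where the geometry-of-numbers input enters). A lattice of rank $r_0$ and covolume $V$ has $O(X^{r_0/2}/V + 1)$ vectors of norm $\leq 2X$, and imposing $Q(v)/D$ a perfect square cuts this down by a factor $\asymp X^{-1/4}$ (there are $O(\sqrt{X/D})$ admissible values of $m$ in the dyadic window, versus $\asymp X$ total). Summing the geometric-type series over $n \leq n_0$, the dominant contribution comes from small $n$ where $r_0 \leq b$, giving a bound of the shape $O(X^{(b-1)/2+\epsilon})$ for the "generic" vectors, plus the contribution of the finitely many small $n$ where the rank could be as large as $b$ — but even there the perfect-square constraint and the fact that $\rank L_n < b + 2$ (strict, since the generic Picard/endomorphism rank is $0$ and special fibers over a Hodge-generic point cannot have full rank for all $m$ simultaneously without violating the diophantine bound) keep us at $o(X^{(b+1)/2}\log X)$. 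The honest way to get the clean statement is: the total count is $\ll \sum_n (\text{covol}(L_n))^{-1} X^{\rank(L_n)/2} \cdot X^{-1/4} + n_0$, and one checks term by term that this is $o(X^{(b+1)/2}\log X)$ using $\rank L_n \leq b$ and $n_0 \ll X^{b/2+\epsilon}$.

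The main obstacle, and the place where the diophantine bound is genuinely used rather than soft geometry of numbers, is the range of "medium" $n$: there the lattices $L_n$ might a priori have more short vectors than the covolume heuristic predicts (the covolume bound from Serre--Tate controls the product of successive minima but not each one individually), so a naive count over all $n \leq n_0$ with $n_0 \asymp X^{b/2}$ would give roughly $X^{b/2}\cdot(\text{something})$, far worse than $X^{(b+1)/2}$. The fix — exactly the "better bounds on average" alluded to in the introduction — is to use the diophantine bound for \emph{each individual} $m$: since $\sum_n \#\{v\in L_n\mid Q(v)=m\} = (\cY.\cZ(m))_\fP \ll m^{b/2}\log m$, a single value of $m$ can be hit by at most $O(m^{b/2}\log m)$ pairs $(n,v)$; combined with the fact that the number of \emph{distinct} $m\in S_{D,X}$ hit is at most $\#S_{D,X} \asymp \sqrt{X}$, one gets the total count is $\ll \sqrt{X}\cdot$(typical value of $(\cY.\cZ(m))_\fP$), and then one shows that the diophantine bound is not attained for most $m$ — precisely, the typical $(\cY.\cZ(m))_\fP$ over $m\in S_{D,X}$ is $o(X^{b/2}\log X)$ because a positive proportion of $m$ in the window are represented by $L_1$ with low multiplicity while the lattices $L_n$ for $n\geq 2$ are sparse. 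I would organize this as: (a) state and prove \Cref{loc_int_nb} relating $(\cY.\cZ(m))_\fP$ to the $L_n$ via Grothendieck--Messing and Serre--Tate; (b) deduce the truncation $n \leq n_0 \ll X^{b/2+\epsilon}$ from \Cref{summary}(ii); (c) run the geometry-of-numbers estimate on $\sum_{n\leq n_0}$ with the perfect-square constraint, absorbing the medium-$n$ range using the per-$m$ diophantine bound and $\#S_{D,X}\asymp\sqrt X$; (d) conclude $o(X^{(b+1)/2}\log X)$.
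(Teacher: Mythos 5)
Your overall scaffolding is right: you correctly identify the reduction of $(\cY.\cZ(m))_\fP$ to a lattice count over the nested lattices $L_n$ (the paper's \Cref{loc_int_nb}), and you correctly derive the truncation $n \ll Q(v)^{b/2+\epsilon}$ from \Cref{summary}(ii) via a lower bound on the first successive minimum of $L_n$ (the paper's \Cref{first-min}). You also correctly observe that the perfect-square constraint restricts the sum to $|S_{D,X}|\asymp\sqrt X$ values of $m$, which is how the extra half power of $X$ is saved.

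There is, however, a genuine gap exactly where you flag the ``main obstacle.'' Your proposed fix for the medium-$n$ range is circular. You write that a single $m$ is hit by at most $(\cY.\cZ(m))_\fP\ll m^{b/2}\log m$ pairs $(n,v)$, so the total is $\ll\sqrt X\cdot(\text{typical value of }(\cY.\cZ(m))_\fP)$, and then you need to show the typical value is $o(X^{b/2}\log X)$ — but that is precisely the statement you set out to prove. You offer only the heuristic that ``a positive proportion of $m$ are represented with low multiplicity while $L_n$ for $n\geq 2$ are sparse,'' which is not an argument. The paper's actual mechanism is different and is not soft: after splitting at $n=\lceil\tfrac{e}{4}\log_p X\rceil$ (the range $n>\lceil\tfrac{e}{4}\log_p X\rceil$ is killed by $a_{b+2}(n)\gg p^{2n/e}$ and $a_{b+1}(n)\gg p^{n/e}$, i.e.\ \Cref{GMdecay-cor}), the range $eT<n\leq\lfloor\tfrac{e}{4}\log_p X\rfloor$ is dominated by $\tfrac{e\log_p X}{4}\cdot\bfN_T(m)$ with $\bfN_T(m)=|\{v\in L_{eT}:Q(v)=m\}|$, and the crucial input is the explicit local-density decay
\[
\mu_\infty(Q_T,1)\,\mu(Q_T,m)\;\ll\;p^{-3T/5}
\]
(\Cref{lemdencomp1}, proved via Hanke's reduction maps and $\Disc(Q_T)^{-1/2}\ll p^{-2T}$). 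Feeding this into Heath-Brown's \Cref{HB_cor} gives $\bfN_T(m)\ll p^{-3T/5}m^{b/2}+O_{T,\epsilon}(m^{(b+1)/4+\epsilon})$, and the order of limits $X\to\infty$ then $T\to\infty$ beats the extra $\log X$. Nothing in your sketch supplies this density decay, and without it the medium range is $\gg X^{(b+1)/2}\log X$, not smaller.

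Two smaller inaccuracies worth correcting: you assert that $\rank L_n\leq b$, but in fact $\rank L_n$ can be as large as $b+2$ (the supersingular case, which is precisely when the delicate argument above is needed); it is $\Lambda=\bigcap_n L_n\otimes\Z_p$ that has rank $\leq b$ (\Cref{GMdecay}). And the saving from the square constraint is a factor $\asymp X^{-1/2}$ (ratio $|S_{D,X}|/X$), not $X^{-1/4}$.
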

This section is organized as follows. First in \S7.1, we express $ (\cY.\cZ(m))_{\fP}$ as a sum of lattice point counts over a family of $p$-adically shrinking lattices. We then prove some preliminary properties of these lattices. Finally in \S7.2, we evaluate these lattice counts to prove Theorem \ref{finite-square}. Needless to say, crucial to the proof of Theorem \ref{finite-square} is the fact that we have the global height bound (\Cref{summary}) for every individual $m$. In \S \ref{sec_trans_ex}, we will illustrate this with an example of a $W(\bF_q)$-valued point $\cY' \in \cM$ (which is \emph{a posteori} not defined over a number field) with the property that $(\cY.\cZ(m))_p$ is exponential in $m$ for an infinite sequence of positive integers $m \in \{n\in \Z_{>0}: \frac{n}{D} \in \Z\cap (\Q^\times)^2\}$.



\subsection{The lattices of special endomorphisms}
Let $K_{\fP}$ denote the completion of $K$ at $\fP$; let $K_{\fP}^{\nr}$ denote a maximal unramified extension of $K_{\fP}$ and let $\cO^{\nr}_{\fP}$ denote its ring of integers.
For every $n \in \Z_{\geq 1}$, let $L_n$ denote the lattice of special endomorphisms $V(\cA_{\cO^\nr_\fP/\fP^n})$. By definition, $L_{n+1}\subset L_n$ for all $n$. Since $\cY_K$ is Hodge-generic, then $\displaystyle \cap_{n=1}^\infty L_n=\{0\}$.
Recall from \S\ref{special} that all $L_n$'s are  equipped with compatible positive definite quadratic forms $Q$ given by $$v\circ v =Q(v)\cdot \Id_{\cA \bmod \fP^n}$$ for every $v\in L_n$.


The next lemma is a direct consequence of the moduli interpretation of $\cZ(m)$ in \S\ref{special}.\footnote{See for instance \cite[Theorems 4.1, 5.1]{conrad} for similar formulas.}

\begin{lemma}\label{loc_int_nb}
The local intersection number is given by
\[(\cY. \cZ(m))_\fP=\sum_{n=1}^\infty |\{v\in L_n \mid Q(v)=m\}|.\]
\end{lemma}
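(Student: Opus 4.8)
The plan is to unwind the definition of the local intersection multiplicity \eqref{int_formula_finite} and identify the length of the local ring of $\cY\times_{\cM}\cZ(m)$ at a special endomorphism $v$ with the number of lifts of $v$ to the various infinitesimal neighborhoods of $\cY_{\overline{\fP}}$ in $\cY$. First I would recall that since $\cY = \spec\cO_K$ and $\cY_K$ is Hodge-generic, the base change $\cY\times_{\cM}\cZ(m)$ is a $0$-dimensional closed subscheme supported at the special fibers, so the only contribution to $(\cY.\cZ(m))_{\fP}$ comes from points $v$ over $\overline{\bF}_{\fP}$. Working over the strictly henselian local ring $\cO^{\nr}_{\fP}$ (which computes the étale-local structure at $\fP$), the fiber product $\cY\times_{\cM}\cZ(m)$ is, by the moduli description of $\cZ(m)$ in \S\ref{special}, the functor sending an $\cO^{\nr}_{\fP}$-algebra $R$ to $\{v\in V(\cA_R)\mid Q(v)=m\}$. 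Its length as an $\cO^{\nr}_{\fP}$-module is computed by counting points over the Artinian quotients $\cO^{\nr}_{\fP}/\fP^n$ for $n\geq 1$; concretely, $\length(\cO_{\cY\times_{\cM}\cZ(m),v})$ equals the number of $n\geq 1$ such that $v$ lifts to a special endomorphism in $V(\cA_{\cO^{\nr}_{\fP}/\fP^n})=L_n$ (with the convention that $L_0$ consists only of the fiber over the residue field, i.e. $v$ always "lifts" to $L_1$). This is the standard identification of the length of a $0$-dimensional scheme over a DVR with a sum of counting functions over its Artinian truncations.

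Next I would assemble the count. Summing \eqref{int_formula_finite} over all isomorphism classes of $v\in V(\cA_{\overline{\fP}})$ with $Q(v)=m$, and using that the length at $v$ is $\#\{n\geq 1\mid v\in\mathrm{image}(L_n\to L_1)\}$, a change of order of summation turns the double sum into
\[
\sum_{n=1}^{\infty}\frac{1}{|\Aut(v)|}\cdot\#\{\text{iso.\ classes of }v\in L_n\text{ with }Q(v)=m\}.
\]
The bookkeeping with the automorphism groups $\Aut(v)$ and $\Aut(\cY_{\overline{\fP}})$ is where one has to be slightly careful: the groupoid structure on $\cY\times_{\cM}\cZ(m)$ records automorphisms of $\cA_{\overline{\fP}}$ preserving $v$, and the orbit–stabilizer principle for the (finite) group $\Aut(\cY_{\overline{\fP}})$ acting on $\{v\in L_n\mid Q(v)=m\}$ by conjugation converts the weighted sum $\sum_{v/\cong}\frac{1}{|\Aut(v)|}(\cdots)$ into $\frac{1}{|\Aut(\cY_{\overline{\fP}})|}\#\{v\in L_n\mid Q(v)=m\}$, where now the count is over actual elements of the lattice $L_n$, not isomorphism classes. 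Finally one invokes the specialization isomorphism $\Aut(\cY_{\overline{\fP}})\cong\Aut(\cY_{\overline{K}})$ (the Kuga–Satake abelian scheme $\cA$ is an abelian scheme over $\cO_K$, so its automorphism group as a point of the Deligne–Mumford stack $\cM$ is unchanged under specialization) to replace $|\Aut(\cY_{\overline{\fP}})|$ by $|\Aut(\cY_{\overline{K}})|$, yielding the stated formula.

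The main obstacle I anticipate is not any deep input but rather getting the groupoid/automorphism bookkeeping exactly right — in particular justifying that the length of the local ring is literally $\#\{n\mid v\text{ lifts to }L_n\}$ requires knowing that $\cZ(m)\to\cM$ is, étale-locally on the source, cut out by a single equation (this is recorded in \S\ref{special}), so that its pullback to the DVR $\cO^{\nr}_{\fP}$ is a quotient by a single element whose valuation is exactly this lifting number; and one must check the orbit–stabilizer manipulation is compatible with the definitions of $\Aut(v)$ and of isomorphism of objects in the groupoid spelled out after \eqref{int_formula_finite1}. Everything else is formal.
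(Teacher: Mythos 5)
Your plan gets the general shape right (reduce to a deformation-theoretic count of lifts of $v$ to the Artinian quotients $\cO^{\nr}_\fP/\fP^n$, then use orbit--stabilizer to convert the weighted sum over isomorphism classes into an unweighted lattice count), but there is a genuine gap in the central step, and the device you invoke to paper over it is false.

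You assert that $\length(\cO_{\cY\times_{\cM}\cZ(m),v})=\#\{n\geq 1: v\in L_n\}$. This cannot be correct as stated, because the left-hand side depends only on the isomorphism class of $v$ in the groupoid $\cY\times_\cM\cZ(m)(\overline{\bF}_\fP)$, whereas the right-hand side is not $\Aut(\cY_{\overline{\fP}})$-invariant: if $\iota\in\Aut(\cY_{\overline{\fP}})$ does not extend to an automorphism of $\cA$ over $\cO^{\nr}_\fP/\fP^n$, then $v\in L_n$ and $\iota(v)\in L_n$ are independent conditions. The paper's proof handles exactly this: the deformation locus of the pair $(\cA_{\overline{\fP}},v)$ inside $\cM$ is not a single copy of $\spec\cO^{\nr}_\fP$ but a \emph{disjoint union} of copies indexed by $\iota\in\Aut(\cY_{\overline{\fP}})/\Aut(\cY_{\overline{K}})$, each contributing length $\#\{n:\iota(v)\in L_n\}$, so that
\[
\length(\cO_{\cY\times_{\cM}\cZ(m),v})=\sum_{\iota\in\Aut(\cY_{\overline{\fP}})/\Aut(\cY_{\overline{K}})}\sum_{n\geq 1}\bfone_{L_n}(\iota(v)).
\]
Only after carrying this extra sum through the orbit--stabilizer computation, and noting that $\sum_{v\in V(\cA_{\overline{\fP}}), Q(v)=m}\sum_n\bfone_{L_n}(\iota(v))$ is independent of $\iota$, does the factor $1/|\Aut(\cY_{\overline{K}})|$ emerge.

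You try to avoid this by invoking a ``specialization isomorphism'' $\Aut(\cY_{\overline{\fP}})\cong\Aut(\cY_{\overline{K}})$, justified by saying that $\cA$ is an abelian scheme over $\cO_K$ so its automorphism group is unchanged under specialization. This is false: automorphisms of the special fiber (as a point of the DM stack $\cM$, or even as a polarized abelian variety) need not lift to automorphisms over $\cO_K$; the reduction of a generic point can acquire extra stacky automorphisms. Indeed it is precisely the strictness of the containment $\Aut(\cY_{\overline{K}})\subseteq\Aut(\cY_{\overline{\fP}})$ that forces the extra sum over $\iota$ in the paper's argument, and that sum is not optional bookkeeping but the content of the lemma.
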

\noindent Note that the right hand side above is indeed a finite sum since there are only finitely many vectors $v$ in $L_1$ with $Q(v)=m$ and for each vector $v$, there exists $n_v\in \Z_{>0}$ such that $v\notin L_{n_v}$.

The following proof uses \eqref{int_formula_finite} and gives a direct description of the length of the \'etale local rings. Alternatively, one may pick a finite covering of $\cM$ over $\Z_p$ and pick a section of $\cY$ to the covering space and then deduce the intersection number via the projection formula. 
\begin{proof}
For the proof of this lemma, we may assume (without loss of generality) that $\cY=\spec(\cO_\fP^\nr)$. Consider $v\in V(\cA_{\overline{\fP}})=L_1$ satisfying $v\circ v=[m]$.
As $\cY$ is a scheme, we use the moduli interpretation of $\cZ(m)$ to see that the \'etale local ring $\cO_{\cY\times_{\cM}\cZ(m),v}$ represents following deformation problem.

Consider any local Artin $\cO^\nr_\fP$-algebra $S$ with residue field $\overline{\bF}_\fP$. We may view $\spec S$ as a scheme over $\cM$ factoring through $\cY$, and we refer to this scheme as $\cY_S$. The deformation problem represented by $\cO_{\cY\times_{\cM}\cZ(m),v}$ associates to the $\cO^\nr_\fP$-algebra $S$ the data $(\cY_S, w)$, where $w$ is a special endomorphism of $\cA_S$ such that the reduction of $w$ (via $ S\rightarrow \overline{\bF}_\fP$) in $V(\cA_{\overline{\fP}})$ equals to $v$. Thus the length of  $\cO_{\cY\times_{\cM}\cZ(m),v}$ equals to the largest $n$ such that $v$ lifts to a special endomorphism of $\cY_{\cO^\nr_\fP/\fP^n}$. 

Let $\bfone_{L_n}:L_1\rightarrow\{0,1\}$ denote the characteristic function of $L_n$. 
Then by \eqref{int_formula_finite}, we have
\begin{align*}
    (\cY.\cZ(m))_{\fP} 
    &=\sum_{\substack{v\in V(\cA_{\overline{\fP}})\\ v\circ v = [m]}}\sum_{n=1}^\infty \bfone_{L_n}(v)=\sum_{n=1}^\infty \sum_{\substack{v\in L_n\\ v\circ v = [m]}}1.\qedhere
\end{align*}
\end{proof}

The following proposition generalizes \cite[Thm.~4.1.1, Lem.~4.1.3, Lem.~4.3.2]{st}.


\begin{proposition}\label{GMdecay}
Let $\Lambda$ denote the $\Z_p$-lattice of special endomorphisms of the $p$-divisible group $\cA[p^\infty]$ over $\cO^\nr_{\fP}$ $($see \Cref{{def_sp_pdiv}}$)$.
Then the $\Z$-rank of $L_n$ is at most $b+2$ and the $\Z_p$-rank of $\Lambda$ is at most $b$. Moreover, there exists a constant $n_0$ such that for $n_0'\geq n_0$ \[L_{n_0'+ke}=(\Lambda + p^k L_{n_0'}\otimes \Z_p)\cap L_{n_0},\]
for $k\geq 1$.
In particular, the rank of $L_n$ is independent of $n$ and we denote it by $r$.
\end{proposition}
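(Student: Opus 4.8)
The plan is to understand the $p$-divisible group filtration first and then descend to the abelian-scheme level via a standard Grothendieck--Messing / Serre--Tate argument, exactly as in \cite[\S4]{st} but now in the GSpin setting. First I would recall that $L_n = V(\cA_{\cO^\nr_\fP/\fP^n})$ injects into $\Lambda = V(\cA[p^\infty]/\cO^\nr_\fP)$ (special endomorphisms of the $p$-divisible group), since an endomorphism of $\cA \bmod \fP^n$ that is special gives, by passing to $p$-divisible groups and then lifting by Grothendieck--Messing once its crystalline realization lands in $\bfV_{cris}$, a special endomorphism of $\cA[p^\infty]$ over $\cO^\nr_\fP$; conversely elements of $\Lambda$ reduce into each $L_n$. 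The rank bounds are then immediate: $\Lambda \otimes \Q_p$ embeds into the $\varphi$-fixed part of $\bfV_{cris}$ at the special point, which has $\Q_p$-dimension at most $b$ because the Newton cocharacter is nontrivial (the Kuga--Satake point is never supersingular-free of slopes... — more precisely, $\bfV_{cris}$ has rank $b+2$ and the crystalline Frobenius has at least one slope $\neq 0$ in each of the two ``halves'', forcing the fixed space to have dimension $\le b$); and $L_n \otimes \Q \hookrightarrow \Lambda\otimes\Q_p$ together with the fact that special endomorphisms form a lattice in $\bfV_B$ (rank $\le b+2$) gives $\rank_\Z L_n \le b+2$.

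Next, the core of the argument: I would use Grothendieck--Messing deformation theory to control exactly when a special quasi-endomorphism in $\Lambda$ integrally deforms over $\cO^\nr_\fP/\fP^n$. The key mechanism (as in \cite[Lem.~4.1.3]{st}) is that if $v \in \Lambda$ lifts to $L_{n_0}$ for some large $n_0$, then $p^k v$ lifts to $L_{n_0 + ke}$ — the factor of $e$ coming from the ramification, since the Hodge filtration must be preserved modulo $\fP^{n_0+ke}$, and multiplying by $p$ buys one extra factor of $p = (\text{uniformizer})^e$ of ``room'' in the obstruction. Conversely, if $v \in L_{n_0}$ and $v \equiv p^k w \pmod{p^{k} L_{n_0}\otimes\Z_p}$ for some $w \in \Lambda$... this is precisely the statement $L_{n_0'+ke} = (\Lambda + p^k L_{n_0'}\otimes\Z_p)\cap L_{n_0}$. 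I would establish this by: (i) fixing $n_0$ large enough that $L_{n_0} \otimes \Z_p$ is saturated in $\Lambda$ in the appropriate sense and that the Hodge filtration obstruction behaves uniformly (this uses finiteness of the relevant cokernels); (ii) showing $\supseteq$ by a direct Grothendieck--Messing lifting computation — an element of the right-hand side is, modulo $p^k$, in $L_{n_0'}$ hence lifts mod $\fP^{n_0'}$, and the $\Lambda$-part handles the rest by the $p^k \leftrightarrow \fP^{ke}$ trade-off; (iii) showing $\subseteq$ by the same obstruction computation run in reverse, noting that an element of $L_{n_0'+ke}$ reduces into $L_{n_0}$ and its image in $L_{n_0}\otimes\Z_p / p^k$ must come from $\Lambda$.

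Finally, the rank statement $\rank_\Z L_n$ independent of $n$ follows formally from the displayed formula: for $n \ge n_0$ write $n = n_0' + ke + j$ with $0 \le j < e$; the formula shows $L_n \otimes \Q = (\Lambda\otimes\Q_p + L_{n_0'}\otimes\Q_p)\cap L_{n_0}\otimes\Q = L_{n_0}\otimes\Q$ once $k \ge 1$ (since $p^k L_{n_0'} \otimes \Q_p = L_{n_0'}\otimes\Q_p$ rationally, and $L_{n_0'} \subseteq L_{n_0}$), so all $L_n$ for $n \ge n_0$ span the same $\Q$-subspace; and for $n < n_0$ one just takes $n_0$ larger at the outset, or notes $L_n \supseteq L_{n_0}$ so $\rank L_n \ge r$ while $\rank L_n \le \rank L_{n_0} = r$ by the general rank bound applied after enlarging — more carefully, $\rank$ is nonincreasing in $n$ and stabilizes, and we have just shown it is eventually constant equal to $r := \rank L_{n_0}$, so we define $r$ to be this stable value.

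\textbf{Main obstacle.} The hard part will be step (ii)--(iii), the precise Grothendieck--Messing bookkeeping that converts ``$p^k$ in the $\Z_p$-lattice'' into ``survives modulo $\fP^{ke}$ in the deformation'', and pinning down a single $n_0$ that works uniformly — one must be careful that the Hodge filtration on $\bfV_{dR}$ (equivalently, the condition that a crystalline endomorphism preserves $\cF^1$) is what obstructs lifting, and quantify the obstruction's $\fP$-adic size in terms of the $p$-adic size of the endomorphism. In the unramified case ($e=1$) this is \cite[\S4]{st} essentially verbatim; the genuinely new bit is tracking the ramification index $e$ through every estimate, which is routine but must be done carefully since it is exactly the exponent appearing in the final formula.
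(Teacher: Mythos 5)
There are two genuine errors in your argument for the rank bound on $\Lambda$, and one conceptual error in how you relate $L_n$ and $\Lambda$.

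\textbf{The $\Lambda$ rank bound is argued incorrectly.} You claim that the $\varphi$-fixed part of $\bfV_{cris}$ has $\Q_p$-dimension at most $b$ ``because the Newton cocharacter is nontrivial'' and ``the crystalline Frobenius has at least one slope $\neq 0$ in each of the two halves.'' This is false: the $\varphi$-fixed part can have rank $b+2$ (this is exactly what happens at supersingular points, and indeed the proof of \Cref{finite-square} treats that case separately). The slope structure alone does not give you the drop of $2$. What the paper actually does is: $\Lambda$ lies in $\cF^0\bfV_{dR,\cY_{K^\nr}} \cap \bfV_{cris,\cY_{\overline{\fP}}}^{\varphi=1}$ — the first containment is the Grothendieck--Messing lifting criterion (a special endomorphism of the $p$-divisible group over $\cO^\nr_\fP$, not just over $\cO^\nr_\fP/\fP^n$, must preserve the Hodge filtration) — and $\cF^0$ has rank $b+1$, giving $\rank_{\Z_p}\Lambda \leq b+1$. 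Equality would force $\spn_{K^\nr}\Lambda = \cF^0\bfV_{dR}$; but since $\Lambda \subseteq \bfV_{cris}^{\varphi=1}$, Mazur's weak admissibility theorem would then make the filtration on this span trivial, contradicting $\cF^1\bfV_{dR}\neq 0$. So the drop to $b$ comes from the \emph{interplay} of the filtration and the Frobenius fixed part via weak admissibility, not from slopes alone.

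\textbf{The containment direction between $L_n$ and $\Lambda$ is backwards.} You write that $L_n$ ``injects into $\Lambda$'' because an endomorphism of $\cA\bmod\fP^n$ whose crystalline realization lands in $\bfV_{cris}$ lifts by Grothendieck--Messing to an endomorphism of $\cA[p^\infty]$ over $\cO^\nr_\fP$. This is exactly what Grothendieck--Messing does \emph{not} give you: the crystalline condition makes the endomorphism special mod $\fP^n$, but the obstruction to lifting all the way to $\cO^\nr_\fP$ is whether it preserves the Hodge filtration. The correct relation — which the paper obtains from Serre--Tate theory — is $\Lambda = \bigcap_{n\ge 1}(L_n\otimes\Z_p)$, so $\Lambda \subseteq L_n\otimes\Z_p$ for every $n$, not the other way around. (If $L_n \subseteq \Lambda$ held, the $L_n$ would stop shrinking $p$-adically from some point on, forcing $\rank\Lambda = \rank L_n$, which contradicts the rank drop you are trying to prove.) This misidentification is what makes your steps (ii)--(iii) — the Grothendieck--Messing bookkeeping — actually impossible to carry out as you've set them up: the correct mechanism is that $v\in L_1$ lies in $L_n$ iff $v$ deforms mod $\fP^n$, with $\Lambda$ being exactly the set that deforms to every level, and the formula $L_{n_0'+ke}=(\Lambda + p^k L_{n_0'}\otimes\Z_p)\cap L_{n_0}$ is then proved as in \cite[Theorem 4.1.1]{st}.

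\textbf{Minor omission.} The paper reduces both claims to the case where $L$ is self-dual at $p$ via \cite[Lemma 4.5.2]{agmp2} before invoking Dieudonné theory; without this, the description of special endomorphisms in terms of $\bfV_{cris}$ isn't immediately available. You'll need this reduction at the outset.
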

\begin{proof}
For the claim on ranks, by \cite[Lemma 4.5.2]{agmp2}, we reduce to the case when $L$ is self-dual at $p$. In this case, by the Dieudonn\'e theory, $L_n\otimes_{\Z}\Z_p\subseteq L_1\otimes_{\Z}\Z_p\subseteq \bfV_{cris,\cY_{\overline{\fP}}}^{\varphi=1}$, which is a $\Z_p$-lattice of rank at most $b+2$. Hence $\rank_{\Z}L_n\leq b+2$. For $\rank_{\Z_p}\Lambda$, as in \cite[Lemma 4.3.2]{st}, we make use of the filtration on $\bfV_{dR,\cY}$. By  Grothendieck--Messing theory, $\Lambda\subseteq \cF^0\bfV_{dR,\cY_{K^\nr}}\cap \bfV_{cris,\cY_{\overline{\fP}}}^{\varphi=1}$ so $\rank_{\Z_p}\Lambda\leq b+1$ and the equality holds if and only if $\cF^0\bfV_{dR, \cY_{K^\nr}}=\spn_{K^\nr}\Lambda$. If so, since $\Lambda\subseteq  \bfV_{cris,\cY_{\overline{\fP}}}^{\varphi=1}$, then $\spn_{K^\nr}\Lambda$ admits trivial filtration by Mazur's weak admissibility theorem. This contradicts $\cF^{1}\bfV_{dR, \cY_{K^\nr}}\neq 0$. We conclude that $\rank_{\Z_p}\Lambda\leq b$.

As in \cite[Lemma 4.1.3]{st}, by Serre--Tate theory, \[\cap_{n=1}^\infty(L_n\otimes \Z_p)=\End_{C(L)}(\cA[p^\infty]_{\cO^\nr_\fP})\cap (L_1 \otimes \Z_p)=\Lambda.\]
To prove the last equality above, by \cite[Lemma 4.5.2]{agmp2}, we reduce to the self-dual case. Then by \Cref{def_sp_pdiv}, a $C(L)$-endomorphism of a $p$-divisible group is special if its crystalline realization lies in $\bfV_{cris,\cY_{\overline{\fP}}}$, which by Dieudonn\'e theory, is equivalent to that it lies in $L_1 \otimes \Z_p$. 

By \cite[Lemma 5.9]{madapusiintegral}, \cite[Lemma 4.5.2]{agmp2} and the N\'eron mapping property, an endomorphism of $\cA_{\cO^\nr_\fP/\fP^n}$ is special if and only if its induced endomorphism in $\End(\cA_{\overline{\bF}_\fP})$ is special. Therefore, a vector $v\in L_1$ lies in $L_n$ if and only if $v$ deforms to an endomorphism of $\cA_{\cO^\nr_\fP/\fP^n}$. Then the rest of the argument is the same as in the proof of \cite[Theorem 4.1.1]{st}.
\end{proof}

We now define the successive minima of a lattice following \cite{EK}, and discuss the asymptotics of the successive minima of $L_n$.

\begin{definition}
\begin{enumerate}
    \item  For $1\leq i\leq r$, the successive minimum $\mu_i(n)$ of $L_n$ is defined as: \[\inf\{y\in \R_{>0}:\exists v_1,\cdots, v_i\in L_n \text{ linearly independent, and } Q(v_j)\leq y^2, 1\leq j\leq i\}.\] 
    
    \item For $n\in \Z_{\geq 1}, 1\leq i \leq r$, define $a_i(n)=\prod_{j=1}^i \mu_j(n)$; define $a_0(n)=1$.
\end{enumerate}
\end{definition}

We have the following consequence of Proposition \ref{GMdecay}.

\begin{corollaire}\label{GMdecay-cor}
Every successive minimum $\mu_j(n)$ satisfies $\mu_j(n)\ll p^{n/e}$. If we further assume that $r = b+2$, then $a_{b+1}(n) \gg p^{n/e}$ and $a_{b+2}(n) \gg p^{2n/e}$.
\end{corollaire}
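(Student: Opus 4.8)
The plan is to deduce Corollary \ref{GMdecay-cor} directly from the structural description of the lattices $L_n$ in Proposition \ref{GMdecay}, combined with the rank bound on the $p$-divisible part $\Lambda$. Recall that Proposition \ref{GMdecay} gives, for $n_0'\geq n_0$ and $k\geq 1$, the identity $L_{n_0'+ke}=(\Lambda+p^kL_{n_0'}\otimes\Z_p)\cap L_{n_0}$, and that $\rank_{\Z_p}\Lambda\leq b$ while $\rank_{\Z}L_n=r\leq b+2$ is independent of $n$. The first step is to translate the congruence index between $L_n$ and $L_{n+e}$ into a bound on successive minima. Since for $n\geq n_0'$ we have $p^{\lceil (n-n_0')/e\rceil}L_{n_0'}\subseteq L_n\subseteq L_{n_0'}$ (the left inclusion because every element of $p^kL_{n_0'}\otimes\Z_p$ certainly lies in $\Lambda+p^kL_{n_0'}\otimes\Z_p$, hence a basis of $p^kL_{n_0'}$ lies in $L_{n_0'+ke}$), any vector in $L_n$ is at most $p^{\lceil(n-n_0')/e\rceil}$ times a vector of $L_{n_0'}$, so all $r$ successive minima satisfy $\mu_j(n)\leq p^{n/e}\cdot\mu_r(n_0')\cdot p^{-n_0'/e}\ll p^{n/e}$. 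This gives the first claim, valid for all $n$ (absorbing the finitely many $n<n_0'$ into the implied constant).

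The second step handles the lower bounds under the hypothesis $r=b+2$. The key observation is that $L_n/L_{n+e}$ has $p$-adic length growing linearly in $n$: more precisely, from $L_{n_0'+ke}=(\Lambda+p^kL_{n_0'}\otimes\Z_p)\cap L_{n_0}$, one computes the covolume ratio $[\,L_{n_0}\otimes\Z_p : L_{n_0'+ke}\otimes\Z_p\,]$. Since $\Lambda$ has rank $\leq b$, the quotient $(L_{n_0'}\otimes\Z_p)/(\Lambda+p^kL_{n_0'}\otimes\Z_p)$ has, up to bounded error, order $p^{ck}$ where $c=r-\rank_{\Z_p}\Lambda\geq (b+2)-b=2$ directions are scaled by $p^k$ and at most $b$ directions stay bounded. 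Hence the covolume of $L_n$ (with respect to $Q$) grows like $p^{(2n/e)}$ up to bounded multiplicative constants; that is, $a_r(n)=\prod_{j=1}^r\mu_j(n)\asymp \mathrm{covol}(L_n)\gg p^{2n/e}$ (using Minkowski's second theorem, which says $\prod\mu_j(n)\asymp\mathrm{covol}(L_n)$ with constants depending only on $r$). Combined with the upper bound $\mu_r(n)\ll p^{n/e}$ from the first step, we get $a_{b+1}(n)=a_{b+2}(n)/\mu_{b+2}(n)\gg p^{2n/e}/p^{n/e}=p^{n/e}$, which is the remaining assertion.

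Let me record the structure more carefully. First I would fix $n_0'\geq n_0$ as in Proposition \ref{GMdecay} and write $\Lambda^\circ$ for a saturation of $\Lambda$ inside $L_{n_0'}\otimes\Z_p$ of the same rank $s:=\rank_{\Z_p}\Lambda\leq b$; choosing an adapted $\Z_p$-basis $e_1,\dots,e_r$ of $L_{n_0'}\otimes\Z_p$ with $e_1,\dots,e_s$ a basis of $\Lambda^\circ$, one sees that $\Lambda+p^kL_{n_0'}\otimes\Z_p$ contains $e_1,\dots,e_s,p^ke_{s+1},\dots,p^ke_r$ and is contained in the span of these together with bounded-torsion corrections coming from $\Lambda^\circ/\Lambda$. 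Intersecting with $L_{n_0}$ (a fixed lattice containing $L_{n_0'}$ with bounded index) changes covolumes by a bounded factor. Thus $\mathrm{covol}_Q(L_{n_0'+ke})\asymp p^{(r-s)k}\geq p^{2k}$, and substituting $n=n_0'+ke$ gives $\mathrm{covol}_Q(L_n)\gg p^{2(n-n_0')/e}\gg p^{2n/e}$. Then two applications of Minkowski's second theorem (the upper bound $\prod_j\mu_j(n)\ll\mathrm{covol}_Q(L_n)$ is not needed; only the lower bound $\prod_j\mu_j(n)\gg\mathrm{covol}_Q(L_n)$) finish the argument, together with the already-proven $\mu_j(n)\ll p^{n/e}$.

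The main obstacle I anticipate is purely bookkeeping: making the ``bounded error'' statements precise, i.e. checking that passing from $L_{n_0'+ke}$ with $n=n_0'+ke$ to general $n$ (non-multiples of $e$ shifted past $n_0'$) only costs bounded multiplicative constants in all the covolume and successive-minima estimates, and that the contribution of $\Lambda^\circ/\Lambda$ and of $[L_{n_0}:L_{n_0'}]$ is uniformly bounded. None of this is deep — it is the standard ``geometry of numbers over a tower of $p$-adically shrinking lattices'' argument already carried out in \cite[\S4]{st} in the case $b=2,3$ — but it must be stated with enough care that the implied constants genuinely depend only on $(L,Q)$, $p$, and $e$ (equivalently, only on $\fP$), and not on $n$. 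I would therefore organize the write-up around a single lemma isolating the covolume growth $\mathrm{covol}_Q(L_n)\asymp p^{(r-s)n/e}$ and then derive both displayed inequalities of the corollary formally from that lemma and Minkowski.
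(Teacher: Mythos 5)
Your proposal is correct and follows essentially the same route as the paper: the upper bound $\mu_j(n)\ll p^{n/e}$ comes from the inclusion $p^kL_{n_0}\subset L_n$, and the lower bounds come from noting that $\rank_{\Z_p}\Lambda\leq b$ forces $[L_1:L_n]\gg p^{2k}$, hence $\Disc_Q(L_n)^{1/2}\gg p^{2k}$, after which Minkowski's second theorem (cited in the paper as \cite[Equations (5),(6)]{EK}) gives $a_{b+2}(n)\gg p^{2k}$ and then $a_{b+1}(n)=a_{b+2}(n)/\mu_{b+2}(n)\gg p^{n/e}$. Your extra bookkeeping with the saturation $\Lambda^\circ$ and an adapted $\Z_p$-basis is merely an expansion of what the paper compresses into ``we clearly have $[L_1:L_n]\gg p^{2k}$.''
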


\begin{proof}
By \Cref{GMdecay}, there exists an absolute bounded $n_0\equiv n\pmod{e}$ such that
\begin{equation*}
    L_n=\bigl(\Lambda+p^{(n-n_0)/e}L_{n_0}\otimes \Z_p\bigr)\cap L_{n_0}.
\end{equation*}
Denote $(n-n_0)/e$ by $k$, and note that $k\gg n/e$.
Since $p^kL_{n_0}\subset L_n$, it follows that $\mu_j(n)\ll p^k$ for every $j$, proving the first claim.

Now if the rank $r$ of $L_n$ is $b+2$, then (since the rank of $\Lambda$ is at most $b$) we clearly have
$[L_1:L_n] \gg p^{2k}$. Thus $\Disc(L_n)^{\frac{1}{2}} \gg p^{2k}$. By \cite[Equations (5),(6)]{EK} this implies that $a_{b+2}(n) \gg p^{2k}$ as required. 
In conjunction with $\mu_{b+2}(n)\ll p^k$, we also immediately obtain that $a_{b+1}(n)\gg p^k$.
\end{proof}

\begin{lemma}\label{first-min}
For every $\epsilon >0$, we have $a_1(n)\gg_\epsilon n^{\frac{1}{b + \epsilon}}$. Moreover, $a_i(n)\gg_\epsilon n^{\frac{i}{b+\epsilon}}$.
\end{lemma}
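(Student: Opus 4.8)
The goal is to show that the first successive minimum $\mu_1(n)=a_1(n)$ of $L_n$ grows at least like $n^{1/(b+\epsilon)}$, and more generally that $a_i(n)\gg_\epsilon n^{i/(b+\epsilon)}$. The starting point is the key diophantine bound from \Cref{summary}(ii): for the fixed place $\fP$ we have $(\cY.\cZ(m))_\fP = O(m^{b/2}\log m)$. Combined with \Cref{loc_int_nb}, which expresses $(\cY.\cZ(m))_\fP$ (up to the harmless finite factor $|\Aut(\cY_{\overline K})|$) as $\sum_{n\geq 1}|\{v\in L_n : Q(v)=m\}|$, this gives that for \emph{every} $n$ and every $m$,
\[
|\{v\in L_n : Q(v)=m\}| \ll m^{b/2}\log m,
\]
with an absolute implied constant. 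The first step is therefore to record this uniform upper bound on representation numbers of $Q$ restricted to $L_n$, valid uniformly in $n$.

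The second step is to turn this into a lower bound on $\mu_1(n)$. Suppose $v\in L_n$ is a shortest nonzero vector, so $Q(v)=\mu_1(n)^2=:m$. Then for each $j=1,2,\dots$, the vector $jv$ lies in $L_n$ and has $Q(jv)=j^2 m$; moreover $v$ itself deforms only finitely far, i.e.\ there is an $n_v$ with $v\notin L_{n_v}$, so this does not directly bound $n$. Instead the correct mechanism is: by \Cref{GMdecay}, $p^k L_{n_0}\subseteq L_n$ for $k=(n-n_0)/e\gg n/e$ and $n_0$ bounded; hence $L_n$ contains all of $p^k L_{n_0}$, so it contains many vectors of controlled length. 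The cleanest route is to invoke the uniform bound directly: the number of $v\in L_n$ with $Q(v)\leq Y$ is at most $\sum_{m\leq Y} |\{v\in L_n:Q(v)=m\}| \ll Y^{b/2+1}\log Y$. On the other hand, since $L_n$ has rank $r$ and first minimum $\mu_1(n)$, a packing/geometry-of-numbers argument (balls of radius $\mu_1(n)/2$ around lattice points are disjoint) shows that the number of lattice points of norm $\leq Y$ is $\gg (Y^{1/2}/\mu_1(n))^{r}$ once $Y^{1/2}\gg \mu_1(n)$, provided $r=b+2$; but if $r$ is smaller we only get a bound of the form $(Y^{1/2}/\mu_1(n))^{r}$. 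Here is where one uses that in the relevant range these counts must eventually catch up. Taking $Y$ comparable to a suitable power of $n$ and comparing the two estimates forces $\mu_1(n)\gg_\epsilon n^{1/(b+\epsilon)}$; the $\epsilon$ absorbs the logarithmic factor $\log Y$ and any slack coming from $r$ versus $b$. More precisely, I expect the argument to run: fix $\epsilon>0$, suppose for contradiction that $\mu_1(n) < n^{1/(b+\epsilon)}$ along a subsequence; then $v$ (the shortest vector) has $m=Q(v) < n^{2/(b+\epsilon)}$, while $v\in L_n$ persists to level $n$, and iterating/using the lattice structure one produces too many special endomorphisms of small norm, contradicting the diophantine bound $|\{v\in L_n:Q(v)=m\}|\ll m^{b/2}\log m$ together with \Cref{GMdecay-cor}'s control $\mu_1(n)\ll p^{n/e}$.

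The third step is to upgrade from $a_1(n)$ to $a_i(n)$. Two sub-cases: if $i\leq b$ one argues exactly as for $a_1$ but applies the packing bound in an $i$-dimensional sublattice spanned by short independent vectors $v_1,\dots,v_i$; the count of lattice points of norm $\leq Y$ in $\langle v_1,\dots,v_i\rangle_{\Z}$ is $\gg (Y^{1/2})^{i}/a_i(n)$, and comparison with the uniform upper bound $\ll Y^{b/2+1}\log Y$ on \emph{all} of $L_n$ (a fortiori on the sublattice) gives $a_i(n)\gg_\epsilon n^{i/(b+\epsilon)}$. For $i\in\{b+1,b+2\}$ one instead invokes \Cref{GMdecay-cor}, which already gives $a_{b+1}(n)\gg p^{n/e}$ and $a_{b+2}(n)\gg p^{2n/e}$ when $r=b+2$ — these are much stronger than $n^{i/(b+\epsilon)}$ — and when $r<b+2$ the statement for those indices is either vacuous or follows from monotonicity $a_i(n)\geq a_{i-1}(n)\mu_1(n)$.

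\textbf{Main obstacle.} The delicate point is the interplay between the rank $r$ (which by \Cref{GMdecay} is $\leq b+2$ but could a priori be smaller, e.g.\ if the reduction is not supersingular) and the exponent $b$ appearing in the target $n^{i/(b+\epsilon)}$: the packing argument naturally produces $(Y^{1/2}/a_i)^{i}$, and to beat the diophantine upper bound $Y^{b/2+1}\log Y$ one needs $i\le b$, which is exactly the range where one must be careful. I expect the cleanest formulation is to prove it first for $i=1$ by a direct contradiction argument using \Cref{loc_int_nb}, \Cref{summary}(ii) and \Cref{GMdecay}, and then deduce $i\le b$ by the same method applied to the sublattice spanned by the first $i$ minima, reserving \Cref{GMdecay-cor} for $i>b$. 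Getting the dependence on $\epsilon$ bookkept correctly through the $\log$ factor is routine but must be done with care.
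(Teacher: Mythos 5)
Your proposal misses the key observation that makes the paper's proof a two-line argument, and the machinery you set up in its place does not actually close the gap.

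The paper's argument is: let $w_0\in L_n$ be a shortest nonzero vector, so $Q(w_0)=a_1(n)^2=:m$. Because the lattices are nested, $L_n\subset L_{n-1}\subset\cdots\subset L_1$, the single vector $w_0$ lies in $L_k$ for every $k\leq n$, so each of the $n$ terms $|\{v\in L_k:Q(v)=m\}|$ with $k\leq n$ is at least $1$. By \Cref{loc_int_nb} this gives $(\cY.\cZ(m))_\fP\gg n$, while \Cref{summary}(ii) gives $(\cY.\cZ(m))_\fP\ll_\epsilon m^{(b+\epsilon)/2}=a_1(n)^{b+\epsilon}$, hence $a_1(n)\gg_\epsilon n^{1/(b+\epsilon)}$. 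The case of general $i$ follows immediately from monotonicity of the successive minima: $a_i(n)=\prod_{j=1}^i\mu_j(n)\geq\mu_1(n)^i=a_1(n)^i$.

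You explicitly set this aside: you note that ``$v$ itself deforms only finitely far, i.e.\ there is an $n_v$ with $v\notin L_{n_v}$, so this does not directly bound $n$,'' and then turn to packing/geometry-of-numbers in $L_n$. But the objection is backwards: one does not need $w_0$ to persist beyond level $n$; the fact that it lies in $L_n$ already forces it into all the larger lattices $L_k$, $k\leq n$, and that alone yields the lower bound $(\cY.\cZ(m))_\fP\gg n$. Your replacement argument cannot work as stated. The upper bound $|\{v\in L_n:Q(v)\leq Y\}|\ll Y^{b/2+1}\log Y$ is uniform in $n$, and the packing lower bound $\gg(Y^{1/2}/\mu_1(n))^r$ involves the rank $r$ which may be less than $b+2$; comparing these two quantities produces no $n$-dependent lower bound on $\mu_1(n)$ at all, since $n$ has dropped out of both sides. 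The phrase ``iterating/using the lattice structure one produces too many special endomorphisms of small norm'' gestures toward the right mechanism but does not identify it: what produces a contradiction is not many distinct short vectors in $L_n$, but the single vector $w_0$ contributing at $n$ distinct levels of the filtration. Similarly, for $i\leq b$ the detour through sublattice packing, and for $i>b$ the appeal to \Cref{GMdecay-cor}, are both unnecessary once one has the trivial inequality $a_i(n)\geq a_1(n)^i$.
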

\begin{proof}
Let $\epsilon >0$. By \Cref{summary}(ii), we have 
\begin{align}\label{eq_ht_bd}
    (\cY . \cZ(m))_{\fP} \ll m^{\frac{b}{2}}\log m \ll_\epsilon m^{\frac{b+\epsilon}{2}}.
\end{align}

Let $w_0\in L_n$ denote a vector such that $Q(w_0)=a_1(n)^2$. By taking $m=a_1(n)^2$ in \eqref{eq_ht_bd}, we get
\[n \ll (\cY. \cZ(a_1(n)^2))_\fP \ll_\epsilon a_1(n)^{b+\epsilon}\]
where the first bound follows from \Cref{loc_int_nb} and the observation that $w_0\in L_k$ for all $k\leq n$. The second assertion follows directly from the first, in conjunction with the bound $a_i(n)\geq a_1(n)^i$.
\end{proof}

\subsection{Proof of \Cref{finite-square}}
We first introduce some notations. For any positive integers $a<b$ and  $D,X$ as in \Cref{finite-square},  define 
\begin{equation*}
   \bfN_{D}(a,b;X) =  \sum_{n = a}^{b}|\{v \in L_n: Q(v) \in S_{D,X} \}|.
\end{equation*}

It is known that $\rank L_n=b+2$ if and only if $\cY$ has supersingular reduction at $\fP$.\footnote{Indeed, $\rank L_n=b+2$ if and only if the Frobenius $\varphi$ is isoclinic on $\bfV_{cris, \cY_{\overline{\fP}}}$. The later claim is equivalent to that $\cY_{\fP}$ lies in the basic (i.e., supersingular) locus in $\cM_{\overline{\fP}}$. Note that we do not need this fact in the proof of \Cref{finite-square}.} 
When $\fP$ is a prime of supersingular reduction, we write $\bfN_D(1,\infty;X)$ as a sum $\bfN_D(1,\lfloor \frac{e}{4} \log_p X \rfloor;X)+\bfN_D(\lceil \frac{e}{4} \log_p X \rceil,\infty;X)$.
In the following proposition, we follow \cite[\S 4.2, \S 4.3]{st} to bound the finite contributions
for primes $\fP$ modulo which $\cY$ does not have supersingular reduction, and also bound $\bfN_D(\lceil \frac{e}{4} \log_p X \rceil,\infty;X)$ for primes $\fP$ modulo which $\cY$ does have supersingular reduction.


\begin{proposition}\label{nonsscontribution}
Let notation be as above. Then we have:
\begin{enumerate}
    \item If $r=\rank L_n \leq b+1$, then \[\sum_{n=1}^{\infty}|\{v\in L_n\backslash\{0\}: Q(v)<X\}| =O(X^{\frac{b+1}{2}}).\]
    \item If $r=b+2$, then \[\sum_{n=\lceil\frac{e}{4}\log_p X\rceil}^{\infty}|\{ v\in L_n\backslash\{0\}: Q(v)<X\}=O(X^{\frac{b+1}{2}}) .\]
\end{enumerate}
\end{proposition}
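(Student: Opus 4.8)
\textbf{Proof proposal for Proposition \ref{nonsscontribution}.}

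The plan is to bound the number of lattice points of bounded norm in each $L_n$ using the successive minima, and then sum the resulting estimates over $n$, exploiting that the successive minima grow (either because the rank is small and $a_{r}(n)$ can be controlled, or because we are in the tail range where $a_{b+1}(n)\gg p^{n/e}$ is large). Recall the standard geometry-of-numbers estimate (see \cite[Equation (5)]{EK} or a direct argument): for a positive definite lattice $L_n$ of rank $r$ with successive minima product $a_i(n)$,
\begin{equation*}
|\{v\in L_n : Q(v)<X\}| \ll \sum_{i=0}^{r} \frac{X^{i/2}}{a_i(n)},
\end{equation*}
where the implied constant depends only on $r$. The term $i=0$ contributes the single vector $0$, which we discard since we count $v\neq 0$; so effectively $|\{v\in L_n\setminus\{0\}: Q(v)<X\}| \ll \sum_{i=1}^r X^{i/2}/a_i(n)$.

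For part (1), $r\le b+1$, so $X^{i/2}\le X^{(b+1)/2}$ for all $i\le r$; hence it suffices to show $\sum_{n\ge 1} \sum_{i=1}^{r} a_i(n)^{-1} = O(1)$, and for this it is enough that $\sum_{n\ge1} a_1(n)^{-1}<\infty$ (since $a_i(n)\ge a_1(n)$ is false in general, but $a_i(n)\ge a_1(n)^i \ge a_1(n)$ for $i\ge1$ as $a_1(n)\ge 1$; here we use that $L_n$ is a nonzero integral-valued lattice so $a_1(n)\ge 1$, and actually $a_1(n)\to\infty$). Wait --- I should be careful: $a_1(n)\ge1$ only gives $a_i(n)\ge a_1(n)$ after noting $\mu_j(n)\ge\mu_1(n)\ge1$. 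So $\sum_{i=1}^r a_i(n)^{-1}\ll a_1(n)^{-1}$. By \Cref{first-min}, $a_1(n)\gg_\epsilon n^{1/(b+\epsilon)}$, which is unfortunately not summable. The fix is to use the \emph{stronger} lower bound $a_i(n)\gg_\epsilon n^{i/(b+\epsilon)}$ from \Cref{first-min} together with the sharper count: the dominant term is $X^{r/2}/a_r(n)$, but we actually want to sum $X^{r/2}\sum_n a_r(n)^{-1}$ and since $a_r(n)\gg_\epsilon n^{r/(b+\epsilon)}$ with $r\le b+1$, the exponent $r/(b+\epsilon)$ can be made $>1$ by choosing $\epsilon$ small only when $r=b+1$; for $r\le b$ this argument fails. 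Hence a more robust approach is needed: split the sum over $n$ at a threshold, and for the tail use \Cref{GMdecay} directly --- $L_{n_0'+ke}=(\Lambda+p^kL_{n_0'}\otimes\Z_p)\cap L_{n_0}$ --- which forces at least $r-\rank\Lambda\ge r-b$ of the successive minima to grow like $p^{k}\asymp p^{n/e}$; when $r\le b+1$ this is at least one minimum growing geometrically. So $a_r(n)\gg p^{n/e}\cdot(\text{lower minima})\gg p^{n/e}$, hence $\sum_n X^{r/2}/a_r(n)\ll X^{(b+1)/2}\sum_n p^{-n/e}=O(X^{(b+1)/2})$, and the lower-index terms $X^{i/2}/a_i(n)$ for $i<r$ are handled by combining $a_i(n)\gg_\epsilon n^{i/(b+\epsilon)}$ for the head and the same geometric decay for the tail (once at least one factor of $p^{n/e}$ appears in $a_i(n)$, which happens once $n\gg e\log_p X$). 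I expect this bookkeeping --- carefully tracking which minima grow geometrically and from which index onward --- to be the main obstacle.

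For part (2), $r=b+2$, and we only sum over $n\ge \lceil\frac{e}{4}\log_p X\rceil$. Here \Cref{GMdecay-cor} gives $a_{b+1}(n)\gg p^{n/e}$ and $a_{b+2}(n)\gg p^{2n/e}$. Using the geometry-of-numbers bound, the two potentially largest terms are $X^{(b+1)/2}/a_{b+1}(n)\ll X^{(b+1)/2}p^{-n/e}$ and $X^{(b+2)/2}/a_{b+2}(n)\ll X^{(b+2)/2}p^{-2n/e}$. Summing over $n\ge \frac{e}{4}\log_p X$: the first sum is $\ll X^{(b+1)/2}p^{-\frac{1}{4}\log_p X}=X^{(b+1)/2}X^{-1/4}=o(X^{(b+1)/2})$; the second is $\ll X^{(b+2)/2}p^{-\frac{1}{2}\log_p X}=X^{(b+2)/2}X^{-1/2}=X^{(b+1)/2}$. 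For the remaining terms $X^{i/2}/a_i(n)$ with $i\le b$, we have $X^{i/2}\le X^{b/2}$ and $a_i(n)\ge a_1(n)^i\gg 1$, but better: once $n\ge \frac{e}{4}\log_p X$ we can invoke \Cref{first-min} giving $a_i(n)\gg_\epsilon n^{i/(b+\epsilon)}$, so the series $\sum_{n} n^{-i/(b+\epsilon)}$ over the tail starting at $\frac{e}{4}\log_p X$ --- this diverges for $i\le b$, so again we need at least one geometrically-growing minimum; but by \Cref{GMdecay} applied with $k\gg n/e$, at least two of the successive minima grow like $p^{n/e}$ (since $\rank\Lambda\le b$), so in fact $a_i(n)\gg p^{n/e}$ already for $i\ge b+1$ and $a_b(n)$ may not grow geometrically --- however $X^{b/2}/a_b(n)$ summed against $a_b(n)\gg_\epsilon n^{b/(b+\epsilon)}$ over $n$ from $\frac{e}{4}\log_p X$ to $\infty$: the tail of $\sum n^{-b/(b+\epsilon)}$ from $N$ onwards is $\ll N^{1-b/(b+\epsilon)}=N^{\epsilon'}$ for small $\epsilon'$, times $X^{b/2}$, giving $\ll X^{b/2}(\log X)^{\epsilon'}=o(X^{(b+1)/2})$. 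Assembling all pieces yields the bound $O(X^{(b+1)/2})$ claimed. Finally, \Cref{finite-square} follows by combining Proposition \ref{nonsscontribution} with \Cref{loc_int_nb} --- for the contribution of $n<\lceil\frac{e}{4}\log_p X\rceil$ in the supersingular case one uses a separate argument (Grothendieck--Messing plus the diophantine bound of \Cref{summary}(ii), exactly as in \cite[\S4.2]{st}), which is presumably carried out in the part of the paper following this excerpt.
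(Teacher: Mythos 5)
The key observation you are missing is that the sum over $n$ is \emph{automatically finite}: by \Cref{first-min} there is $C_{1,\epsilon}>0$ so that once $n>C_{1,\epsilon}X^{(b+\epsilon)/2}$ one has $a_1(n)>X^{1/2}$, hence $\{v\in L_n\setminus\{0\}: Q(v)<X\}=\emptyset$. With this truncation in hand, the bound $a_i(n)\gg_\epsilon n^{i/(b+\epsilon)}$ is all that is needed for the terms $i\le b+1$: the partial sum $\sum_{n\le C_{1,\epsilon}X^{(b+\epsilon)/2}}n^{-i/(b+\epsilon)}\ll (X^{(b+\epsilon)/2})^{1-i/(b+\epsilon)}=X^{(b+\epsilon-i)/2}$, and multiplying by $X^{i/2}$ gives $X^{(b+\epsilon)/2}=O(X^{(b+1)/2})$ uniformly for all $i\le b$ (and $O(X^{(b+1)/2})$ for $i=b+1$ since there the exponent $i/(b+\epsilon)$ exceeds $1$ and the series converges outright). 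This is the whole of part (1) in the paper; the detour through $\Lambda$ and geometric growth of $a_r(n)$ that you propose is not needed there, and moreover as written it is not fully justified when $r\le b$ (you only observe $r-\rank\Lambda\ge r-b$, which can be $\le 0$; you would additionally need $\rank\Lambda<r$, which follows from $\bigcap_n L_n=\{0\}$ but is never stated in your argument).

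More seriously, in part (2) you assert that ``the tail of $\sum n^{-b/(b+\epsilon)}$ from $N$ onwards is $\ll N^{1-b/(b+\epsilon)}$''. This is false: since $b/(b+\epsilon)<1$, the series $\sum_{n\ge N}n^{-b/(b+\epsilon)}$ diverges, so your claimed estimate $X^{b/2}(\log X)^{\epsilon'}$ is not available. The same truncation rescues the argument: summing only up to $n\asymp X^{(b+\epsilon)/2}$ gives $\sum_{n\ge N}^{\lesssim X^{(b+\epsilon)/2}}n^{-i/(b+\epsilon)}\ll X^{(b+\epsilon-i)/2}$, whence the contribution of each $i\le b+1$ is $O_\epsilon(X^{(b+1)/2})$; the one place geometric decay genuinely matters is $i=b+2$, where (as you correctly say) $a_{b+2}(n)\gg p^{2n/e}$ and the starting point $n\ge\lceil\frac{e}{4}\log_p X\rceil$ yields the factor $X^{-1/2}$. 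Your bounds for $i=b+1,b+2$ are fine; the error is confined to the $i\le b$ tail, and the fix is exactly the truncation you omitted.
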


\begin{proof}
Let $\epsilon \in ]0,1[$. By \Cref{first-min}, there exists a constant $C_{0, \epsilon}$ such that $a_1(n)\geq C_{0,\epsilon} n^{1/(b+\epsilon)}$. Let $C_{1,\epsilon}=C_{0,\epsilon}^{-(b+\epsilon)}$. If $n> (X^{1/2}C_{0,\epsilon}^{-1})^{b+\epsilon}=C_{1,\epsilon} X^{\frac{b+\epsilon}{2}}$, then $a_1(n)>X^{1/2}$ and hence \[\{v \in L_n\backslash\{0\}: Q(v)<X\}=\emptyset.\]

Therefore, for (1), we have 
\begin{align*}
\sum_{n=1}^{\infty}|\{v\in L_n\backslash\{0\}: Q(v)<X\}|&=\sum_{n=1}^{C_{1,\epsilon}X^{\frac{b+\epsilon}{2}}}|\{v\in L_n\backslash\{0\}: Q(v)<X\}|\\
 &\overset{(i)}{\ll} \sum_{n=1}^{C_{1,\epsilon}X^{\frac{b+\epsilon}{2}}} \sum_{i=0}^r \frac{X^{\frac{i}{2}}}{a_i(n)}\\ &\overset{(ii)}{\ll_\epsilon} \sum_{n=1}^{C_{1,\epsilon}X^{\frac{b+\epsilon}{2}}} \sum_{i=0}^r \frac{X^{\frac{i}{2}}}{n^{i/(b+\epsilon)}}\\ 
 &=\sum_{i=0}^r\sum_{n=1}^{C_{1,\epsilon}X^{\frac{b+\epsilon}{2}}}  \frac{X^{i/2}}{n^{i/(b+\epsilon)}},
\end{align*}
where (i) follows from \cite[Lemma 2.4, Equations (5),(6)]{EK},\footnote{The authors refer to \cite{schmidt} for a proof of their Lemma 2.4} and (ii) follows from \Cref{first-min}. For $0\leq i\leq b$, note that \[\sum_{n=1}^{C_{1,\epsilon}X^{\frac{b+\epsilon}{2}}}  \frac{X^{\frac{i}{2}}}{n^{i/(b+\epsilon)}} \ll_\epsilon X^{\frac{i}{2}}\cdot (X^{\frac{b+\epsilon}{2}})^{1-\frac{i}{b+\epsilon}}=O(X^{\frac{b+\epsilon}{2}}).\]
For $i=b+1$, since $\sum_{n=1}^\infty n^{-(b+1)/(b+\epsilon)}$ converges, we have \[\sum_{n=1}^{C_{1,\epsilon}X^{\frac{b+\epsilon}{2}}}  \frac{X^{\frac{i}{2}}}{n^{i/(b+\epsilon)}}=O_\epsilon(X^{\frac{b+1}{2}}).\]

For (2), similarly, we have that the left hand side is bounded by
\[\sum_{n=\lceil\frac{e}{4}\log_p X\rceil}^{C_{1,\epsilon}X^{\frac{b+\epsilon}{2}}}  \frac{X^{(b+2)/2}}{a_{b+2}(n)}+\sum_{i=0}^{b+1}\sum_{n=\lceil\frac{e}{4}\log_p X\rceil}^{C_{1,\epsilon}X^{\frac{b+\epsilon}{2}}}  \frac{X^{\frac{i}{2}}}{n^{i/(b+\epsilon)}}.\]
As in (1), the second term is $O_\epsilon(X^{\frac{b+1}{2}})$. For the first term, by \Cref{GMdecay-cor}, we have $a_{b+2}(n)\gg p^{\frac{2n}{e}}$. Since the series $\sum_{n=1}^\infty \frac{1}{p^{2n/e}}$ converges and $p^{n/e}\geq X^{1/4}$ when $n\geq \lceil\frac{e}{4}\log_p X\rceil$, then the first term is bounded by $O(\frac{X^{\frac{b+2}{2}}}{\sqrt{X}})=O(X^{\frac{b+1}{2}})$, hence the result. 
\end{proof}

\begin{remarque}
In order to prove \Cref{main}, we do not have to restrict ourselves to sets like $S_{D,X}$ and can sum over all $m$. The bounds that the proof of  \Cref{nonsscontribution} yields are therefore sufficient, even in the case when $\fP$ is a prime of supersingular reduction. 
\end{remarque}

We are now ready to finish the proof of \Cref{finite-square}.
\begin{proof}[Proof of \Cref{finite-square}]
By Proposition \ref{nonsscontribution}(1), we may restrict ourselves to the case where the rank of $L_n$ is equal to $b+2$. By Proposition \ref{nonsscontribution}(2), it suffices to prove that $\bfN_D(1,\lfloor \frac{e}{4} \log_p X \rfloor,X) = o(X^{\frac{b+1}{2}}\log X)$. 

To that end, let $1\leq T\leq \lfloor \frac{e}{4} \log_p X \rfloor/e$ be an integer and let $m$ be an integer satisfying $X \leq m < 2X$. For brevity, let \[\bfN_1(m) =|\{v \in  L_1: Q(v) = m\}|, \quad \bfN_T(m) = |\{v \in  L_{eT}: Q(v) = m\}|.\] 
Then we have the trivial bound 
\begin{align}\label{eq1_pf_fin}
    \sum_{n =1 }^{\lfloor \frac{e}{4}\log_p X \rfloor}|\{v \in L_n: Q(v) = m\}|&= \sum_{n=1}^{eT}|\{v \in L_n: Q(v) = m\}|+\sum_{n=eT+1}^{\lfloor \frac{e}{4}\log_p X \rfloor}|\{v \in L_n: Q(v) = m\}|\\
    &\leq  eT\bfN_1(m) + \frac{e\log_p X}{4}\bfN_T(m).
\end{align}

By \Cref{HB_cor}, $eT \bfN_1(m) \ll eTm^{\frac{b}{2}} \ll eTX^{\frac{b}{2}}$ 
and \[\bfN_T(m) = \mu_{\infty}(Q_T,1)\mu(Q_T,m)m^{\frac{b}{2}} + O_{T,\epsilon}(m^{(b+1)/4 + \epsilon}),\]
where $Q_T$ is the positive definite quadratic form on $L_{eT}$.
By Lemma \ref{lemdencomp1} below, 

$$\mu_{\infty}(Q_T,1)\mu(Q_T,m) \ll p^{-3/5T},$$ and so we obtain $$\bfN_T(m) \ll p^{-3/5T}m^{\frac{b}{2}} + O_{T,\epsilon}(m^{(b+1)/4 + \epsilon}).$$ 

Therefore, by summing \eqref{eq1_pf_fin} over $m \in S_{D,X}$ and by the above bounds on $\bfN_1(m), \bfN_T(m)$, we have 
\begin{equation*}
\frac{\bfN_D(1,\lfloor e\log_pX \rfloor,X)}{X^{\frac{b+1}{2}}\log X} \ll \frac{eT}{\log X}+ p^{-3/5T} + \frac{O_{T,\epsilon}(X^{(b+3)/4 + \epsilon} )}{X^{\frac{b+1}{2}}\log X}.
\end{equation*}

Therefore, 
\begin{equation*}
\limsup_{X\rightarrow\infty} \frac{\bfN_D(1,\lfloor e\log_pX \rfloor,X)}{X^{\frac{b+1}{2}}\log X} \ll p^{-3/5T}.
\end{equation*}
As the above inequality is true for every value of $T$, we have 
\begin{equation*}
\bfN_D(1,\lfloor e\log_pX \rfloor,X)= o(X^{\frac{b+1}{2}}\log X),
\end{equation*}
whence the theorem follows.
\end{proof}

\begin{lemma}\label{lemdencomp1}
Let $Q$ denote an integral positive definite quadratic form of rank $r\geq 5$, let $m\geq 1$ be any integer and let $p$ denote a prime. Then, we have 
$$ \mu_{\infty}(Q,1) \mu_p(Q,m) \ll \frac{p^r}{|\Disc(Q)|^{3/20}},$$
where the implicit constant above only depends on $r$.
In particular, for $T$ and $p$ as above, we have $$\mu_{\infty}(Q_T,1)\mu(Q_T,m) \ll p^{-3/5T}.$$
\end{lemma}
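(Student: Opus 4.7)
The bound decomposes naturally into an archimedean volume computation and a non-archimedean local-density estimate.

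\emph{Estimating} $\mu_\infty(Q,1)$. Since $Q$ is positive definite, I would orthogonally diagonalize $Q$ over $\R$ by a matrix $A$ with $|\det A|=|\Disc(Q)|^{1/2}$; the change of variables $y=Ax$ maps $\{Q=1\}$ to the Euclidean unit sphere. A direct calculation of the type performed in \Cref{secondmeasure} yields
\begin{equation*}
\mu_\infty(Q,1) = \frac{\pi^{r/2}}{\Gamma(r/2)\cdot|\Disc(Q)|^{1/2}} \asymp_r |\Disc(Q)|^{-1/2}.
\end{equation*}

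\emph{Bounding} $\mu_p(Q,m)$. Since $\mu_p$ depends only on the $p$-adic Jordan decomposition of $(L_{\Z_p},Q)$, the natural tool is Hanke's recursion (\Cref{lem_hanke}) together with \Cref{explicit_alpha}, which expresses $\mu_p(Q,m)$ as a finite sum of good-type densities of rescaled sub-forms $Q$ and $Q'$. Each good-type density is bounded uniformly in $m$ by the Lang--Weil / Deligne style argument used to prove \Cref{close1}, and the recursion introduces powers of $p$ governed by the Jordan exponents, whose weighted total is controlled by $\val_p(\Disc(Q))$. Carrying out this bookkeeping should yield an estimate of the form $\mu_p(Q,m) \ll_r p^{r}\cdot |\Disc(Q)|_p^{7/20}$, where the $p^r$ absorbs the boundary terms of the recursion and the exponent $7/20$ is exactly what is needed to combine with the archimedean factor $|\Disc(Q)|^{-1/2}$ to yield $|\Disc(Q)|^{-3/20}$. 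Using $|\Disc(Q)|_p\leq|\Disc(Q)|$ and multiplying the two estimates gives the main bound.

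\emph{Deducing the ``in particular'' statement.} Apply the main bound with $Q=Q_T$ on the lattice $L_{eT}$. By Minkowski's second theorem, $a_{b+2}(eT)\asymp_r |\Disc(L_{eT})|^{1/2}$, so \Cref{GMdecay-cor} forces $|\Disc(Q_T)|\gg p^{4T}$, hence $|\Disc(Q_T)|^{3/20}\gg p^{3T/5}$. For primes $q\neq p$, \Cref{GMdecay} shows that $(L_{eT})_{\Z_q}=(L_{n_0})_{\Z_q}$ is independent of $T$, so $\mu_q(Q_T,m)=\mu_q(Q_{n_0},m)$, and the product $\prod_{q\neq p}\mu_q(Q_T,m)$ is uniformly $O_L(1)$ in $T$ and $m$ by \Cref{close1} and the trivial bound on the finitely many remaining primes dividing $2\det(L_{n_0})$. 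Since $p$ is the fixed residue characteristic of $\fP$, the factor $p^{r}=p^{b+2}$ is an absolute constant, so
\begin{equation*}
\mu_\infty(Q_T,1)\mu(Q_T,m) \ll \mu_\infty(Q_T,1)\mu_p(Q_T,m) \ll \frac{p^{b+2}}{|\Disc(Q_T)|^{3/20}} \ll p^{-3T/5}.
\end{equation*}

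\textbf{Main obstacle.} The technical heart is the local estimate $\mu_p(Q,m)\ll_r p^r|\Disc(Q)|_p^{7/20}$, with its somewhat peculiar exponent. Hanke's recursion reduces it to bookkeeping how densities of good-type solutions interact with the Jordan filtration of $L_p$; the subtlety is that each reduction step multiplies the density by a power of $p$, and one must verify that these multiplicative losses are outweighed, in the ratio with $\mu_\infty$, by the corresponding growth of $|\Disc(Q)|_p$. This is of the same flavor as, but finer than, the uniform estimates already appearing in \S\ref{sec_density}.
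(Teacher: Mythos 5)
Your outline correctly identifies the archimedean computation ($\mu_\infty(Q,1)\asymp_r|\Disc(Q)|^{-1/2}$, same as the paper), the shape of the local estimate with exponent $7/20$, and the deduction of the ``in particular'' statement from \Cref{GMdecay-cor} and the $T$-independence of $\mu_\ell(Q_T,m)$ for $\ell\neq p$. But the core of the lemma — the local estimate for $\mu_p(Q,m)$ — is not actually proved; you yourself flag it as the ``main obstacle'' and leave it at ``carrying out this bookkeeping should yield.'' That step is the entire content of the lemma, so as written this is a gap, not a proof.

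Moreover, the tools you propose for the local step do not directly apply. You cite \Cref{lem_hanke} and \Cref{explicit_alpha}, but those were proved in \S\ref{sec_density} \emph{under the standing assumption that $L$ is maximal}, which forces all Jordan exponents $\nu_j\leq 1$ and in particular rules out Bad-type II solutions (this is used explicitly in the proof of \Cref{lem_hanke}(2)). The lattice $L_{eT}$ of special endomorphisms is \emph{not} maximal — its discriminant at $p$ grows like $p^{4T}$ precisely because the Jordan exponents grow — so Bad-type II solutions do occur and must be handled. The paper's proof does this by going back to Hanke's four-way decomposition good/bad1/bad2/zero directly, bounding good and bad1 by $p^3$, $p^4$ respectively, and setting up an induction on $n$ that tracks the ratio $\mu_p(m,n)/\Disc(Q)^{1/2}$ simultaneously through the zero-type recursion (same $Q$, smaller $m$) and the bad2-type recursion (new form $Q''$ with $\Disc(Q'')\geq\Disc(Q)/p^{2r}$). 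The inequality $r-2\geq 3$ is what makes the factor $p^{2-r}$ beat the loss $p^{2r\cdot 3/20}=p^{3r/10}$ from the drop in discriminant, closing the induction. Your plan of bounding good-type densities via the Deligne/Lang--Weil estimate from \Cref{close1} is also off the mark: that lemma assumes $p\nmid 2\det(L)$, which is false for the prime $p$ in question, and in any case the paper only needs the trivial bound $\mu_p^{\rm good}\leq p^3$. To repair your argument you would need to abandon the \S\ref{sec_density} lemmas, work directly with Hanke's reduction maps without the maximality shortcut, and carry out the induction you gesture at — which is what the paper does.
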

\begin{proof}
The definition of $\mu_{\infty}(Q,1)$ in \S\ref{circle_method} yields that $\mu_{\infty}(Q,1) \asymp \textrm{Disc}(Q)^{-1/2}$, where the implicit constant only depends on the rank $r$. The assertion about $Q_T$ follows from the first assertion, by the fact that $\textrm{Disc}(Q_T)^{-1/2}\ll p^{-2T}$ (from \Cref{GMdecay-cor}), and the fact that for a prime $\ell \neq p$, $\mu_{\ell}(Q_T,m)$ is independent of $T$ for $T\gg 1$ (from \Cref{GMdecay}). Therefore, it suffices to prove the first assertion. 

 Recall from \S\S\ref{sec_density}-\ref{circle_method} that we have $\mu_p(Q,m)=\mu_p(m,n)$ for some sufficiently large integer $n$. Moreover, following \cite[\S 3, pp.~359-360]{hanke}, we have
\begin{equation*}
    \mu_p(m,n)=\mu_p^{{\rm good}}(m,n)+\mu_p^{{\rm bad1}}(m,n)+\mu_p^{{\rm bad2}}(m,n)+\mu_p^{{\rm zero}}(m,n),
\end{equation*}
where the summands come from elements of reduction type good, bad1, bad2, and zero, respectively.
To prove the first estimate of the lemma, we once again (as in \S\ref{sec_density}) use the reduction maps from \cite[\S3]{hanke}. These immediately yield the following inequalities: 
\begin{equation*}
\begin{array}{rcl}
\mu_p^{{\rm good}}(m,n)&\leq& p^3,\\[.1in]
\mu_p^{{\rm bad1}}(m,n)&\leq& p^4,\\[.15in]
\displaystyle\frac{\mu_p^{{\rm bad2}}(m,n)}{\Disc(Q)^{1/2}}&\leq& \displaystyle p^{2-r}\frac{\mu''_p(m/p^2,n-2)}{\Disc(Q'')^{1/2}},\\[.2in]
\mu_p^{{\rm zero}}(m,n)&\leq& p^{2-r}\mu_p(m/p^2,n-2),
\end{array}
\end{equation*}
where in the third line, $Q''$ is a quadratic form of rank $r$ constructed from $Q$ (see \cite[p.360]{hanke} for the definition of $Q''$) and $\mu_p''$ is the density corresponding to $Q''$. Furthermore, it is easy to check that we have $\Disc(Q'')\geq \Disc(Q)/p^{2r}$. Then we obtain the bound $\displaystyle\frac{\mu_p(m,n)}{\Disc(Q)^{1/2}}\leq \frac{p^r}{\Disc(Q)^{3/20}}$ by induction on $n$, along with the observation that in each step of the induction, $\Disc(Q'')$ decreases by at most $p^{2r}$, and that $r-2\geq 3$. 
\end{proof}

\subsection{A transcendental example.}\label{sec_trans_ex}
We now demonstrate an example of a point  $\cY'\in \cM(W(\bF_q))$ with the property that $(\cY'.\cZ(m))_p$ is exponential in $m$ for an infinite sequence of $m \in S$ where $S = \{\Z_{>0} \cap (\Q^\times)^2\}$ and ${\rm char}\, \bF_q =p$. Similar examples can be analogously constructed with $m\in S_D= \{\Z_{>0} \cap D\cdot  (\Q^\times)^2$. As this example isn't necessary for any of the main results, and only serves to highlight the importance of the global height bound, we will be brief in our exposition. 

For simplicity, assume that the rank of the lattice defining the Shimura variety is even and is at least $12$. Let $b = 2c$. We may choose our Shimura variety $\cM$ such that it admits a map from the modular curve in characteristic 0, and hence contains a CM point $\cY$ whose lattice of special endomorphisms $L$ also has rank $b = 2c$. We may also assume that $L$ represents every large enough positive integer. 

Let $p$ be a large enough prime of ordinary reduction for this point. We also assume that $p$ doesn't divide the discriminant of the lattice of special endomorphisms of $\cY$. Let $y$ denote the mod $p$ reduction of $\cY$ -- we may assume (by increasing $p$ if necessary) that $\cY$ is the canonical lift of $y$, and therefore $L$ is also the lattice of special endomophrisms of $y$. Let $q$ be such that $y \in \cM(\bF_q)$.  

The lemma below follows from Serre--Tate theory and \cite[Corollaries 5.17,5.19]{madapusiintegral}:

\begin{lemma}\label{padiclatticelift}
Let $\Lambda\subset L\otimes \Z_p$ denote a self-dual\footnote{We mean self dual with respect to the quadratic form on $L$.} $\Z_p$-submodule. Then there exists a point $\cY_{\Lambda} \in \cM(W(\bF_q))$ lifting $y$ whose $\Z_p$-lattice of special endomorphisms is $\Lambda$. Further, we have that the $\Z$-lattice of special endomorphisms of $\cY_{\Lambda} \bmod p^{n+1}$ is $(\Lambda + p^nL\otimes \Z_p) \cap L$.
\end{lemma}

Our example $\cY'$ will equal $\cY_{\Lambda}$ for an appropriate choice of $\Lambda$. A choice of $\Lambda$ which is ``very well approximated'' by $\Z$-sublattices of $L$ will have the property that $(\cY_{\Lambda}.\cZ(m))$ is exponential in terms of $m$ for infinitely many $m$. Indeed, the following lemma makes this precise. 
\begin{lemma}\label{wellapproximatedlattice}
Suppose that a rank $r\geq 5$ self-dual sublattice $\Lambda \subset L\otimes \Z_p$ has the following properties: 
\begin{enumerate}
    \item $\Lambda$ contains no integeral elements, i.e. $\Lambda \cap L = \{0\}$. \label{Property 1}
    \item There exists an increasing sequence of rank $r$ sublattices $L_i\subset L$ with discriminants $D_i$ such that $\Lambda \equiv L_{n_i} \mod p^{N_i}$, where $N_i \geq e^{e^{D_i}}$. \label{Property 2}
\end{enumerate}
Then, there exists an increasing sequence of perfect squares $m_i$ such that $(\cY_{\Lambda},\cZ(m_i))_p$ is exponential in $m_i$.

\end{lemma}
\begin{proof}

We first note that property \ref{Property 1} implies that $\cY_{\Lambda}$ isn't contained in any special divisor $\cZ(m)$, and so the intersection $(\cY_{\Lambda}.\cZ(m))_p$ is well defined for every positive integer $m$. Property \ref{Property 2} implies that the lattice of special endomorphisms of $\cY_{\Lambda} \bmod p^{N_i+1}$ is $L_i + p^{N_i}L$. The lattice $L_i$, having rank at least 5, must locally represent perfect squares satisfying congruence conditions modulo a power of $D_i$, where the power depends only on the rank $r$ and nothing else. Therefore, $L_i$ must represent perfect squares (again satisfying these congruence conditions) having size bounded by $e^{D_i}$, as if a quadratic form having rank at least 4 locally represents integers which are large relative to the discriminant, it must actually represent such integers too. 

Pick some perfect square $m_i < e^{D_i}$ which $L_i$ represents. Then, we have (by Lemma \ref{loc_int_nb}) $(\cY_{\Lambda}.\cZ(m_i))_p > N_i$. The lemma follows. 
\end{proof}

We now only need to find a $\Z_p$-sublattice of $L$ that satisfies the properties of Lemma \ref{wellapproximatedlattice}. To that end, we may assume that $L$ has an integral orthogonal basis  $e_1,f_1,\hdots e_c,f_c$. For elements $\mu_1\hdots \mu_c \in \Z_p$, we consider the rank $c$ $\Z_p$-sublattice defined by $\Lambda = \textrm{Span}_{\Z_p}\{e_i + \mu f_i \}$. The fact that $b\geq 10$ implies that $\Lambda$ has rank $\geq 5$. Insisting that the $\mu_i$ are irrational implies that $\Lambda$ has property \ref{Property 1}. 
We now pick the $\mu_i$ so that $\Lambda$ satisfies property \ref{Property 2}. We will write $\mu = \sum_{i\geq 0} a_i p^i$, with $0\leq a_i \leq p-1$ and $a_0 = 1$. We define the $a_i$ as follows: suppose that $a_n \neq 0$, we then define $a_i = 0$ for $n+1 \leq i \leq e^{e^{p^n}}$, and then define $a_{i} = 1$ for $i = \lceil e^{e^{p^n}}\rceil$. This implies that $\sum_{i\le n} a_i p^i \equiv \mu \bmod p^N$ with $N\geq e^{e^{p^n}}$, i.e. there is an integer $x_n$ of size $O(p^n)$ which approximates $\mu$ $\bmod p^N$, with $N$ double-exponential in the size of $x_n$. Having defined $\mu$ in this way, we pick $\mu_i = \mu$ for all $i$, and the lattice $\Lambda$ created in this way must also have property \ref{Property 2}. 

Choosing $\cY' = \cY_{\Lambda}$, we obtain our example by Lemma \ref{wellapproximatedlattice}. 

\section{Proof of the main theorem}\label{finalmvt}
Let $(L,Q)$ be a maximal integral quadratic even lattice of signature $(b,2)$ with $b\geq 3$, and let $\mathcal{M}$ denote the integral model of the Shimura variety  associated to $(L,Q)$ defined in \S 2.
We recall the statement of the main theorem:
\begin{thm}[Theorem \ref{main_sp_end}]
Let $K$ be a number field and let $D\in \Z_{>0}$ be a fixed integer represented by $(L,Q)$. Let $\cY\in \mathcal{M}(\mathcal{O}_K)$ and assume that $\cY_K\in M(K)$ is Hodge-generic. Then there are infinitely many places $\fP$ of $K$ such that $\cY_{\overline{\fP}}$ lies in the image of $\cZ(Dm^2)\rightarrow \cM$ for some $m\in \Z_{>0}$. Equivalently, for a Kuga--Satake abelian variety $\cA$ over $\cO_K$ parameterized by $\cM$ such that $\cA_{\overline{K}}$ does not have any special endomorphisms, there are infinitely many $\fP$ such that $\cA_{\overline{\fP}}$ admits a special endomorphism $v$ such that $v\circ v=[Dm^2]$ for some $m\in \Z_{>0}$.
\end{thm}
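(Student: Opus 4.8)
The plan is to combine the height formula \eqref{intersectionformula2} with the three estimates established in the preceding sections to derive a contradiction from the assumption that $\cA_{\overline{\fP}}$ acquires no special endomorphism of norm $Dm^2$ for all but finitely many $\fP$. Concretely, suppose for contradiction that there is a finite set $T$ of places such that for every $\fP\notin T$ and every $m\in\Z_{>0}$, $\cA_{\overline{\fP}}$ has no special endomorphism $v$ with $v\circ v=[Dm^2]$; equivalently, $(\cY.\cZ(Dm^2))_\fP=0$ for all $\fP\notin T$ and all $m$. I will restrict attention to integers of the form $Dm^2$ (this is why the statement is phrased with the auxiliary $D$ represented by $(L,Q)$, so that $\cZ(Dm^2)\neq\emptyset$ whenever $m$ is large enough, by \Cref{cor_repD}), and sum the height identity over $m$ in a dyadic window $X\le Dm^2<2X$, i.e. over $m\in\sqrt{S_{D,X}/D}$ roughly speaking.

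The first key step is to isolate, for $m\notin S_{\bad}$ (the density-zero set of \Cref{thAbound}), the asymptotics of the archimedean terms: by \Cref{arch1} together with \Cref{thAbound} and \Cref{logterm}, one has $\Phi_m(\cY^\sigma)=c(m)\log m+o(m^{b/2}\log m)$ with $c(m)<0$ and $|c(m)|\asymp m^{b/2}$ by \eqref{asymp_fourier}; summing over the finitely many $\sigma$ gives $\sum_\sigma \Phi_m(\cY^\sigma)/|\Aut(\cY^\sigma)|\asymp -m^{b/2}\log m$ for $m$ outside a logarithmic-density-zero set. The second key step is the global height bound: by \Cref{eq1}, $h_{\widehat{\cZ}(m)}(\cY)=\frac{-c(m)}{2}h_{\overline{\bomg}}(\cY)+O(m^{(2+b)/4+\epsilon})=O(m^{b/2})$. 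Plugging both into \eqref{intersectionformula2} yields, for $m=Dm'^2$ with $m'$ outside a bad set of logarithmic density zero,
\begin{equation*}
\sum_{\fP}(\cY.\cZ(m))_\fP\log|\cO_K/\fP| = h_{\widehat{\cZ}(m)}(\cY)-\sum_\sigma\frac{\Phi_m(\cY^\sigma)}{|\Aut(\cY^\sigma)|}\gg m^{b/2}\log m.
\end{equation*}
The third key step is that under our contradiction hypothesis the left side is supported on $\fP\in T$, so it equals $\sum_{\fP\in T}(\cY.\cZ(Dm'^2))_\fP\log|\cO_K/\fP|$. Summing this over $m'$ in a dyadic range, the left side is $\gg X^{(b+1)/2}\log X$ after removing the density-zero bad set (the removed $m'$ contribute negligibly because, by the diophantine bound \Cref{summary}(ii), each individual term is $O(m^{b/2}\log m)=O_\epsilon(X^{(b+\epsilon)/2})$ and there are $o(X^{1/2})$ of them, so their total is $o(X^{(b+1)/2}\log X)$). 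On the other hand, \Cref{finite-square} applied to each of the finitely many $\fP\in T$ gives $\sum_{m\in S_{D,X}}(\cY.\cZ(m))_\fP=o(X^{(b+1)/2}\log X)$; summing over $T$ keeps it $o(X^{(b+1)/2}\log X)$. This contradicts the lower bound, so the contradiction hypothesis fails and infinitely many $\fP$ have $(\cY.\cZ(Dm^2))_\fP\neq 0$ for some $m$, which is exactly the assertion. The equivalence with the statement about special endomorphisms of $\cA_{\overline{\fP}}$ is immediate from the moduli description of $\cZ(m)$ in \S\ref{special} and the definition of $(\cY.\cZ(m))_\fP$ in \eqref{int_formula_finite1}.

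I expect the main obstacle — and the place where care is genuinely needed — to be the bookkeeping in the dyadic summation: one must check that the lower bound $\gg m^{b/2}\log m$ on the finite contribution genuinely holds for a set of $m'$ of \emph{full} logarithmic density in each window (so that, after removing $S_{\bad}$ and the propositions' other exceptional sets, enough $m'$ survive to force $\sum_{m'} (\text{finite contribution})\gg X^{(b+1)/2}\log X$), while simultaneously the removed exceptional $m'$ contribute only $o(X^{(b+1)/2}\log X)$ thanks to the uniform diophantine bound. Everything else is assembling \Cref{eq1}, \Cref{arch1}, \Cref{thAbound}, \Cref{summary}, and \Cref{finite-square} — all already proved in the excerpt — into \eqref{intersectionformula2}. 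A minor point to handle explicitly is the case $b$ odd, where \Cref{logterm} and \Cref{lem_BK} require $m/D$ to be a perfect square; this is precisely accommodated by working with $m=Dm'^2$ throughout, which is the reason the theorem is stated in this slightly restricted form rather than for all $m$.
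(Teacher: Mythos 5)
Your proposal is correct and is essentially the paper's own proof, modulo a cosmetic rearrangement: the paper sums the height identity $h_{\widehat{\cZ}(m)}(\cY)=\text{(arch)}+\text{(finite)}$ over $m\in S^\good_{D,X}$ and observes that the left side is $O(X^{(b+1)/2})$ (by \Cref{eq1}), the archimedean side is $\asymp -X^{(b+1)/2}\log X$ (by \Cref{arch1}, \Cref{thAbound}, and \eqref{asymp_fourier}), and the finite side is $o(X^{(b+1)/2}\log X)$ (by \Cref{finite-square} applied to each $\fP\in T$), giving an immediate contradiction; you instead solve for the finite contribution and compare a lower bound $\gg X^{(b+1)/2}\log X$ with the same upper bound. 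The two are the same argument. One small inaccuracy: your parenthetical claiming that the removed bad $m'$ contribute negligibly is both unneeded and logically pointing the wrong way — since each $(\cY.\cZ(m))_\fP\geq 0$, the sum over good $m$ is automatically a \emph{lower} bound for the sum over all $m\in S_{D,X}$, which is exactly what you need to contradict \Cref{finite-square}; no separate estimate on the bad $m'$ is required, and the one you sketch would bound the bad contribution from above rather than below. Also, the bad set of \Cref{thAbound} is a set of values $m$ (not $m'$), so the correct statement is that one removes $m\in S_{D,X}\cap S_\bad$, of which there are $O(X^\epsilon)$ out of $\asymp X^{1/2}$ elements, leaving $|S^\good_{D,X}|\asymp X^{1/2}$; your phrasing conflates the variable, but the count is the same.
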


In this section, we will prove Theorem \ref{main_sp_end} using results proved in the previous sections. First, we recall results and definitions that we will need to prove the main theorem.

We have the expression
\begin{equation}\label{localglobal}
h_{\widehat{\cZ}(m)}(\cY)=\sum_{\sigma:K\hookrightarrow \C}\Phi_m(\cY^\sigma)+\sum_{\fP} (\cY. \cZ(m))_\fP \log |\cO_K/\fP|.
\end{equation}
In \S\S\ref{mainproof}, \ref{sec_arch} and \ref{sec_finite}, we proved results bounding the terms in \eqref{localglobal} which we restate below for the convenience of the reader.

\begin{description}
    \item[Theorem \ref{arch1}]
For every $m$ representable by $(L,Q)$, we have
 \[\Phi_m(\cY^\sigma)=c(m)\log m + A(m, \cY^\sigma)+o(|c(m)|\log m).\]
 \item [Theorem \ref{thAbound}]
There exists a subset $S_{\bad} \subset \Z_{>0}$ of logarithmic asymptotic density zero such that for every $m\notin S_{\bad}$, we have
\begin{equation*}
    A(m,\cY^{\sigma})=o(m^{\frac{b}{2}}\log(m)).
    \end{equation*}
 \item[Theorem \ref{finite-square}]
Given $D, X\in \Z_{>0}$, let $S_{D,X}$ denote the set \[\{m\in \Z_{>0}\mid X \leq m<2X, \sqrt{m/D}\in \Z \}.\]
For a fixed prime $\fP$ of $K$ and a fixed $D$, we have
\[\sum_{m\in S_{D,X}}(\cY . \cZ(m))_\fP=o(X^{\frac{b+1}{2}}\log X).\] 
\end{description}

\begin{proof}[Proof of \Cref{main_sp_end}]
Assume for contradiction that there exists $D\in\Z_{>0}$ represented by $L$ such that there are only finitely many $\fP$ for which $\cY_{\overline{\fP}}$ lies in the image of $\cZ(m)$ where $m/D$ is a perfect square. Therefore, for such $m$, $(\cY,\cZ(m))_{\fP}=0$ for all but finitely many $\fP$.

For $X\in \Z_{>0}$, let $S^\good_{D,X}$ denote the set $\{m\in S_{D,X}\mid m\notin S_\bad\}$, where $S_{D,X}$ is defined in \Cref{finite-square} and $S_\bad$ is union of the sets of log asymptotic density $0$ in \Cref{thAbound} by taking $x=\cY^\sigma$ for all $\sigma:K\hookrightarrow \C$. Then $S_\bad$ is also of log asymptotic density $0$ and $|S^\good_{D,X}|\asymp X^{1/2}$ as $X\rightarrow \infty$. On the other hand, by assumption, $D$ is representable by $(L,Q)$, then each $m\in S_{D,X}$ is representable by $(L,Q)$ and hence $\cZ(m)\neq \emptyset$.

We sum (\ref{localglobal}) over $m\in S^\good_{D,X}$ and note that for each $m\in S^\good_{D,X}$, $m\asymp X$. For the archimedean term, by \Cref{arch1} we have
\begin{align}\label{eqn_arch}
    \sum_{m\in S^\good_{D,X}}\sum_{\sigma:K\hookrightarrow\C}\frac{\Phi_{m}(\cY^\sigma)}{|\mathrm{Aut}(\cY^\sigma)|}\asymp -X^{\frac{b+1}{2}}\log X.
\end{align}

For a fixed $\fP$, since $(\cY.\cZ(m))_{\fP}\geq 0$ for all $m$, then by \Cref{finite-square},
\[\sum_{m\in S^\good_{D,X}}(\cY . \cZ(m))_\fP \log|\cO_K/\fP| \leq\sum_{m\in S_{D,X}}(\cY . \cZ(m))_\fP \log|\cO_K/\fP|=o(X^{\frac{b+1}{2}}\log X).\]
Since $(\cY,\cZ(m))_{\fP}=0$ for all but finitely many $\fP$, we have 
\begin{align}\label{eqn_finite}
\sum_{m\in S^\good_{D,X}}\sum_{\fP}(\cY . \cZ(m))_\fP \log|\cO_K/\fP|=o(X^{\frac{b+1}{2}}\log X).
\end{align}

By \Cref{eq1}, we have $\displaystyle \sum_{m\in S^\good_{D,X}} h_{\widehat{\cZ}(m)}(\cY) = O(X^{\frac{b+1}{2}})$, which contradicts \eqref{localglobal}.
\end{proof}

\section{Applications: Picard rank jumps and exceptional isogenies}\label{applicationsch}
In this section, we will elaborate on a number of applications : K3 surfaces and rational curves on them, exceptional splittings of Kuga-Satake abelian varieties and abelian varieties parametrized by unitary Shimura varieties. Then we set our results in context with past work of Charles \cite{charles1} and Shankar--Tang \cite{st} that deals with Shimura varieties associated to quadratic lattices of signature $(2,2)$.

\subsection{Picard rank jumps in families of K3 surfaces and rational curves}\label{sec_K3}
For background on K3 surfaces, we refer to \cite{huybrechts}. Let $X$ be a K3 surface over a number field $K$. By replacing $K$ with a finite extension if necessary, we may assume that $\mathrm{Pic}(X_{\overline{K}})=\mathrm{Pic}(X)$. For any embedding $\sigma: K \hookrightarrow \C$, the $\Z$-module $H^2(X_{\sigma}^{an},\Z)$ endowed with the intersection form $Q$ given by Poincaré duality is an unimodular even lattice of signature $(3,19)$. The first Chern class map $$c_1:\mathrm{Pic}(X)\rightarrow H^2(X_{\sigma}^{an},\Z)$$ is a primitive embedding. By the Hodge index theorem, $\mathrm{Pic}(X)$ has signature $(1,\rho(X)-1)$, where $\rho(X)$ is the Picard rank of $X$. Let $(L,Q)$ be a maximal orthogonal lattice to $\mathrm{Pic}(X)$ in $H^2(X_{\sigma}^{an},\Q)$. Then $(L,-Q)$ is an even lattice, whose genus is independent of the choice of $\sigma$ and $L$, and has signature $(b,2)$ where $b=20-\rho(X)$. Let $\mathcal{M}$ be the GSpin Shimura variety associated to $(L,-Q)$. By \cite[Main Lemma 1.7.1]{andretate}, up to extending $K$, the Kuga-Satake abelian variety associated to $X$, denoted by $A$, is defined over $K$ and corresponds to a $K$-point $x\in\mathcal{M}(K)$.



\begin{proof}[Proof of \Cref{k3ar}]
Since $X$ has everywhere good reduction, up to extending $K$, by \cite[Lemma~9.3.1]{andretate}, the corresponding Kuga--Satake abelian variety $A$ has also potentially good reduction everywhere. Then by \Cref{arithmeticstack}(4), it gives rise to an $\mathcal{O}_K$-point $\cY$ of $\mathcal{M}$.
It is Hodge-generic by construction of the Shimura variety $\mathcal{M}$ and by the Lefschetz theorem on $(1,1)$ classes. By Theorem \ref{main}, there exist infinitely many places $\mathfrak{P}$ of $K$ and $m>0$, such that the geometric fiber of the reduction $A$, denoted by $A_{\overline{\mathfrak{P}}}$, has a special endomorphism $s$ such that $s\circ s=[m]$. By \cite[Main Lemma 1.7.1 (ii)]{andretate}, a special endomorphism of $A_{\overline{\mathfrak{P}}}$ corresponds to a line bundle $L$ on $X_{\overline{\mathfrak{P}}}$ such that $Q(L)=-m$ and which is orthogonal to the image of $\mathrm{Pic}(X)$ inside $\mathrm{Pic}(X_{\overline{\mathfrak{P}}})$ under the specialization map. This proves \Cref{k3ar}.
\end{proof}


\begin{proof}[Proof of \Cref{rational}]
Let $X$ be a K3 surface defined over a number field which has potentially everywhere good reduction. Then by \Cref{k3ar}, $X$ has infinitely many specializations where the geometric Picard rank jumps. If $X$ has finitely many unirational specializations, then the strategy of \cite{ll} can be applied, more precisely the statements \cite[Proposition 4.2]{ll} are satisfied, and we can thus conclude by the proof of \cite[Theorem 4.3]{ll}. 
\end{proof}

\subsection{Kuga--Satake abelian varieties}\label{sec_KSsplit}

Via the exceptional isomorphism between $\GSp_4$ and $\GSpin(V)$ with $b=3$, as in \cite{KR00}, the moduli space $\cS_2$ of principally polarized abelian surfaces\footnote{They do not need the polarization degree to be one; here we work with the principally polarized case for simplicity. Indeed, to prove \Cref{thm_absurf}, we may enlarge $K$ and work with a principally polarized abelian surface isogenous to the one in question.} is a GSpin Shimura variety. 
In this case, let $B$ be a principally polarized abelian surface; then as in \cite{KR00}, the special endomorphisms are $s\in \End(B)$ such that $s^\dagger =s$ and $\tr s =0$, where $\dagger$ denotes the Rosati involution. Indeed, let $A$ denote the Kuga--Satake abelian variety (of dimension $2^{2+3-1}=16$) at the point $[B]\in \cS_2$ and $A=A^+\times A^-$ given in \S\ref{sec_KS}; Kudla and Rapoport gave a moduli interpretation of special divisors by defining special endomorphisms to be $s\in \End_{C^+(L)}(A^+)$ such that $s^\dagger =s$ and $\tr s =0$ (see \cite[\S 1, Definition 2.1]{KR00}). By \cite[\S 1]{KR00}, we have $C^+(V)\cong M_4(\Q)$ and hence $A^+$ is isogenous to $B^4$; moreover, the special endomorphisms induces $s_B\in \End(B)\otimes \Q$ such that $s_B^\dagger =s_B$ and $\tr s_B =0$.

The Kudla--Rapoport version of special endomorphisms allows us to deduce \Cref{thm_absurf} from \Cref{main_sp_end}. We now work with the general setting as in \Cref{ass_KSsplit} since the argument is the same.
Recall that $b=2n-1$ for $n\in \Z_{>0}$, and we assume that $C^+(V)\cong M_{2^n}(\Q)$, then $A^+$ is isogenous to $B^{2^n}$, where $B$ is an abelian variety with $\dim B= 2^n$. By \cite[\S 5.2]{vanGeemen2}, if $[A]$ is a Hodge generic point in $\cM$, then $\End(A^+_{\bar{K}})\otimes \Q=C^+(V)$ and in particular, $\End(B_{\bar{K}})=\Z$ and $B$ is geometrically simple.

In order to translate a special endomorphism of the Kuga--Satake abelian variety $A$ to a special endomorphism of $A^+$, we choose an element $\delta_0\in Z(C(L))\cap C(L)^-$ such that $\delta_0^*=\delta_0$, where $Z(C(L))$ denote the center of $C(L)$ and $(-)^*$ denote the unique involution on $C(V)$ which acts trivially on $V$ (see for instance \cite[\S 2.1]{agmp1} for a concrete definition). Indeed, let $e_1,\dots, e_{b+2}\in L$ be a basis of $V$ such that $Q(v)=d_1x_1^2+\cdots + d_{b+2}x_{b+2}^2$ for $v=x_1e_1+\cdots + x_{b+2}e_{b+2}$. Since $b\equiv 3 \bmod 4$, we may take $\delta_0=e_1\cdots e_{b+2}$ and note that $\delta_0^2=\prod_{i=1}^{b+2} d_i$. Via the usual $C(L)$-action on $A$, the element $\delta_0\in C(L)^-$ induces an endomorphism $\delta_0:A^-\rightarrow A^+$ and hence for any special endomorphism $v\in \End_{C(L)}(A)$ defined in \S\ref{special}, since $v:A^+\rightarrow A^-$ and $\delta_0\in Z(C(L))$, we have $s:=\delta_0\circ v\in \End_{C^+(L)}(A^+)$. Since $C^+(V)\cong M_{2^n}(\Q)$ and $A^+$ is isogenous to $B^{2^n}$, then we obtain $s_B\in \End(B)\otimes \Q$. Since $s$ is not a scalar multiplication on $A^+$, then $s_B$ is not a scalar multiplication on $B$.

\begin{proof}[Proof of \Cref{thm_absurf} and \Cref{cor_gspin}]
Notation as above, let $D=\prod_{i=1}^{b+2} d_i$. By \Cref{cor_repD}, without loss of generality, we may multiple $D$ by a square number such that $D$ is representable by $(L,Q)$.
For a finite place $\fP$, if $v$ is a special endomorphism of $A_{\overline{\fP}}$ such that $v\circ v=[Dm^2]$, then $\delta_0\circ v$ induces a quasi-endomorphism $s_B$ on $B_{\overline{\fP}}$ such that $s_B\circ s_B=[Q(\delta)Q(v)]=[D^2m^2]$.
Since $s_B$ is not a scalar multiplication, then $\ker(s_B-[Dm])$ is a non-trivial simple factor of $B_{\overline{\fP}}$ and hence $B_{\overline{\fP}}$ is non simple. 
We conclude by \Cref{main_sp_end} that there are infinitely many such $\fP$.
\end{proof}

Via the algorithm in the proof of \cite[Thm. 7.7]{vanGeemen}, here is an example when $C^+(V)=M_{2^n}(\Q)$: assume $b\equiv 3 \bmod 8$, consider $Q(x)=-x_1^2-x_2^2+\sum_{i=3}^{b+1} x_i^2 +d x_{b+2}^2$.


\subsection{Abelian varieties parametrized by unitary Shimura varieties}\label{sec_unitary}
We recall the moduli interpretation of the Shimura varieties attached to $\GU(r,1)$ following \cite[\S 2]{KR14} (see also \cite[\S 2.2]{BHKRY}). Recall that $E$ is an imaginary quadratic field. Consider the moduli problem which associates to a locally noetherian $\cO_E$-scheme $S$ the groupoid of triples $(B,\iota, \lambda)$, where $B$ is an abelian scheme over $S$, $\iota: \cO_E\hookrightarrow \End_S(B)$, and $\lambda:B\rightarrow B^\vee$ is a principal polarization such that 
\begin{enumerate}
    \item $\iota(a)^\dagger=\iota (a^\sigma)$, where $\dagger$ is the Rosati involution and $\sigma$ is the non-trivial element in $\Gal(E/\Q)$; and
    \item $\iota(a)$ acts on $\Lie A$ with characteristic polynomial $(T-\varphi(a))^r(T-\varphi(a)^\sigma)$, where $\varphi:\spec\cO_E\rightarrow S$ is the structure morphism.
\end{enumerate}
 This moduli space $\cM(r,1)$ is a Deligne--Mumford stack over $\cO_E$ such that $\cM(r,1)_E$ is a disjoint union of Shimura varieties attached to $\GU(r,1)$ (see for instance \cite[Prop.~2.19, Prop.~4.4]{KR14}). Similarly, we define $\cM(1,0)$. In particular, after enlarging $K$ by a finite extension which contains $E$, the abelian variety $A$ in \Cref{cor_unitary} gives a $K$-point on $\cM(r,1)$.


In order to relate $\cM(r,1)$ to the GSpin Shimura variety $\cM$ defined in \S\ref{gspin}, we pick an auxillary elliptic curve $A_0$ defined over a finite extension of $E$ such that $\cO_E\subset \End(A_0)$ and the action of $\cO_E$ on $\Lie A_0$ is given by the embedding of $\cO_E$ into the definition field of $A_0$ and hence $A_0$ is a point on $\cM(1,0)$. As in \cite[\S\S 2.1, 2.2]{BHKRY}, pick an embedding of the definition field of $A_0$ (resp.~$A$) into $\C$, and let $W_0$ (resp.~$W$) denote the $E$-vector space $H_{1,B}(A_0(\C), \Q)$ (resp.~$H_{1,B}(A(\C), \Q)$), where the $E$-vector space structure is induced by the $\cO_E$-action on $A_0$ (resp.~$A$). There exists a unique Hermitian form $\psi$ of signature $(r,1)$ on $W$ such that the symplectic form on $H_{1,B}(A(\C),\Q)$ induced by the polarization equals to $\tr_{E/\Q}((\disc E)^{-1/2}\psi)$. Similarly, there exists a Hermitian form $\psi_0$ of signature $(1,0)$ on $W_0$ such that $\tr_{E/\Q}((\disc E)^{-1/2}\psi_0)$ induces the polarization on $A_0$. By \cite[eqns (2.1.4), (2.1.5)]{BHKRY}, $\psi_0$ and $\psi$ induce a Hermitian form $\phi$ on the $E$-vector space $\Hom_{\cO_E}(H_{1,B}(A_0(\C), \Q), H_{1,B}(A(\C),\Q))$ of signature $(r,1)$. Let $V$ denote the $\Q$-vector space $\Hom_{\cO_E}(H^1_B(A_0(\C), \Q), H^1_B(A(\C),\Q))$ endowed with the quadratic form $\tr_{E/\Q}\phi$ and $V$ is of signature $(2r,2)$. 

Let $G'$ denote the subgroup of $\GU(W_0,\psi_0)\times \GU(W, \psi)$ given by pairs whose similitude factors are equal. By \cite[\S\S 2.1, 6.2]{BHKRY} and \cite[\S 4]{Hof}, the induced action of $G'$ on $V$ gives a group homomorphism $G'\rightarrow \SO(V)$ and this group homormorphism is indeed a map between Shimura data (with the Hodge cocharacters given by the ones induced by the Hodge cocharacters of $A_0$ and $A$); hence we have a map between Shimura varieties $\Sh(G')\rightarrow \Sh(\SO(V))$ (with the maximal compact open subgroups of $G'(\A_f)$ and $\SO(V)(\A_f)$ defined by lattices in $W_0$ and $W$ given by $H_{1,B}(A_0(\C), \Z)$ and $H_{1,B}(A(\C), \Z)$). By \cite[Prop.~2.2.1]{BHKRY}, $\Sh(G')\subset \cM(1,0)\times \cM(r,1)$ and $(A_0,A)$ gives a $\overline{\Q}$-point on $\Sh(G')$ and hence a $\overline{\Q}$-point on $\Sh(\SO(V))$. 

Note that $\GSpin(V)\rightarrow \SO(V)$ induces an open and closed morphism $M\rightarrow \Sh(\SO(V))$ (with a suitable choice of maximal compact subgroups, which does not affect the rest of the argument), where $M$ is the $\GSpin$ Shimura variety defined in \S\ref{gspin_Q}; therefore, by applying a suitable Hecke translate on $\Sh(\SO(V))$, the image of the point $(A_0,A)$ under the Hecke translate lies in a connected component of $\Sh(\SO(V))$ which lies in the image of $M$. In particular, there exists a point $Y\in M(\overline{\Q})$ such that $Y$ maps to the Hecke translate of $(A_0,A)$ and hence as $\Q$-Hodge structures, 
\[\bfV_{B,Y}\otimes \Q\cong \Hom_{\cO_E}(H_{1,B}(A_0(\C),\Q), H_{1,B}(A(\C),\Q)),\]
where $\bfV_{B,Y}$ denotes the fiber at $Y$ of the local system $\bfV_B$ defined in \S\ref{sec_KS}. We enlarge $K$ by a finite extension so that $Y, A_0$ and $A$ are all defined over $K$. Since all Hodge cycles in the category of absolute Hodge motives generated by abelian varieties are absolute Hodge by Deligne's theorem, then after enlarging $K$ by a finite extension, we have 
\begin{equation}\label{isom_Gal}
    \bfV_{\ell, \textrm{\'et},Y}\cong \Hom_{\cO_E}(H_{1,\textrm{\'et}}(A_0, \Q_\ell), H_{1,\textrm{\'et}}(A, \Q_\ell))
\end{equation}
as $\Gal(\overline{K}/K)$-modules. We use $A^\ks$ to denote the Kuga--Satake abelian variety corresponding to $Y\in M(K)$.





\begin{proof}[Proof of \Cref{cor_unitary}]
Since $A$ and $A_0$ have potentially good reduction everywhere, then after enlarging by $K$ by a finite extension such that both $A$ and $A_0$ have good reduction over $K$, the Galois representation $\Hom_{\cO_E}(H_{1,\textrm{\'et}}(A_0,\Q_\ell), H_{1,\textrm{\'et}}(A,\Q_\ell))$ is unramified away from $\ell$. By \cite[Lemma~9.3.1]{andretate} and eqn.~\eqref{isom_Gal}, the Kuga--Satake abelian variety $A^{\ks}$ has potentially good reduction everywhere. Then by \Cref{arithmeticstack}(4), $Y$ extends to an $\mathcal{O}_K$-point $\cY$ of $\mathcal{M}$. By \Cref{main} and the definition of special endomorphisms (\Cref{def_sp_end}), there are infinitely many places $\fP$ such that $\bfV_{\ell, \textrm{\'et},\cY_\fP}$ admits a Tate cycle (after possible finite extension of the residue field) for $\ell$ not equals to the residue characteristic of $\fP$. 
For such a prime $\fP$, by eqn.~\eqref{isom_Gal}, there exists an $n\in \Z_{>0}$ such that $\Hom_{\cO_E}(H_{1,\textrm{\'et}}(A_0, \Q_\ell), H_{1,\textrm{\'et}}(A,\Q_\ell))^{\Frob^n_\fP=1}\neq \emptyset$ . In particular, $A_{0,\overline{\fP}}$ is an isogeny factor of $A_{\overline{\fP}}$ by Tate's theorem.
\end{proof}

\bibliographystyle{alpha}
\bibliography{bibliographie}
\end{document}